\documentclass[11pt]{amsart}
\usepackage{amssymb,xy,color,doi}
\xyoption{all}
\usepackage{ytableau,hyperref}
\hypersetup{
	colorlinks = true,
	linkcolor = blue,
	anchorcolor = blue,
	citecolor = blue,
	filecolor = blue,
	urlcolor = blue
}
\usepackage{hyperref}
\usepackage{amsmath}
\usepackage{MnSymbol}
\usepackage{tikz}
\usepackage{array, multirow}
\usepackage[margin= 0.8 in]{geometry}
\usepackage{diagbox}
\usetikzlibrary{matrix}

\definecolor{darkred}{rgb}{0.7,0,0}
\newcommand{\defncolor}{\color{darkred}}
\newcommand{\defn}[1]{{\defncolor\emph{#1}}} 

\newcommand{\CC}{\mathbb C}

\newcommand{\s}{<_{std}}

\newlength\scratchlength
\newcommand\filling[2]{
        \settoheight\scratchlength{\mathstrut}%
        \scratchlength=\number\numexpr\number#1-1\relax\scratchlength
        \lower.5\scratchlength\hbox{\scalebox{1}[#1]{$#2$}}%
}

\usepackage[colorinlistoftodos]{todonotes}

\newtheorem{theorem}{Theorem}[section]
\newtheorem{lemma}[theorem]{Lemma}
\newtheorem{proposition}[theorem]{Proposition}
\newtheorem{corollary}[theorem]{Corollary}
\newtheorem{conjecture}[theorem]{Conjecture}

\theoremstyle{definition}
\newtheorem{definition}[theorem]{Definition}
\newtheorem{remark}[theorem]{Remark}
\newtheorem{example}[theorem]{Example}
\newtheorem{question}[theorem]{Question}

\numberwithin{equation}{section}

\title[Immersion poset]{The immersion poset on partitions}

\author[Johnston]{Lisa Johnston}
\address[L. Johnston]{Department of Mathematics, University of California, One Shields
        Avenue, Davis, CA 95616-8633, U.S.A.}
\email{lisjohnston@ucdavis.edu}

\author[Kenepp]{David Kenepp}
\address[D. Kenepp]{Department of Mathematics, University of California, One Shields
        Avenue, Davis, CA 95616-8633, U.S.A.}
\email{dskenepp@ucdavis.edu}

\author[Nguyen]{Evuilynn Nguyen}
\address[E. Nguyen]{Department of Mathematics, University of California, One Shields
        Avenue, Davis, CA 95616-8633, U.S.A.}
\email{evtnguyen@ucdavis.edu}

\author[Paul]{Digjoy Paul}
\address[D. Paul]{Department of Mathematics, Indian Institute of Science, Bangalore 560012, India}
\email{digjoypaul@iisc.ac.in}

\author[Schilling]{Anne Schilling}
\address[A. Schilling]{Department of Mathematics, University of California, One Shields
        Avenue, Davis, CA 95616-8633, U.S.A.}
\email{anne@math.ucdavis.edu}
\urladdr{\href{http://www.math.ucdavis.edu/~anne}{http://www.math.ucdavis.edu/~anne}}

\author[Simone]{Mary Claire Simone}
\address[M. C. Simone]{Department of Mathematics, University of California, One Shields
        Avenue, Davis, CA 95616-8633, U.S.A.}
\email{mcsimone@ucdavis.edu}

\author[Zhou]{Regina Zhou}
\address[R. Zhou]{Department of Mathematics, University of California, One Shields
        Avenue, Davis, CA 95616-8633, U.S.A.}
\email{rgazhou@ucdavis.edu}

\keywords{immersion poset, monomial-positivity, Schur-positivity, characters of symmetric group}

\makeatletter
\@namedef{subjclassname@2020}{%
  \textup{2020} Mathematics Subject Classification}
\makeatother
\subjclass[2020]{06A07, 06A11, 05E05, 05A17, 20C05}

\begin{document}

\begin{abstract}
        We introduce the immersion poset $(\mathcal{P}(n), \leqslant_I)$ on partitions, defined by $\lambda \leqslant_I \mu$ if and only if 
        $s_\mu(x_1, \ldots, x_N) - s_\lambda(x_1, \ldots, x_N)$ is monomial-positive. Relations in the immersion poset determine when irreducible polynomial 
        representations of $GL_N(\mathbb{C})$ form an immersion pair, as defined by Prasad and Raghunathan~\cite{MR4493873}. 
        We develop injections $\mathsf{SSYT}(\lambda, \nu) \hookrightarrow \mathsf{SSYT}(\mu, \nu)$ on semistandard Young tableaux
        given constraints on the shape of $\lambda$, and present results on immersion relations among hook and two column partitions. 
        The standard immersion poset $(\mathcal{P}(n), \leqslant_{std})$ is a refinement of the immersion poset, defined by 
        $\lambda \leqslant_{std} \mu$ if and only if $\lambda \leqslant_D \mu$ in dominance order and 
        $f^\lambda \leqslant f^\mu$, where $f^\nu$ is the number of standard Young tableaux of shape $\nu$. We classify maximal elements of certain 
        shapes in the standard immersion poset using the hook length formula. Finally, we prove Schur-positivity of power sum symmetric functions
        $p_{A_\mu}$ on conjectured lower intervals in the immersion poset, addressing questions posed by Sundaram~\cite{MR3855421}. 
\end{abstract}

\maketitle

\tableofcontents
		
\section{Introduction}

\subsection{Immersion of representations} 
Given two finite-dimensional representations $\pi_1 \colon G \to GL(W_1)$ and $\pi_2 \colon G \to GL(W_2)$ of a group $G$, we say that 
the representation $\pi_1$ is \defn{immersed} in the representation $\pi_2$ if the eigenvalues of $\pi_1(g)$, counting multiplicities, are contained in 
the eigenvalues of $\pi_2(g)$ for all $g\in G$. In this case, we call $(W_1,W_2)$ an \defn{immersion pair} denoted by $W_1 \leqslant_I W_2$. 
Note that, if  $\pi_1$ is a subrepresentation of $\pi_2$, then $W_1 \leqslant_I W_2$, but the converse is not true.

\begin{question}[Prasad and Raghunathan~\cite{MR4493873}]
        Classify immersion of representations  $W_1 \leqslant_I W_2$ for a given group.
\end{question}

Recently, some progress was made on the above problem for symmetric groups~\cite{APV} and alternating groups~\cite{APV.2024}.
In this paper, we study immersion pairs for finite-dimensional irreducible polynomial representations of the general linear group $GL_N(\CC)$.

\subsection{Polynomial representation theory of $GL_N(\CC)$ and symmetric polynomials} 
The polynomial representation theory of $GL_N(\CC)$ was developed by Schur~\cite{Schur.1907} and later popularized by Weyl~\cite{Weyl.1939}
in his expository book on the representation theory of the classical groups. Briefly, the homogeneous irreducible polynomial representations (of degree $n$) 
of $GL_N(\CC)$, also known as \defn{Weyl modules} $W_{\lambda}(\CC^N)$, are indexed by integer partitions $\lambda$ (of size $n$) with at most 
$N$ non-zero parts. The corresponding irreducible characters, known as \defn{Schur polynomials} $s_{\lambda}(x_1,\ldots, x_N)$, are homogeneous 
symmetric polynomials (of degree $n$) in $N$ variables $x_1,\ldots, x_N$. 

\subsection{Monomial positivity} 
Given a partition $\lambda$ of $n$ with at most $N$ parts, the \defn{monomial symmetric polynomials} is
$m_{\lambda}(x_1,\ldots,x_N):=\sum_{\alpha}x_1^{\alpha_1}\cdots x_N^{\alpha_N}$, where the sum is over all distinct permutations $\alpha$
of the parts of $\lambda$. For example, $m_{(2,1)}(x_1,x_2,x_3)=x_1^2x_2+x_1^2x_3+x_2^2x_1+x_2^2x_3+x_3^2x_1+x_3^2x_2.$

The \defn{Schur polynomials} $\{s_{\lambda} \mid \lambda \vdash n\}$ as well as the monomial symmetric polynomials 
$\{m_{\lambda} \mid \lambda \vdash n\}$ form a basis for the vector
space of symmetric polynomials of degree $n$.  A symmetric polynomial $f(x_1,\ldots,x_N)$ is called \defn{monomial-positive} if
\[
	f(x_1,\ldots,x_N)=\sum_{\lambda}c_{\lambda}m_{\lambda}(x_1,\ldots,x_N),
\]
where the coefficients $c_{\lambda}$ are non-negative integers.
\subsection{Immersion of Weyl modules: the immersion poset} 
For a partition $\lambda$ of $n$ with length $\ell(\lambda)\leqslant N$, let 
\[
	\rho_{\lambda}\colon GL_N(\CC) \longrightarrow GL\left(W_{\lambda}(\CC^N)\right)
\]
be the irreducible polynomial representation of degree $n$ of highest weight $\lambda$. It is a known fact that (for example, see \cite[Chapter 7]{EC2}) if $g\in GL_N(\CC)$ has the 
eigenvalues $x_1,\ldots,x_N$, then the eigenvalues of $\rho_{\lambda}(g)$ are the monomials appearing in the Schur polynomial $s_{\lambda}(x_1,\ldots,x_N)$.

Thus, given two partitions $\lambda,\mu$ of $n$ with $\ell(\lambda),\ell(\mu)\leqslant N$, the Weyl module $W_{\lambda}(\CC^N)$ is immersed in 
$W_{\mu}(\CC^N)$  if and only if $s_{\mu}(x_1,\ldots,x_N)-s_{\lambda}(x_1,\ldots,x_N)$ is monomial-positive. 
Hence studying the immersion of Weyl modules is equivalent to studying monomial positivity of the difference of Schur polynomials.

Let $\mathcal{P}(n)$ denote the set of integer partitions of $n$.

\begin{definition}
        \label{definition.immersion}
        We define a partial order on $\mathcal{P}(n)$ as follows. For $\lambda, \mu \in \mathcal{P}(n)$, we define $\lambda \leqslant_I \mu$ if 
        $s_{\mu}(x_1,\ldots,x_N)-s_{\lambda}(x_1,\ldots,x_N)$ is monomial-positive. We call the poset $(\mathcal{P}(n), \leqslant_I)$ the 
        \defn{immersion poset}.
\end{definition}

\subsection{Representation theory of symmetric groups} 
The irreducible representations as well as the conjugacy classes of the symmetric group $S_n$ are indexed by partitions of $n$. Let $\chi^{\lambda}(\mu)$ 
denote the character value of the irreducible character $\chi^{\lambda}$ evaluated at an element of cycle type $\mu$. The character table of $S_n$ is a square matrix encoding character values, whose rows are indexed by irreducible characters $\chi^{\lambda}$ and whose columns are indexed by conjugacy classes $C_{\mu}$.
The character values of $S_n$ are all integers. Solomon~\cite{Solomon.1961} proved that all row sums of the character table of $S_n$ are non-negative 
integers. Finding a combinatorial interpretation of the row sums is still an open problem (see~\cite[Exercise 7.71]{EC2}).

\subsection{Schur-positivity}
A symmetric polynomial $f(x_1,\ldots,x_N)$ of degree $n$ is called \defn{Schur-positive} if
\[
	f(x_1,\ldots,x_N)=\sum_{\lambda \vdash n}c_{\lambda}s_{\lambda}(x_1,\ldots,x_N),
\]
where the coefficients $c_{\lambda}$ are non-negative integers. Schur-positivity is intimately tied to representation theory. Namely, the symmetric function
$f$ is Schur-positive if it is the character of a representation $W$ of $GL_N(\CC)$ which admits the decomposition into irreducibles
$W\cong \underset{\lambda \vdash n}{\bigoplus} W_{\lambda}(\CC^N)^{\oplus c_{\lambda}}.$

The \defn{Frobenius characteristic map} is a bridge between characters of the symmetric group and symmetric polynomials. The irreducible character 
$\chi^{\lambda}$ maps to $s_{\lambda}$ under the Frobenius characteristic map.
Via the Frobenius map, Schur-positivity of $f$ implies that there exists a representation $V$ of $S_n$ such that 
$V\cong \underset{\lambda \vdash n}{\bigoplus} V_{\lambda}^{\oplus {c_{\lambda}}}$, where $V_{\lambda}$ is the irreducible representation of $S_n$
indexed by $\lambda$. 

\subsection{Power sum symmetric polynomials and restricted row sums of character table}
Define the $r$-th power sum symmetric polynomial as 
\[
	p_r(x_1,\ldots,x_N):=\sum_{i=1}^{N}x_i^r.
\]
For a partition $\mu=(\mu_1,\mu_2,\ldots) \vdash n$, define the \defn{power sum symmetric polynomial} as $p_{\mu}:=p_{\mu_1}p_{\mu_2}\cdots$.

Given a subset $A_n$ of partitions of $n$, consider the sum of power sum symmetric polynomials 
\begin{equation}
	\label{char}
        p_{A_n}:=\sum_{\mu \in A_n} p_{\mu}.
\end{equation}

By the Murnaghan--Nakayama rule \cite[Corollary 7.17.4]{EC2}, $p_{\mu}$ can be expressed in the basis of Schur polynomials as 
\[
	p_{\mu}=\sum_{\lambda \vdash n}\chi^{\lambda}(\mu) s_{\lambda}.
\]
Observe that the coefficient of $s_{\lambda}$ in the expansion of $p_{A_n}$ is $\sum_{\mu \in A_n}\chi^{\lambda}(\mu)$. This is precisely the restricted 
row sum (ignoring the columns not in $A_n$) of the character table of $S_n$. 
These values need not always be non-negative integers, that is, $p_{A_n}$ need not be Schur-positive. For example,
if $A_4=\{(1^4), (2,1,1),(4)\}$, then one can deduce from the character table of $S_4$ that $p_{A_4}=3 s_{(4)} +3 s_{(3,1)} +2 s_{(2,2)}+ 3 s_{(2,1,1)}-s_{(1^4)}$ 
is not Schur-positive.

\begin{question}[Sundaram~\cite{MR3855421}]
        \label{question.Sundaram}
        For which choices of $A_n$ is the symmetric polynomial $p_{A_n}$ Schur-positive? 
        In other words, which subsets $A_n$ of columns in the character table of $S_n$ result in non-negative row sums?
\end{question}

In pursuit of Sundaram's question, we explore the immersion poset in detail, hence understanding the immersion of polynomial representations for 
$GL_N(\CC)$. Given a partition $\mu$ of $n$, consider the interval in the immersion poset 
$[(1^n),\mu]:=\{\lambda \mid (1^n) \leqslant_I \lambda \leqslant_I \mu\}$. 
One may ask for what choices of $\mu$, the symmetric polynomial $p_{[(1^n),\mu]}$ defined in Equation~\eqref{char} is Schur-positive.
Assuming Conjectures~\ref{conjecture.interval1} and~\ref{conjecture.interval2}, we prove that:
\begin{enumerate}
\item $p_{[(1^n), (n-2,1,1)]}$ is Schur-positive;
\item $p_{[(1^n), (n-2,2)]}$ is Schur-positive for $n\neq 7$.
\end{enumerate}        

One natural question which arises from the Schur-positivity of the above symmetric functions is to explore the representation theory behind it. 
It would be interesting to construct a natural representation $V$ of the symmetric group such that its character maps to the symmetric polynomial 
$p_{[(1^n),\mu]}$ under the Frobenius map, when $\mu=(n-2,1,1)$ or $(n-2,2)$.

\subsection{Results}

In this paper we analyze various properties of the immersion poset. We begin in Section~\ref{section.standard} by defining the 
standard immersion poset. The relation  $\lambda \leqslant_I \mu$ in the immersion poset for $\lambda,\mu \in \mathcal{P}(n)$ holds if the Kostka numbers
$K_{\lambda,\alpha}\leqslant K_{\mu,\alpha}$ for all $\alpha \in \mathcal{P}(n)$. In the standard immersion poset, one only compares
the number of standard tableaux of shape $\lambda$ and $\mu$ (instead of semistandard tableaux of all content). Relations in the immersion
poset imply relations for the standard immersion poset, but not vice versa. We study properties and maximal elements of the standard immersion.
In particular, maximal elements in the standard immersion poset are also maximal elements in the immersion poset.
In Section~\ref{section.immersion}, we study properties of the immersion poset. In particular, in Section~\ref{section.injections}
we study relations and covers in the immersion poset using explicit injections between sets of semistandard tableaux.
In Section~\ref{section.hooks}, we analyze the immersion poset restricted to partitions of hook shape. In Section~\ref{section.two column},
we analyze the immersion relations on partitions with at most two columns. Finally, in Section~\ref{section.lowerintervals} we conjecture
the structure of certain lower intervals in the immersion poset and prove that $p_{[(1^n), (n-2,1,1)]}$ and $p_{[(1^n), (n-2,2)]}$ ($n\neq 7$)
are Schur-positive. We conclude in Section~\ref{section.discussion} with a discussion of open problems.

\section*{Acknowledgements}
We thank Amritanshu Prasad and Dipendra Prasad for discussions. In particular, we thank Amritanshu Prasad for his probabilistic
questions posed in Section~\ref{section.discussion}.

AS was partially supported by NSF grant DMS--2053350.
AS thanks IPAM for hospitality at the program ``Geometry, Statistical Mechanics, and Integrability'' in Spring 2024, where this work was completed.
		
\section{Standard immersion poset}
\label{section.standard}

In this section, we introduce the standard immersion poset, which is a refinement of the immersion poset. The definition is given in
Section~\ref{section.standard immersion definition}. Basic properties of the standard immersion poset are proved in 
Section~\ref{section.properties standard}. In Section~\ref{section.maximal}, the maximal elements of the standard immersion poset are 
studied. We follow the notational conventions in~\cite[Chapter 6,7]{EC2}.

\subsection{Definition of the standard immersion poset}
\label{section.standard immersion definition}

The \defn{Schur polynomial} $s_\lambda$ for $\lambda \vdash n$ is defined as
\begin{equation}
        \label{equation.Schur}
        s_\lambda(x_1,\ldots,x_N) = \sum_{\mu \vdash n} K_{\lambda,\mu} m_\mu(x_1,\ldots,x_N),
\end{equation}
where $K_{\lambda,\mu}$ are the \defn{Kostka numbers} which count the number of semistandard Young tableaux of shape $\lambda$ and content $\mu$.
Note that with this definition the Schur polynomials are zero unless $N \geqslant \ell(\lambda)$, that is, the number of variables needs to be at least as
large as the number of parts in $\lambda$.

\begin{lemma}
        \label{lemma.immersion K}
        For $\lambda,\mu \in \mathcal{P}(n)$, $\lambda \leqslant_I \mu$ if $K_{\lambda,\alpha}\leqslant K_{\mu,\alpha}$ for all $\alpha \in \mathcal{P}(n)$. 
\end{lemma}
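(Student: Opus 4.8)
The plan is to reduce the monomial-positivity of $s_\mu - s_\lambda$ directly to the hypothesis on Kostka numbers by expanding both Schur polynomials in the monomial basis. First I would recall the defining expansion~\eqref{equation.Schur}, namely $s_\nu(x_1,\ldots,x_N) = \sum_{\alpha \vdash n} K_{\nu,\alpha} m_\alpha(x_1,\ldots,x_N)$ for $\nu \in \mathcal{P}(n)$; this is valid uniformly in $N$ with the convention that $m_\alpha$ vanishes when $\ell(\alpha) > N$. Subtracting, we get
\[
	s_\mu(x_1,\ldots,x_N) - s_\lambda(x_1,\ldots,x_N) = \sum_{\alpha \vdash n} \bigl( K_{\mu,\alpha} - K_{\lambda,\alpha} \bigr)\, m_\alpha(x_1,\ldots,x_N).
\]
By hypothesis $K_{\mu,\alpha} - K_{\lambda,\alpha} \geqslant 0$ for every $\alpha \in \mathcal{P}(n)$, and these differences are integers since Kostka numbers are non-negative integers. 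Hence the right-hand side is a non-negative integer combination of monomial symmetric polynomials, which is exactly the definition of monomial-positivity. By Definition~\ref{definition.immersion}, this gives $\lambda \leqslant_I \mu$.

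The only point requiring a word of care is that the immersion relation, as set up in the introduction, is phrased for a fixed $N$ with $\ell(\lambda), \ell(\mu) \leqslant N$, whereas the Kostka comparison ranges over all $\alpha \vdash n$ including those with more than $N$ parts. This is harmless: terms $m_\alpha$ with $\ell(\alpha) > N$ are zero in $N$ variables, so they simply drop out of the displayed sum, and the remaining coefficients are still non-negative. Thus the conclusion holds for every admissible $N$ simultaneously, and in fact the condition $K_{\lambda,\alpha} \leqslant K_{\mu,\alpha}$ for all $\alpha$ is the natural $N$-independent sufficient condition.

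There is essentially no obstacle here — the statement is an immediate unwinding of definitions once the Schur-to-monomial expansion is in hand. If anything, the subtlety worth flagging (and which presumably motivates the use of ``if'' rather than ``if and only if'' in the statement) is that the converse can fail: monomial-positivity of $s_\mu - s_\lambda$ in a fixed finite number of variables $N$ does not force $K_{\lambda,\alpha} \leqslant K_{\mu,\alpha}$ for partitions $\alpha$ with $\ell(\alpha) > N$, since such $\alpha$ contribute nothing in $N$ variables. In the stable range $N \geqslant n$, however, the condition becomes both necessary and sufficient, because then every $m_\alpha$ with $\alpha \vdash n$ is a genuine nonzero basis element.
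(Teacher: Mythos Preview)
Your proof is correct and follows the same approach as the paper: expand $s_\mu - s_\lambda$ in the monomial basis via~\eqref{equation.Schur} and observe that the coefficients $K_{\mu,\alpha} - K_{\lambda,\alpha}$ are non-negative by hypothesis. Your additional remarks on the dependence on $N$ and the converse are accurate and go slightly beyond what the paper records, but the core argument is identical.
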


\begin{proof}
        By Definition~\ref{definition.immersion}, two partitions $\lambda,\mu \in \mathcal{P}(n)$ are comparable in the immersion poset $\lambda \leqslant_I \mu$ if
        \[
        s_{\mu}(x_1,\ldots,x_N)-s_{\lambda}(x_1,\ldots,x_N)
        \]
        is monomial-positive. Using~\eqref{equation.Schur}, this can be restated as saying 
        $\lambda \leqslant_I \mu$ if $K_{\lambda,\alpha}\leqslant K_{\mu,\alpha}$ for all $\alpha\in \mathcal{P}(n)$. 
\end{proof}

In particular, Lemma~\ref{lemma.immersion K} implies that a necessary condition for $\lambda \leqslant_I \mu$ is that
$K_{\lambda,(1^n)} \leqslant K_{\mu,(1^n)}$, which count the standard Young tableaux of shape $\lambda$ and $\mu$, respectively.
Note that $f^\lambda:=K_{\lambda,(1^n)}$ is also the dimension of the Specht module $V_{\lambda}$ (the irreducible representation of $S_n$) indexed by $\lambda$. 

Let $\lambda, \mu \in \mathcal{P}(n)$. Define $\lambda \leqslant_D \mu$ in \defn{dominance order} on partitions by requiring that
\[
\sum_{i=1}^k \lambda_i \leqslant \sum_{i=1}^k \mu_i \qquad \text{for all $k\geqslant 1$.}
\]
The Kostka matrix $(K_{\lambda,\alpha})_{\lambda,\alpha\in \mathcal{P}(n)}$ is unit upper-triangular with respect to dominance order, that is, 
$K_{\lambda,\lambda}=1$ and $K_{\lambda,\alpha}=0$ unless $\alpha \leqslant_D \lambda$. This implies another necessary condition for 
$\lambda \leqslant_I \mu$, namely $\lambda \leqslant_D \mu$.
This motivates the definition of the standard immersion poset.

\begin{definition}
        \label{definition.standard immersion}
        On $\mathcal{P}(n)$, define $\lambda \leqslant_{std} \mu$ if $\lambda \leqslant_D \mu$ in dominance order 
        and $f^\lambda \leqslant f^\mu$. We call this poset the \defn{standard immersion poset}.
\end{definition}

As argued above, the standard immersion poset is a refinement of the immersion poset, that is, $\lambda \leqslant_I \mu$ implies that 
$\lambda \leqslant_{std} \mu$. The converse is not always true. For $n \geqslant 12$, there are examples of $\lambda \leqslant_{std} \mu$, which
do not satisfy $\lambda \leqslant_I \mu$. For example $(5,3,1,1,1,1)$ covers $(4, 2, 2, 2, 1, 1)$ in the standard immersion poset for $n=12$, but
not in the immersion poset.

\begin{example}
        The immersion poset for $n=8$ is given in Figure~\ref{figure.immersion8}. It is equal to the standard immersion poset.
\end{example}

\begin{figure}[t]
        \scalebox{0.65}{
                \begin{tikzpicture}[>=latex,line join=bevel,]
                        \node (node_0) at (379.0bp,51.0bp) [draw,draw=none] {${\def\lr#1{\multicolumn{1}{|@{\hspace{.6ex}}c@{\hspace{.6ex}}|}{\raisebox{-.3ex}{$#1$}}}\raisebox{-.6ex}{$\begin{array}[b]{*{1}c}\cline{1-1}\lr{\phantom{x}}\\\cline{1-1}\lr{\phantom{x}}\\\cline{1-1}\lr{\phantom{x}}\\\cline{1-1}\lr{\phantom{x}}\\\cline{1-1}\lr{\phantom{x}}\\\cline{1-1}\lr{\phantom{x}}\\\cline{1-1}\lr{\phantom{x}}\\\cline{1-1}\lr{\phantom{x}}\\\cline{1-1}\end{array}$}}$};
                        \node (node_1) at (339.0bp,183.0bp) [draw,draw=none] {${\def\lr#1{\multicolumn{1}{|@{\hspace{.6ex}}c@{\hspace{.6ex}}|}{\raisebox{-.3ex}{$#1$}}}\raisebox{-.6ex}{$\begin{array}[b]{*{2}c}\cline{1-2}\lr{\phantom{x}}&\lr{\phantom{x}}\\\cline{1-2}\lr{\phantom{x}}\\\cline{1-1}\lr{\phantom{x}}\\\cline{1-1}\lr{\phantom{x}}\\\cline{1-1}\lr{\phantom{x}}\\\cline{1-1}\lr{\phantom{x}}\\\cline{1-1}\lr{\phantom{x}}\\\cline{1-1}\end{array}$}}$};
                        \node (node_21) at (419.0bp,183.0bp) [draw,draw=none] {${\def\lr#1{\multicolumn{1}{|@{\hspace{.6ex}}c@{\hspace{.6ex}}|}{\raisebox{-.3ex}{$#1$}}}\raisebox{-.6ex}{$\begin{array}[b]{*{8}c}\cline{1-8}\lr{\phantom{x}}&\lr{\phantom{x}}&\lr{\phantom{x}}&\lr{\phantom{x}}&\lr{\phantom{x}}&\lr{\phantom{x}}&\lr{\phantom{x}}&\lr{\phantom{x}}\\\cline{1-8}\end{array}$}}$};
                        \node (node_2) at (279.0bp,303.0bp) [draw,draw=none] {${\def\lr#1{\multicolumn{1}{|@{\hspace{.6ex}}c@{\hspace{.6ex}}|}{\raisebox{-.3ex}{$#1$}}}\raisebox{-.6ex}{$\begin{array}[b]{*{2}c}\cline{1-2}\lr{\phantom{x}}&\lr{\phantom{x}}\\\cline{1-2}\lr{\phantom{x}}&\lr{\phantom{x}}\\\cline{1-2}\lr{\phantom{x}}\\\cline{1-1}\lr{\phantom{x}}\\\cline{1-1}\lr{\phantom{x}}\\\cline{1-1}\lr{\phantom{x}}\\\cline{1-1}\end{array}$}}$};
                        \node (node_7) at (357.0bp,417.0bp) [draw,draw=none] {${\def\lr#1{\multicolumn{1}{|@{\hspace{.6ex}}c@{\hspace{.6ex}}|}{\raisebox{-.3ex}{$#1$}}}\raisebox{-.6ex}{$\begin{array}[b]{*{2}c}\cline{1-2}\lr{\phantom{x}}&\lr{\phantom{x}}\\\cline{1-2}\lr{\phantom{x}}&\lr{\phantom{x}}\\\cline{1-2}\lr{\phantom{x}}&\lr{\phantom{x}}\\\cline{1-2}\lr{\phantom{x}}&\lr{\phantom{x}}\\\cline{1-2}\end{array}$}}$};
                        \node (node_20) at (409.0bp,303.0bp) [draw,draw=none] {${\def\lr#1{\multicolumn{1}{|@{\hspace{.6ex}}c@{\hspace{.6ex}}|}{\raisebox{-.3ex}{$#1$}}}\raisebox{-.6ex}{$\begin{array}[b]{*{7}c}\cline{1-7}\lr{\phantom{x}}&\lr{\phantom{x}}&\lr{\phantom{x}}&\lr{\phantom{x}}&\lr{\phantom{x}}&\lr{\phantom{x}}&\lr{\phantom{x}}\\\cline{1-7}\lr{\phantom{x}}\\\cline{1-1}\end{array}$}}$};
                        \node (node_3) at (162.0bp,417.0bp) [draw,draw=none] {${\def\lr#1{\multicolumn{1}{|@{\hspace{.6ex}}c@{\hspace{.6ex}}|}{\raisebox{-.3ex}{$#1$}}}\raisebox{-.6ex}{$\begin{array}[b]{*{2}c}\cline{1-2}\lr{\phantom{x}}&\lr{\phantom{x}}\\\cline{1-2}\lr{\phantom{x}}&\lr{\phantom{x}}\\\cline{1-2}\lr{\phantom{x}}&\lr{\phantom{x}}\\\cline{1-2}\lr{\phantom{x}}\\\cline{1-1}\lr{\phantom{x}}\\\cline{1-1}\end{array}$}}$};
                        \node (node_4) at (244.0bp,417.0bp) [draw,draw=none] {${\def\lr#1{\multicolumn{1}{|@{\hspace{.6ex}}c@{\hspace{.6ex}}|}{\raisebox{-.3ex}{$#1$}}}\raisebox{-.6ex}{$\begin{array}[b]{*{3}c}\cline{1-3}\lr{\phantom{x}}&\lr{\phantom{x}}&\lr{\phantom{x}}\\\cline{1-3}\lr{\phantom{x}}\\\cline{1-1}\lr{\phantom{x}}\\\cline{1-1}\lr{\phantom{x}}\\\cline{1-1}\lr{\phantom{x}}\\\cline{1-1}\lr{\phantom{x}}\\\cline{1-1}\end{array}$}}$};
                        \node (node_16) at (541.0bp,621.0bp) [draw,draw=none] {${\def\lr#1{\multicolumn{1}{|@{\hspace{.6ex}}c@{\hspace{.6ex}}|}{\raisebox{-.3ex}{$#1$}}}\raisebox{-.6ex}{$\begin{array}[b]{*{6}c}\cline{1-6}\lr{\phantom{x}}&\lr{\phantom{x}}&\lr{\phantom{x}}&\lr{\phantom{x}}&\lr{\phantom{x}}&\lr{\phantom{x}}\\\cline{1-6}\lr{\phantom{x}}&\lr{\phantom{x}}\\\cline{1-2}\end{array}$}}$};
                        \node (node_5) at (20.0bp,525.0bp) [draw,draw=none] {${\def\lr#1{\multicolumn{1}{|@{\hspace{.6ex}}c@{\hspace{.6ex}}|}{\raisebox{-.3ex}{$#1$}}}\raisebox{-.6ex}{$\begin{array}[b]{*{3}c}\cline{1-3}\lr{\phantom{x}}&\lr{\phantom{x}}&\lr{\phantom{x}}\\\cline{1-3}\lr{\phantom{x}}&\lr{\phantom{x}}\\\cline{1-2}\lr{\phantom{x}}\\\cline{1-1}\lr{\phantom{x}}\\\cline{1-1}\lr{\phantom{x}}\\\cline{1-1}\end{array}$}}$};
                        \node (node_6) at (84.0bp,525.0bp) [draw,draw=none] {${\def\lr#1{\multicolumn{1}{|@{\hspace{.6ex}}c@{\hspace{.6ex}}|}{\raisebox{-.3ex}{$#1$}}}\raisebox{-.6ex}{$\begin{array}[b]{*{4}c}\cline{1-4}\lr{\phantom{x}}&\lr{\phantom{x}}&\lr{\phantom{x}}&\lr{\phantom{x}}\\\cline{1-4}\lr{\phantom{x}}\\\cline{1-1}\lr{\phantom{x}}\\\cline{1-1}\lr{\phantom{x}}\\\cline{1-1}\lr{\phantom{x}}\\\cline{1-1}\end{array}$}}$};
                        \node (node_9) at (186.0bp,525.0bp) [draw,draw=none] {${\def\lr#1{\multicolumn{1}{|@{\hspace{.6ex}}c@{\hspace{.6ex}}|}{\raisebox{-.3ex}{$#1$}}}\raisebox{-.6ex}{$\begin{array}[b]{*{3}c}\cline{1-3}\lr{\phantom{x}}&\lr{\phantom{x}}&\lr{\phantom{x}}\\\cline{1-3}\lr{\phantom{x}}&\lr{\phantom{x}}&\lr{\phantom{x}}\\\cline{1-3}\lr{\phantom{x}}\\\cline{1-1}\lr{\phantom{x}}\\\cline{1-1}\end{array}$}}$};
                        \node (node_11) at (244.0bp,525.0bp) [draw,draw=none] {${\def\lr#1{\multicolumn{1}{|@{\hspace{.6ex}}c@{\hspace{.6ex}}|}{\raisebox{-.3ex}{$#1$}}}\raisebox{-.6ex}{$\begin{array}[b]{*{3}c}\cline{1-3}\lr{\phantom{x}}&\lr{\phantom{x}}&\lr{\phantom{x}}\\\cline{1-3}\lr{\phantom{x}}&\lr{\phantom{x}}&\lr{\phantom{x}}\\\cline{1-3}\lr{\phantom{x}}&\lr{\phantom{x}}\\\cline{1-2}\end{array}$}}$};
                        \node (node_15) at (341.0bp,621.0bp) [draw,draw=none] {${\def\lr#1{\multicolumn{1}{|@{\hspace{.6ex}}c@{\hspace{.6ex}}|}{\raisebox{-.3ex}{$#1$}}}\raisebox{-.6ex}{$\begin{array}[b]{*{5}c}\cline{1-5}\lr{\phantom{x}}&\lr{\phantom{x}}&\lr{\phantom{x}}&\lr{\phantom{x}}&\lr{\phantom{x}}\\\cline{1-5}\lr{\phantom{x}}&\lr{\phantom{x}}&\lr{\phantom{x}}\\\cline{1-3}\end{array}$}}$};
                        \node (node_19) at (357.0bp,525.0bp) [draw,draw=none] {${\def\lr#1{\multicolumn{1}{|@{\hspace{.6ex}}c@{\hspace{.6ex}}|}{\raisebox{-.3ex}{$#1$}}}\raisebox{-.6ex}{$\begin{array}[b]{*{6}c}\cline{1-6}\lr{\phantom{x}}&\lr{\phantom{x}}&\lr{\phantom{x}}&\lr{\phantom{x}}&\lr{\phantom{x}}&\lr{\phantom{x}}\\\cline{1-6}\lr{\phantom{x}}\\\cline{1-1}\lr{\phantom{x}}\\\cline{1-1}\end{array}$}}$};
                        \node (node_8) at (84.0bp,621.0bp) [draw,draw=none] {${\def\lr#1{\multicolumn{1}{|@{\hspace{.6ex}}c@{\hspace{.6ex}}|}{\raisebox{-.3ex}{$#1$}}}\raisebox{-.6ex}{$\begin{array}[b]{*{3}c}\cline{1-3}\lr{\phantom{x}}&\lr{\phantom{x}}&\lr{\phantom{x}}\\\cline{1-3}\lr{\phantom{x}}&\lr{\phantom{x}}\\\cline{1-2}\lr{\phantom{x}}&\lr{\phantom{x}}\\\cline{1-2}\lr{\phantom{x}}\\\cline{1-1}\end{array}$}}$};
                        \node (node_18) at (186.0bp,711.0bp) [draw,draw=none] {${\def\lr#1{\multicolumn{1}{|@{\hspace{.6ex}}c@{\hspace{.6ex}}|}{\raisebox{-.3ex}{$#1$}}}\raisebox{-.6ex}{$\begin{array}[b]{*{5}c}\cline{1-5}\lr{\phantom{x}}&\lr{\phantom{x}}&\lr{\phantom{x}}&\lr{\phantom{x}}&\lr{\phantom{x}}\\\cline{1-5}\lr{\phantom{x}}&\lr{\phantom{x}}\\\cline{1-2}\lr{\phantom{x}}\\\cline{1-1}\end{array}$}}$};
                        \node (node_10) at (36.0bp,711.0bp) [draw,draw=none] {${\def\lr#1{\multicolumn{1}{|@{\hspace{.6ex}}c@{\hspace{.6ex}}|}{\raisebox{-.3ex}{$#1$}}}\raisebox{-.6ex}{$\begin{array}[b]{*{4}c}\cline{1-4}\lr{\phantom{x}}&\lr{\phantom{x}}&\lr{\phantom{x}}&\lr{\phantom{x}}\\\cline{1-4}\lr{\phantom{x}}&\lr{\phantom{x}}\\\cline{1-2}\lr{\phantom{x}}\\\cline{1-1}\lr{\phantom{x}}\\\cline{1-1}\end{array}$}}$};
                        \node (node_12) at (186.0bp,621.0bp) [draw,draw=none] {${\def\lr#1{\multicolumn{1}{|@{\hspace{.6ex}}c@{\hspace{.6ex}}|}{\raisebox{-.3ex}{$#1$}}}\raisebox{-.6ex}{$\begin{array}[b]{*{4}c}\cline{1-4}\lr{\phantom{x}}&\lr{\phantom{x}}&\lr{\phantom{x}}&\lr{\phantom{x}}\\\cline{1-4}\lr{\phantom{x}}&\lr{\phantom{x}}\\\cline{1-2}\lr{\phantom{x}}&\lr{\phantom{x}}\\\cline{1-2}\end{array}$}}$};
                        \node (node_17) at (261.0bp,621.0bp) [draw,draw=none] {${\def\lr#1{\multicolumn{1}{|@{\hspace{.6ex}}c@{\hspace{.6ex}}|}{\raisebox{-.3ex}{$#1$}}}\raisebox{-.6ex}{$\begin{array}[b]{*{5}c}\cline{1-5}\lr{\phantom{x}}&\lr{\phantom{x}}&\lr{\phantom{x}}&\lr{\phantom{x}}&\lr{\phantom{x}}\\\cline{1-5}\lr{\phantom{x}}\\\cline{1-1}\lr{\phantom{x}}\\\cline{1-1}\lr{\phantom{x}}\\\cline{1-1}\end{array}$}}$};
                        \node (node_14) at (514.0bp,525.0bp) [draw,draw=none] {${\def\lr#1{\multicolumn{1}{|@{\hspace{.6ex}}c@{\hspace{.6ex}}|}{\raisebox{-.3ex}{$#1$}}}\raisebox{-.6ex}{$\begin{array}[b]{*{4}c}\cline{1-4}\lr{\phantom{x}}&\lr{\phantom{x}}&\lr{\phantom{x}}&\lr{\phantom{x}}\\\cline{1-4}\lr{\phantom{x}}&\lr{\phantom{x}}&\lr{\phantom{x}}&\lr{\phantom{x}}\\\cline{1-4}\end{array}$}}$};
                        \node (node_13) at (108.0bp,711.0bp) [draw,draw=none] {${\def\lr#1{\multicolumn{1}{|@{\hspace{.6ex}}c@{\hspace{.6ex}}|}{\raisebox{-.3ex}{$#1$}}}\raisebox{-.6ex}{$\begin{array}[b]{*{4}c}\cline{1-4}\lr{\phantom{x}}&\lr{\phantom{x}}&\lr{\phantom{x}}&\lr{\phantom{x}}\\\cline{1-4}\lr{\phantom{x}}&\lr{\phantom{x}}&\lr{\phantom{x}}\\\cline{1-3}\lr{\phantom{x}}\\\cline{1-1}\end{array}$}}$};
                        \draw [black,->] (node_0) ..controls (365.69bp,95.271bp) and (360.5bp,112.11bp)  .. (node_1);
                        \draw [black,->] (node_0) ..controls (395.85bp,106.76bp) and (406.8bp,142.34bp)  .. (node_21);
                        \draw [black,->] (node_1) ..controls (316.57bp,228.1bp) and (306.7bp,247.52bp)  .. (node_2);
                        \draw [black,->] (node_1) ..controls (345.84bp,272.21bp) and (351.02bp,338.98bp)  .. (node_7);
                        \draw [black,->] (node_1) ..controls (356.25bp,217.76bp) and (359.15bp,223.07bp)  .. (362.0bp,228.0bp) .. controls (371.99bp,245.3bp) and (384.09bp,264.31bp)  .. (node_20);
                        \draw [black,->] (node_2) ..controls (243.96bp,337.54bp) and (207.57bp,372.38bp)  .. (node_3);
                        \draw [black,->] (node_2) ..controls (264.43bp,350.61bp) and (261.63bp,359.58bp)  .. (node_4);
                        \draw [black,->] (node_2) ..controls (345.46bp,339.73bp) and (520.25bp,436.94bp)  .. (549.0bp,492.0bp) .. controls (566.02bp,524.58bp) and (557.76bp,568.37bp)  .. (node_16);
                        \draw [black,->] (node_3) ..controls (124.87bp,438.57bp) and (81.445bp,464.27bp)  .. (49.0bp,492.0bp) .. controls (48.537bp,492.4bp) and (48.074bp,492.8bp)  .. (node_5);
                        \draw [black,->] (node_3) ..controls (137.95bp,450.69bp) and (125.43bp,467.69bp)  .. (node_6);
                        \draw [black,->] (node_3) ..controls (171.96bp,461.99bp) and (175.03bp,475.57bp)  .. (node_9);
                        \draw [black,->] (node_3) ..controls (189.2bp,453.17bp) and (207.45bp,476.75bp)  .. (node_11);
                        \draw [black,->] (node_3) ..controls (140.85bp,481.19bp) and (133.28bp,527.62bp)  .. (157.0bp,558.0bp) .. controls (197.6bp,610.0bp) and (240.52bp,567.66bp)  .. (301.0bp,594.0bp) .. controls (304.93bp,595.71bp) and (308.91bp,597.79bp)  .. (node_15);
                        \draw [black,->] (node_4) ..controls (212.87bp,440.94bp) and (198.71bp,449.95bp)  .. (185.0bp,456.0bp) .. controls (127.79bp,481.24bp) and (102.41bp,459.5bp)  .. (49.0bp,492.0bp) .. controls (48.808bp,492.12bp) and (48.616bp,492.24bp)  .. (node_5);
                        \draw [black,->] (node_4) ..controls (197.22bp,448.99bp) and (150.36bp,480.04bp)  .. (node_6);
                        \draw [black,->] (node_4) ..controls (217.79bp,465.89bp) and (211.13bp,478.07bp)  .. (node_9);
                        \draw [black,->] (node_4) ..controls (244.0bp,468.44bp) and (244.0bp,481.73bp)  .. (node_11);
                        \draw [black,->] (node_4) ..controls (275.61bp,484.22bp) and (294.41bp,523.52bp)  .. (311.0bp,558.0bp) .. controls (317.12bp,570.71bp) and (323.97bp,584.87bp)  .. (node_15);
                        \draw [black,->] (node_4) ..controls (281.88bp,453.53bp) and (307.77bp,477.82bp)  .. (node_19);
                        \draw [black,->] (node_5) ..controls (46.539bp,564.98bp) and (53.761bp,575.59bp)  .. (node_8);
                        \draw [black,->] (node_5) ..controls (42.947bp,552.8bp) and (45.953bp,555.57bp)  .. (49.0bp,558.0bp) .. controls (74.51bp,578.35bp) and (89.361bp,571.5bp)  .. (113.0bp,594.0bp) .. controls (127.06bp,607.38bp) and (152.84bp,650.81bp)  .. (node_18);
                        \draw [black,->] (node_6) ..controls (43.178bp,565.31bp) and (25.925bp,583.87bp)  .. (22.0bp,594.0bp) .. controls (12.105bp,619.54bp) and (16.542bp,650.41bp)  .. (node_10);
                        \draw [black,->] (node_6) ..controls (123.91bp,562.78bp) and (141.57bp,579.05bp)  .. (node_12);
                        \draw [black,->] (node_6) ..controls (114.39bp,551.99bp) and (119.21bp,555.28bp)  .. (124.0bp,558.0bp) .. controls (163.62bp,580.51bp) and (179.21bp,573.69bp)  .. (220.0bp,594.0bp) .. controls (220.2bp,594.1bp) and (220.4bp,594.2bp)  .. (node_17);
                        \draw [black,->] (node_7) ..controls (394.85bp,463.85bp) and (433.27bp,521.92bp)  .. (402.0bp,558.0bp) .. controls (365.09bp,600.58bp) and (205.51bp,579.71bp)  .. (151.0bp,594.0bp) .. controls (138.49bp,597.28bp) and (125.13bp,602.16bp)  .. (node_8);
                        \draw [black,->] (node_7) ..controls (310.56bp,439.76bp) and (236.89bp,475.1bp)  .. (215.0bp,492.0bp) .. controls (214.52bp,492.37bp) and (214.04bp,492.75bp)  .. (node_9);
                        \draw [black,->] (node_7) ..controls (324.35bp,448.62bp) and (293.89bp,477.2bp)  .. (node_11);
                        \draw [black,->] (node_7) ..controls (394.27bp,439.92bp) and (438.38bp,466.61bp)  .. (474.0bp,492.0bp) .. controls (478.71bp,495.36bp) and (483.6bp,499.07bp)  .. (node_14);
                        \draw [black,->] (node_7) ..controls (390.81bp,438.33bp) and (422.92bp,461.83bp)  .. (436.0bp,492.0bp) .. controls (447.67bp,518.91bp) and (453.87bp,534.74bp)  .. (436.0bp,558.0bp) .. controls (400.02bp,604.84bp) and (361.68bp,572.31bp)  .. (node_17);
                        \draw [black,->] (node_7) ..controls (357.0bp,459.05bp) and (357.0bp,477.7bp)  .. (node_19);
                        \draw [black,->] (node_8) ..controls (65.048bp,656.74bp) and (59.947bp,666.1bp)  .. (node_10);
                        \draw [black,->] (node_8) ..controls (93.893bp,658.28bp) and (96.972bp,669.56bp)  .. (node_13);
                        \draw [black,->] (node_9) ..controls (130.84bp,551.46bp) and (62.143bp,584.14bp)  .. (55.0bp,594.0bp) .. controls (38.338bp,616.99bp) and (34.175bp,649.25bp)  .. (node_10);
                        \draw [black,->] (node_9) ..controls (186.0bp,563.7bp) and (186.0bp,577.3bp)  .. (node_12);
                        \draw [black,->] (node_11) ..controls (223.24bp,559.64bp) and (213.03bp,576.2bp)  .. (node_12);
                        \draw [black,->] (node_12) ..controls (157.22bp,654.47bp) and (144.25bp,669.1bp)  .. (node_13);
                        \draw [black,->] (node_12) ..controls (186.0bp,653.7bp) and (186.0bp,667.18bp)  .. (node_18);
                        \draw [black,->] (node_14) ..controls (490.55bp,546.45bp) and (482.16bp,552.93bp)  .. (474.0bp,558.0bp) .. controls (444.5bp,576.31bp) and (408.73bp,592.69bp)  .. (node_15);
                        \draw [black,->] (node_14) ..controls (522.42bp,555.31bp) and (528.94bp,578.03bp)  .. (node_16);
                        \draw [black,->] (node_17) ..controls (229.56bp,658.89bp) and (219.26bp,670.98bp)  .. (node_18);
                \end{tikzpicture}
        }
        \caption{The (standard) immersion poset for $n=8$.
                \label{figure.immersion8}}
\end{figure}

\subsection{Properties of the standard immersion poset}
\label{section.properties standard}

We now state and prove properties of the standard immersion poset. Our main tool is the \defn{hook length formula} for $\lambda \in \mathcal{P}(n)$
\begin{equation}
        \label{equation.hook length}
        f^\lambda = \frac{n!}{\prod_{u \in \lambda} h(u)},
\end{equation}
where $h(u)$ is the hook length of the cell $u$ in $\lambda$ which counts the cells weakly to the right of $u$ and strictly below $u$ (in English notation for 
partitions).

We write $\lambda \lessdot_{std} \mu$ if $\mu$ covers $\lambda$ in the standard immersion poset. More precisely, $\lambda \lessdot_{std} \mu$ if
$\lambda <_{std} \mu$ and there does not exist any $\nu$ such that $\lambda <_{std} \nu <_{std} \mu$.

\begin{lemma}
        \label{lemma.minimal}
        The partition $(1^n)$ is the unique minimal element in the standard immersion poset.
\end{lemma}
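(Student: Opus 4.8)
The plan is to show that $(1^n)$ sits below every partition $\lambda \vdash n$ in the standard immersion poset, and that it is the only such minimal element. By Definition~\ref{definition.standard immersion}, it suffices to verify the two defining conditions: $(1^n) \leqslant_D \lambda$ in dominance order and $f^{(1^n)} \leqslant f^\lambda$. The first is classical: the column partition $(1^n)$ is the unique minimum of the dominance lattice on $\mathcal{P}(n)$, since for any $\lambda$ and any $k \geqslant 1$ we have $\sum_{i=1}^k (1^n)_i = k \leqslant \sum_{i=1}^k \lambda_i$ (the partial sums of $\lambda$ grow at least by $1$ each step until they reach $n$, because $\lambda$ has nonempty parts only up to $\ell(\lambda) \leqslant n$). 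For the second condition, $f^{(1^n)} = 1$ because there is exactly one standard Young tableau of a single column, and $f^\lambda \geqslant 1$ for every $\lambda \vdash n$ since the set of standard tableaux of any shape is nonempty. Hence $(1^n) \leqslant_{std} \lambda$ for all $\lambda \vdash n$.

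Next I would argue uniqueness and minimality. Since $(1^n)$ is below every element, it is in particular a minimal element; and any minimal element $\nu$ must satisfy $(1^n) \leqslant_{std} \nu$ by what we just showed, which forces $\nu = (1^n)$ by antisymmetry of the partial order. This also shows $(1^n)$ is the unique minimal element.

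The argument is essentially routine once the two classical facts are in place, so there is no real obstacle; the only point requiring a word of care is the dominance-order claim, where one should note explicitly that $(1^n)$ is weakly dominated by every partition of $n$. I would state this as a short self-contained computation rather than citing it, to keep the section self-contained. One could alternatively remark that the hook length formula~\eqref{equation.hook length} gives $f^{(1^n)} = n!/(n(n-1)\cdots 1) = 1$, tying the proof to the ``main tool'' advertised just before the lemma, though the combinatorial count of standard tableaux is equally immediate here.
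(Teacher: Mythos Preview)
Your proof is correct and follows essentially the same approach as the paper: verify that $(1^n)$ is the minimum in dominance order and that $f^{(1^n)}=1\leqslant f^\lambda$ for all $\lambda$. The paper's version is simply terser, citing the dominance-order fact without the partial-sum computation and omitting the explicit antisymmetry step for uniqueness.
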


\begin{proof}
        The partition $(1^n)$ is the unique minimal element in dominance order. Furthermore, $f^{(1^n)}=1\leqslant f^\lambda$ for all $\lambda \in \mathcal{P}(n)$.
        This proves the claim.
\end{proof}

\begin{lemma}
        \label{lemma.col}
        We have
        \begin{enumerate}
                \item $(1^n) \lessdot_{std} (n)$ for all $n$ and
                \item $(2,1^{n-2}) \lessdot_{std} (n-1,1)$ for all $n \geqslant 3$.
        \end{enumerate}
\end{lemma}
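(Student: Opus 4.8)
Both statements assert cover relations, so in each case the plan is to first verify the two defining conditions of $\leqslant_{std}$ — dominance order and the inequality of standard-tableau counts — to obtain a strict relation, and then to rule out an intermediate partition via the hook length formula~\eqref{equation.hook length}. Part~(1) is short: $(n)$ is the top of dominance order, so $\nu\leqslant_D(n)$ for every $\nu\in\mathcal P(n)$, and $f^{(1^n)}=1=f^{(n)}$, whence $(1^n)<_{std}(n)$ for $n\geqslant 2$. If $(1^n)<_{std}\nu<_{std}(n)$, then $f^\nu\leqslant f^{(n)}=1$, so $f^\nu=1$; but a shape with at least two rows and at least two columns has $f^\nu\geqslant 2$ (its row-superstandard filling carries a $2$ in cell $(1,2)$, whereas its column-superstandard filling carries $\ell(\nu)+1\geqslant 3$ there), so $f^\nu=1$ forces $\nu\in\{(1^n),(n)\}$, a contradiction; hence $(1^n)\lessdot_{std}(n)$.

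For part~(2) I would start with the two easy ingredients. The hook length formula gives $f^{(2,1^{n-2})}=\frac{n!}{n(n-2)!}=n-1$, and since $(n-1,1)$ is the conjugate of $(2,1^{n-2})$ and conjugation preserves the number of standard tableaux, $f^{(n-1,1)}=n-1$ as well. The partial sums of $(2,1^{n-2})$ are $2,3,\dots,n$ and those of $(n-1,1)$ are $n-1,n,n,\dots$, so $(2,1^{n-2})\leqslant_D(n-1,1)$, strictly for $n\geqslant 4$ (the case $n=3$ being degenerate, as the two shapes coincide). Hence $(2,1^{n-2})<_{std}(n-1,1)$.

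Next I would reduce the cover claim to a statement purely about dimensions. Any $\nu$ with $(2,1^{n-2})<_{std}\nu<_{std}(n-1,1)$ must have $f^\nu=n-1$ by squeezing, and since $(2,1^{n-2})$ is the unique cover of $(1^n)$ in dominance order while $(n)$ is the unique cover of $(n-1,1)$ there, the condition $(2,1^{n-2})\leqslant_D\nu\leqslant_D(n-1,1)$ merely says $\nu\notin\{(1^n),(n)\}$, which is automatic since $f^{(1^n)}=f^{(n)}=1\neq n-1$. Thus part~(2) is equivalent to the claim that \emph{the only $\nu\vdash n$ with $f^\nu=n-1$ are the hooks $(2,1^{n-2})$ and $(n-1,1)$.} For hook shapes $\nu=(a,1^{n-a})$ one has $f^\nu=\binom{n-1}{a-1}$, which equals $n-1$ only when $a\in\{2,n-1\}$, so hooks give no trouble. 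For a non-hook $\nu$ I would bound $f^\nu$ from below with the hook length formula, using that $\nu$ contains cell $(2,2)$, so that the corner $(1,1)$ has hook length at most $n-1$ (rather than $n$, as for a hook) and several further cells are forced to have small hook lengths, which should yield $\prod_{u\in\nu}h(u)<n!/(n-1)$, that is, $f^\nu>n-1$.

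The step I expect to be the real obstacle is exactly this lower bound for non-hook shapes, because it is tight and it can fail for small $n$. The dimension of a rectangular shape $(k,k)$ is the Catalan number $C_k$, which is independent of the ambient size $n=|\nu|$, so for small parameters it can coincide with $n-1$; concretely $f^{(3,3)}=f^{(2,2,2)}=C_3=5=n-1$ when $n=6$, so the classification of dimension-$(n-1)$ partitions — and with it part~(2) — needs the small values of $n$ to be examined individually, with $n=6$ looking like a genuine exception. I would therefore expect the clean inequality $f^\nu>n-1$ for non-hook $\nu$ to hold for all sufficiently large $n$ (the near-hook shapes conjugate to $(n-2,2)$, of dimension $n(n-3)/2$, being the binding family there), and I would settle the remaining small values of $n$ by directly enumerating the dimension-$(n-1)$ partitions.
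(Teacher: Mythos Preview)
Your approach is the same as the paper's: check dominance and the dimension equality, then argue that no third partition shares the relevant dimension. The paper's proof is a two-sentence version of exactly this, simply asserting for part~(2) that ``there is no other partition $\lambda$ with $f^{\lambda}=n-1$''.

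You have, however, been more careful than the paper and have caught a genuine error. Your computation $f^{(3,3)}=f^{(2,2,2)}=C_3=5$ is correct, and for $n=6$ one checks $(2,1^4)<_D(3,3)<_D(5,1)$ and $(2,1^4)<_D(2,2,2)<_D(5,1)$, so that $(2,1^4)<_{std}(3,3)<_{std}(5,1)$. Hence part~(2) as stated, and the paper's supporting assertion, are false at $n=6$; the cover relation $(2,1^{n-2})\lessdot_{std}(n-1,1)$ simply does not hold there. Your sketch does go through for $n\geqslant 7$ (the minimal non-hook dimension is $f^{(n-2,2)}=n(n-3)/2>n-1$), and for $n=4,5$ by inspection; the case $n=3$ is degenerate since the two partitions coincide. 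So the lemma should exclude $n=6$ (and arguably $n=3$), and with those exclusions your argument is complete and matches the paper's intended one.
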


\begin{proof}
        We have $(1^n) <_D (n)$ and $f^{(1^n)} = f^{(n)}=1$. There is no other partition $\lambda$ with $f^{\lambda}=1$. This implies $(1^n) \lessdot_{std} (n)$.
        Similarly, $(2,1^{n-2}) <_D (n-1,1)$ and $f^{(2,1^{n-2})} = f^{(n-1,1)}=n-1$. There is no other partition $\lambda$ with $f^{\lambda}=n-1$. 
        This implies $(2,1^{n-2}) \lessdot_{std} (n-1,1)$.
\end{proof}

\begin{remark}
        \label{remark.standard immersion}
        \mbox{}
        \begin{enumerate}
                \item Let $\lambda \s \mu$. If $\mu$ covers $\lambda$ in dominance order, then $\mu$ covers $\lambda$ with respect to $\s$. The converse is not true. 
                Take $\lambda=(1^n)$ and $\mu=(n)$. 
                \item For a given partition $\lambda$ with transpose $\lambda^t$, if $\lambda <_D \lambda^t$, then $\lambda \s \lambda^t$ as both representations have the same dimension, that
                is, $f^\lambda = f^{\lambda^t}$. In general, $\lambda^t$ does not cover $\lambda$. 
        \end{enumerate}
\end{remark}

Given a partition $\lambda$ such that $\lambda <_D \lambda^t$, it would be interesting to find all partitions $\lambda <_D \nu <_D \lambda^t$ 
satisfying $f^{\lambda}=f^{\nu}$. This would help to understand when the transpose of $\lambda$ covers $\lambda$ in the immersion poset.

\begin{lemma}
        \label{two col}
        Let $\lambda=(2^{a},1^b)$ and $\mu=(2^{a+1},1^{b -2})$. Then  $\lambda \lessdot_{std} \mu$ if and only if $\frac{b(b-1)}{2}>a$.
\end{lemma}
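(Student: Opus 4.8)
The plan is to reduce the cover relation $\lambda \lessdot_{std}\mu$ to a single numerical inequality between $f^\lambda$ and $f^\mu$, and then to evaluate that inequality using the hook length formula~\eqref{equation.hook length}. Write $n:=2a+b$, which is the common size of $\lambda=(2^a,1^b)$ and $\mu=(2^{a+1},1^{b-2})$ (so implicitly $b\geqslant 2$). First I would record that $\lambda<_D\mu$ always holds: comparing partial sums, $\sum_{i=1}^k\lambda_i=\sum_{i=1}^k\mu_i$ for $k\leqslant a$ and for $k\geqslant a+b$, while $\sum_{i=1}^k\mu_i=\sum_{i=1}^k\lambda_i+1$ for $a+1\leqslant k\leqslant a+b-1$, so the dominance inequalities hold and are strict somewhere. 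Next I would show $\mu$ in fact \emph{covers} $\lambda$ in dominance order. If $\lambda\leqslant_D\nu\leqslant_D\mu$, then (again via partial sums, using that every part of $\mu$ is $\leqslant 2$) one gets $\nu_i=2$ for $1\leqslant i\leqslant a$ and $\nu_{a+1}\in\{1,2\}$; if $\nu_{a+1}=1$ then all later parts are $\leqslant 1$ and $\nu=\lambda$, and if $\nu_{a+1}=2$ then the bounds $\sum_{i=1}^k\nu_i\leqslant\sum_{i=1}^k\mu_i$ force every part after position $a+1$ to be $\leqslant 1$, so $\nu=\mu$. (Alternatively one can quote Brylawski's description of covers in dominance order.) Since $\leqslant_{std}$ refines $\leqslant_D$, no partition lies strictly between $\lambda$ and $\mu$ in $\leqslant_{std}$ either, so $\lambda\lessdot_{std}\mu\iff\lambda<_{std}\mu\iff f^\lambda\leqslant f^\mu$, the last equivalence using that $\lambda<_D\mu$ already holds.

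It remains to decide when $f^\lambda\leqslant f^\mu$. I would apply~\eqref{equation.hook length} directly to the two-column shape $(2^a,1^b)$: the hook lengths are $a,a-1,\dots,1$ in the second column, and $a{+}b{+}1,a{+}b,\dots,b{+}2$ together with $b,b{-}1,\dots,1$ in the first column, so
\[
\prod_{u\in\lambda}h(u)=\frac{(a+b+1)!}{(b+1)!}\cdot b!\cdot a!=\frac{(a+b+1)!\,a!}{b+1},
\qquad\text{hence}\qquad
f^{(2^a,1^b)}=\frac{(2a+b)!\,(b+1)}{(a+b+1)!\,a!}.
\]
Substituting $(a,b)\mapsto(a+1,b-2)$, which preserves $n$, gives $f^\mu=\dfrac{n!\,(b-1)}{(a+b)!\,(a+1)!}$, so that
\[
\frac{f^\mu}{f^\lambda}=\frac{(b-1)(a+b+1)}{(a+1)(b+1)}.
\]
Therefore $f^\lambda\leqslant f^\mu$ iff $(b-1)(a+b+1)\geqslant(a+1)(b+1)$, which expands to $b^2-b-2\geqslant 2a$. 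Since $b(b-1)$ is even, $b^2-b-2=2\bigl(\tfrac{b(b-1)}{2}-1\bigr)$, so this is equivalent to $a\leqslant\tfrac{b(b-1)}{2}-1$, i.e.\ to $\tfrac{b(b-1)}{2}>a$. Chaining this with the equivalences of the first paragraph yields the claim.

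The dominance bookkeeping (showing $\mu$ covers $\lambda$ in $\leqslant_D$) is routine, and the hook-length computation is elementary; the one genuinely delicate point is the final equivalence, where one must use the parity of $b(b-1)$ to convert the weak inequality $b^2-b-2\geqslant 2a$ into the strict inequality $\tfrac{b(b-1)}{2}>a$ that appears in the statement. I would sanity-check the formula on small cases such as $(2,1,1)\not\lessdot_{std}(2,2)$ (where $f^{(2,1,1)}=3>2=f^{(2,2)}$) and $(2,1,1,1)\lessdot_{std}(2,2,1)$ (where $4\leqslant 5$).
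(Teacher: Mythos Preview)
Your proof is correct and follows essentially the same route as the paper: reduce the cover question to the single inequality $f^\lambda\leqslant f^\mu$ using that $\mu$ covers $\lambda$ in dominance order (the paper invokes Remark~\ref{remark.standard immersion}(1) for this step, which you reprove directly), then compute the ratio $f^\lambda/f^\mu=\frac{(b+1)(a+1)}{(b-1)(a+b+1)}$ via the hook length formula and simplify. Your version is more explicit than the paper's---you spell out the partial-sum verification of $\lambda\lessdot_D\mu$, the hook-length products, and the parity observation needed to turn $b^2-b-2\geqslant 2a$ into the strict inequality $\tfrac{b(b-1)}{2}>a$---but these are exactly the details the paper suppresses in its two-line argument.
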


\begin{proof}
        We have $\lambda \lessdot_D \mu$. Hence by Remark~\ref{remark.standard immersion}(1), it suffices to show that $\lambda \s \mu$.
        By the hook length formula, this is true if $\frac{f^{\lambda}}{f^{\mu}}=\frac{(b+1)(a+1)}{(b-1)(a+b+1)} \leqslant 1$, which is equivalent to the condition
        $\frac{b(b-1)}{2}>a$.
\end{proof}

\subsection{Classifying maximal elements}
\label{section.maximal}

In this section, we study the maximal elements of the standard immersion poset. Recall that the standard immersion poset is a refinement of the immersion 
poset. This implies that if a partition is maximal in the standard immersion poset, then it is also maximal in the immersion poset. 

\begin{proposition}
        \label{prop.max_two_row}
        The partition $(a+b,a)$ is a maximal element in the standard immersion poset if and only if $\frac{b(b+3)}{2} \geqslant a$.
\end{proposition}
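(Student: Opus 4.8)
The plan is to characterize when $(a+b,a)$ is maximal in the standard immersion poset $(\mathcal P(n),\leqslant_{std})$ with $n = 2a+b$. Since $(a+b,a)$ is already the top element of the set of two-row partitions in dominance order, the only way it can fail to be maximal is if there is some partition $\mu$ with $(a+b,a) <_D \mu$ and $f^{(a+b,a)} \leqslant f^{\mu}$. First I would observe that any such $\mu$ must dominate $(a+b,a)$, and among the partitions dominating a two-row shape, the natural candidates to test are the "minimal" ones covering $(a+b,a)$ in dominance order: namely $(a+b+1, a-1)$ (move a box up within two rows, valid when $a \geqslant 1$) and $(a+b, a-1, 1)$ (move a box to a new third row). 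So the first real step is to reduce the maximality question to comparing $f^{(a+b,a)}$ against $f^{\mu}$ for these cover candidates, using the fact that $f$ is not monotone along all of dominance order but that it suffices to rule out covers.

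The key computation is then to compare $f^{(a+b,a)}$ with $f^{(a+b+1,a-1)}$ via the hook length formula~\eqref{equation.hook length}. For the two-row shape $(p,q)$ with $p \geqslant q$, the hook lengths give the well-known formula $f^{(p,q)} = \binom{p+q}{q} \frac{p-q+1}{p+1}$. Using this with $(p,q) = (a+b,a)$ and $(p,q) = (a+b+1,a-1)$, the ratio $f^{(a+b,a)} / f^{(a+b+1,a-1)}$ simplifies to a rational expression in $a$ and $b$; setting the condition $f^{(a+b,a)} \leqslant f^{(a+b+1,a-1)}$ fails (i.e. $(a+b,a)$ is strictly larger in $f$, so it dominates this candidate and remains a candidate for maximality against it) should reduce exactly to an inequality, and I expect the threshold is governed by $\tfrac{b(b+3)}{2} \geqslant a$. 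I would do this algebra carefully: cross-multiplying and cancelling common factorial factors leaves a quadratic-in-$b$ (or linear-in-$a$) inequality, and the claim is that this quadratic is precisely $b(b+3)/2 \geqslant a$.

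Next I would handle the three-row candidate $(a+b, a-1, 1)$ and any other partitions covering $(a+b,a)$ in dominance order, showing that whenever $\tfrac{b(b+3)}{2} \geqslant a$ holds, we also have $f^{(a+b,a)} > f^{\mu}$ for these $\mu$ — so none of them witnesses $(a+b,a) <_{std}\mu$ — and conversely that when $\tfrac{b(b+3)}{2} < a$ the candidate $(a+b+1,a-1)$ (or whichever one is relevant) does satisfy $f^{(a+b,a)} \leqslant f^{(a+b+1,a-1)}$, giving $(a+b,a) <_{std} (a+b+1,a-1)$ and hence non-maximality. This two-directional check, together with the observation that it is enough to test dominance-covers (since $\leqslant_D$ is the "backbone" of $\leqslant_{std}$), completes the proof.

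The main obstacle I anticipate is the converse/completeness direction: ruling out \emph{all} partitions $\mu$ with $(a+b,a) <_D \mu$, not merely the dominance-covers, as potential witnesses of non-maximality when $\tfrac{b(b+3)}{2}\geqslant a$. Because $f$ is not dominance-monotone, one cannot simply say "$\mu$ dominates $(a+b,a)$ so $f^\mu$ is large"; instead I would argue that $f^{(a+b+1,a-1)}$ (resp. the relevant near-minimal cover) is the largest value of $f^\mu$ among $\mu \gtrdot_D (a+b,a)$, or more robustly, that any $\mu$ strictly dominating $(a+b,a)$ factors through one of the covers and the hook-length comparison can be propagated — e.g. by a direct monotonicity lemma for $f$ along the specific chains emanating from a two-row partition, or by invoking known bounds on $f^\lambda$ in terms of $\lambda_1$. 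Pinning down exactly which finite set of $\mu$ needs checking, and verifying the hook-length inequality is "worst" at $(a+b+1,a-1)$, is the delicate part; the rest is the routine factorial manipulation sketched above.
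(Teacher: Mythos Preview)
Your core computation---comparing $f^{(a+b,a)}$ with $f^{(a+b+1,a-1)}$ via the hook length formula and extracting the condition $\tfrac{b(b+3)}{2} \geqslant a$---is exactly right and matches the paper. But two things go wrong around it.

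First, $(a+b,a-1,1)$ is \emph{not} a dominance cover of $(a+b,a)$: moving a box from the second row to a new third row makes the partition \emph{smaller} in dominance order, not larger (check the partial sum at $k=2$). So that candidate is irrelevant and should be removed.

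Second, the ``main obstacle'' you anticipate---ruling out all $\mu >_D (a+b,a)$, not just covers---dissolves once you make the observation the paper uses: every partition dominating a two-row shape $(a+b,a)$ is itself of the form $\nu^{(i)} = (a+b+i,\,a-i)$ for some $i \geqslant 1$. There simply are no three-or-more-row partitions above $(a+b,a)$ in dominance order. This reduces the problem to a single chain, and the paper finishes by showing the ratio $f^{\nu^{(i+1)}}/f^{\nu^{(i)}}$ is bounded above by $f^{\nu^{(1)}}/f^{\lambda}$, so once $f^{\nu^{(1)}} < f^{\lambda}$ the entire sequence $f^{\nu^{(i)}}$ is strictly decreasing. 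No delicate case analysis or ``monotonicity lemma along chains'' is needed beyond this one-line ratio comparison.
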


\begin{proof}
        Let $\lambda = (a+b,a)$. Any partition $\nu$ which dominates $\lambda$, that is, $\nu>_D\lambda$, must have the form $\nu=\nu^{(i)}
        =(a+b+i,a-i)$ for some $i \geqslant 1$. Note that $\frac{f^{\nu^{(1)}}}{f^{\lambda}}=\frac{a(b+3)}{(b+1)(a+b+2)}$. Hence $f^{\nu^{(1)}} <f^{\lambda}$ 
        if and only if $\frac{b(b+3)}{a-1}> 2$ (which is equivalent to $\frac{b(b+3)}{2} \geqslant a$). Thus, the condition is necessary.

        To prove that the condition $\frac{b(b+3)}{a-1}> 2$ is sufficient, note that
        \[
        \frac{f^{\nu^{(i+1)}}}{f^{\nu^{(i)}}}=\frac{(a-i)(b+3+2i)}{(b+1+2i)(a+b+i+2)}< \frac{a(b+3)}{(b+1)(a+b+2)}=\frac{f^{\nu^{(1)}}}{f^{\lambda}}.
        \]
        Since $f^{\nu^{(1)}}<f^{\lambda}$ when $\frac{b(b+3)}{a-1}> 2$, we must have  $\frac{f^{\nu^{(i+1)}}}{f^{\nu^{(i)}}}<1$. This is true for each $i$. 
        Hence $\lambda$ is a maximal element.
\end{proof}

\begin{proposition}
        \label{prop.max_3r1}
        Let $\lambda = (a+b, a, 1)$ where $a\geqslant 2.$ Then $\lambda$ is maximal in the standard immersion poset if and only if 
        $a \leqslant \frac{(b+1)(b+2)}{2}.$
\end{proposition}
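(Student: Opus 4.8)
The plan is to read off maximality directly from the definition of $\leqslant_{std}$: since $\lambda\leqslant_{std}\mu$ with $\lambda\neq\mu$ forces $\mu>_D\lambda$ and $f^\mu\geqslant f^\lambda$, the partition $\lambda=(a+b,a,1)$ is maximal in the standard immersion poset exactly when $f^\nu<f^\lambda$ for \emph{every} $\nu>_D\lambda$. The first step is bookkeeping. Because $|\lambda|=2a+b+1$ while $\lambda_1+\lambda_2=2a+b$, any $\nu>_D\lambda$ has at most three parts with $\nu_3\leqslant 1$; hence $\nu$ is of one of two shapes, the ``three-row'' type $\nu^{A}_i:=(a+b+i,a-i,1)$ with $1\leqslant i\leqslant a-1$, or the ``two-row'' type $\nu^{B}_i:=(a+b+i,a+1-i)$ with $0\leqslant i\leqslant a+1$ (the last value giving the one-row partition). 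By the hook length formula~\eqref{equation.hook length} one records the closed form
\[
f^{(a+b,a,1)}=\frac{(2a+b+1)!\,(b+1)}{(a+1)(a+b+2)(a+b)!\,(a-1)!},
\]
and similarly for the two families above; everything in the proof is a comparison of such expressions.

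For the necessity of $a\leqslant\binom{b+2}{2}$, I would focus on the single cover $\nu^{A}_1=(a+b+1,a-1,1)$, which is again of the form $(x+y,x,1)$ with $x=a-1$, $y=b+2$. Substituting into the closed form and simplifying gives
\[
R(a,b):=\frac{f^{(a+b+1,a-1,1)}}{f^{(a+b,a,1)}}=\frac{(b+3)(a+1)(a+b+2)(a-1)}{a(b+1)(a+b+1)(a+b+3)}.
\]
The key manipulation is to divide by $a(a+b+2)$, use $\tfrac{(a-1)(a+1)}{a}=a-\tfrac1a$ and $\tfrac{(a+b+1)(a+b+3)}{a+b+2}=(a+b+2)-\tfrac1{a+b+2}$, and clear denominators, which turns ``$R(a,b)<1$'' into
\[
2a<(b+1)(b+2)+\delta,\qquad \delta:=\frac{b+3}{a}-\frac{b+1}{a+b+2}>0.
\]
Since $(b+1)(b+2)$ is even and $\delta>0$, the bound $a\leqslant\binom{b+2}{2}$ immediately gives $R(a,b)<1$. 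Conversely, if $a\geqslant\binom{b+2}{2}+1$, a one-line estimate shows $\delta<\tfrac{b+3}{a}\leqslant 2$ (this reduces to $(b+1)^2\geqslant 0$), so $2a\geqslant(b+1)(b+2)+2>(b+1)(b+2)+\delta$ and therefore $R(a,b)\geqslant 1$. In that case $(a+b,a,1)<_{std}(a+b+1,a-1,1)$ (note $a-1\geqslant 1$ since $a\geqslant 2$), so $\lambda$ is not maximal.

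For sufficiency, assume $a\leqslant\binom{b+2}{2}$; note this forces $b\geqslant 1$. For the three-row family, $\nu^{A}_{i+1}$ is obtained from $\nu^{A}_i$ by moving one box from row $2$ to row $1$, so $f^{\nu^{A}_{i+1}}/f^{\nu^{A}_i}=R(a-i,b+2i)$ for $0\leqslant i\leqslant a-2$; since $2(a-i)\leqslant 2a\leqslant(b+1)(b+2)\leqslant(b+2i+1)(b+2i+2)$, the criterion above makes each such ratio $<1$, so $f^{\nu^{A}_i}$ strictly decreases from $f^{\nu^{A}_0}=f^\lambda$. For the two-row family, an analogous hook length computation gives $f^{\nu^{B}_{i+1}}/f^{\nu^{B}_i}=\tfrac{(b+2i+2)(a+1-i)}{(b+2i)(a+b+i+2)}$, which is $<1$ iff $2(a+1-i)<(b+2i)(b+2i+1)$; for $i\geqslant 1$ the right side is at least $(b+2)(b+3)>(b+1)(b+2)\geqslant 2a\geqslant 2(a+1-i)$, so $f^{\nu^{B}_i}$ is decreasing for $i\geqslant 1$. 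It then remains only to compare the two ``boundary'' values against $f^\lambda$: one finds $f^{\nu^{B}_0}/f^\lambda=\tfrac{b(a+b+2)}{a(b+1)(a+b+1)}$ and $f^{\nu^{B}_1}/f^\lambda=\tfrac{(b+2)(a+1)}{a(b+1)(a+b+1)}$, and each is $<1$ for all $a\geqslant 2$, $b\geqslant 1$ by an elementary inequality (e.g.\ $a(b+1)(a+b+1)\geqslant 2(b+1)(a+b+1)$ already dominates both numerators). Thus every $\nu>_D\lambda$ satisfies $f^\nu<f^\lambda$, and $\lambda$ is maximal.

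The part I expect to be delicate is the two-row family: one cannot simply restrict attention to the dominance covers of $\lambda$, because $f^{(p,q)}$ is \emph{not} monotone along dominance order near the ``square'' (for instance $f^{(4,4)}=14<28=f^{(5,3)}$ when $n=8$), so $\max_i f^{\nu^{B}_i}$ need not occur at $i=0$. The resolution is the observation that $f^{\nu^{B}_i}$ does become monotone once $i\geqslant 1$, which reduces that family to the two explicit comparisons at $i=0$ and $i=1$; the rest is careful but routine hook length bookkeeping, parallel in spirit to the proof of Proposition~\ref{prop.max_two_row}.
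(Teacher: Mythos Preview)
Your proof is correct, and it takes a genuinely different route from the paper's argument. The paper proceeds by induction on $a$: in the base case $a=2$ it invokes Proposition~\ref{maxhook} (maximality of $(3+b,1^2)$) to dispose of everything above one of the dominance covers, and in the inductive step it reduces to the two dominance covers $(a+b,a+1)$ and $(a+b+1,a-1,1)$, using the inductive hypothesis that the latter is itself maximal. Your argument is instead direct and self-contained: you enumerate all $\nu>_D\lambda$ into the two explicit one-parameter families $\nu^A_i$ and $\nu^B_i$, prove that $f^{\nu^A_i}$ is strictly decreasing in $i$ and that $f^{\nu^B_i}$ is strictly decreasing for $i\geqslant 1$, and then check the two boundary values $f^{\nu^B_0},f^{\nu^B_1}$ against $f^\lambda$ by elementary inequalities.

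What your approach buys is independence from Proposition~\ref{maxhook} and a clean algebraic reformulation of the key ratio: your identity $R(a,b)<1\iff 2a<(b+1)(b+2)+\delta$ with $\delta=\tfrac{b+3}{a}-\tfrac{b+1}{a+b+2}\in(0,2)$ makes the threshold $a\leqslant\binom{b+2}{2}$ completely transparent, whereas the paper verifies the same inequality by substituting $a=\tfrac{(b+1)(b+2)}{2}$ and expanding a degree-$7$ polynomial in $b$. What the paper's inductive structure buys is that it ports almost verbatim to the next result, Proposition~\ref{prop.max_3r2} on $(a+b,a,2)$, where there are more dominance covers to track; your direct approach would also work there but with more case analysis.
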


\begin{proof}
        We first prove the reverse direction by inducting on $a$. For our base case, let $a=2$ and $2 \leqslant \frac{(b+1)(b+2)}{2}$. To prove that 
        $\lambda=(2+b,2,1)$ is maximal, we show that there exists no partition $\nu$ such that $\lambda <_D \nu $ and $f^\lambda < f^\nu$. We start by 
        classifying all partitions $\nu$ such that $\lambda <_D \nu$. It is known that $\lambda \lessdot_D \nu$ if and only if the Young diagram of $\nu$ can 
        be obtained from the Young diagram of $\lambda$ by moving a single box in row $k$ to row $k-1$ or by moving a single box in column $k$ to column $k+1$. 
        This means that the partition $(2+b,2,1)$ has exactly two covers: $(2+b,3)$ and $(3+b,1^2)$. The former is obtained by moving the box in row 3 to row 2, 
        and the latter is obtained by moving the box at the end of row 2 to row 1. Furthermore, $(2+b,3)$ and $(3+b,1^2)$ are only covered by $(3+b,2)$. 
        Below is the Hasse diagram in dominance order summarizing the specific covering relations:
        \begin{center}
                \begin{tikzpicture}
                        \matrix (a) [matrix of math nodes, column sep=0.6cm, row sep=0.5cm]{

                                & \vdots \\
                                & (3+b,2) &\\
                                (2+b,3) && (3+b,1^2) \\
                                &(2+b,2,1) &\\};

                        \foreach \i/\j in {3-1/4-2, 3-3/4-2, 2-2/3-1, 2-2/3-3, 1-2/2-2}
                        \draw (a-\i) -- (a-\j);
                \end{tikzpicture}
        \end{center}
        Let $\nu$ be any partition such that $\lambda <_D \nu$. 
        By our covering relations, we have that either $\nu=(2+b,3)$ or $\nu$ is contained in some chain $\lambda <_D (3+b,1^2) <_D \dots <_D \nu$.

        Now, we will show that $f^\lambda > f^\nu$ for all $\nu$ such that $\lambda <_D \nu$. 

        By Proposition~\ref{maxhook}, we know that $(3+b,1^2)$ is maximal in the standard immersion poset. That is, if $(3+b,1^2) <_D \nu$ then 
        $f^{(3+b,1^2)} > f^\nu$. Note that our assumption that $2 \leqslant \frac{(b+1)(b+2)}{2}$ implies $b \geqslant 1$. By this fact and the hook length formula,  
        \[
        \frac{f^{(2+b,3)}}{f^\lambda}=\frac{b(b+4)}{2(b+1)(b+3)}<1 \qquad \text{and} \qquad
        \frac{f^{(3+b,1^2)}}{f^\lambda}=\frac{3(b+4)}{2(b+1)(b+5)} <1. 
        \]
        Since $f^\lambda > f^{(2+b,3)}$ and $f^\lambda > f^{(3+b,1^2)} > f^\nu$, we have shown that $f^\lambda > f^\nu$ for all $\nu$ such that $\lambda <_D \nu$.

        Now, let $\lambda=(a+b,a,1)$ where $a\leqslant \frac{(b+1)(b+2)}{2}$ and suppose that for some $a \geqslant 2$, the partition $(c+b,c,1)$ is maximal 
        when $c<a\leqslant \frac{(b+1)(b+2)}{2}$. We follow a similar argument as the base case and show that $f^\lambda > f^\nu$ for $\lambda \lessdot_D \nu$. 
        Observe that the Hasse diagram in dominance order around $\lambda$ looks as follows:
        \begin{center}
                \begin{tikzpicture}
                        \matrix (a) [matrix of math nodes, column sep=0.6cm, row sep=0.5cm]{

                                & \vdots \\
                                & (a+b+1,a) & \vdots \\
                                (a+b,a+1) && (a+b+1,a-1,1) \\
                                &(a+b,a,1) &\\};

                        \foreach \i/\j in {3-1/4-2, 3-3/4-2, 2-2/3-1, 2-2/3-3, 2-3/3-3, 1-2/2-2}
                        \draw (a-\i) -- (a-\j);
                \end{tikzpicture}
        \end{center}
        We first consider the partition $(a+b, a+1).$ Then by the hook length formula,
        \[
        \frac{f^{(a+b,a+1)}}{f^\lambda} = \frac{(a+b+2)(b)}{(a+b+1)(a)(b+1)} < \frac{(a+b+2)}{(a+b+1)(a)} < 1
        \]
        where the last inequality follows since $a \geqslant 2.$

        Next, consider the partition $(a+b+1,a-1,1)$. By our inductive hypothesis, $(a+b+1,a-1,1)=((a-1)+(b+2),a-1,1)$ is maximal since 
        $a-1<a\leqslant \frac{(b+1)(b+2)}{2}<\frac{(b+3)(b+4)}{2}$. Suppose that $a$ is the upper bound of our inequality $a\leqslant \frac{(b+1)(b+2)}{2}$, 
        that is, $a=\frac{(b+1)(b+2)}{2}$. Then by the hook length formula,
        \begin{align}
                \label{equation.hook3row}
                \frac{f^{(a+b+1,a-1,1)}}{f^\lambda} &= \frac{(a+b+2)(a+1)(a-1)(b+3)}{(a+b+3)(a+b+1)(a)(b+1)} \\
                &= \frac{\left((b+1)(b+2)+2b+4\right)\left((b+1)(b+2)+2\right)\left((b+1)(b+1-2)\right)(2b+6)}{\left((b+1)(b+2)+2b+6\right)\left((b+1)(b+2)+2b+2\right)
                        \left((b+1)(b+2)\right)(2b+2)} \nonumber\\
                &= \frac{b^7 + 14 b^6 + 82 b^5 + 260 b^4 + 477 b^3 + 486 b^2 + 216 b}{b^7 + 14b^6 + 82b^5 + 260b^4 +477b^3 + 502b^2 + 280b +64} 
                < 1 \nonumber.
        \end{align}

        It follows that if $a < \frac{(b+1)(b+2)}{2}$ then $f^{(a+b+1,a-1,1)} < f^{\lambda}$ because for fixed $b,$ Equation~\eqref{equation.hook3row} decreases as 
        $a$ decreases. To see this, we examine the effect of decreasing $a$ on $\frac{a+b+2}{a+b+3}$, $\frac{a+1}{a+b+1}$, and $\frac{a-1}{a}$ individually.  
        Each of these factors is of the form $\frac{x}{x+d}$ for fixed $d > 0.$ Notice that $g(x) = \frac{x}{x+d}$ is a strictly increasing function for $x >0.$ 
        Therefore, each of the above factors decreases as $a$ decreases. Thus, we have shown that for all $\nu$ such that $\lambda <_D \nu$, 
        $f^\lambda > f^{(a+b,a+1)}$ and $f^\lambda > f^{(a+b+1,a-1,1)}>f^\nu$. 
        Hence, $(a+b,a,1)$ is maximal whenever $a \leqslant \frac{(b+1)(b+2)}{2}$.

        Now in the reverse direction, if $a > \frac{(b+1)(b+2)}{2},$ then $f^{(a+b+1, a-1, 1)} > f^\lambda .$ To see this it suffices to consider 
        $a = \frac{(b+1)(b+2)}{2}+1$ 
        since Equation~\eqref{equation.hook3row} increases as $a$ increases for the same reason as above. If $a = \frac{(b+1)(b+2)}{2}+1,$ then 
        \begin{align*}
                \frac{f^{(a+b+1, a-1, 1)}}{f^{\lambda}} = \frac{b^7 + 14 b^6 + 88 b^5 + 322 b^4 + 739 b^3 + 1056 b^2 + 852 b + 288}{b^7 + 14 b^6 + 88 b^5 + 318 b^4 
                        + 707 b^3 + 964 b^2 + 740 b + 240} > 1.
        \end{align*}

        Therefore, $\lambda$ is maximal if only if $a \leqslant \frac{(b+1)(b+2)}{2}$. 
\end{proof}

\begin{proposition}
        \label{prop.max_3r2}
        Let $\lambda= (a+b,a,2)$ where $a \geqslant 3$. Then $\lambda$ is maximal in the standard immersion poset if and only if 
        $a \leqslant \frac{(b+1)(b+2)}{2}$.
\end{proposition}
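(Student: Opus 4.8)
The plan is to follow the proof of Proposition~\ref{prop.max_3r1} closely, inducting on $a$ and reducing everything to the hook length formula~\eqref{equation.hook length}. Recall that $\lambda$ is maximal in the standard immersion poset if and only if $f^\lambda > f^\nu$ for every $\nu$ with $\lambda <_D \nu$. The first step is to pin down the principal upper set $U_\lambda := \{\nu \mid \lambda <_D \nu\}$ of $\lambda = (a+b,a,2)$. Since the third partial sum of $\lambda$ already equals $n = 2a+b+2$, every $\nu >_D \lambda$ has at most three parts; solving the dominance inequalities $\nu_1 \geq a+b$ and $\nu_1+\nu_2 \geq 2a+b$ then shows
\[
U_\lambda \;=\; \{(a+b,a+1,1),\ (a+b,a+2)\}\ \cup\ \{\mu\}\cup U_\mu, \qquad \mu := (a+b+1,a-1,2),
\]
so that $U_\lambda$ collapses onto the upper set of the single cover $\mu$ together with two exceptional partitions ($(a+b,a+2)$ occurs only when $b\geq 2$). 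Hence it suffices to show (i) $f^\lambda > f^{(a+b,a+1,1)}$ and $f^\lambda > f^{(a+b,a+2)}$; (ii) $f^\lambda > f^\mu$; and (iii) $f^\mu > f^\nu$ for every $\nu \in U_\mu$.

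For the reverse direction I would induct on $a\geq 3$. In the inductive step ($a\geq 4$, which already forces $b\geq 2$), item (iii) follows from the induction hypothesis: writing $\mu = ((a-1)+(b+2),a-1,2)$, we have $a-1<a\leq \tfrac{(b+1)(b+2)}{2}<\tfrac{(b+3)(b+4)}{2}$, so $\mu$ is maximal and $f^\mu>f^\nu$ for all $\nu\in U_\mu$. For the base case $a=3$ one has $\mu=(b+4,2,2)$, which is not itself maximal; instead one notes the further collapse $U_\mu = \{(b+4,3,1)\}\cup U_{(b+4,3,1)}$, and $(b+4,3,1)=((b+1)+3,3,1)$ is maximal by Proposition~\ref{prop.max_3r1} since $3\leq\tfrac{(b+2)(b+3)}{2}$ for all $b$. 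So the base case reduces to the four inequalities $f^{(b+3,3,2)} > f^{(b+3,4,1)}$, $f^{(b+3,3,2)} > f^{(b+3,5)}$ (for $b\geq 2$), $f^{(b+3,3,2)} > f^{(b+4,2,2)}$, and $f^{(b+3,3,2)} > f^{(b+4,3,1)}$, all of which are routine hook length ratio checks.

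The remaining inequalities are hook length computations. Items (i) amount to
\[
\frac{f^{(a+b,a+1,1)}}{f^\lambda}=\frac{2b(a+b+1)(a+1)}{(b+1)(a+b)(a+2)(a-1)}<1,\qquad
\frac{f^{(a+b,a+2)}}{f^\lambda}=\frac{2(b-1)(a+b+2)}{(b+1)(a+b)(a+2)(a-1)}<1,
\]
which hold for all $a\geq 3$ and $b\geq 1$ (resp.\ $b\geq 2$) after clearing denominators and comparing polynomials of low degree. The crux of the proof --- the step producing the ``if and only if'' --- is item (ii), namely the ratio
\[
\frac{f^{(a+b+1,a-1,2)}}{f^{(a+b,a,2)}} \;=\; \frac{(b+3)(a+b+1)(a+1)(a-2)}{(b+1)(a+b+3)(a+b)(a-1)},
\]
which I claim is $<1$ precisely when $a\leq\tfrac{(b+1)(b+2)}{2}$. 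As with Equation~\eqref{equation.hook3row}, I would substitute the boundary value $a=\tfrac{(b+1)(b+2)}{2}$, at which the numerator and denominator become explicit polynomials of degree $7$ in $b$ whose difference (denominator minus numerator) is manifestly positive, giving ratio $<1$ there; then, for fixed $b$, each of $\tfrac{a+b+1}{a+b+3}$, $\tfrac{a+1}{a+b}$, $\tfrac{a-2}{a-1}$ has the form $\tfrac{x}{x+d}$ with $d>0$ and is (weakly) increasing in $a$, so the ratio is $<1$ for all $a\leq\tfrac{(b+1)(b+2)}{2}$ and is $\geq 1$ for all $a\geq\tfrac{(b+1)(b+2)}{2}+1$ (after checking the single value $a=\tfrac{(b+1)(b+2)}{2}+1$). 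The latter yields the forward direction: if $a>\tfrac{(b+1)(b+2)}{2}$ then $\lambda <_D (a+b+1,a-1,2)$ with $f^\lambda \leq f^{(a+b+1,a-1,2)}$, so $\lambda <_{std}(a+b+1,a-1,2)$ and $\lambda$ is not maximal.

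The main obstacle I anticipate is the bookkeeping in the first step: one must check carefully that $U_\lambda$ really does collapse onto $U_\mu$ plus two exceptional partitions, and in the base case that $U_\mu$ in turn collapses onto $U_{(b+4,3,1)}$, so that finitely many hook length inequalities suffice. The observation that every partition dominating $\lambda$ has at most three parts makes this manageable, but the nested base-case collapse through $(b+4,3,1)$ must be spelled out. Apart from that, the only genuinely computational point is the boundary polynomial comparison for the crucial ratio, and it has exactly the same flavour as the comparison already carried out in the proof of Proposition~\ref{prop.max_3r1}.
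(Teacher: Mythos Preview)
Your proposal is correct and follows essentially the same inductive strategy as the paper's proof, reducing the upper set $U_\lambda$ to the two exceptional partitions $(a+b,a+1,1)$, $(a+b,a+2)$ together with $\mu=(a+b+1,a-1,2)$ and its upper set, and then using the inductive hypothesis on $\mu$ plus the same hook length ratio (your item (ii) is exactly the paper's Equation~\eqref{equation.(a+b+1,a-1,2)}). The only genuine difference is in the base case: the paper invokes maximality of $(b+3,4,1)$ from Proposition~\ref{prop.max_3r1} and dispatches $b=1,2$ by computer, whereas you collapse further through $U_{(b+4,2,2)}=\{(b+4,3,1)\}\cup U_{(b+4,3,1)}$ and use maximality of $(b+4,3,1)$ instead, which trades the computer check for two additional routine hook length inequalities.
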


\begin{proof}
        We first prove the reverse direction by inducting on $a$. For our base case, let $a=3$ and $3 \leqslant \frac{(b+1)(b+2)}{2}$. To prove that 
        $\lambda=(3+b,3,2)$ is maximal, we follow a similar argument to Proposition ~\ref{prop.max_3r1}. We first classify all partitions $\nu$ such that 
        $\lambda <_D \nu$ and then show that $f^\lambda > f^\nu$ for all such $\nu$ by finding chains in the dominance order that contain maximal elements 
        from the standard immersion poset. Our assumption that $3\leqslant \frac{(b+1)(b+2)}{2}$ implies that $b\geqslant 1$. Hence it suffices to show that 
        $\lambda=(3+b,3,2)$ is maximal for all $b\geqslant 1$. We consider the cases $b=1,b=2$, and $b \geqslant$ 3 separately. It can be
        checked explicitly (for example using {\sc SageMath}~\cite{sagemath}) that $(4,3,2)$ and $(5,3,2)$ are maximal in the standard immersion poset.

        For $b \geqslant 3$, the Hasse diagram in dominance order around $\lambda=(3+b,3,2)$ looks as follows:
        \begin{center}
                \begin{tikzpicture}
                        \matrix (a) [matrix of math nodes, column sep=0.5cm, row sep=0.5cm]{
                                & \vdots\\
                                \vdots & (4+b,3,1)&\\
                                (3+b,4,1) && (4+b,2,2)\\
                                & (3+b,3,2)&\\
                                \\};

                        \foreach \i/\j in {1-2/2-2, 2-1/3-1, 3-1/2-2, 3-3/2-2,4-2/3-1,4-2/3-3}
                        \draw (a-\i) -- (a-\j);
                \end{tikzpicture}
        \end{center}
        If $\lambda <_D \nu$ then $\nu=(3+b,4,1), (4+b,2,2), $ or $\nu$ is contained in some chain $\lambda <_D (3+b,4,1) <_D \nu$. By 
        Proposition~\ref{prop.max_3r1}, $(3+b,4,1)$ is maximal in the standard immersion poset so it suffices to show that $f^\lambda > f^\nu$ for 
        $\nu= (3+b,4,1)$ and $(4+b,2,2)$. By the hook length formula, 
        \[ 
        \frac{f^{(3+b,4,1)}}{f^\lambda} = \frac{4(b)(b+4)}{5(b+1)(b+3)}=\frac{4(b^2+4b)}{5(b^2+4b+3)}<1
        \qquad \text{and} \qquad
        \frac{f^{(4+b,2,2)}}{f^\lambda}= \frac{4(b+4)}{2(b+1)(b+6)}<1.
        \]
        Hence, for $b \geqslant 3$, $(3+b,3,1)$ is maximal in the standard immersion poset, so we have shown that $(3+b,3,1)$ is maximal for all $b\geqslant 1$. 

        Now, let $\lambda= (a+b,a,2)$ where $a\leqslant \frac{(b+1)(b+2)}{2}$ and suppose that for some $a \geqslant 3$, the partition $(c+b,c,2)$ is maximal 
        when $c<a \leqslant \frac{(b+1)(b+2)}{2}$. Again, we show that $f^\lambda > f^\nu$ for $\lambda <_D \nu$. Observe that the Hasse diagram in dominance 
        order around $\lambda$ looks as follows:
        \begin{center}
                \begin{tikzpicture}
                        \matrix (a) [matrix of math nodes, column sep=0.5cm, row sep=0.5cm]{
                                & \vdots \\
                                & (a+b+1,a+1) & \vdots\\
                                (a+b,a+2) && (a+b+1,a,1)\\
                                (a+b,a+1,1) && (a+b+1,a-1,2)\\
                                & (a+b,a,2) \\};

                        \foreach \i/\j in {1-2/2-2, 2-3/3-3, 3-1/2-2, 3-1/4-1, 4-1/5-2, 5-2/4-3, 4-3/3-3, 3-3/2-2}
                        \draw (a-\i) -- (a-\j);
                \end{tikzpicture}
        \end{center}
        By the Hasse diagram, if $\nu$ is a partition such that $\lambda <_D \nu$, then $\nu= (a+b,a+1,1), (a+b,a+2), (a+b+1,a-1,2)$, or $\nu$ is contained in 
        some chain $\lambda <_D (a+b+1,a-1,2) <_D \nu$. Observe that $(a+b+1, a-1,2)=((a-1)+(b+2),a-1,2)$ is maximal by our inductive hypothesis since 
        $a-1<a\leqslant \frac{(b+1)(b+2)}{2} \leqslant \frac{(b+3)(b+4)}{2}$. Therefore, it suffices to check that $f^\lambda > f^\nu$ for 
        $\nu = (a+b,a+1,1), (a+b,a+2), \text{ and } (a+b+1,a-1,2) $.

        For $\nu = (a+b, a+1, 1),$ we have that
        \begin{equation}
                \label{equation.(a+b, a+1, 1)}
                \frac{f^{(a+b, a+1,1)}}{f^\lambda} = \frac{2b(a+b+1)(a+1)}{(a+b)(a-1)(a+2)(b+1)}.
        \end{equation}
        Since 
        \[ 
        \frac{d}{da} \frac{f^{(a+b, a+1,1)}}{f^\lambda} = 
        -\frac{(2 b (a^4 + 2 a^3 b + 4 a^3 + a^2 b^2 + 5 a^2 b + 7 a^2 + 2 a b^2 + 8 a b + 2 a + 3 b^2 + 3 b - 2))}{((a - 1)^2 (a + 2)^2 (b + 1) (a + b)^2)},
        \]
        we have that Equation~\eqref{equation.(a+b, a+1, 1)} decreases as $a$ increases. Therefore, it suffices to consider $a = 3$ which we have done in our 
        base case. Hence, $f^{(a+b, a+1,1)} < f^\lambda.$ 

        For $\nu = (a+b, a+2),$ we have that
        \[
        \frac{f^{(a+b, a+2)}}{f^\lambda} = \frac{2(b-1)(a+b+2)}{(a-1)(b+1)(a+b)(a+2)} 
        \leqslant \frac{a+b+2}{(a+b)(a+2)}
        \]
        since $a \geqslant 3.$ As $(a+b)(a+2) = a^2 + 2a + ab + 2b \geqslant a+b+2,$ we have that $f^{(a+b, a+2)} < f^\lambda.$

        Lastly, for $\nu = (a+b+1, a-1, 2),$ we first consider when $a = \frac{(b+1)(b+2)}{2}.$ By the hook length formula, we have
        \begin{align}
                \label{equation.(a+b+1,a-1,2)}
                \frac{f^{(a+b+1, a-1, 2)}}{f^\lambda} &= \frac{(b+3)(a+b+1)(a-2)(a+1)}{(b+1)(a+b+3)(a-1)(a+b)} \\
                &= \frac{(2b+6)((b+1)(b+2)+2b+2)((b+1)(b+2)-4)((b+1)(b+2)+2)}{((b+1)(b+2)+2b)(2b+2)((b+1)(b+2)+2b+6)((b+1)(b+2)-2)} \nonumber \\
                &= \frac{b^7 + 14 b^6 + 78 b^5 + 220 b^4 + 321 b^3 + 182 b^2 - 80 b - 96}{b^7 + 14 b^6 + 78 b^5 + 220 b^4 + 321 b^3 + 214 b^2 + 48 b} \nonumber \\
                &< 1. \nonumber
        \end{align}

        Following a similar argument as in Proposition~\ref{prop.max_3r1} for Equation~\eqref{equation.hook3row}, we can see that for fixed $b$, 
        Equation~\eqref{equation.(a+b+1,a-1,2)} decreases as $a$ decreases by considering $\frac{a+b+1}{a+b+3},\frac{a-2}{a-1},$ and $\frac{a+1}{a+b}.$

        We have thus shown that when $a \geqslant 3,$ $\lambda = (a+b, a, 2)$ is maximal if $a \leqslant \frac{(b+1)(b+2)}{2}.$ For the reverse direction, 
        consider Equation~\eqref{equation.(a+b+1,a-1,2)} when $a = \frac{(b+1)(b+2)}{2} + 1.$ We have that
        \[
        \frac{f^{(a+b+1, a-1,2)}}{f^\lambda} 
        = \frac{b^6 + 12 b^5 + 60 b^4 + 162 b^3 + 243 b^2 + 162 b}{b^6 + 12 b^5 + 60 b^4 + 158 b^3 + 219 b^2 + 150 b + 40} 
        > 1.
        \]
        Since Equation~\eqref{equation.(a+b+1,a-1,2)}  increases as $a$ increases, $\lambda \leqslant_{std} (a+b+1, a-1, 2)$ when $a > \frac{(b+1)(b+2)}{2}.$

        Therefore, $\lambda$ is maximal if and only if $a \leqslant \frac{(b+1)(b+2)}{2}.$
\end{proof}

\begin{table}[t]
\begin{center}
                \setlength{\arrayrulewidth}{0.5mm}
                \setlength{\tabcolsep}{12pt}
                \renewcommand{\arraystretch}{1.3}
                \begin{tabular}{ |p{2cm}|p{2cm}|p{2cm}|p{3cm}| }
                        \hline
                        Value for $\alpha$ & $\lambda=(\alpha,\beta)$ & $\lambda=(\alpha,\beta,1)$ & $\lambda=(\alpha,\beta,2)$ \\
                        \hline
                        $\alpha \geqslant 2$  & $(\alpha,1)$  &  & \\
                        $\alpha \geqslant 3$  & $(\alpha,2)$  & $(\alpha,2,1)$ & \\
                        $\alpha \geqslant 4$  &          & $(\alpha,3,1)$ & $(\alpha,3,2)$\\
                        $\alpha \geqslant 5$  & $(\alpha,3)$  &           &\\
                        $\alpha \geqslant 6$  & $(\alpha,4)$  & $(\alpha,4,1)$ & $(\alpha,4,2)$\\
                        $\alpha \geqslant 7$  & $(\alpha,5)$  & $(\alpha,5,1)$ & $(\alpha,5,2)$\\
                        $\alpha \geqslant 8$  &          & $(\alpha,6,1)$ & $(\alpha,6,2)$\\
                        $\alpha \geqslant 9$  & $(\alpha,6)$  &           & \\
                        $\alpha \geqslant 10$ & $(\alpha,7)$  & $(\alpha,7,1)$ & $(\alpha,7,2)$\\
                        $\alpha \geqslant 11$ & $(\alpha,8)$  & $(\alpha,8,1)$ & $(\alpha,8,2)$\\
                        $\alpha \geqslant 12$ & $(\alpha,9)$  & $(\alpha,9,1)$ & $(\alpha,9,2)$\\
                        $\alpha \geqslant 13$ &          & $(\alpha,10,1)$ & $(\alpha,10,2)$\\
                        $\alpha \geqslant 14$ & $(\alpha,10)$ &                &        \\
                        $\alpha \geqslant 15$ & $(\alpha,11)$ & $(\alpha,11,1)$ & $(\alpha,11,2)$\\
                        $\alpha \geqslant 16$ & $(\alpha,12)$ & $(\alpha,12,1)$ & $(\alpha,12,2)$\\
                        $\alpha \geqslant 17$ & $(\alpha,13)$ & $(\alpha,13,1)$ & $(\alpha,13,2)$\\
                        $\alpha \geqslant 18$ & $(\alpha,14)$ & $(\alpha,14,1)$ & $(\alpha,14,2)$\\
                        $\alpha \geqslant 19$ &          & $(\alpha,15,1)$ & $(\alpha,15,2)$\\
                        $\alpha \geqslant 20$ & $(\alpha,15)$ &                &       \\
                        \vdots &      \vdots    & \vdots & \vdots \\ 
                        \hline
                \end{tabular}
\end{center}
\caption{Necessary and sufficient conditions for maximality of a partition $\lambda$.
\label{table.maximal}}
 \end{table}

\begin{remark}
        We may translate the results of Propositions~\ref{prop.max_two_row}, ~\ref{prop.max_3r1}, and ~\ref{prop.max_3r2} into statements 
        about partitions of the form $(\alpha,\beta)$, $(\alpha, \beta, 1)$, and $(\alpha, \beta, 2)$. Table~\ref{table.maximal} summarizes 
        our maximality conditions.
\end{remark}

We next classify all maximal hook shape partitions. As noted in Lemma \ref{lemma.col}, $(1^n)  \lessdot_{std} (n)$ and so the single column shape is only maximal when $n=1$. By Lemma \ref{two col}, $(2, 1^b) \lessdot_{std} (2^2, 1^{b-2})$ whenever $b \geqslant 3$. Since $(2,1,1) \lessdot_{std} (3,1)$, the only maximal hook shape with arm length $2$ is $(2,1)$. In the following proposition, we investigate all hook shape partitions with arm length greater than $2$. 
\begin{proposition}
        \label{maxhook}
        Let $\lambda = (a, 1^{b})$ be a hook shape partition such that $a > 2$. Then $\lambda$ is a maximal element in the standard 
        immersion poset if and only if $b \leqslant 2$. 
\end{proposition}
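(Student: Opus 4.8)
The plan is to run everything through the hook length formula~\eqref{equation.hook length}, exactly as in Propositions~\ref{prop.max_two_row}--\ref{prop.max_3r2}. First I would record that for $\lambda=(a,1^b)$ with $n=a+b$ one has
\[
f^{\lambda}=\frac{n!}{n\,(a-1)!\,b!}=\binom{a+b-1}{b},
\]
since the corner cell has hook length $n$, the other first-row cells have hook lengths $a-1,\dots,1$, and the other first-column cells have hook lengths $b,\dots,1$. Maximality of $\lambda$ in $\leqslant_{std}$ means precisely that $f^{\nu}<f^{\lambda}$ for every partition $\nu$ with $\lambda<_D\nu$.

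For the direction $b\geqslant 3\Rightarrow\lambda$ not maximal, it suffices to exhibit one partition above $\lambda$ with at least as many standard tableaux. I would take $\mu=(a,2,1^{b-2})$, which is a genuine partition because $a>2$ and $b\geqslant 2$; comparing the partial sums $a,a+1,a+2,\dots$ of $\lambda$ with $a,a+2,a+3,\dots$ of $\mu$ shows $\lambda<_D\mu$. A hook length computation for $\mu$ gives
\[
\frac{f^{\mu}}{f^{\lambda}}=\frac{(a+b)(a-1)(b-1)}{(a+b-1)\,a},
\]
and for $b\geqslant 3$ we have $(a-1)(b-1)\geqslant 2(a-1)\geqslant a$ (using $a\geqslant 2$) together with $a+b>a+b-1$, so this ratio exceeds $1$. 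Hence $\lambda<_{std}\mu$ and $\lambda$ is not maximal.

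For the direction $b\leqslant 2\Rightarrow\lambda$ maximal: the case $b=0$ is trivial ($\lambda=(a)$ is the maximum of dominance order), and for $b=1$ the only partition dominating $(a,1)$ is $(a+1)$, with $f^{(a+1)}=1<a=f^{(a,1)}$ since $a>2$. For $b=2$ I would first enumerate all $\nu$ with $(a,1,1)<_D\nu$: since $\nu\geqslant_D(a,1,1)$ forces $\nu_1\geqslant a$ while $|\nu|=a+2$, the only candidates are $(a,2)$, $(a+1,1)$, and $(a+2)$. The hook length formula then gives $f^{(a,2)}=\tfrac{(a+2)(a-1)}{2}$, $f^{(a+1,1)}=a+1$, $f^{(a+2)}=1$, each strictly below $f^{(a,1,1)}=\tfrac{a(a+1)}{2}$ exactly when $a>2$. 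So no $\nu$ witnesses $\lambda<_{std}\nu$, proving $\lambda$ maximal.

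The only delicate point is the $b=2$ case of the ``if'' direction: one must check the enumeration of partitions strictly above $(a,1,1)$ is exhaustive, and observe that the hypothesis $a>2$ is precisely what is needed — the binding inequality is $a+1<\tfrac{a(a+1)}{2}$, i.e.\ $a>2$, and for $a=2$ indeed $f^{(2,1,1)}=f^{(3,1)}=3$ with $(2,1,1)<_{std}(3,1)$, matching the remark that the only maximal hook with arm length $2$ is $(2,1)$. In the direction $b\geqslant 3$ there is essentially no obstacle beyond choosing the right $\mu$; analyzing the entire upper set of $\lambda$ is unnecessary, since a single $\mu$ with $f^{\mu}\geqslant f^{\lambda}$ already defeats maximality.
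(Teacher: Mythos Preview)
Your proof is correct and follows essentially the same approach as the paper: the same witness $\mu=(a,2,1^{b-2})$ and the same hook length ratio for $b\geqslant 3$, and the same exhaustive enumeration of the three partitions above $(a,1,1)$ for $b=2$. The only cosmetic differences are that you also dispose of the degenerate case $b=0$ explicitly and phrase the key inequality as $f^{\mu}/f^{\lambda}>1$ rather than the paper's reciprocal $f^{\lambda}/f^{\mu}\leqslant 1$.
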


\begin{proof}
        When $b = 1$, the only partition that dominates $(a, 1)$ is $(a + 1)$ and $f^{(a,1)} = (a + 1) - 1 > 1 = f^{(a + 1)}$. 
        Thus, $(a, 1)$ is maximal. When $b = 2$, the only partitions that dominate $(a, 1^2)$ are $(a + 2)$,  $(a+1, 1)$, 
        and $(a, 2)$. By the hook length formula, 
        \[
        \frac{f^{(a + 1, 1)}}{f^{(a, 1^2)}}=\frac{2}{a} < 1 \qquad \text{ and } \qquad \frac{f^{(a, 2)}}{f^{(a, 1^2)}}
        =\frac{(a+2)(a-1)}{(a+1)a} = \frac{a^2+a-2}{a^2+a} < 1.
        \]
        Therefore, no partition dominates $(a,1^2)$ and has more standard Young tableaux, so $(a, 1^2)$ is maximal.

        When $b \geqslant 3$, $(a, 1^b) \s (a, 2, 1^{b - 2})$, by the hook length formula:
        \[
        \frac{f^{(a, 1^b)}}{f^{(a, 2, 1^{b-2})}}= \frac{a(a + b - 1)}{(a + b)(a - 1)(b-1)} \leqslant 1
        \]
        since 
        \[
        (a + b)(a - 1)(b-1) \geqslant 2(a-1)(a + b) \geqslant a(a + b - 1).
        \]
        Therefore $ f^{(a, 1^b)} \leqslant f^{(a, 2, 1^{b-2})}$ and $(a, 1^b) <_D (a, 2, 1^{b-2})$, so we have 
        $(a, 1^b) \s (a, 2, 1^{b - 2})$ whenever $b \geqslant 3$. 
\end{proof}

\begin{proposition}
        \label{prop.notmaximal}
        If $\lambda$ is a maximal element in the standard immersion poset, then $\lambda_1 > \lambda_2$. 
\end{proposition}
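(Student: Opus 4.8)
The plan is to show the contrapositive: if $\lambda = (\lambda_1, \lambda_2, \dots)$ is a partition of $n$ with $\lambda_1 = \lambda_2$, then $\lambda$ is not maximal in the standard immersion poset, i.e.\ there exists $\nu >_D \lambda$ with $f^\lambda \leqslant f^\nu$. The natural candidate is the partition $\nu$ obtained from $\lambda$ by deleting the last box of the last row and appending it as a new box at the end of the first row; since $\lambda_1 = \lambda_2$, this move strictly increases the first partial sum, so $\nu >_D \lambda$, and in fact $\nu$ covers $\lambda$ in dominance order. It then remains to prove $f^\lambda \leqslant f^\nu$ via the hook length formula~\eqref{equation.hook length}.

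First I would set up notation: write $\lambda = (c, c, \lambda_3, \dots, \lambda_{k-1}, \lambda_k)$ where $c = \lambda_1 = \lambda_2 \geqslant 1$ and $\lambda_k \geqslant 1$ is the last part, and let $\nu$ be the result of moving one box from the end of row $k$ to the end of row $1$. I would then compute the ratio $f^\lambda / f^\nu = \prod_{u \in \nu} h_\nu(u) \big/ \prod_{u \in \lambda} h_\lambda(u)$ and track exactly which hook lengths change. Moving a box from cell $(k, \lambda_k)$ to cell $(1, c+1)$ affects hooks only in row $1$, column $c+1$ (restricted to rows $2,\dots$), row $k$, and the cell being moved. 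The key comparison: the new cell $(1,c+1)$ in $\nu$ has hook length $1$ (it is a corner), while in row $1$ of $\lambda$ every cell $(1, j)$ for $j \leqslant c$ has its hook length increased by exactly $1$ in $\nu$, and similarly cells in the column above $(k,\lambda_k)$ lose a box from their hook. I expect the ratio to reduce to a clean product of factors each of the form $\tfrac{x}{x+1}$ or $\tfrac{x+1}{x}$, from which $f^\lambda \leqslant f^\nu$ should follow, possibly needing the hypothesis $\lambda_1 = \lambda_2$ (rather than $\lambda_1 > \lambda_2$) in an essential way to guarantee one of the factors lands on the correct side of $1$.

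The main obstacle is controlling the hook-length changes in the column $c+1$ and in row $k$ simultaneously when $\lambda$ has many rows between row $2$ and row $k$: the box-moving operation can cascade changes through several cells, and bounding the resulting product requires care. One clean way around this is to observe that the move factors as a composition of single-box dominance covers, but a more robust approach is to argue directly that the net effect is dominated by the simplest case. Concretely, I would compare with the two-row and hook-shape computations already carried out: when $\lambda = (c,c)$ we get $\nu = (c+1, c-1)$ and Proposition~\ref{prop.max_two_row} (with $b = 0$) already shows $(c,c)$ is not maximal since $\tfrac{b(b+3)}{2} = 0 < a = c$ fails for $c \geqslant 1$; when $\lambda = (2, 1^{b})$ we have $\lambda_1 > \lambda_2$, so that case does not arise. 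For the general case I would isolate the first row and last row and use that the intervening rows contribute a ratio that is a product of terms $\tfrac{h_\lambda(u)}{h_\lambda(u)-1} \geqslant 1$ balanced against the row-$1$ contribution $\prod_{j=1}^{c}\tfrac{h_\lambda(1,j)}{h_\lambda(1,j)+1}$ together with the corner factor, and verify the inequality holds because $\lambda_1 = \lambda_2$ forces the row-$1$ hook lengths to be large relative to the corner. If the general estimate proves delicate, an acceptable fallback is to induct on the number of rows of $\lambda$, peeling off row $k$ and reducing to a partition with $\lambda_1 = \lambda_2$ of smaller length, using Lemma~\ref{two col} and Proposition~\ref{maxhook} to handle the base cases.
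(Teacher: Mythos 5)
Your overall plan (prove the contrapositive by exhibiting $\nu >_D \lambda$ with $f^\lambda \leqslant f^\nu$) is the right one, but your choice of witness $\nu$ is wrong, and no amount of care with the hook-length bookkeeping will repair it. Taking $\nu$ to be the partition obtained by moving the last box of the \emph{last} row of $\lambda$ to the end of the first row fails already for $\lambda = (3,3,1)$: your $\nu$ is $(4,3)$, and $f^{(3,3,1)} = 21 > 14 = f^{(4,3)}$. More generally $(a,a,1) \mapsto (a+1,a)$ fails for every $a \geqslant 3$, and these are cases with $\lambda_1 = \lambda_2$, so the hypothesis you hoped would force the inequality onto the correct side of $1$ does not. (Your side claim that this $\nu$ covers $\lambda$ in dominance order is also false in general, e.g.\ $(2,2,1) <_D (3,1,1) <_D (3,2)$, though that is not needed for the argument.)

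The fix is to move a different box. Write $\lambda = (a^b, \lambda_{b+1}, \dots)$ with $a > \lambda_{b+1}$ and $b \geqslant 2$, and move the box in position $(b,a)$ --- the last cell of the \emph{bottom row of the block of equal leading parts} --- to position $(1, a+1)$, producing $\mu = (a+1, a^{b-2}, a-1, \lambda_{b+1}, \dots)$. This is what the paper does, and it also sidesteps the hook-length computation entirely: the map $\mathsf{SYT}(\lambda) \to \mathsf{SYT}(\mu)$ that relocates this single box is well defined (the entry in cell $(b,a)$ exceeds everything weakly above and to its left, in particular the entry in $(1,a)$, and column $a+1$ of $\mu$ contains only one cell) and is clearly injective, giving $f^\lambda \leqslant f^\mu$ directly, with $\lambda <_D \mu$ immediate. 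If you prefer a hook-length argument, it would have to be carried out for this $\mu$, not for your $\nu$; as your $(3,3,1)$-type examples show, the inequality is genuinely sensitive to which box is moved.
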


\begin{proof}
Suppose by contradiction that $\lambda = (a^b, \lambda_{b + 1}, \ldots)$ with $a > \lambda_{b+1}$ and $b \geqslant 2$. Let 
$\mu = (a+1, a^{b - 2}, a-1, \lambda_{b + 1}, \ldots)$ and denote by $\mathsf{SYT}(\lambda)$ the set of all
standard Young tableaux of shape $\lambda$. The map
\[
	\varphi \colon \mathsf{SYT}(\lambda) \rightarrow \mathsf{SYT}(\mu),
 \]
where $\varphi(T)$ is the standard Young tableau obtained from $T$ by moving the box in position $(b, a)$ to 
position $(1, a+1)$, is an injection. Therefore, $\lambda <_D \mu$ and $f^\lambda \leqslant f^{\mu}$, which 
implies $\lambda \leqslant_I \mu$ and thus demonstrates that $\lambda$ is not a maximal element in the 
standard immersion poset. 
\end{proof}

In fact, the injection $\varphi$ used in the proof of Proposition~\ref{prop.notmaximal} remains an injection when 
the domain and codomain are extended to semistandard Young tableaux of content $\nu$, for any 
$\nu \vdash |\lambda|$. Injection arguments between sets of semistandard Young tableaux are expanded 
on in Section~\ref{section.injections}. In particular, this result is extended to the immersion poset in Corollary~\ref{cor.firstparts}. 

We conclude this section with a conjecture about more general maximal elements in the standard immersion poset.

\begin{conjecture}
        \label{conjecture.maximal}
        Suppose $\lambda=(\sum_{i=1}^\ell a_i, \sum_{i=1}^{\ell-1}a_i,\dots,a_2+a_1,a_1)$ for $\ell>2$. If
        \[
        \binom{a_j+2}{2} \geqslant \sum_{i=1}^{j-1}a_i + j -2
        \]
        is satisfied for all $2\leqslant j\leqslant \ell$, then $\lambda$ is maximal in the standard immersion poset. 
\end{conjecture}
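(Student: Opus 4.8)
The plan is to generalize the method behind Propositions~\ref{prop.max_two_row}, \ref{prop.max_3r1}, and \ref{prop.max_3r2}. Recall first that $\lambda$ is maximal in the standard immersion poset precisely when $f^\nu < f^\lambda$ for every $\nu$ with $\lambda <_D \nu$, so it suffices to bound $f^\nu$ over all such $\nu$. I would first reduce to the case $a_i \geqslant 1$ for every $i$: if $a_i = 0$ for some $i < \ell$ then $\lambda$ has repeated parts and is either covered by Propositions~\ref{prop.max_two_row}--\ref{maxhook} or requires the argument below to be supplemented by the finitely many covers it gains from column moves. Assuming $a_i \geqslant 1$, we have $\lambda_1 > \lambda_2 > \dots > \lambda_\ell \geqslant 1$ and $\lambda^t = (\ell^{a_1}, (\ell-1)^{a_2}, \dots, 1^{a_\ell})$, whose consecutive parts differ by at most $1$; hence $\lambda$ has no dominance-order cover obtained by moving a box one column to the right, and its covers are exactly the $\ell-1$ partitions
\[
\nu^{(r)} := \lambda + e_{r-1} - e_r \qquad (2 \leqslant r \leqslant \ell),
\]
obtained by moving the last box of row $r$ up to the end of row $r-1$.

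Next I would prove the ``local'' statement $f^{\nu^{(r)}} < f^\lambda$ for every cover, by a direct hook-length computation. Writing $\ell_i := \lambda_i + \ell - i$ for the first-column hook lengths, so that $f^\lambda = n!\, \prod_{i<j}(\ell_i - \ell_j)\big/\prod_i \ell_i!$, passing to $\nu^{(r)}$ only replaces $\ell_{r-1}, \ell_r$ by $\ell_{r-1}+1, \ell_r-1$, and one obtains, using $\ell_{r-1} - \ell_r = a_{\ell-r+2}+1$,
\[
\frac{f^{\nu^{(r)}}}{f^\lambda} = \frac{a_{\ell-r+2}+3}{a_{\ell-r+2}+1} \cdot \frac{\ell_r}{\ell_r + a_{\ell-r+2}+2} \cdot \prod_{i<r-1} \frac{(\ell_i - \ell_{r-1} - 1)(\ell_i - \ell_r + 1)}{(\ell_i - \ell_{r-1})(\ell_i - \ell_r)} \cdot \prod_{i>r} \frac{(\ell_{r-1} - \ell_i + 1)(\ell_r - \ell_i - 1)}{(\ell_{r-1} - \ell_i)(\ell_r - \ell_i)}.
\]
Each of the $(r-2) + (\ell - r) = \ell - 2$ factors in the two products equals $1 - \frac{a_{\ell-r+2}+2}{m}$ for a positive integer $m$, hence is strictly less than $1$, while the first two factors multiply to at most $1$ exactly when $\ell_r \leqslant \binom{a_{\ell-r+2}+2}{2}$; since $\ell_r = \sum_{i=1}^{\ell-r+1} a_i + \ell - r$, this last inequality is precisely the hypothesis of the conjecture at $j = \ell - r + 2$. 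Thus the $\ell-1$ hypotheses control the $\ell-1$ covers bijectively, and the assumption $\ell > 2$ is exactly what makes the products nonempty and hence the inequality strict, giving $f^{\nu^{(r)}} < f^\lambda$.

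Since $f$ need not be unimodal along saturated dominance chains, this does not yet establish maximality, and the final---and hardest---step is to control every $\nu >_D \lambda$, not only the covers. As in the three-row proofs, I would argue by induction on $\ell$, and for fixed $\ell$ on a secondary statistic, with Propositions~\ref{prop.max_two_row}--\ref{maxhook} furnishing base cases. Each cover is again a staircase of the conjectured shape---$\nu^{(\ell)}$ loses a row when $a_1 = 1$, $\nu^{(2)}$ has strictly smaller second part, and the interior covers rearrange the step sizes---and one checks that the conjecture's hypotheses are inherited (with slack) by each of them except possibly along one boundary condition. The combinatorial heart, generalizing the explicit Hasse diagrams drawn around $\lambda$ in the proofs of Propositions~\ref{prop.max_3r1} and~\ref{prop.max_3r2}, is to show that every $\nu >_D \lambda$ either lies in the up-set of the ``main cover'' $\nu^{(2)}$---where maximality of $\nu^{(2)}$ (by the inductive hypothesis) together with $f^{\nu^{(2)}} < f^\lambda$ from the previous step closes the case---or lies in a finite ``strip'' of partitions sharing their first part with $\lambda$ which re-enters the up-set of $\nu^{(2)}$ after boundedly many covering steps, and on which one checks $f^\nu < f^\lambda$ directly via the hook length formula.

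I expect two principal obstacles. First, the bookkeeping: for general $\ell$ one must organize $\ell-1$ covers together with the order structure above them, rather than the two covers and single merging node of the three-row cases, and one must isolate and separately dispatch the boundary configurations where a cover fails to inherit a hypothesis, by an explicit finite check in the spirit of the computations for $(4,3,2)$ and $(5,3,2)$ in the proof of Proposition~\ref{prop.max_3r2}. Second, even granting the combinatorial skeleton, the inequalities $f^\nu < f^\lambda$ now involve products of $\Theta(\ell)$ rational factors in $a_1, \dots, a_\ell$, and establishing these uniformly in $\ell$, rather than degree by degree, is the technical crux.
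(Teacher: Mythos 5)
First, a point of calibration: the paper does not prove this statement at all. It is stated as Conjecture~\ref{conjecture.maximal}, supported only by computer verification for $|\lambda|\leqslant 30$, and Section~\ref{section.discussion} explicitly lists a proof as an open problem ("seems in reach with the methods developed in this paper"). So there is no paper proof to compare you against; the only question is whether your proposal settles the conjecture, and it does not. The part of your argument that is complete is correct and genuinely in the spirit of Propositions~\ref{prop.max_two_row}--\ref{maxhook}: with all $a_i\geqslant 1$ the dominance covers of $\lambda$ are exactly the $\ell-1$ adjacent-row moves $\nu^{(r)}$ (no adjacent-column move exists because consecutive parts of $\lambda^t$ differ by at most one), your hook-length ratio is right, the equivalence $\ell_r\leqslant\binom{a_{\ell-r+2}+2}{2}$ $\Leftrightarrow$ (first two factors $\leqslant 1$) matches the $j$-th hypothesis to the cover $r=\ell-j+2$ exactly, and the $\ell-2\geqslant 1$ remaining factors of the form $1-\tfrac{a_{\ell-r+2}+2}{m}$ give strictness (consistent with the paper's remark that the weak inequality suffices only for $\ell>2$). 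This establishes that no cover of $\lambda$ has at least as many standard Young tableaux.

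That, however, is strictly weaker than maximality in the standard immersion poset, which requires $f^\nu<f^\lambda$ for every $\nu>_D\lambda$; since $f$ is not monotone along dominance chains, the non-cover elements are the whole difficulty (they are exactly what forced the delicate inductions and Hasse-diagram case analyses in Propositions~\ref{prop.max_3r1} and~\ref{prop.max_3r2}). Your plan for this global step is only a sketch, and it has a concrete unresolved obstruction that you yourself flag: the induction needs the covers to again be staircases satisfying the conjecture's hypotheses, but this inheritance fails in general --- for instance $\nu^{(2)}$ replaces $a_{\ell-1}$ by $a_{\ell-1}-1$ (and $a_\ell$ by $a_\ell+2$), and the hypothesis at $j=\ell-1$ can then be violated, so the inductive hypothesis is not available where you want to invoke it. The proposed remedy (a finite ``strip'' of partitions sharing the first part with $\lambda$, dispatched by direct hook-length estimates uniform in $\ell$) is exactly the hard core of the problem and is left unexecuted. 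So the proposal should be read as a correct treatment of the cover relations plus a plausible but incomplete programme for the rest; the conjecture remains open both in the paper and after your attempt.
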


This conjecture has been verified with {\sc SageMath}~\cite{sagemath} for $|\lambda|\leqslant 30$.

\begin{remark}
        Proposition~\ref{prop.max_two_row} addresses the case $\ell=2$ associated to Conjecture~\ref{conjecture.maximal}.
        Note that for $\ell=2$ the condition stated in Conjecture~\ref{conjecture.maximal} reads
        \[
        \binom{a_2+2}{2} \geqslant a_1, \quad \text{whereas the condition from Proposition~\ref{prop.max_two_row} is} \quad
        \binom{a_2+2}{2} > a_1.
        \]
        This discrepancy comes from the fact that for $\ell>2$, there are more factors contributing to the inequality in $\frac{f^\mu}{f^\lambda}<1$.
\end{remark}

\section{Immersion poset}
\label{section.immersion}

In this section we turn to the immersion poset. In Section~\ref{section.properties}, we study basic properties of the immersion poset.
In Section~\ref{section.injections}, we provide explicit injections between certain sets of semistandard Young tableaux, which are used to 
determine statements about maximal elements and cover relations in the immersion poset. In Sections~\ref{section.hooks} and~\ref{section.two column}, 
we study the immersion poset restricted to hook partitions and two column partitions, respectively. We conclude in Section~\ref{section.lowerintervals}
with conjectures about certain lower intervals in the immersion poset and prove that the conjectured intervals give Schur-positive sums of power sum
symmetric functions.

\subsection{Properties of the immersion poset}
\label{section.properties}

We begin by specifying the minimal element.

\begin{lemma}
        \label{lemma.minimal immersion}
        The partition $(1^n)$ is the unique minimal element in the immersion poset $(\mathcal{P}(n), \leqslant_I)$.
\end{lemma}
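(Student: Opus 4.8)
The plan is to show that $(1^n)$ is the unique minimal element of $(\mathcal{P}(n), \leqslant_I)$ by combining two facts: first, that $(1^n)$ is comparable to every partition from below, and second, that nothing lies strictly below it. The key observation is that the immersion poset refines (is refined by?) the standard immersion poset — more precisely, $\lambda \leqslant_I \mu$ implies $\lambda \leqslant_{std} \mu$, as established in Section~\ref{section.standard}. So minimality considerations in the standard immersion poset transfer to the immersion poset.

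First I would argue that $(1^n) \leqslant_I \lambda$ for every $\lambda \in \mathcal{P}(n)$. By Lemma~\ref{lemma.immersion K}, it suffices to check that $K_{(1^n),\alpha} \leqslant K_{\lambda,\alpha}$ for all $\alpha \vdash n$. Now $K_{(1^n),\alpha}$ counts semistandard Young tableaux of shape $(1^n)$ (a single column) and content $\alpha$; such a tableau exists only when $\alpha = (1^n)$, in which case there is exactly one. Hence $K_{(1^n),\alpha} = \delta_{\alpha,(1^n)}$. For $\alpha \neq (1^n)$ the inequality is trivial, and for $\alpha = (1^n)$ we need $1 \leqslant K_{\lambda,(1^n)} = f^\lambda$, which holds since every partition shape admits at least one standard Young tableau. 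Therefore $(1^n) \leqslant_I \lambda$ for all $\lambda$.

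Next I would rule out anything strictly below $(1^n)$: if $\lambda \leqslant_I (1^n)$ then in particular $\lambda \leqslant_D (1^n)$ in dominance order (a necessary condition noted after Lemma~\ref{lemma.immersion K}), but $(1^n)$ is the unique minimal element of dominance order, forcing $\lambda = (1^n)$. Combining this with the previous paragraph, $(1^n)$ is below everything and has nothing strictly below it, so it is the unique minimal element. Alternatively, and even more directly, one can simply invoke that $\lambda \leqslant_I (1^n)$ implies $\lambda \leqslant_{std} (1^n)$ by the refinement, and then apply Lemma~\ref{lemma.minimal}.

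I do not anticipate a genuine obstacle here; the statement is essentially a bookkeeping consequence of results already in place. The only thing to be careful about is making the argument self-contained — deciding whether to route through Lemma~\ref{lemma.minimal} and the refinement property, or to give the direct Kostka-number computation. I would lean toward the direct computation via Lemma~\ref{lemma.immersion K}, since it also exhibits explicitly that $(1^n)$ is comparable to all of $\mathcal{P}(n)$ and mirrors the proof of Lemma~\ref{lemma.minimal}.
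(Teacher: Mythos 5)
Your proposal is correct and follows essentially the same route as the paper: the authors also verify $K_{(1^n),\alpha}\leqslant K_{\lambda,\alpha}$ directly (noting $K_{(1^n),\alpha}=0$ for $\alpha\neq(1^n)$ and $f^{(1^n)}=1\leqslant f^\lambda$) and conclude via Lemma~\ref{lemma.immersion K}. Your additional paragraph ruling out elements strictly below $(1^n)$ is fine but not strictly needed, since being a global minimum already forces unique minimality by antisymmetry.
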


\begin{proof}
        We have $f^{(1^n)}=1\leqslant f^\lambda$ for all $\lambda \in \mathcal{P}(n)$. Furthermore $K_{(1^n),\alpha}=0\leqslant K_{\lambda,\alpha}$ for 
        all $\alpha\neq (1^n)$ and $\lambda \in \mathcal{P}(n)$. By Lemma~\ref{lemma.immersion K} this proves the claim.
\end{proof}

Analogously to Lemma~\ref{lemma.col}, we prove the following result.

\begin{lemma}
        \label{lemma.cover}
        We have
        \begin{enumerate}
                \item $(1^n) \lessdot_{I} (n)$ for all $n$ and
                \item $(2,1^{n-2}) \lessdot_{I} (n-1,1)$ for all $n \geqslant 3$.
        \end{enumerate}
\end{lemma}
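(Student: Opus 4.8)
The plan is to deduce both covering relations from Lemma~\ref{lemma.immersion K}, which recasts $\lambda\leqslant_I\mu$ as the coordinatewise inequality $K_{\lambda,\alpha}\leqslant K_{\mu,\alpha}$ for all $\alpha\vdash n$, together with two facts already available: that $\leqslant_I$ refines to $\leqslant_{std}$ (so $\lambda\leqslant_I\mu\Rightarrow\lambda\leqslant_{std}\mu$), and that the analogous covers $(1^n)\lessdot_{std}(n)$ and $(2,1^{n-2})\lessdot_{std}(n-1,1)$ hold by Lemma~\ref{lemma.col}. For each of the two pairs $\lambda\lessdot\mu$, I would first check the Kostka inequalities to obtain $\lambda\leqslant_I\mu$, and then note that any $\nu$ with $\lambda<_I\nu<_I\mu$ would satisfy $\lambda<_{std}\nu<_{std}\mu$, contradicting the corresponding covering relation in the standard immersion poset; hence $\lambda\lessdot_I\mu$. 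This second step is uniform, so essentially all of the (minimal) work is in verifying the Kostka inequalities.

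For part~(1) this is immediate: $K_{(n),\alpha}=1$ for every $\alpha\vdash n$ (a single row admits exactly one semistandard filling of any given content), while $K_{(1^n),\alpha}$ equals $1$ for $\alpha=(1^n)$ and $0$ otherwise, so $K_{(1^n),\alpha}\leqslant K_{(n),\alpha}$ throughout; one could equally invoke Lemma~\ref{lemma.minimal immersion}. For part~(2), with $\lambda=(2,1^{n-2})$ and $\mu=(n-1,1)$, the point is to identify the relevant Kostka numbers. In a semistandard tableau of shape $\lambda$ the first column contributes $n-1$ strictly increasing (hence distinct) entries and there is only one further cell, so at most one value is repeated; thus $K_{\lambda,\alpha}=0$ unless $\alpha\in\{(1^n),(2,1^{n-2})\}$, with $K_{\lambda,(1^n)}=f^\lambda=n-1$ and $K_{\lambda,(2,1^{n-2})}=1$. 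For $\mu=(n-1,1)$, the single box in the second row must carry a part of $\alpha$ strictly larger than the smallest part (the remaining entries being then forced, in weakly increasing order, into row~1), so $K_{\mu,\alpha}$ equals the number of distinct parts of $\alpha$ minus one; this equals $n-1$ at $\alpha=(1^n)$, equals $1$ at $\alpha=(2,1^{n-2})$, and is always $\geqslant 0$. Comparing term by term gives $K_{\lambda,\alpha}\leqslant K_{\mu,\alpha}$ for all $\alpha$, hence $\lambda\leqslant_I\mu$, and the uniform second step together with Lemma~\ref{lemma.col}(2) promotes this to a cover.

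I do not expect a genuine obstacle: the only real content is the bookkeeping in part~(2), namely confirming that $(1^n)$ and $(2,1^{n-2})$ are the only contents with $K_{(2,1^{n-2}),\alpha}>0$ and that the two values match $K_{(n-1,1),\cdot}$ exactly. The one subtlety worth stating carefully is the logic of the promotion step: a covering relation that survives into a refinement remains a covering relation, precisely because any witness $\nu$ to non-covering in $\leqslant_I$ is automatically a witness in $\leqslant_{std}$. (As with Lemma~\ref{lemma.col}, the case $n=3$ is degenerate, since there $(2,1^{n-2})$ and $(n-1,1)$ coincide.)
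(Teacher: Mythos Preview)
Your proposal is correct and follows essentially the same approach as the paper: verify $\lambda<_I\mu$ (the paper via the monomial expansions of $s_\lambda$ and $s_\mu$, you equivalently via Kostka numbers and Lemma~\ref{lemma.immersion K}), then use that $\leqslant_I$ refines $\leqslant_{std}$ together with Lemma~\ref{lemma.col} to promote the relation to a cover. Your added Kostka computations for part~(2) are accurate (noting that by ``distinct parts'' you mean $\ell(\alpha)$), and your observation about the degenerate case $n=3$ is apt.
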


\begin{proof}
        By Lemma~\ref{lemma.minimal immersion}, we have $(1^n) <_I (n)$. By Lemma~\ref{lemma.col}, $(1^n) \lessdot_{std} (n)$. Since in the immersion
        poset there are fewer order relations than in the standard immersion poset, the first part of the lemma follows.

        We have $(2,1^{n-2}) <_{I} (n-1,1)$ since
        \[
        s_{(2,1^{n-2})}=(n-1) m_{(1^n)} + m_{(2,1^{n-2})} \text{ and } 
        s_{(n-1,1)}=(n-1) m_{(1^n)} + (n-2) m_{(2,1^{n-2})}+\sum\limits_{\mu \neq (1^n),(2,1^{n-2})} K_{(n-1,1),\mu} m_\mu.
        \]
        Again, since by Lemma~\ref{lemma.col} we have $(2,1^{n-2}) \lessdot_{std} (n-1,1)$, the second part of the lemma follows.
\end{proof}

Unlike in the standard immersion poset, where $\lambda$ and $\lambda^t$ are always comparable as long as they are comparable in
dominance order (see Remark~\ref{remark.standard immersion}), this is not always true in the immersion poset. For example $\lambda=(4, 4, 2, 1, 1)$ 
and $\lambda^t$ are not comparable in the immersion poset since $K_{(4,4,2,1,1), (4,4,1,1,1,1)} > K_{(5,3,2,2), (4,4,1,1,1,1)}$. For hook partitions, it is however true that $\lambda<_I\lambda^t$ if $\lambda<_D \lambda^t$ (see Corollary~\ref{corollary.hook transpose}).

We prove the analog of Lemma~\ref{two col} in the next section using injections on semistandard Young tableaux. See
Corollaries~\ref{corollary.cover two col immersion0}, \ref{corollary.cover two col immersion1}, and~\ref{corollary.cover two col immersion2}.

\subsection{Explicit injections}
\label{section.injections}
		
Recall from Lemma~\ref{lemma.immersion K} that 
$\lambda \leqslant_I \mu$ if and only if $K_{\lambda,\nu}\leqslant K_{\mu,\nu}$ for all $\nu \in \mathcal{P}(n)$. 
The Kostka number $K_{\lambda,\nu}$ is the cardinality of the set of semistandard Young tableaux $\mathsf{SSYT}(\lambda,\nu)$ of
shape $\lambda$ and content $\nu$.
Hence we can analyze the order relations $\lambda \leqslant_I \mu$ by constructing explicit injections 
\begin{equation}
\label{equation.phi}
	\varphi \colon \mathsf{SSYT}(\lambda,\nu) \to \mathsf{SSYT}(\mu,\nu)
\end{equation}
for all $\nu \in \mathcal{P}(n)$.

To this end, we present one such injection, where $\mu$ differs from $\lambda$ by moving a single cell from the $c$-th column to the 
$(c + 1)$-th column, and $\lambda$ has a bound on the relative size of the two columns. Upon establishing this first injection, we
refine it to obtain more precise bounds on the relative size of the columns. We partially characterize what elements cannot be maximal in 
the immersion poset, similar to those given in Section~\ref{section.maximal} for the standard immersion poset.

Let
\begin{equation}
\label{equation.mu lambda}
\begin{split}
	\lambda &= (\lambda_1, \dots, \lambda_\alpha,  c^\beta, \lambda_{\alpha + \beta + 1}, \dots),\\
	\mu &= (\lambda_1, \dots, \lambda_\alpha, c + 1,  c^{\beta - 2}, c - 1, \lambda_{\alpha +\beta + 1}, \dots),
\end{split}
\end{equation}
such that either $\alpha > 0$ and $\lambda_{\beta + \alpha + 1} < c < \lambda_\alpha$, or $\alpha = 0$ and $\lambda_{\beta + \alpha + 1} < c$. 
In particular, $\lambda_{\beta + \alpha + 1}$ can be $0$. We define a map 
\[
	\varphi_0 \colon \mathsf{SSYT}(\lambda,\nu) \to \mathsf{YT}(\mu,\nu),
\]
where $\mathsf{YT}(\mu,\nu)$ is the set of all tableaux of shape $\mu$ and content $\nu$, not necessarily semistandard.
We will show in Proposition~\ref{two column injection 0} that when $\beta \geqslant \alpha + 2$, the image of $\varphi_0$ will be contained in $\mathsf{SSYT}(\mu,\nu)$, so $\varphi_0$ will be as in ~\eqref{equation.phi}.

For $T \in \mathsf{SSYT}(\lambda,\nu)$, we define $\varphi_0(T)$ as follows.
Suppose the entries in the $c$-th column of $T$ in increasing order are 
$x_{\beta + \alpha}, x_{\beta + \alpha - 1}, \dots, x_1$ and the entries in the $(c + 1)$-th column of $T$ in increasing order are 
$y_\alpha, y_{\alpha - 1}, \dots, y_1$. Let $i$ be the smallest index such that $x_i > y_i$.
If no such index exists, let $i = \alpha + 1$. Then $\varphi_0(T)$ is the tableau such that the entries in the $c$-th column of $\varphi_0(T)$ are 
\[
	x_{\beta + \alpha}, x_{\beta + \alpha - 1}, \dots, x_{i + 1}, y_{i - 1}, y_{i - 2}, \dots, y_1,
\]
the entries in the $(c + 1)$-th column of $\varphi_0(T)$ are 
\[
	y_\alpha, y_{\alpha - 1}, \dots, y_i, x_i, x_{i - 1}, \dots, x_1,
\]
and all other entries are the same as those in $T$. In other words, $\varphi_0$ moves the cell containing $x_1$ to the $(\alpha + 1)$-th row 
of the $(c + 1)$-th column, and swaps each $x_j$ with $y_{j - 1}$ for all $2 \leqslant j \leqslant i$.

More concretely, the $c$-th and $(c+1)$-th column in $T$
and $\varphi_0(T)$ look as follows:
\begin{equation}
\label{phi.zero tableaux}
T:
\ytableausetup{boxsize=5em}%
\scalebox{0.8}[.25]{
\begin{ytableau}
\filling{4}{x_{\beta + \alpha}} & \filling{4}{y_\alpha} \\
\filling{4}{\vdots} & \filling{4}{\vdots} \\
\filling{4}{x_{\beta + i}} & \filling{4}{y_i} \\
\filling{4}{x_{\beta + i - 1}} & *(yellow) \filling{4}{y_{i - 1}} \\
\filling{4}{\vdots} & *(yellow) \filling{4}{\vdots} \\
\filling{4}{x_{\beta + 1}} & *(yellow) \filling{4}{y_1} \\
\filling{4}{x_\beta} \\
\filling{4}{\vdots} \\
\filling{4}{x_{i + 1}} \\
*(green) \filling{4}{x_i } \\
*(green) \filling{4}{\vdots} \\
*(green) \filling{4}{x_2} \\
*(green) \filling{4}{x_1}
\end{ytableau}} \qquad
\varphi_0(T):
\ytableausetup{boxsize=5em}%
\scalebox{0.8}[.25]{
\begin{ytableau}
\filling{4}{x_{\beta + \alpha}} & \filling{4}{y_\alpha} \\
\filling{4}{\vdots} & \filling{4}{\vdots} \\
\filling{4}{x_{\beta + i}} & \filling{4}{y_i} \\
\filling{4}{x_{\beta + i - 1}} & *(green) \filling{4}{x_i} \\
\filling{4}{\vdots} & *(green) \filling{4}{\vdots} \\
\filling{4}{x_{\beta + 1}} & *(green) \filling{4}{x_2} \\
\filling{4}{x_\beta} & *(green) \filling{4}{x_1} \\
\filling{4}{\vdots} \\
\filling{4}{x_{i + 1}} \\
*(yellow) \filling{4}{y_{i - 1}} \\
*(yellow) \filling{4}{\vdots} \\
*(yellow) \filling{4}{y_1}
\end{ytableau}}
\end{equation}
The cells marked in green contain the entries that move from the $c$-th column to the $(c + 1)$-th column,
and the cells marked in yellow are the entries that move from $(c + 1)$-th column to the $c$-th column.
We continue to use this convention for all subsequent examples of $\varphi_0$.

\begin{remark}
\label{choice of i}
Observe that by our choice of $i$, both $x_{i + 1} < x_{i - 1} \leqslant y_{i - 1}$ and $y_i < x_i$, so the columns of $\varphi_0(T)$ are strictly 
increasing by construction.
\end{remark}

\begin{example}
For $\lambda = (3, 2, 1^4)$ and $\mu = (3, 2, 2, 1^2, 0)$, we have $c=1$, $\alpha=2$ and $\beta=4$. Here are some examples of the injection 
$\varphi_0$ on various tableaux of shape $\lambda$:
\begin{equation*}
\ytableausetup{boxsize=normal}
\begin{ytableau}
1 & 1 & 3 \\
2 & 2 \\
3 \\
4 \\
5 \\
*(green) 6
\end{ytableau}
\mapsto
\begin{ytableau}
1 & 1 & 3 \\
2 & 2 \\
3 & *(green) 6 \\
4 \\
5
\end{ytableau}
\hspace{50pt}
\begin{ytableau}
1 & 1 & 2 \\
2 & *(yellow) 7 \\
3 \\
4 \\
*(green) 5 \\
*(green) 6
\end{ytableau}
\mapsto
\begin{ytableau}
1 & 1 & 2 \\
2 & *(green) 5 \\
3 & *(green) 6 \\
4 \\
*(yellow) 7
\end{ytableau}
\hspace{50pt}
\begin{ytableau}
1 & *(yellow) 6 & 9 \\
2 & *(yellow) 8 \\
3 \\
*(green) 4 \\
*(green) 5 \\
*(green) 7
\end{ytableau}
\mapsto
\begin{ytableau}
1 & *(green) 4 & 9 \\
2 & *(green) 5 \\
3 & *(green) 7 \\
*(yellow) 6 \\
*(yellow) 8
\end{ytableau}
\end{equation*}
\end{example}

\begin{proposition}
\label{two column injection 0}
Let $\lambda$ and $\mu$ be as in \eqref{equation.mu lambda} with $\beta \geqslant \alpha + 2$.
Then $\varphi_0$ as defined above is an injection 
\[
\varphi_0 \colon \mathsf{SSYT}(\lambda,\nu) \to \mathsf{SSYT}(\mu,\nu).
\]
\end{proposition}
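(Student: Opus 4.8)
The plan is to verify that $\varphi_0$ is well-defined (i.e.\ lands in $\mathsf{SSYT}(\mu,\nu)$), that it preserves content, and that it is injective. Content preservation is immediate, since $\varphi_0$ merely rearranges the multiset of entries in columns $c$ and $c+1$. For well-definedness we must check three things: that $\varphi_0(T)$ has shape $\mu$, that every column of $\varphi_0(T)$ is strictly increasing, and that every row of $\varphi_0(T)$ is weakly increasing. The shape is correct because column $c$ loses exactly one cell (the cell of $x_1$) and column $c+1$ gains exactly one cell; the hypothesis $\beta\geqslant\alpha+2$ guarantees that after removing a cell, column $c$ still has at least $\alpha+1$ cells, so the new column $c+1$ of length $\alpha+1$ fits inside column $c$, and the condition $\lambda_{\alpha+\beta+1}<c<\lambda_\alpha$ (or $\lambda_{\alpha+\beta+1}<c$ when $\alpha=0$) ensures no conflict with the neighboring columns $c-1$ and $c+2$. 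The strict increase of columns in $\varphi_0(T)$ is exactly the content of Remark~\ref{choice of i}: by the choice of $i$ as the first index with $x_i>y_i$, we have $y_j\leqslant x_j$ for $j<i$ so the yellow entries $y_{i-1},\dots,y_1$ slot in below $x_{i+1}$ in column $c$ with $x_{i+1}<x_{i-1}\leqslant y_{i-1}$, and $y_i<x_i$ so the green entries $x_i,\dots,x_1$ slot in below $y_i$ in column $c+1$.

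The real work is the row condition. First I would handle column $c$: its entries in $\varphi_0(T)$, reading down, are the old entries $x_{\beta+\alpha},\dots,x_{i+1}$ followed by $y_{i-1},\dots,y_1$. Against the column $c-1$ entries of $T$ (unchanged), the top portion is fine since those were already valid in $T$; for the new bottom portion I must check each $y_{j}$ (for $j\leqslant i-1$), now sitting in some row $r$ of column $c$, dominates the entry directly to its left in column $c-1$. Since $y_j$ was originally in row $j+\beta$ (roughly, in the $(c+1)$-st column, below $x_{\beta+j}$ in column $c$) — more precisely $y_j$ sat in row $j$ of column $c+1$, to the right of $x_{\beta+j}$ — and now sits in a row $r\leqslant \beta+\alpha-1$, I need the column $c-1$ entries to be small enough; this follows from the original row-weak-increase in $T$ combined with the fact that $y_j$ has moved strictly \emph{upward} in the diagram. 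Symmetrically, for column $c+1$ of $\varphi_0(T)$, whose new entries are $x_i,x_{i-1},\dots,x_1$ at the bottom, I must check these dominate the corresponding entries in column $c$ of $\varphi_0(T)$ to their left: the left neighbor of $x_j$ (for $j\leqslant i$) in the new tableau is some $y_{j'}$ or some $x_{\beta+j'}$, and in all cases the inequality reduces to inequalities already present among the $x$'s and $y$'s in $T$, using again that the $x$-cells moved upward. I would also check the column $c+2$ side, but the bound $\lambda_{\alpha+\beta+1}<c$ makes column $c+2$ strictly shorter, so there is nothing below row $\alpha$ to worry about; the top $\alpha$ rows of columns $c+1$ and $c+2$ are only affected in the rows $\geqslant i$ where the entry $y_i,\dots,y_\alpha$ is unchanged, so those comparisons survive from $T$.

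For injectivity I would exhibit a left inverse $\psi$ on the image. Given $S=\varphi_0(T)$, one recovers $i$ as follows: $x_1$ is the entry in row $\alpha+1$ of column $c+1$ of $S$, and the green block $x_i,\dots,x_1$ is the maximal contiguous block at the bottom of column $c+1$ of $S$ whose top element $x_i$ is less than the entry $y_i$ immediately above it — equivalently, $i-1$ is the length of the yellow block $y_{i-1},\dots,y_1$ appended at the bottom of column $c$, which is detected as the maximal bottom block of column $c$ of $S$ consisting of entries that are (weakly) larger than what would be forced. More cleanly: in $S$, read column $c$ from the bottom and column $c+1$ from the bottom; the amount to "undo" is exactly determined by comparing $S$'s column $c$ bottom block against column $c+1$. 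I would make this precise by checking that, on $S=\varphi_0(T)$, the reconstructed index equals the original $i$ and the un-swap returns $T$; the key point is that the defining property of $i$ (first index with $x_i>y_i$, equivalently $x_{i+1}<x_{i-1}\leqslant y_{i-1}$ and $y_i<x_i$) is visible in $S$ and pins down where to cut. The main obstacle is bookkeeping the row inequalities across the boundary rows $i-1,i,i+1$ of the two modified columns against their unmodified left and right neighbors; I expect this to be a careful but routine case analysis, driven entirely by the single observation that every cell that moves between the two columns moves strictly upward in the Young diagram.
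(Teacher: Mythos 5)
Your overall strategy (direct verification of semistandardness plus an explicit inverse) matches the paper's, but the row-condition argument has a genuine gap exactly where the hypothesis $\beta \geqslant \alpha+2$ must be used. When you check that each relocated $x_j$ in column $c+1$ dominates its left neighbour, you allow that neighbour to be ``some $y_{j'}$ or some $x_{\beta+j'}$'' and assert that ``in all cases the inequality reduces to inequalities already present among the $x$'s and $y$'s in $T$.'' In the $y_{j'}$ case this is simply false: nothing in $T$ forces $y_{j'} \leqslant x_j$, and Example~\ref{example.failure} in the paper is precisely an instance where such an overlapping pair is decreasing. The actual content of the hypothesis $\beta \geqslant \alpha+2$ is that this case never occurs: the yellow entries $y_{i-1},\dots,y_1$ land in rows $\beta+\alpha-i+1$ through $\beta+\alpha-1$ of column $c$, strictly below row $\alpha+1$, so the left neighbour of every relocated $x_j$ is always $x_{\beta+j-1}$ and never a $y$. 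You invoke $\beta\geqslant\alpha+2$ only for the shape of $\mu$ (where in fact $\beta\geqslant 2$ already suffices), so as written your argument would ``prove'' the statement without the hypothesis, which is known to fail. Relatedly, your organizing principle that ``every cell that moves between the two columns moves strictly upward'' is incorrect: the green $x$-cells move up and to the right, but the yellow $y$-cells move \emph{down} and to the left (from row $\alpha-j+1$ of column $c+1$ to row $\beta+\alpha-j$ of column $c$); the correct reason the left-neighbour comparison survives for them is that $y_{j-1}$ replaces the strictly smaller entry $x_j$ in its cell.

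The injectivity step is also not carried out. The paper recovers the cut index from the image by an across-column comparison: $i'$ is the smallest index such that the $i'$-th entry from the bottom of column $c+1$ exceeds the $i'$-th entry from the bottom of column $c$, and one checks $i'=i$ using $x_{i+1}<x_{i-1}\leqslant y_{i-1}$ and $x_j\leqslant y_j$ for $j<i$. Your proposed criterion (``the maximal contiguous block at the bottom of column $c+1$ whose top element is less than the entry immediately above it'') is a within-column comparison and cannot be right, since column entries strictly increase downward, so no entry is less than the one above it; the subsequent reformulation (``larger than what would be forced'') is not a definition. The idea of an explicit un-swap is the right one, but the key point --- a concrete, checkable rule that reads off $i$ from $\varphi_0(T)$ alone --- is missing.
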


\begin{proof}
Let $T \in \mathsf{SSYT}(\lambda,\nu)$. Note that the content does not change under $\varphi_0$. We need to check that the $c$-th and 
$(c+1)$-th columns of $\varphi_0(T)$ are strictly increasing, and that the $(\alpha - i + 2)$-th through $(\alpha + \beta - 1)$-th rows of $\varphi_0(T)$ are 
weakly increasing, since all other entries are identical to those in $T$. (It may be helpful to consult~\eqref{phi.zero tableaux}.)
The columns are strictly increasing by Remark \ref{choice of i}.

For rows, we first consider the $(\alpha - i + 2)$-th through $(\alpha + 1)$-th rows.
Due to the bound $\beta \geqslant \alpha + 2$, in the $c$-th column, these rows contain $x_{\beta + i - 1}, \dots, x_\beta.$
In the $(c + 1)$-th column, irrespective of the bound on $\alpha$ and $\beta$, these rows contain $x_i, \dots, x_1$.
In particular, the bound $\beta \geqslant \alpha + 2$ makes it so that there is no $y_j$ entry in these rows, so there is no ``overlap'' of $y_j$ and $x_k$ 
for $1 \leqslant k \leqslant i$. The rows are thus strictly increasing because
$x_j > x_k$ for all $j < k$, so the $x_j$ entries in the $(c + 1)$-th column are greater than the entries to their left in the $c$-th column;
and $x_j < x_{j - 1} \leqslant y_{j - 1}$ for $2 \leqslant j \leqslant i$,
so the $x_j$ entries in the $(c + 1)$-th column are less than any entries to their right, originally from $T$.
(Such entries on the right do not necessarily exist. In particular, $x_1$ never has any cell to its right.)

Now consider the $(\alpha + 2)$-th through $(\alpha + \beta - 1)$-th rows. In the $c$-th column, these rows contain
\[
x_{\beta - 1}, \dots, x_{i + 1}, y_{i - 1}, \dots, y_1,
\]
and in the $(c + 1)$-th column, these rows contain no cells.
They are weakly increasing because \newline $y_{j - 1} \geqslant x_{j - 1} > x_j$ for $2 \leqslant j \leqslant i$,
so the $y_{j - 1}$ entries in the $c$-th column are greater than the entries to their left, originally from $T$,
and they have no cells to their right.

To show injectivity, we define an explicit inverse $\psi_0$. Let $T' \in \varphi_0(\mathsf{SSYT}(\lambda,\nu))$. Suppose the entries in the $c$-th column of $T'$ in 
increasing order are $x'_{\beta + \alpha - 1}, x'_{\beta + \alpha - 2}, \dots, x'_1$, and the entries in the $(c + 1)$-th column of $T'$ in increasing order are 
$y'_{\alpha + 1}, y'_\alpha, \dots, y'_1$.
Let $i'$ be the smallest index such that $y'_{i'} > x'_{i'}$.
This $i'$ will be equal to the $i$ from the definition of $\varphi_0$, because $x_i > x_{i + 1}$ and $x_j \leqslant y_j$ for $1 \leqslant j \leqslant i - 1$ in $T$.

Then $\psi_0(T')$ is the tableau of shape $\lambda$ such that the entries in the $c$-th column of $\psi_0(T')$ are
\[
	x'_{\beta + \alpha - 1}, x'_{\beta + \alpha - 2}, \dots, x'_{\beta - 1}, x'_{\beta - 2}, x'_{\beta - 3}, \dots, x'_{i'}, y'_{i'}, y'_{i' - 1}, \dots, y'_1,
\]
the entries in the $(c + 1)$-th column of $\psi_0(T')$ are
\[
	y'_{\alpha + 1}, y'_\alpha, \dots, y'_{i' + 1}, x'_{i' - 1}, x'_{i' - 2}, \dots, x'_1,
\]
and all other entries are the same as those in $T'$. In other words, $\psi_0$ moves the cell containing $y'_1$ to the $(\alpha + \beta)$-th position in the 
$c$-th column, and swaps each $y'_{j'}$ with $x'_{j' - 1}$ for all $2 \leqslant j' \leqslant i'$. Concretely:
\begin{equation}
\label{psi.zero tableaux}
T':
\ytableausetup{boxsize=5.5em}
\scalebox{0.8}[.25]{
\begin{ytableau}
\filling{4}{x'_{\beta + \alpha - 1}} & \filling{4}{y'_{\alpha + 1}} \\
\filling{4}{\vdots} & \filling{4}{\vdots} \\
\filling{4}{x'_{\beta + i' - 1}} & \filling{4}{y'_{i' + 1}} \\
\filling{4}{x'_{\beta + i' - 2}} & *(green) \filling{4}{y'_{i'}} \\
\filling{4}{\vdots} & *(green) \filling{4}{\vdots} \\
\filling{4}{x'_\beta} & *(green) \filling{4}{y'_2} \\
\filling{4}{x'_{\beta - 1}} & *(green) \filling{4}{y'_1} \\
\filling{4}{x'_{\beta - 2}} \\
\filling{4}{\vdots} \\
\filling{4}{x'_{i'}} \\
*(yellow) \filling{4}{x'_{i' - 1}} \\
*(yellow) \filling{4}{\vdots} \\
*(yellow) \filling{4}{x'_1}
\end{ytableau}} \qquad
\psi_0(T'): 
\scalebox{0.8}[.25]{
\begin{ytableau}
\filling{4}{x'_{\beta + \alpha - 1}} & \filling{4}{y'_{\alpha + 1}} \\
\filling{4}{\vdots} & \filling{4}{\vdots} \\
\filling{4}{x'_{\beta + i' - 1}} & \filling{4}{y_{i' + 1}} \\
\filling{4}{x'_{\beta + i' - 2}} & *(yellow) \filling{4}{x'_{i' - 1}} \\
\filling{4}{\vdots} & *(yellow) \filling{4}{\vdots} \\
\filling{4}{x'_\beta} & *(yellow) \filling{4}{x'_1} \\
\filling{4}{x'_{\beta - 1}} \\
\filling{4}{x'_{\beta - 2}} \\
\filling{4}{\vdots} \\
\filling{4}{x'_{i'}} \\
*(green) \filling{4}{y'_{i'}} \\
*(green) \filling{4}{\vdots} \\
*(green) \filling{4}{y'_2} \\
*(green) \filling{4}{y'_1}
\end{ytableau}}
\end{equation}

Since $i' = i$, $\psi_0$ moves back exactly the entries in $T'$ that were originally moved by $\varphi_0$ in $T$, so $\psi_0$ is the inverse of $\varphi_0$.
\end{proof}

As a corollary, the injection describes a class of cover relations in the immersion poset. As a specific example, it can partially address the two 
column case, which was completely addressed by Lemma~\ref{two col} for the standard immersion poset.

\begin{corollary}
\label{corollary.cover two col immersion0}
The partitions $\lambda$ and $\mu$ as in~\eqref{equation.mu lambda} with $\beta \geqslant \alpha + 2$ form a cover in the immersion poset. 
In particular, $\lambda = (2^\alpha, 1^\beta)$ and $\mu = (2^{\alpha + 1}, 1^{\beta - 2})$ form a cover.
\end{corollary}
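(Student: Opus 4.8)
The plan is to read off the relation $\lambda \leqslant_I \mu$ directly from Proposition~\ref{two column injection 0}, and then upgrade it to a cover by comparing with dominance order, where $\mu$ is easily seen to cover $\lambda$.

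\emph{Step 1: $\lambda <_I \mu$.} By Proposition~\ref{two column injection 0}, for every $\nu \vdash n$ the map $\varphi_0$ is an injection $\mathsf{SSYT}(\lambda,\nu) \hookrightarrow \mathsf{SSYT}(\mu,\nu)$, so $K_{\lambda,\nu} \leqslant K_{\mu,\nu}$ for all $\nu$; by Lemma~\ref{lemma.immersion K} this gives $\lambda \leqslant_I \mu$, and since the shapes $\lambda$ and $\mu$ are distinct, $\lambda <_I \mu$.

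\emph{Step 2: $\mu$ covers $\lambda$ in dominance order.} I would compare the two partitions column by column. All columns other than the $c$-th and $(c+1)$-th have the same height in $\lambda$ and in $\mu$: the only parts affected by passing from $\lambda$ to $\mu$ are one copy of $c$ becoming $c+1$ and one copy of $c$ becoming $c-1$, and (using $\lambda_{\alpha+\beta+1}<c<\lambda_\alpha$ when $\alpha>0$, and $\lambda_{\alpha+\beta+1}<c$ when $\alpha=0$) this does not change which of the columns $1,\dots,c-1$ or $c+2,c+3,\dots$ contain these cells. Column $c$ has height $\alpha+\beta$ in $\lambda$ and $\alpha+\beta-1$ in $\mu$, while column $c+1$ has height $\alpha$ in $\lambda$ and $\alpha+1$ in $\mu$. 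Thus $\mu$ is obtained from $\lambda$ by moving a single box from column $c$ to column $c+1$, which is precisely a covering relation $\lambda \lessdot_D \mu$ in dominance order.

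\emph{Step 3: conclude.} Recall from the discussion preceding Definition~\ref{definition.standard immersion} that $\rho \leqslant_I \sigma$ implies $\rho \leqslant_D \sigma$. Hence if some $\rho$ satisfied $\lambda <_I \rho <_I \mu$, then also $\lambda <_D \rho <_D \mu$, contradicting $\lambda \lessdot_D \mu$. Therefore $\lambda \lessdot_I \mu$. For the final assertion, specialize $c=1$ (keeping the hypothesis $\beta \geqslant \alpha+2$): then $\lambda_\alpha = 2 > 1 = c$ and $\lambda_{\alpha+\beta+1}=0<1$ when $\alpha>0$, and the $\alpha=0$ branch of~\eqref{equation.mu lambda} applies when $\alpha=0$; in either case $\lambda = (2^\alpha,1^\beta)$ and $\mu = (2^{\alpha+1},1^{\beta-2})$, so the statement just proved gives the cover.

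I do not expect a genuine obstacle here: the substantive content is the injection itself, already established in Proposition~\ref{two column injection 0}. The only points requiring a little care are the elementary column-height bookkeeping in Step~2 and the logical observation in Step~3 that a dominance cover together with $\leqslant_I$-comparability forces an $\leqslant_I$-cover, which rests entirely on the necessary condition $\leqslant_I \Rightarrow \leqslant_D$.
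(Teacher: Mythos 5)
Your proposal is correct and follows essentially the same route as the paper: the injection of Proposition~\ref{two column injection 0} gives $\lambda <_I \mu$, the box-moving criterion shows $\mu$ covers $\lambda$ in dominance order, and the implication $\leqslant_I \Rightarrow \leqslant_D$ upgrades this to a cover in the immersion poset. The paper states this in one sentence; you have merely made the column-height bookkeeping and the final logical step explicit.
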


\begin{proof}
The partition $\mu$ covers $\lambda$ in dominance order, and the injection shows that $\mu$ is greater than $\lambda$ in the immersion 
poset, so $\mu$ must also cover $\lambda$ in the immersion poset.
\end{proof}

The injection also gives a few conditions on which partitions cannot be maximal.
\begin{corollary}
\label{cor.firstparts}
If $\lambda = (a^{\beta}, b, \dots)$, where $a > b $, and $\beta \geqslant 2$, then $\lambda$ is not maximal.
\end{corollary}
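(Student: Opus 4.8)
The plan is to recognize this as a direct application of Proposition~\ref{two column injection 0} in the degenerate case $\alpha = 0$. Write $\lambda = (a^\beta, b, \dots)$ with $a > b$ and $\beta \geqslant 2$. Set $c = a$ and $\alpha = 0$ in the template~\eqref{equation.mu lambda}; the side condition there (``$\alpha = 0$ and $\lambda_{\beta + \alpha + 1} < c$'') is exactly the hypothesis $\lambda_{\beta + 1} = b < a = c$, and the size condition $\beta \geqslant \alpha + 2$ becomes $\beta \geqslant 2$, which is assumed. The corresponding $\mu$ is
\[
        \mu = (a+1, a^{\beta - 2}, a - 1, b, \dots),
\]
and I would first check this is a genuine partition: the only nontrivial inequality is $a - 1 \geqslant b$, which follows from $a > b$ since all parts are integers. (The edge cases $\beta = 2$, giving $\mu = (a+1, a-1, b, \dots)$, and $b = 0$, which is explicitly permitted in~\eqref{equation.mu lambda}, cause no trouble.)

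Next I would invoke Proposition~\ref{two column injection 0}, which supplies an injection $\varphi_0 \colon \mathsf{SSYT}(\lambda, \nu) \hookrightarrow \mathsf{SSYT}(\mu, \nu)$ for every $\nu \vdash n$. Consequently $K_{\lambda, \nu} \leqslant K_{\mu, \nu}$ for all $\nu \in \mathcal{P}(n)$, and Lemma~\ref{lemma.immersion K} gives $\lambda \leqslant_I \mu$. Since $\mu_1 = a + 1 > a = \lambda_1$ we have $\mu \neq \lambda$, hence $\lambda <_I \mu$, and therefore $\lambda$ is not a maximal element of the immersion poset. This also shows $\lambda$ is not maximal in the standard immersion poset, matching (and extending to the immersion poset) Proposition~\ref{prop.notmaximal}.

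There is no real obstacle here beyond the bookkeeping of fitting $\lambda$ and $\mu$ into the notation of~\eqref{equation.mu lambda}; the only point requiring a moment's care is confirming that taking $\alpha = 0$ is legitimate in Proposition~\ref{two column injection 0} and that the resulting $\mu$ is weakly decreasing, both of which are immediate from $a > b$.
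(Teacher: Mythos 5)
Your proposal is correct and matches the paper's proof, which likewise just applies Proposition~\ref{two column injection 0} with $\alpha = 0$ and $\beta \geqslant 2$ (taking $c = a$) to conclude $\lambda <_I \mu$ and hence non-maximality. The extra bookkeeping you include (checking $\mu$ is a partition via $a - 1 \geqslant b$, and invoking Lemma~\ref{lemma.immersion K}) is implicit in the paper's one-line argument.
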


\begin{proof}
We have $\beta \geqslant 2$ with $\alpha = 0$, so we can apply the injection.
\end{proof}

\begin{corollary}
\label{cor.notmaxhooks}
If $\lambda = (a, b^{\beta}, c, \dots)$, where $a > b > c$, and $\beta \geqslant 3$, then $\lambda$ is not maximal.
In particular, $\lambda = (a, 1^\beta)$ is not maximal for $a \geqslant 2$, $\beta \geqslant 3$.
\end{corollary}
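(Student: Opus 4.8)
The plan is to produce, for the given $\lambda$, an explicit partition $\mu$ with $\lambda <_I \mu$, which by definition shows $\lambda$ is not maximal. The partition $\mu$ will come straight out of Proposition~\ref{two column injection 0}. The first step is to fit $\lambda = (a, b^\beta, c, \dots)$ into the template~\eqref{equation.mu lambda}: take $\alpha = 1$, so that $\lambda_\alpha = a$; let the repeated column value (denoted $c$ in~\eqref{equation.mu lambda}, which I will call $b$ here to avoid clashing with the $c$ in the corollary) be $b$; and note that the part immediately following the block of $b$'s is $\lambda_{\alpha+\beta+1} = c$, which is allowed to be $0$. The hypothesis $\lambda_{\beta+\alpha+1} < (\text{repeated value}) < \lambda_\alpha$ then reads $c < b < a$, which holds by assumption, and since $\lambda$ is weakly decreasing all later parts are $\leqslant c < b$, so the run of $b$'s is genuinely a maximal block of equal parts flanked above by a strictly larger part and below by a strictly smaller one, as required. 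The side condition $\beta \geqslant \alpha + 2$ becomes exactly $\beta \geqslant 3$.

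With these identifications, Proposition~\ref{two column injection 0} supplies an injection $\varphi_0 \colon \mathsf{SSYT}(\lambda,\nu) \hookrightarrow \mathsf{SSYT}(\mu,\nu)$ for every $\nu \vdash |\lambda|$, where $\mu = (a, b+1, b^{\beta-2}, b-1, c, \dots)$. Thus $K_{\lambda,\nu} \leqslant K_{\mu,\nu}$ for all $\nu$, so $\lambda \leqslant_I \mu$ by Lemma~\ref{lemma.immersion K}. Since $\mu \neq \lambda$ (for instance $\mu_2 = b+1 \neq b = \lambda_2$) and $\leqslant_I$ is a partial order, we get $\lambda <_I \mu$, so $\lambda$ is not maximal. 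One can equivalently cite Corollary~\ref{corollary.cover two col immersion0}, which already records that such a pair $(\lambda,\mu)$ is a cover in the immersion poset.

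For the ``in particular'' clause, I would just specialize to $b = 1$ and $c = 0$: the condition $c < b < a$ becomes $0 < 1 < a$, i.e.\ $a \geqslant 2$, and the argument above then applies with $\mu = (a, 2, 1^{\beta-2})$, showing $(a, 1^\beta)$ is not maximal whenever $a \geqslant 2$ and $\beta \geqslant 3$.

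I do not expect a real obstacle here: the entire substance — that $\varphi_0(T)$ is semistandard of shape $\mu$ and that $\varphi_0$ is injective — has already been carried out in Proposition~\ref{two column injection 0}. The only thing that needs a moment of care is the index bookkeeping when matching the hypothesis of the corollary to the variables in~\eqref{equation.mu lambda}, in particular verifying that the block of repeated parts is correctly positioned (strictly larger part above, strictly smaller or zero part below), which is precisely what $a > b > c$ ensures.
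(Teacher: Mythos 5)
Your proposal is correct and follows exactly the paper's argument: the paper's own proof is the one-liner ``we have $\beta \geqslant 3$ with $\alpha = 1$, so we can apply the injection,'' i.e.\ the same instantiation of Proposition~\ref{two column injection 0} that you spell out. Your additional bookkeeping (matching the template of~\eqref{equation.mu lambda}, checking $c<b<a$, and specializing to $b=1$, $c=0$ for the hook case) is accurate and just makes the paper's terse proof explicit.
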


\begin{proof}
We have $\beta \geqslant 3$ with $\alpha = 1$, so we can apply the injection.
\end{proof}
Note that Corollary~\ref{cor.firstparts} and Corollary~\ref{cor.notmaxhooks} repeat the results from Proposition~\ref{prop.notmaximal} and the forward direction of Proposition~\ref{maxhook} concerning nonmaximal elements in the standard immersion poset.

\begin{corollary}
If $\lambda = (a, b, c, d)$ is maximal in the immersion poset, then it has no more than two identical non-zero parts.
\end{corollary}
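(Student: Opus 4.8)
The plan is to prove the contrapositive: if $\lambda=(a,b,c,d)$ (with $a\geqslant b\geqslant c\geqslant d\geqslant 0$) has some non-zero value occurring at least three times among its parts, then $\lambda$ is not maximal in the immersion poset. Since $\lambda$ has at most four parts and equal parts are consecutive, such a value must occupy rows $1,2,3$ or rows $2,3,4$; hence $\lambda$ is one of $(a,a,a,d)$ with $a>0$, $(a,b,b,b)$ with $b>0$, or $(a,a,a,a)$ with $a>0$ (the last being a specialization of each of the first two, obtained by taking $d=a$ or $a=b$). In each case I will invoke an injection from Section~\ref{section.injections}.

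For $\lambda=(a,a,a,d)$ with $d<a$, this is the shape $(a^{\beta},b,\dots)$ of Corollary~\ref{cor.firstparts} with $\beta=3\geqslant 2$ and $a>b=d$ (with empty tail when $d=0$), so $\lambda$ is not maximal. For $\lambda=(a,a,a,a)$ with $a>0$, it is again this shape, now with $\beta=4\geqslant 2$ and $a>b=0$, so $\lambda$ is not maximal. For $\lambda=(a,b,b,b)$ with $a>b>0$, this is the shape $(a,b^{\beta},c,\dots)$ of Corollary~\ref{cor.notmaxhooks} with $\beta=3\geqslant 3$ and $a>b>c=0$ --- precisely the ``$(a,1^\beta)$'' special case of that corollary when $b=1$, and covered by the general statement (with empty tail) otherwise --- so $\lambda$ is not maximal. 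Since the remaining subcases $d=a$ and $a=b$ fall under $(a,a,a,a)$, every $\lambda$ with a repeated non-zero part of multiplicity at least three is non-maximal, which gives the claim.

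This is largely bookkeeping, and the only point requiring a moment of care --- the closest thing to an obstacle --- is that the trailing part $d$, or the non-existent fifth part, may equal $0$. This is harmless: the construction~\eqref{equation.mu lambda} underlying Proposition~\ref{two column injection 0} explicitly allows $\lambda_{\beta+\alpha+1}=0$, and one checks that the hypothesis $\beta\geqslant\alpha+2$ holds in every instance ($\alpha=0$, $\beta\in\{3,4\}$ for the $(a^3,d)$ and $(a^4)$ shapes; $\alpha=1$, $\beta=3$ for $(a,b^3)$), so Corollaries~\ref{cor.firstparts} and~\ref{cor.notmaxhooks} apply verbatim in each case.
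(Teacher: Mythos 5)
Your proof is correct and follows essentially the same route as the paper: the paper's proof is the one-line observation that a quadruple with three or more identical parts must be $(a^4)$, $(a^3,d)$, or $(a,b^3)$, after which the injection $\varphi_0$ (via Corollaries~\ref{cor.firstparts} and~\ref{cor.notmaxhooks}) applies. Your version simply spells out the case check and the verification that $\beta\geqslant\alpha+2$ holds in each instance, which the paper leaves implicit.
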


\begin{proof}
If $\lambda$ has three or more identical parts, then $\lambda$ is one of $(a^4)$, $(a^3, d)$, or $(a, b^3)$, so we can apply the injection.
\end{proof}

As stated in the proof of Proposition~\ref{two column injection 0}, the bound $\beta \geqslant \alpha + 2$ is necessary for $\varphi_0(T)$ to be 
semistandard for $T$ semistandard. When $\beta < \alpha + 2$, $\varphi_0$ can cause an ``overlapping'' row, where for certain 
$1 \leqslant j \leqslant \alpha - \beta + 2$, $y_{\beta - 2 + j}$ is to the left of $x_j$, yet $y_{\beta - 2 + j} > x_j$.

\begin{example}
\label{example.failure}
For $\lambda = (2^2, 1^3)$, so $\alpha = 2$ and $\beta = 3 = \alpha + 1$, $\varphi_0$ can give:
\begin{equation*}
\ytableausetup{boxsize=normal}
\begin{ytableau}
1 & *(yellow) 6 \\
2 & *(yellow) 7 \\
*(green) 3 \\
*(green) 4 \\
*(green) 5
\end{ytableau}
\mapsto
\begin{ytableau}
1 & *(green) 3 \\
2 & *(green) 4 \\
*(yellow) 6 & *(green) 5 \\
*(yellow) 7 
\end{ytableau}
\end{equation*}

One natural modification to restore weakly increasing rows is to swap $y_{\beta - 2 + j}$ and $x_j$ whenever the problem occurs. Unfortunately, doing 
so on its own would not maintain injectivity. If we try to swap the $5$ and $6$ in the previous example, our final tableau is the same as the 
following tableau obtained from $\varphi_0$ with no switches:
\begin{equation*}
\begin{ytableau}
1 & *(yellow) 5 \\
2 & *(yellow) 7 \\
*(green) 3 \\
*(green) 4 \\
*(green) 6
\end{ytableau}
\mapsto
\begin{ytableau}
1 & *(green) 3 \\
2 & *(green) 4 \\
*(yellow) 5 & *(green) 6 \\
*(yellow)7 
\end{ytableau}
\end{equation*}
\end{example}

However, if we are able to implement subsequent modifications in a way such that the resulting tableau is semistandard, yet cannot be obtained 
from $\varphi_0$ alone, then we can restore injectivity.

We now define the modification of our original $\varphi_0$ injection for the case when $\beta = \alpha + 1$, and $\alpha \geqslant 2$, which we call 
\[
\varphi_1 \colon \mathsf{SSYT}(\lambda) \to \mathsf{SSYT}(\mu).
\]
From now on, we drop the content $\nu$ as all maps in this subsection preserve the content.

Let $T \in \mathsf{SSYT}(\lambda)$. As before, suppose that the entries in the $c$-th column of $T$ in increasing order are 
$x_{\beta + \alpha}, x_{\beta + \alpha - 1}, \dots, x_1$, and the entries in the $(c + 1)$-th column of $T$ in increasing order are 
$y_\alpha, y_{\alpha - 1}, \dots, y_1$.
We define $\varphi_1(T)$ to be the same as $\varphi_0(T)$ if $\varphi_0(T) \in \mathsf{SSYT}(\mu)$.

If $\varphi_0(T) \not \in \mathsf{SSYT}(\mu)$, then necessarily $i = \alpha + 1$ as defined for $\varphi_0$ and $x_1$ is to the right of 
$y_{\beta - 1} = y_\alpha$, with $y_\alpha > x_1$. Then $\varphi_1(T)$ is the same as $\varphi_0(T)$, except we swap $y_\alpha$ with $x_1$, 
as well as $x_{\beta + 1}$ with $x_{\beta}$.

Concretely, when $\varphi_1(T) \neq \varphi_0(T)$, the $c$-th and $(c+1)$-th columns of $T$, $\varphi_0(T)$, and $\varphi_1(T)$ look as follows:
\begin{equation}
\label{phi.one tableaux}
T:
\ytableausetup{boxsize=5em}%
\scalebox{0.8}[.25]{
\begin{ytableau}
\filling{4}{x_{\beta + \alpha}} & *(yellow) \filling{4}{y_\alpha} \\
\filling{4}{x_{\beta + \alpha - 1}} & *(yellow) \filling{4}{y_{\alpha - 1}} \\
\filling{4}{\vdots} & *(yellow) \filling{4}{\vdots} \\
\filling{4}{x_{\beta + 2}} & *(yellow) \filling{4}{y_2} \\
\filling{4}{x_{\beta + 1}} & *(yellow) \filling{4}{y_1} \\
*(green) \filling{4}{x_\beta} \\
*(green) \filling{4}{x_{\beta - 1}} \\
*(green) \filling{4}{\vdots} \\
*(green) \filling{4}{x_2} \\
*(green) \filling{4}{x_1}
\end{ytableau}} \quad
\varphi_0(T):
\scalebox{0.8}[.25]{
\begin{ytableau}
\filling{4}{x_{\beta + \alpha}} & *(green) \filling{4}{x_\beta} \\
\filling{4}{x_{\beta + \alpha - 1}} & *(green) \filling{4}{x_{\beta - 1}} \\
\filling{4}{\vdots} & *(green) \filling{4}{\vdots} \\
\filling{4}{x_{\beta + 2}} & *(green) \filling{4}{x_3} \\
\filling{4}{x_{\beta + 1}} & *(green) \filling{4}{x_2} \\
*(yellow) \filling{4}{y_\alpha} & *(green) \filling{4}{x_1} \\
*(yellow) \filling{4}{y_{\alpha - 1}} \\
*(yellow) \filling{4}{\vdots} \\
*(yellow) \filling{4}{y_1}
\end{ytableau}} \quad
\varphi_1(T):
\scalebox{0.8}[.25]{
\begin{ytableau}
\filling{4}{x_{\beta + \alpha}} & \filling{4}{\boldsymbol{x_{\beta + 1}}} \\
\filling{4}{x_{\beta + \alpha - 1}} & *(green) \filling{4}{x_{\beta - 1}} \\
\filling{4}{\vdots} & *(green) \filling{4}{\vdots} \\
\filling{4}{x_{\beta + 2}} & *(green) \filling{4}{x_3} \\
*(green) \filling{4}{\boldsymbol{x_\beta}} & *(green) \filling{4}{x_2} \\
*(green) \filling{4}{x_1} & *(yellow) \filling{4}{y_\alpha} \\
*(yellow) \filling{4}{y_{\alpha - 1}} \\
*(yellow) \filling{4}{\vdots} \\
*(yellow) \filling{4}{y_1}
\end{ytableau}}
\end{equation}
We indicate the additional swaps $\varphi_1$ adds to $\varphi_0$ using boldface on the relevant entries, $x_{\beta + 1}$ and $x_\beta$. We will continue 
to use this convention for any subsequent modifications to $\varphi_0$.

Observe that for the $x_{\beta + 1}$ with $x_\beta$ swap to be between cells in different rows, we must have $\alpha \geqslant 2$. This property is 
necessary for the tableau to remain semistandard after the swap.

The intuition behind the swaps is that the swap of $y_\alpha$ with $x_1$ makes the tableau semistandard, and the swap of $x_{\beta + 1}$ with 
$x_{\beta}$ prevents the new tableau from being in the image of $\varphi_0$.

\begin{example}
For $T$ in Example~\ref{example.failure}, $\varphi_1$ maps:
\ytableausetup{boxsize=normal}
\begin{equation}
\begin{ytableau}
1 & *(yellow) 6 \\
2 & *(yellow) 7 \\
*(green) 3 \\
*(green) 4 \\
*(green) 5
\end{ytableau}
\mapsto
\begin{ytableau}
1 & \bf{2} \\
*(green) \bf{3} & *(green) 4 \\
*(green) 5 & *(yellow) 6 \\
*(yellow) 7
\end{ytableau}
\end{equation}
\end{example}

\begin{proposition} 
\label{two column injection 1}
Let $\lambda$ and $\mu$ be as in \eqref{equation.mu lambda} with $\beta = \alpha + 1 \geqslant 3$.
Then $\varphi_1$ as defined above is an injection 
\[
\varphi_1 \colon \mathsf{SSYT}(\lambda) \to \mathsf{SSYT}(\mu).
\]
\end{proposition}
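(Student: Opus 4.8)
The plan is to prove two things in turn: that $\varphi_1$ actually lands in $\mathsf{SSYT}(\mu)$, and that it is injective. For the first, I would split on whether $\varphi_0(T)$ is already semistandard. If it is, then $\varphi_1(T)=\varphi_0(T)\in\mathsf{SSYT}(\mu)$ and nothing more is needed. If it is not, then, as recorded in the discussion preceding the proposition, the only obstruction is that $i=\alpha+1$ in the construction of $\varphi_0$ and that the unique overlap cell, in row $\alpha+1$, places $x_1$ immediately to the right of $y_\alpha$ with $y_\alpha>x_1$. In this case $\varphi_1(T)$ is the rightmost tableau in \eqref{phi.one tableaux}, and I would only need to check semistandardness of the finitely many rows and columns disturbed by the two swaps. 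All the required inequalities follow from column strictness in $T$ together with the defining inequality $y_\alpha>x_1$ of this case: from $y_\alpha>x_1>x_2$ and $y_\alpha<y_{\alpha-1}$ one gets $x_\beta<x_1<y_{\alpha-1}$ (using $\beta\geqslant 3$) and $x_2<y_\alpha$, which settle the new column-$c$ junction and the new bottom cell of column $c+1$; and the hypotheses $\beta=\alpha+1\geqslant 3$ and $\alpha\geqslant 2$ ensure the swapped pair $x_{\beta+1},x_\beta$ lies in two distinct rows and that the rows $1,\dots,\alpha$ remain weakly increasing. So $\varphi_1$ maps $\mathsf{SSYT}(\lambda)$ into $\mathsf{SSYT}(\mu)$.

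For injectivity I would partition $\mathsf{SSYT}(\lambda)$ into the set $A$ of tableaux $T$ with $\varphi_0(T)\in\mathsf{SSYT}(\mu)$ and its complement $B$, the exceptional tableaux (necessarily with $i=\alpha+1$). On $A$ we have $\varphi_1=\varphi_0$, and I would observe that the explicit inverse $\psi_0$ built in the proof of Proposition~\ref{two column injection 0} recovers $T$ from $\varphi_0(T)$ without using the bound $\beta\geqslant\alpha+2$: that bound was needed only to keep $\varphi_0(T)$ semistandard, whereas the identity $\psi_0\circ\varphi_0=\mathrm{id}$ is formal and does not depend on it. Hence $\varphi_1|_A$ is injective. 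On $B$ I would construct an explicit inverse $\psi_1$ as well: the cells carrying the two boldface entries and the overlap entry in \eqref{phi.one tableaux} occupy positions of $\mu$ determined by the shape alone, so one can undo the two swaps to recover $\varphi_0(T)$ and then apply $\psi_0$; this shows $\varphi_1|_B$ is injective.

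The step I expect to be the real obstacle — and the reason the extra swap $x_{\beta+1}\leftrightarrow x_\beta$ is included, as Example~\ref{example.failure} warns — is proving $\varphi_1(A)\cap\varphi_1(B)=\emptyset$. My plan is to exhibit an invariant of every tableau in the image of $\varphi_0$ that fails on the modified tableaux. Writing the entries of the $c$-th and $(c+1)$-th columns of a tableau $T'$ in increasing order as $x'_\bullet$ and $y'_\bullet$, as in the definition of $\psi_0$, every tableau of the form $\varphi_0(S)$ admits a smallest index $i'$ with $y'_{i'}>x'_{i'}$ (namely the index used to build it; in the regime $\beta=\alpha+1$, membership $S\in A$ forces this index to lie in $\{1,\dots,\alpha\}$). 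By contrast, for a modified tableau $T'=\varphi_1(T)$ with $T\in B$, one checks directly from \eqref{phi.one tableaux} that no such index exists: for $i'=1$ the bottom entry $y_\alpha$ of column $c+1$ lies below the bottom entry $y_1$ of column $c$; for $2\leqslant i'\leqslant\alpha-1$ one compares $x_{i'}$ with $y_{i'}$ and $x_{i'}\leqslant y_{i'}$ because $i=\alpha+1$; for $i'=\alpha$ one has $x_\alpha<x_1$; and for $i'=\alpha+1$ the fresh top entry $x_{\beta+1}$ of column $c+1$ is smaller than the entry $x_\beta$ that the extra swap moved into column $c$ beside it. Thus $T'$ cannot equal $\varphi_0(S)$ for any $S$, which gives the disjointness; combined with the two injectivity statements above, this shows $\varphi_1$ is an injection $\mathsf{SSYT}(\lambda)\to\mathsf{SSYT}(\mu)$.
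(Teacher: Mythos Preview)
Your proof is correct and follows essentially the same strategy as the paper: verify semistandardness of $\varphi_1(T)$ by checking the finitely many disturbed cells, then establish injectivity by showing that the modified tableaux $\varphi_1(T)$ for $T\in B$ satisfy $y'_{j}\leqslant x'_{j}$ for every $1\leqslant j\leqslant \alpha+1$, whereas any $\varphi_0(S)$ has $y'_{i}>x'_{i}$ at the index $i$ used to build it. One small correction: your parenthetical claim that ``membership $S\in A$ forces this index to lie in $\{1,\dots,\alpha\}$'' is false---when $i=\alpha+1$ and $y_\alpha\leqslant x_1$ one has $S\in A$ with $i'=\alpha+1$---but this is harmless, since your disjointness check already covers $i'=\alpha+1$ (via $x_{\beta+1}<x_\beta$), so the argument goes through unchanged.
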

\begin{proof}
Let $T \in \mathsf{SSYT}(\lambda)$. We need to check that $\varphi_1(T)$ is semistandard.
It suffices to do so for the case when $\varphi_1(T) \neq \varphi_0(T)$, where there is an overlap in $\varphi_0(T)$ consisting of a single decreasing pair 
of cells in a row, $y_\alpha > x_1$.
In particular, $\varphi_0(T)$ would be semistandard if it were not for this single pair by the proof  of Proposition~\ref{two column injection 0}, so it suffices to 
check that swapping the $x_1$ with $y_\alpha$ makes the tableau semistandard, and swapping the $x_{\beta + 1}$ with $x_{\beta}$ keeps it semistandard, 
by examining the changed entries.

Swapping the entries $x_1$ with $y_\alpha$ makes the $(\alpha + 1)$-th row weakly increasing since $x_1 < y_\alpha$ by assumption.
The $c$-th column remains strictly increasing since
$x_{\beta + 1} < x_1 < y_\alpha < y_{\alpha - 1}$,
and the $(c + 1)$-th column remains strictly increasing since
$x_2 < x_1 < y_\alpha$.

Swapping the entries $x_\beta$ with $x_{\beta + 1}$ keeps the relevant rows, namely the $1$st row and $\alpha$-th row, weakly increasing and their 
columns strictly increasing since $x_j > x_k$ for all $j < k$.

Specifically, the $c$-th column remains strictly increasing since $x_{\beta + 2} < x_\beta < x_1$, and $(c + 1)$-th column remains strictly increasing 
since $x_{\beta + 1} < x_{\beta - 1}$.
The $1$st row remains weakly increasing because $x_{\beta + 1} < x_\beta$, so $x_{\beta + 1}$ is also less than all entries to its right, which are originally 
right of $x_\beta$.
The $\alpha$-th row remains weakly increasing because $x_{\beta + 1} < x_\beta$, so $x_\beta$ is greater than all entries to its left, which are 
originally left of $x_{\beta + 1}$.

To show injectivity, it suffices to check that the modified tableau cannot be in the image of $\varphi_0$, allowing us to define an explicit inverse $\psi_1$ 
by likewise modifying $\psi_0$.
Namely, we must verify that $\varphi_1(T)$ is not equal to $\varphi_0(S)$ for any $S \in \mathsf{SSYT}(\lambda)$.

Indeed, consider $\varphi_0(S)$ for any $S \in \mathsf{SSYT}(\lambda)$. Let $i$ be as in the definition of $\varphi_0$.
Then the $i$-th entry from the bottom of the $(c + 1)$-th column in $\varphi_0(S)$ must have originally been below and hence greater than 
the $i$-th entry from the bottom of the $c$-th column in $\varphi_0(S)$, which stays in the same place in $\varphi_0(S)$.
That is, if the $j$-th entry from the bottom of the $(c + 1)$-th column in $\varphi_1(T)$ is less than or equal to the $j$-th entry from the bottom of 
the $c$-th column in $\varphi_1(T)$ for all $1 \leqslant j \leqslant \alpha + 1$, then $\varphi_1(T) \neq \varphi_0(S)$ for all $S \in \mathsf{SSYT}(\lambda)$.
If we check these corresponding pairs of entries in $\varphi_1(T)$, we have $y_\alpha < y_1$, $x_j < y_j$ for $2 \leqslant j \leqslant \alpha - 1$, 
$x_{\beta - 1} < x_1$, and $x_{\beta + 1} < x_\beta$. Thus, we do not have a requisite pair of entries, and no $S$ satisfies $\varphi_1(T) = \varphi_0(S)$.

We can now define our explicit inverse $\psi_1$. Let $T' \in \varphi_1(\mathsf{SSYT}(\lambda))$. As before, the entries in the $c$-th column of $T'$ in increasing order are
$x'_{\beta + \alpha - 1}, x'_{\beta + \alpha - 2}, \dots, x'_1$,
and the entries in the $(c + 1)$-th column of $T'$ in increasing order are $y'_{\alpha + 1}, y'_\alpha, \dots, y'_1$.

Let $\psi_1(T') = \psi_0(T')$ when $T' \in \varphi_0(\mathsf{SSYT}(\lambda))$, the domain of $\psi_0$. This occurs when there exists an $i'$ such that $y'_{i'} > x'_{i'}$, so we can take the smallest such $i'$ as in the definition of $\psi_0$.

If such an $i'$ does not exist, then $\psi_1$ first swaps $x'_\beta$ with $y'_{\alpha + 1}$, and $x'_{\beta - 1}$ with $y'_1$, undoing the modifications. 
Relabelling the new tableau obtained after these swaps $T''$, we now let $\psi_1(T') = \psi_0(T'')$.

Concretely, when $\psi_1$ differs from $\psi_0$, the $c$-th and $(c+1)$-th columns look as follows:
\begin{equation}
T':
\ytableausetup{boxsize=6.5em}%
\scalebox{0.8}[.25]{
\begin{ytableau}
\filling{4}{x'_{\beta + \alpha - 1}} & \filling{4}{\boldsymbol{y'_{\alpha + 1}}} \\
\filling{4}{x'_{\beta + \alpha - 2}} & *(green) \filling{4}{y'_\alpha} \\
\filling{4}{\vdots} & *(green) \filling{4}{\vdots} \\
*(green) \filling{4}{\boldsymbol{x'_\beta}} & *(green) \filling{4}{y'_2} \\
*(green) \filling{4}{x'_{\beta - 1}} & *(yellow) \filling{4}{y'_1} \\
*(yellow) \filling{4}{x'_{\beta - 2}} \\
*(yellow) \filling{4}{\vdots} \\
*(yellow) \filling{4}{x'_1}
\end{ytableau}}
\qquad
T'':
\scalebox{0.8}[.25]{
\begin{ytableau}
\filling{4}{x'_{\beta + \alpha - 1}} & *(green) \filling{4}{\boldsymbol{x'_\beta}} \\
\filling{4}{x'_{\beta + \alpha - 2}} & *(green) \filling{4}{y'_\alpha} \\
\filling{4}{\vdots} & *(green) \filling{4}{\vdots} \\
\filling{4}{\boldsymbol{y'_{\alpha + 1}}} & *(green) \filling{4}{y'_2} \\
*(yellow) \filling{4}{y'_1} & *(green) \filling{4}{x'_{\beta - 1}} \\
*(yellow) \filling{4}{x'_{\beta - 2}} \\
*(yellow) \filling{4}{\vdots} \\
*(yellow) \filling{4}{x'_1}
\end{ytableau}}
\qquad
\psi_1(T'):
\scalebox{0.8}[.25]{
\begin{ytableau}
\filling{4}{x'_{\beta + \alpha - 1}} & *(yellow) \filling{4}{y'_1} \\
\filling{4}{x'_{\beta + \alpha - 2}} & *(yellow) \filling{4}{x'_{\beta - 2}} \\
\filling{4}{\vdots} & *(yellow) \filling{4}{\vdots} \\
\filling{4}{y'_{\alpha + 1}} & *(yellow) \filling{4}{x'_1} \\
*(green) \filling{4}{x'_\beta} \\
*(green) \filling{4}{y'_\alpha} \\
*(green) \filling{4}{\vdots} \\
*(green) \filling{4}{y'_2} \\
*(green) \filling{4}{x'_{\beta - 1}}
\end{ytableau}}
\end{equation}

It is straightforward to check that for $T' = \varphi_1(T)$, $\psi_1$ exactly reverses all the swaps done by $\varphi_1$.
\end{proof}

We now obtain stronger versions of the corollaries obtained from the previous injection, in particular Corollary~\ref{corollary.cover two col immersion0}.

\begin{corollary}
\label{corollary.cover two col immersion1}
The partitions $\lambda$ and $\mu$ as in~\eqref{equation.mu lambda} with $\beta \geqslant \alpha + 1 \geqslant 3$ form a cover in the immersion poset.
\end{corollary}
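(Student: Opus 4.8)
The plan is to combine the two injections just constructed. The hypothesis $\beta \geqslant \alpha+1 \geqslant 3$ splits into the two cases $\beta \geqslant \alpha+2$ and $\beta = \alpha+1$ (the latter forcing $\alpha \geqslant 2$), and these two cases together exhaust it. In the first case Proposition~\ref{two column injection 0} provides an injection $\mathsf{SSYT}(\lambda,\nu) \hookrightarrow \mathsf{SSYT}(\mu,\nu)$ for every $\nu \in \mathcal{P}(n)$; in the second case Proposition~\ref{two column injection 1} provides a content-preserving injection $\mathsf{SSYT}(\lambda) \hookrightarrow \mathsf{SSYT}(\mu)$, hence an injection $\mathsf{SSYT}(\lambda,\nu) \hookrightarrow \mathsf{SSYT}(\mu,\nu)$ for every $\nu$. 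Either way, $K_{\lambda,\nu} \leqslant K_{\mu,\nu}$ for all $\nu \in \mathcal{P}(n)$, so Lemma~\ref{lemma.immersion K} gives $\lambda \leqslant_I \mu$; and since $\mu_{\alpha+1} = c+1 \neq c = \lambda_{\alpha+1}$ we have $\lambda \neq \mu$, hence $\lambda <_I \mu$.

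Next I would verify, exactly as in the proof of Corollary~\ref{corollary.cover two col immersion0}, that $\mu$ covers $\lambda$ in dominance order. One clean way to see this is to pass to conjugates: $\mu'$ agrees with $\lambda'$ in every part except $\mu'_c = \lambda'_c - 1$ and $\mu'_{c+1} = \lambda'_{c+1}+1$, so $\lambda'$ is obtained from $\mu'$ by moving a single box between the adjacent rows $c+1$ and $c$, and moving a box between adjacent rows is always a covering move in dominance order. Thus $\mu' \lessdot_D \lambda'$, that is, $\lambda \lessdot_D \mu$.

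Finally, to promote $\lambda <_I \mu$ to a cover $\lambda \lessdot_I \mu$: if there were $\nu$ with $\lambda <_I \nu <_I \mu$, then, since $\lambda \leqslant_I \nu$ implies $\lambda \leqslant_D \nu$ and likewise $\nu \leqslant_D \mu$, and since $\nu$ is distinct from both $\lambda$ and $\mu$, we would obtain $\lambda <_D \nu <_D \mu$, contradicting $\lambda \lessdot_D \mu$. I do not expect a genuine obstacle in carrying this out, since the substantive content is already packaged in Propositions~\ref{two column injection 0} and~\ref{two column injection 1}; the only points needing a little care are checking that those two results together cover the full range $\beta \geqslant \alpha+1 \geqslant 3$ and recording the short conjugation argument for the dominance cover.
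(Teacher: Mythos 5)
Your proof is correct and follows essentially the same route the paper intends: combine Propositions~\ref{two column injection 0} and~\ref{two column injection 1} to get $K_{\lambda,\nu}\leqslant K_{\mu,\nu}$ for all $\nu$, then use the fact that $\mu$ covers $\lambda$ in dominance order (which refines the immersion order) to upgrade the relation to a cover, exactly as in the proof of Corollary~\ref{corollary.cover two col immersion0}. The extra details you supply (the case split on $\beta\geqslant\alpha+2$ versus $\beta=\alpha+1$, and the conjugation argument for the dominance cover) are correct and merely make explicit what the paper leaves implicit.
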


\begin{corollary}
If $\lambda = (a^2, b^{\beta}, c, \dots)$, where $a > b > c$, and $\beta \geqslant 3$, then $\lambda$ is not maximal.
\end{corollary}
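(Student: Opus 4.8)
The plan is to recognize $\lambda$ as an instance of the shape template in~\eqref{equation.mu lambda} and then invoke the cover relation already proved in Corollary~\ref{corollary.cover two col immersion1} (equivalently, apply the injections of Propositions~\ref{two column injection 0} and~\ref{two column injection 1} directly). So first I would take $\alpha = 2$, with the two leading parts equal to $a$ playing the role of $\lambda_1, \dots, \lambda_\alpha$; identify the repeated column value of~\eqref{equation.mu lambda} with $b$, which occurs in $\lambda$ with multiplicity $\beta$; and identify $\lambda_{\alpha+\beta+1}$ with the part $c$ following the block of $b$'s. Since $\alpha = 2 > 0$, the requirement in~\eqref{equation.mu lambda} reads $\lambda_{\beta+\alpha+1} < b < \lambda_\alpha$, i.e.\ $c < b < a$, which is exactly the hypothesis $a > b > c$. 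The corresponding partition is $\mu = (a, a, b+1, b^{\beta-2}, b-1, c, \dots)$, and this is a genuine partition precisely because $a \geqslant b+1$, $b-1 \geqslant c$, and $\beta - 2 \geqslant 1$, all guaranteed by $a > b > c$ and $\beta \geqslant 3$.

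Next I would apply the injection machinery. With $\alpha = 2$ and $\beta \geqslant 3$ we have $\beta \geqslant \alpha + 1 \geqslant 3$, so Corollary~\ref{corollary.cover two col immersion1} immediately gives that $\lambda$ and $\mu$ form a cover in the immersion poset, and in particular $\lambda <_I \mu$. If one prefers to argue from the propositions directly: when $\beta \geqslant 4 = \alpha + 2$, Proposition~\ref{two column injection 0} provides an injection $\varphi_0 \colon \mathsf{SSYT}(\lambda,\nu) \hookrightarrow \mathsf{SSYT}(\mu,\nu)$ for every $\nu \vdash |\lambda|$; when $\beta = 3 = \alpha + 1$ (and here $\alpha = 2 \geqslant 2$, as required), Proposition~\ref{two column injection 1} provides the analogous injection $\varphi_1$. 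In either case $K_{\lambda,\nu} \leqslant K_{\mu,\nu}$ for all $\nu$, so $\lambda \leqslant_I \mu$ by Lemma~\ref{lemma.immersion K}; since $\mu \neq \lambda$, the relation is strict, so $\lambda$ is not maximal.

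There is no genuine obstacle in this argument; the only point that needs care is the bookkeeping when matching $\lambda$ to the template~\eqref{equation.mu lambda}, and in particular noticing that the boundary case $\beta = 3$ sits exactly at $\beta = \alpha + 1$ rather than at $\beta \geqslant \alpha + 2$, so it requires the refined injection $\varphi_1$ of Proposition~\ref{two column injection 1} rather than $\varphi_0$. This is precisely the difference from Corollary~\ref{cor.notmaxhooks}, which handled $\lambda = (a, b^\beta, c, \dots)$ with $\alpha = 1$: there $\beta \geqslant 3$ already forces $\beta \geqslant \alpha + 2$, so $\varphi_0$ alone sufficed, whereas with the extra leading part (i.e.\ $\alpha = 2$) we must also cover the case $\beta = 3$ via $\varphi_1$.
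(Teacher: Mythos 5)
Your proof is correct and is exactly the argument the paper intends: the paper states this corollary without proof immediately after Corollary~\ref{corollary.cover two col immersion1}, relying on the reader to match $\lambda=(a^2,b^\beta,c,\dots)$ to the template~\eqref{equation.mu lambda} with $\alpha=2$ and note that $\beta\geqslant 3=\alpha+1$. Your additional remark that the boundary case $\beta=3$ is what forces the use of $\varphi_1$ rather than $\varphi_0$ is an accurate and worthwhile observation, but the substance of the argument is the same.
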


\begin{corollary}
If $\lambda = (a, b, c, d, e)$ is maximal in the immersion poset, then it has no more than two identical non-zero parts.
\end{corollary}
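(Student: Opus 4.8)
The plan is to prove the contrapositive: if $\lambda=(a,b,c,d,e)$ has three or more equal nonzero parts, then $\lambda$ is not maximal in the immersion poset. Since the parts of $\lambda$ are weakly decreasing, any three equal nonzero parts are consecutive, so there is a run $\lambda_i=\lambda_{i+1}=\dots=\lambda_j$ with $\lambda_j>0$ and $j\geqslant i+2$; choose $i$ minimal, so that either $i=1$ or $\lambda_{i-1}>\lambda_i$. Because $\lambda$ has at most five parts, $i+2\leqslant j\leqslant 5$ leaves only $i\in\{1,2,3\}$, and I would treat these three cases separately, in each case exhibiting non-maximality through an injection already constructed in this section.

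For $i=1$ we have $\lambda=(\lambda_1^{\,j},\dots)$ with $j\geqslant 3$ and $\lambda_{j+1}<\lambda_1$ (where $\lambda_{j+1}$ may be $0$); this is exactly the hypothesis of Corollary~\ref{cor.firstparts} with $\alpha=0$ and $\beta=j$, so $\lambda$ is not maximal. For $i=2$ we have $\lambda_1>\lambda_2$ and a run $\lambda_2=\dots=\lambda_j$ of length $j-1\geqslant 3$ with $\lambda_{j+1}<\lambda_2$ (possibly $0$); this fits Corollary~\ref{cor.notmaxhooks} with $\alpha=1$ and $\beta=j-1$, so again $\lambda$ is not maximal.

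The remaining case $i=3$ is the tightest: the run must occupy positions $3,4,5$, so $\lambda=(\lambda_1,\lambda_2,c^3)$ with $\lambda_1\geqslant\lambda_2>c>0$. This matches the template \eqref{equation.mu lambda} with $\alpha=2$, $\beta=3$, and trailing part $\lambda_{\alpha+\beta+1}=\lambda_6=0$, since $0<c<\lambda_2=\lambda_\alpha$. As $\beta=3=\alpha+1\geqslant 3$, Corollary~\ref{corollary.cover two col immersion1} produces a partition $\mu$ with $\lambda<_I\mu$, so $\lambda$ is not maximal. (When $\lambda_1=\lambda_2$ one may alternatively invoke the corollary on partitions of the form $(a^2,b^\beta,c,\dots)$.) Thus in every case $\lambda$ fails to be maximal, which is the contrapositive of the claim.

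I expect no genuinely hard step: once one notices that "three equal nonzero parts" forces one of a short list of shapes, the argument is a finite case check with no new computation. The only point requiring care is the bookkeeping at the boundaries — runs of length exactly three ending in the last part, trailing zeros, and the sub-distinction $\lambda_1=\lambda_2$ versus $\lambda_1>\lambda_2$ in the $i=3$ case — and in particular confirming that the $i=3$ shapes legitimately fall under the $\beta=\alpha+1$ injection $\varphi_1$ rather than $\varphi_0$.
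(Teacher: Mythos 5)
Your proof is correct and follows exactly the route the paper intends: reduce to the finitely many shapes with a run of three equal nonzero parts, then dispose of $(a^{\beta},\dots)$ and $(a,b^{\beta},\dots)$ via $\varphi_0$ (Corollaries~\ref{cor.firstparts} and~\ref{cor.notmaxhooks}) and the remaining shape $(a,b,c^3)$ via $\varphi_1$ with $\alpha=2$, $\beta=3$ (Corollary~\ref{corollary.cover two col immersion1}), mirroring the paper's one-line proof of the analogous four-part corollary. No gaps.
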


In order to further improve the bound for the injection, we must continue to apply modifications to resolve decreasing pairs in ``overlapping'' rows, 
and then apply further modifications to establish injectivity. However, there are now multiple cases to consider.

Firstly, any combination of the overlapping rows containing both $y_{\beta - 2 + j}$ and $x_j$ can be decreasing.

\begin{example}
\label{example.tworowoverlap}
For $\lambda = (2^4, 1^4)$, so $\alpha = \beta = 4$, following $\varphi_0$ can give two overlapping rows. We have each possible combination of rows with decreasing pairs as follows:
\begin{equation}
\ytableausetup{boxsize=normal}
\begin{ytableau}
1 & *(yellow) 7 \\
2 & *(yellow) 10 \\
3 & *(yellow) 11 \\
*(green) 4 & *(yellow) 12 \\
*(green) 5 \\
*(green) 6 \\
*(green) 8 \\
*(green) 9
\end{ytableau}
\mapsto
\begin{ytableau}
1 & *(green) 4 \\
2 & *(green) 5 \\
3 & *(green) 6 \\
*(yellow) 7 & *(green) 8 \\
*(yellow) 10 & *(green) 9 \\
*(yellow) 11 \\
*(yellow) 12
\end{ytableau}
\qquad
\begin{ytableau}
1 & *(yellow) 8 \\
2 & *(yellow) 9 \\
3 & *(yellow) 11 \\
*(green) 4 & *(yellow) 12 \\
*(green) 5 \\
*(green) 6 \\
*(green) 7 \\
*(green) 10
\end{ytableau}
\mapsto
\begin{ytableau}
1 & *(green) 4 \\
2 & *(green) 5 \\
3 & *(green) 6 \\
*(yellow) 8 & *(green) 7 \\
*(yellow) 9 & *(green) 10 \\
*(yellow) 11 \\
*(yellow) 12
\end{ytableau}
\qquad
\begin{ytableau}
1 & *(yellow) 9 \\
2 & *(yellow) 10 \\
3 & *(yellow) 11 \\
*(green) 4 & *(yellow) 12 \\
*(green) 5 \\
*(green) 6 \\
*(green) 7 \\
*(green) 8
\end{ytableau}
\mapsto
\begin{ytableau}
1 & *(green) 4 \\
2 & *(green) 5 \\
3 & *(green) 6 \\
*(yellow) 9 & *(green) 7 \\
*(yellow) 10 & *(green) 8 \\
*(yellow) 11 \\
*(yellow) 12
\end{ytableau}
\end{equation}
\end{example}

While all these previous tableaux give $2$ rows of overlap, it is also possible for a tableau of the same shape to give $0$ or $1$ rows of overlap instead. More generally, $\varphi_0(T)$ for $T$ of shape $\lambda$ as in \eqref{equation.mu lambda} can have anywhere between $0$ and $\max\{0, \alpha - \beta + 2\}$ rows of overlap.

\begin{example}
\label{example.onerowoverlap}
For the same $\lambda = (2^4, 1^4)$ as in Example~\ref{example.tworowoverlap}, $\varphi_0$ can give a single overlapping row, which contains a decreasing pair:
\begin{equation}
\begin{ytableau}
1 & 5 \\
2 & *(yellow) 10 \\
3 & *(yellow) 11 \\
4 & *(yellow) 12 \\
*(green) 6 \\
*(green) 7 \\
*(green) 8 \\
*(green) 9
\end{ytableau}
\mapsto
\begin{ytableau}
1 & 5 \\
2 & *(green) 6 \\
3 & *(green) 7 \\
4 & *(green) 8 \\
*(yellow) 10 & *(green) 9 \\
*(yellow) 11 \\
*(yellow) 12
\end{ytableau}
\end{equation}
\end{example}

Hence, in our next modifications of $\varphi_0$, we must encode the information of every possible case in a way that both distinguishes the cases 
from $\varphi_0$ with no modifications, and distinguishes the cases from each other.
To achieve this, our modifications will involve cyclically rotating certain entries in the $c$-th and $(c + 1)$-th columns. These rotations will be 
analogous to the $x_\beta$ with $x_{\beta + 1}$ swap in $\varphi_1$, which can be thought of as a rotation of $2$ elements.

We now define a second set of modifications of our original $\varphi_0$ injection for the case when $\beta = \alpha$, and $\alpha \geqslant 4$, 
which we call 
\[
	\varphi_2 \colon \mathsf{SSYT}(\lambda) \to \mathsf{SSYT}(\mu).
\]

Let $T \in \mathsf{SSYT}(\lambda)$. As before, suppose that the entries in the $c$-th column of $T$ in increasing order are 
$x_{\beta + \alpha}, x_{\beta + \alpha - 1}, \ldots, x_1$, and the entries in the $(c + 1)$-th column of $T$ in increasing order are 
$y_\alpha, y_{\alpha - 1}, \dots, y_1$.
We define $\varphi_2(T)$ to be the same as $\varphi_0(T)$ if $\varphi_0(T) \in \mathsf{SSYT}(\mu)$.

If $\varphi_0(T) \not \in \mathsf{SSYT}(\mu)$, then we have several cases. 
If there are two rows of overlap, then $i$ as defined for $\varphi_0$ is $\alpha + 1$, $x_1$ is to the right of $y_{\alpha - 1}$, and $x_2$ is to the right of 
$y_\alpha = y_\beta$:
\begin{equation}
T:
\ytableausetup{boxsize=5em}%
\scalebox{1}[.25]{
\begin{ytableau}
\filling{4}{x_{\beta + \alpha}} & *(yellow) \filling{4}{y_\alpha} \\
\filling{4}{x_{\beta + \alpha - 1}} & *(yellow) \filling{4}{y_{\alpha - 1}} \\
\filling{4}{x_{\beta + \alpha - 2}} & *(yellow) \filling{4}{y_{\alpha - 2}} \\
\filling{4}{\vdots} & *(yellow) \filling{4}{\vdots} \\
\filling{4}{x_{\beta + 3}} & *(yellow) \filling{4}{y_3} \\
\filling{4}{x_{\beta + 2}} & *(yellow) \filling{4}{y_2} \\
*(green) \filling{4}{x_{\beta + 1}} & *(yellow) \filling{4}{y_1} \\
*(green) \filling{4}{x_\beta} \\
*(green) \filling{4}{x_{\beta - 1}} \\
*(green) \filling{4}{\vdots} \\
*(green)\filling{4}{x_2} \\
*(green) \filling{4}{x_1}
\end{ytableau}} \quad
\varphi_0(T):
\scalebox{1}[.25]{
\begin{ytableau}
\filling{4}{x_{\beta + \alpha}} & *(green) \filling{4}{x_{\beta + 1}} \\
\filling{4}{x_{\beta + \alpha - 1}} & *(green) \filling{4}{x_{\beta}} \\
\filling{4}{x_{\beta + \alpha - 2}} & *(green) \filling{4}{x_{\beta - 1}} \\
\filling{4}{\vdots} & *(green) \filling{4}{\vdots} \\
\filling{4}{x_{\beta + 3}} & *(green) \filling{4}{x_4} \\
\filling{4}{x_{\beta + 2}} & *(green) \filling{4}{x_3} \\
*(yellow) \filling{4}{y_\alpha} & *(green) \filling{4}{x_2} \\
*(yellow) \filling{4}{y_{\alpha - 1}} & *(green) \filling{4}{x_1} \\
*(yellow) \filling{4}{y_{\alpha - 2}} \\
*(yellow) \filling{4}{\vdots} \\
*(yellow) \filling{4}{y_1}
\end{ytableau}}
\end{equation}

If $y_{\alpha - 1} > x_1$ and $y_\alpha \leqslant x_2$, then we swap $y_{\alpha - 1}$ with $x_1$. We also ``clockwise rotate" the entries $x_{\beta + 2}$ and $x_{\beta + 3}$ in the $c$-th column, and $x_{\beta + 1}$ in the $(c + 1)$-th column, as shown in our next diagram.

In our definition of $\varphi_2$, a clockwise rotation of a set of entries in the $c$-th and $(c + 1)$-th columns moves all entries in the $c$-th column up one cell except the topmost entry, which moves to the topmost cell in the $(c + 1)$-th column containing an entry being rotated. The rotation moves all entries in the $(c + 1)$-th column down one cell except the bottommost entry, which moves to the bottommost cell in the $c$-th column containing an entry being rotated. As another example, a rotation of a single entry in the $c$-th column and a single entry in the $(c + 1)$-th column is a swap of those entries. We will continue to describe all cases of $\varphi_2$ with rotations of different sets of entries.

If $y_{\alpha - 1} \leqslant x_1$ and $y_\alpha > x_2$, then we swap $y_\alpha$ with $x_2$. We also clockwise rotate $x_{\beta + 2}$ in the $c$-th column, and $x_{\beta + 1}$ and $x_\beta$ in the $(c + 1)$-th column.

If $y_{\alpha - 1} > x_1$ and $y_\alpha > x_2$, then we swap both $y_{\alpha - 1}$ with $x_1$ and $y_\alpha$ with $x_2$. We also clockwise rotate $x_{\beta + 2}$ and $x_{\beta + 3}$ in the $c$-th column, and $x_{\beta + 1}$ and $x_\beta$ in the $(c + 1)$-th column.

Concretely, the two row overlap cases are as follows:
\begin{align}
\varphi_2(T):
& \scalebox{1}[.25]{
\begin{ytableau}
\filling{4}{x_{\beta + \alpha}} & \filling{4}{\boldsymbol{x_{\beta + 3}}} \\
\filling{4}{x_{\beta + \alpha - 1}} & *(green) \filling{4}{x_\beta} \\
\filling{4}{x_{\beta + \alpha - 2}} & *(green) \filling{4}{x_{\beta - 1}} \\
\filling{4}{\vdots} & *(green) \filling{4}{\vdots} \\
\filling{4}{\boldsymbol{x_{\beta + 2}}} & *(green) \filling{4}{x_4} \\
*(green) \filling{4}{\boldsymbol{x_{\beta + 1}}} & *(green) \filling{4}{x_3} \\
*(yellow) \filling{4}{y_\alpha} & *(green) \filling{4}{x_2} \\
*(green) \filling{4}{x_1} & *(yellow) \filling{4}{y_{\alpha - 1}} \\
*(yellow) \filling{4}{y_{\alpha - 2}} \\
*(yellow) \filling{4}{\vdots} \\
*(yellow) \filling{4}{y_1}
\end{ytableau}}
& \scalebox{1}[.25]{
\begin{ytableau}
\filling{4}{x_{\beta + \alpha}} & \filling{4}{\boldsymbol{x_{\beta + 2}}} \\
\filling{4}{x_{\beta + \alpha - 1}} & *(green) \filling{4}{\boldsymbol{x_{\beta + 1}}} \\
\filling{4}{x_{\beta + \alpha - 2}} & *(green) \filling{4}{x_{\beta - 1}} \\
\filling{4}{\vdots} & *(green) \filling{4}{\vdots} \\
\filling{4}{x_{\beta + 3}} & *(green) \filling{4}{x_4} \\
*(green) \filling{4}{\boldsymbol{x_\beta}} & *(green) \filling{4}{x_3} \\
*(green) \filling{4}{x_2} & *(yellow) \filling{4}{y_\alpha} \\
*(yellow) \filling{4}{y_{\alpha - 1}} & *(green) \filling{4}{x_1} \\
*(yellow) \filling{4}{y_{\alpha - 2}} \\
*(yellow) \filling{4}{\vdots} \\
*(yellow) \filling{4}{y_1}
\end{ytableau}}
\hspace{2.5em}
& \scalebox{1}[.25]{
\begin{ytableau}
\filling{4}{x_{\beta + \alpha}} & \filling{4}{\boldsymbol{x_{\beta + 3}}} \\
\filling{4}{x_{\beta + \alpha - 1}} & *(green) \filling{4}{\boldsymbol{x_{\beta + 1}}} \\
\filling{4}{x_{\beta + \alpha - 2}} & *(green) \filling{4}{x_{\beta - 1}} \\
\filling{4}{\vdots} & *(green) \filling{4}{\vdots} \\
\filling{4}{\boldsymbol{x_{\beta + 2}}} & *(green) \filling{4}{x_4} \\
*(green) \filling{4}{\boldsymbol{x_{\beta}}} & *(green) \filling{4}{x_3} \\
*(green) \filling{4}{x_2} & *(yellow) \filling{4}{y_\alpha} \\
*(green) \filling{4}{x_1} & *(yellow) \filling{4}{y_{\alpha - 1}} \\
*(yellow) \filling{4}{y_{\alpha - 2}} \\
*(yellow) \filling{4}{\vdots} \\
*(yellow) \filling{4}{y_1}
\end{ytableau}}
\\
& & & \nonumber
\\
& y_{\alpha - 1} > x_1, \quad y_\alpha \leqslant x_2
& y_{\alpha - 1} \leqslant x_1, \quad y_\alpha > x_2 \hspace{5em}
& y_{\alpha - 1} > x_1, \quad y_\alpha > x_2 \nonumber
\end{align}

Consider the entries involved in the clockwise rotation in each case.
For the topmost entry in the $c$-th column to move strictly up to the $(c + 1)$-th column, and the bottommost entry in the $(c + 1)$-th column move strictly down to the $c$-th column, we must have $\alpha \geqslant 3$.
This property is necessary for the tableau to remain semistandard after the rotation, which partially necessitates the $\alpha \geqslant 4$ assumption, which is analogous to the $\alpha \geqslant 2$ assumption for $\varphi_1$.
 
If there is one row of overlap, then $i$ as defined for $\varphi_0$ is $\alpha$, and $x_1$ is to the right of $y_{\alpha - 1}$ with $x_1 < y_{\alpha - 1}$:
\begin{equation}
T:
\ytableausetup{boxsize=6em}%
\scalebox{0.8}[.25]{
\begin{ytableau}
\filling{4}{x_{\beta + \alpha}} & \filling{4}{y_\alpha} \\
\filling{4}{x_{\beta + \alpha - 1}} & *(yellow) \filling{4}{y_{\alpha - 1}} \\
\filling{4}{x_{\beta + \alpha - 2}} & *(yellow) \filling{4}{y_{\alpha - 2}} \\
\filling{4}{\vdots} & *(yellow) \filling{4}{\vdots} \\
\filling{4}{x_{\beta + 3}} & *(yellow) \filling{4}{y_3} \\
\filling{4}{x_{\beta + 2}} & *(yellow) \filling{4}{y_2} \\
\filling{4}{x_{\beta + 1}} & *(yellow) \filling{4}{y_1} \\
*(green) \filling{4}{x_\beta} \\
*(green) \filling{4}{x_{\beta - 1}} \\
*(green) \filling{4}{\vdots} \\
*(green) \filling{4}{x_2} \\
*(green) \filling{4}{x_1}
\end{ytableau}}
\quad
\varphi_0(T):
\scalebox{0.8}[.25]{
\begin{ytableau}
\filling{4}{x_{\beta + \alpha}} & \filling{4}{y_\alpha} \\
\filling{4}{x_{\beta + \alpha - 1}} & *(green) \filling{4}{x_\beta} \\
\filling{4}{x_{\beta + \alpha - 2}} & *(green) \filling{4}{x_{\beta - 1}} \\
\filling{4}{\vdots} & *(green) \filling{4}{\vdots} \\
\filling{4}{x_{\beta + 3}} & *(green) \filling{4}{x_4} \\
\filling{4}{x_{\beta + 2}} & *(green) \filling{4}{x_3} \\
\filling{4}{x_{\beta + 1}} & *(green) \filling{4}{x_2} \\
*(yellow) \filling{4}{y_{\alpha - 1}} & *(green) \filling{4}{x_1} \\
*(yellow) \filling{4}{y_{\alpha - 2}} \\
*(yellow) \filling{4}{\vdots} \\
*(yellow) \filling{4}{y_1}
\end{ytableau}}
\end{equation}

In this case, we only have the single pair of decreasing entries, $y_{\alpha - 1} < x_1$, so we swap $y_{\alpha - 1}$ with $x_1$. However, for the 
additional modifications after and in addition to this swap, we have different subcases.

If $y_\alpha < x_{\beta + 2}$, we clockwise rotate $x_{\beta + 1}$ and $x_{\beta + 2}$ in the $c$-th column, and $x_\beta$ in the $(c + 1)$-th column.

If $x_{\beta + 2} \leqslant y_\alpha < x_{\beta + 1}$, we swap $x_{\beta + 2}$ with $y_\alpha$, and $x_{\beta + 1}$ with $x_\beta$. Observe in particular 
that this subcase is two separate swaps, and not a rotation.

If $x_{\beta + 1} \leqslant y_\alpha$, we clockwise rotate $x_{\beta + 1}$ and $x_{\beta + 2}$ in the $c$-th column, and $y_\alpha$, $x_\beta$, and 
$x_{\beta - 1}$ in the $(c + 1)$-th column.
Concretely, the one row overlap cases are as follows:
\begin{align}
\varphi_2(T):
& \scalebox{0.8}[.25]{
\begin{ytableau}
\filling{4}{x_{\beta + \alpha}} & \filling{4}{y_\alpha} \\
\filling{4}{x_{\beta + \alpha - 1}} & \filling{4}{\boldsymbol{x_{\beta + 2}}} \\
\filling{4}{x_{\beta + \alpha - 2}} & *(green) \filling{4}{x_{\beta - 1}} \\
\filling{4}{\vdots} & *(green) \filling{4}{\vdots} \\
\filling{4}{x_{\beta + 3}} & *(green) \filling{4}{x_4} \\
\filling{4}{\boldsymbol{x_{\beta + 1}}} & *(green) \filling{4}{x_3} \\
*(green) \filling{4}{\boldsymbol{x_\beta}} & *(green) \filling{4}{x_2} \\
*(green) \filling{4}{x_1} & *(yellow) \filling{4}{y_{\alpha - 1}} \\
*(yellow) \filling{4}{y_{\alpha - 2}} \\
*(yellow) \filling{4}{\vdots} \\
*(yellow) \filling{4}{y_1}
\end{ytableau}} \quad
& \scalebox{0.8}[.25]{
\begin{ytableau}
\filling{4}{x_{\beta + \alpha}} & \filling{4}{\boldsymbol{x_{\beta + 2}}} \\
\filling{4}{x_{\beta + \alpha - 1}} & \filling{4}{\boldsymbol{x_{\beta + 1}}} \\
\filling{4}{x_{\beta + \alpha - 2}} & *(green) \filling{4}{x_{\beta - 1}} \\
\filling{4}{\vdots} & *(green) \filling{4}{\vdots} \\
\filling{4}{x_{\beta + 3}} &  *(green)\filling{4}{x_4} \\
\filling{4}{\boldsymbol{y_\alpha}} & *(green) \filling{4}{x_3} \\
*(green) \filling{4}{\boldsymbol{x_\beta}} & *(green) \filling{4}{x_2} \\
*(green) \filling{4}{x_1} & *(yellow) \filling{4}{y_{\alpha - 1}} \\
*(yellow) \filling{4}{y_{\alpha - 2}} \\
*(yellow) \filling{4}{\vdots} \\
*(yellow) \filling{4}{y_1}
\end{ytableau}} \quad
& \scalebox{0.8}[.25]{
\begin{ytableau}
\filling{4}{x_{\beta + \alpha}} & \filling{4}{\boldsymbol{x_{\beta + 2}}} \\
\filling{4}{x_{\beta + \alpha - 1}} & \filling{4}{\boldsymbol{y_\alpha}} \\
\filling{4}{x_{\beta + \alpha - 2}} & *(green) \filling{4}{\boldsymbol{x_\beta}} \\
\filling{4}{\vdots} & *(green) \filling{4}{\vdots} \\
\filling{4}{x_{\beta + 3}} & *(green) \filling{4}{x_4} \\
\filling{4}{\boldsymbol{x_{\beta + 1}}} & *(green) \filling{4}{x_3} \\
*(green) \filling{4}{\boldsymbol{x_{\beta - 1}}} & *(green) \filling{4}{x_2} \\
*(green) \filling{4}{x_1} & *(yellow) \filling{4}{y_{\alpha - 1}} \\
*(yellow) \filling{4}{y_{\alpha - 2}} \\
*(yellow) \filling{4}{\vdots} \\
*(yellow) \filling{4}{y_1}
\end{ytableau}}
\\
& & & \nonumber
\\
& y_\alpha < x_{\beta + 2}
& x_{\beta + 2} \leqslant y_\alpha < x_{\beta + 1} \hspace{6em}
& x_{\beta + 1} \leqslant y_\alpha \nonumber
\end{align}

Again, consider the entries involved in the modifications in each case, either the rotations when $y_\alpha < x_{\beta + 2}$ or 
$x_{\beta + 1} \leqslant y_\alpha$, or the swaps when $x_{\beta +2} \leqslant y_\alpha < x_{\beta + 1}$.
For the topmost entry in the $c$-th column to move strictly up to the $(c + 1)$-th column, and the bottommost entry in the $(c + 1)$-th column to move 
strictly down to the $c$-th column, we must have $\alpha \geqslant 4$.
This property is necessary for the tableau to remain semistandard after the rotation, as we will see in Lemma~\ref{cyclic rotation}, which fully necessitates 
the $\alpha \geqslant 4$ assumption.

\begin{example}
For $\lambda = (2^4, 1^4)$ and $T$ from Example~\ref{example.tworowoverlap}, we get all the two row overlap cases of $\varphi_2$:
\begin{equation}
\ytableausetup{boxsize=normal}
\begin{ytableau}
1 & *(yellow) 7 \\
2 & *(yellow) 10 \\
3 & *(yellow) 11 \\
*(green) 4 & *(yellow) 12 \\
*(green) 5 \\
*(green) 6 \\
*(green) 8 \\
*(green) 9
\end{ytableau}
\mapsto
\begin{ytableau}
1 & \bf{2} \\
\bf{3} & *(green) 5 \\
*(green) \bf{4} & *(green) 6 \\
*(yellow) 7 & *(green) 8 \\
*(green) 9 & *(yellow) 10 \\
*(yellow) 11 \\
*(yellow) 12
\end{ytableau}
\qquad
\begin{ytableau}
1 & *(yellow) 8 \\
2 & *(yellow) 9 \\
3 & *(yellow) 11 \\
*(green) 4 & *(yellow) 12 \\
*(green) 5 \\
*(green) 6 \\
*(green) 7 \\
*(green) 10
\end{ytableau}
\mapsto
\begin{ytableau}
1 & \bf{3} \\
2 & *(green) \bf{4} \\
*(green) \bf{5} & *(green) 6 \\
*(green) 7 & *(yellow) 8 \\
*(yellow) 9 & *(green) 10 \\
*(yellow) 11 \\
*(yellow) 12
\end{ytableau}
\qquad
\begin{ytableau}
1 & *(yellow) 9 \\
2 & *(yellow) 10 \\
3 & *(yellow) 11 \\
*(green) 4 & *(yellow) 12 \\
*(green) 5 \\
*(green) 6 \\
*(green) 7 \\
*(green) 8
\end{ytableau}
\mapsto
\begin{ytableau}
1 & \bf{2} \\
*(green) \bf{4} & \bf{3} \\
*(green) \bf{5} & *(green) 6 \\
*(green) 7 & *(yellow) 9 \\
*(green) 8 & *(yellow) 10 \\
*(yellow) 11 \\
*(yellow) 12
\end{ytableau}
\end{equation}
For the same $\lambda$, we have all the one row overlap cases of $\varphi_2$ as follows, including the $T$ from Example~\ref{example.onerowoverlap}:
\begin{equation}
\begin{ytableau}
1 & 3 \\
2 & *(yellow) 10 \\
4 & *(yellow) 11 \\
5 & *(yellow) 12 \\
*(green) 6 \\
*(green) 7 \\
*(green) 8 \\
*(green) 9
\end{ytableau}
\mapsto
\begin{ytableau}
1 & 3 \\
2 & \bf{4} \\
\bf{5} & *(green) 7 \\
*(green) \bf{6} & *(green) 8 \\
*(green) 9 & *(yellow) 10 \\
*(yellow) 11 \\
*(yellow) 12
\end{ytableau}
\qquad
\begin{ytableau}
1 & 4 \\
2 & *(yellow) 10 \\
3 & *(yellow) 11 \\
5 & *(yellow) 12 \\
*(green) 6 \\
*(green) 7 \\
*(green) 8 \\
*(green) 9
\end{ytableau}
\mapsto
\begin{ytableau}
1 & \bf{3} \\
2 & \bf{5} \\
\bf{4} & *(green) 7 \\
*(green) \bf{6} & *(green) 8 \\
*(green) 9 & *(yellow) 10 \\
*(yellow) 11 \\
*(yellow) 12
\end{ytableau}
\qquad
\begin{ytableau}
1 & 5 \\
2 & *(yellow) 10 \\
3 & *(yellow) 11 \\
4 & *(yellow) 12 \\
*(green) 6 \\
*(green) 7 \\
*(green) 8 \\
*(green) 9
\end{ytableau}
\mapsto
\begin{ytableau}
1 & \bf{3} \\
2 & \bf{5} \\
\bf{4} & *(green) \bf{6} \\
*(green) \bf{7} & *(green) 8 \\
*(green) 9 & *(yellow) 10 \\
*(yellow) 11 \\
*(yellow) 12
\end{ytableau}
\end{equation}
\end{example}

The proof that $\varphi_2$ is an injection of semistandard tableaux relies on the following lemma regarding the clockwise rotation.
\begin{lemma}
\label{cyclic rotation}
Suppose $S$ is a semistandard tableau. Suppose that we perform a clockwise rotation on elements in the $c$-th and $(c + 1)$-th columns of $S$ to obtain $S'$, such that the following are true:
\begin{enumerate}
    \item The bottommost rotated entry in the $c$-th column in $S$ is less than or equal to the topmost rotated entry in the $(c + 1)$-th column in $S$.
    \item The entry moving from the $c$-th column in $S$ to the $(c + 1)$-th column in $S'$ moves strictly upwards.
    \item The entry moving from the $(c + 1)$-th column in $S$ to the $c$-th column in $S'$ moves strictly downward.
    \item The entry moving from the $c$-th column in $S$ to the $(c + 1)$-th column in $S'$ is greater than the entry above it in the $(c + 1)$-th column in $S'$, if such an entry exists.
    \item The entry moving from the $(c + 1)$-th column to the $c$-th column is less than the entry below it in the $c$-th column in $S'$.
\end{enumerate}
Then $S'$ is semistandard.
\end{lemma}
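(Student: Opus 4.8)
The plan is to verify the two defining properties of a semistandard tableau for $S'$ directly — rows weakly increasing, columns strictly increasing — using that a clockwise rotation changes only the cells lying in the two rotated runs. First I would fix notation: write $a_1 < a_2 < \dots < a_p$ for the rotated entries of $S$ in column $c$, read from top to bottom along consecutive rows, and $b_1 < b_2 < \dots < b_q$ for those in column $c+1$. By the definition of the rotation, in $S'$ the column-$c$ run reads $a_2, a_3, \dots, a_p, b_q$ from top to bottom, the column-$(c+1)$ run reads $a_1, b_1, \dots, b_{q-1}$, and every remaining cell is untouched. Conditions (2) and (3) are exactly the statements that $a_1$ lands strictly above the old column-$(c+1)$ run and $b_q$ strictly below the old column-$c$ run, which is what makes this description literally correct and puts the two ``crossed'' entries at the extreme ends of the runs.

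Next I would dispatch the column condition. Inside each run, strictness is inherited from $a_1 < \dots < a_p$ and $b_1 < \dots < b_q$. At the single spot in column $c$ where $a_p$ now sits immediately above $b_q$, strictness follows from $a_p \le b_1 \le b_q$, which uses condition (1); symmetrically in column $c+1$ the pair $a_1$ above $b_1$ is handled by $a_1 \le a_p \le b_1$, again via (1). The cell of $S$ lying just above the altered column-$(c+1)$ run is unchanged, and condition (4) is precisely the inequality needed to keep that column strict through the new top entry $a_1$; condition (5) does the same job for the cell just below the altered column-$c$ run and the new entry $b_q$. The two remaining boundary comparisons — the old cell just below the column-$(c+1)$ run against $b_{q-1}$, and the old cell just above the column-$c$ run against $a_2$ — hold because those cells were already strictly ordered against $b_q$ and $a_1$ in $S$.

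For the row condition I would split the rows meeting a changed cell into three families: those meeting only the column-$c$ run, only the column-$(c+1)$ run, or both; the last case occurs exactly when the two runs overlap in rows, which conditions (2)--(3) permit. In each family every horizontal neighbour of a changed cell lies in column $c-1$, $c$, $c+1$, or $c+2$ and is itself unchanged, so it suffices to compare the new entry with the corresponding entries of $S$. Whenever an $a_i$ or a $b_j$ merely replaces a smaller or larger member of its own run, monotonicity finishes it; the only genuinely new comparisons concern the two crossed entries. For $b_q$ now in column $c$: its left neighbour was at most $a_p \le b_1 \le b_q$ in $S$, and its right neighbour lies strictly below the old position of $b_q$ in the strictly increasing column $c+1$, hence exceeds $b_q$. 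For $a_1$ now in column $c+1$: its left neighbour lies strictly above the old position of $a_1$ in the strictly increasing column $c$, hence is smaller than $a_1$, while its right neighbour was at least $b_1 \ge a_p \ge a_1$ in $S$. Condition (1) is the recurring ingredient throughout, as it is what interleaves the $a$'s below the $b$'s.

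I expect the bookkeeping for the overlapping-runs case to be the main obstacle: there a single row can have had both its column-$c$ entry and its column-$(c+1)$ entry replaced, by some $a_i$ and some $b_j$ respectively, and one must show the resulting pair is still weakly increasing, that is, $a_i \le b_j$ for the indices that genuinely co-occur — once more a consequence of (1), the monotonicity of the runs, and the strictness of the columns of $S$. A smaller point that deserves care is that the two junction comparisons $a_p < b_q$ and $a_1 < b_1$ must be \emph{strict}; when one of the runs has length one this places the full weight on condition (1), and in the applications of this lemma in Section~\ref{section.injections} the rotated entries trace back to distinct cells of a single strictly increasing column, so no equality can occur.
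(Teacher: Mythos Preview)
The paper states Lemma~\ref{cyclic rotation} without proof (the proof is omitted along with that of Proposition~\ref{two column injection 2}), so there is no argument in the paper to compare yours against. Judged on its own terms, your outline is sound and contains the right ingredients; two points deserve tightening.

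First, the sentence ``whenever an $a_i$ or a $b_j$ merely replaces a smaller or larger member of its own run, monotonicity finishes it'' only handles half of each row comparison. When $a_{i+1}$ replaces $a_i$ in column $c$, the inequality with the left neighbour is automatic, but the right neighbour in column $c+1$ was only known to satisfy $\geqslant a_i$, not $\geqslant a_{i+1}$. The fix is exactly the column-strictness argument you already use for the crossed entry $b_q$: any unchanged cell to the right lies strictly below $b_q$ in column $c+1$ of $S$, hence exceeds $b_q \geqslant b_1 \geqslant a_p \geqslant a_{i+1}$. Symmetrically, when $b_{j-1}$ replaces $b_j$ in column $c+1$, the unchanged left neighbour lies strictly above $a_1$ in column $c$ of $S$, hence is $< a_1 \leqslant a_p \leqslant b_1 \leqslant b_{j-1}$. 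You invoke this reasoning only for the two crossed entries; it should be stated once for all intermediate positions.

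Second, your closing observation about strictness is well taken and is in fact a genuine boundary issue with the lemma as stated: if $p=1$ then condition~(1) gives only $a_1 \leqslant b_1$, and equality would violate column strictness at the junction in $S'$. None of conditions (2)--(5) rules this out. Your remark that in every application within the paper the rotated entries come from distinct cells of a single strictly increasing column of $T$, forcing strict inequality, is the correct resolution; it would be worth saying explicitly that the lemma is being used only in that regime.
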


\begin{proposition} 
\label{two column injection 2}
Let $\lambda$ and $\mu$ be as in \eqref{equation.mu lambda} with $\beta = \alpha \geqslant 4$.
Then $\varphi_2$ as defined above is an injection 
\[
\varphi_2 \colon \mathsf{SSYT}(\lambda) \to \mathsf{SSYT}(\mu).
\]
\end{proposition}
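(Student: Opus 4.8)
The plan is to follow the two-step template of Propositions~\ref{two column injection 0} and~\ref{two column injection 1}: first verify that $\varphi_2(T)$ is semistandard for every $T\in\mathsf{SSYT}(\lambda)$, then prove injectivity by constructing an explicit inverse $\psi_2$. Content is plainly preserved, since every operation only permutes entries of the $c$-th and $(c+1)$-th columns. Because the case $\varphi_2(T)=\varphi_0(T)$ is already handled by Proposition~\ref{two column injection 0}, we may assume $\varphi_0(T)\notin\mathsf{SSYT}(\mu)$; under the hypothesis $\beta=\alpha$, the discussion preceding the proposition shows the only obstruction is overlap in at most $\alpha-\beta+2=2$ rows, leaving exactly the six displayed subcases (two rows of overlap, three subcases; one row of overlap, three subcases). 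In each, the cells whose neighbours differ from $\varphi_0(T)$ are only those touched by the swap of $y_{\alpha-1}$ (resp.\ $y_\alpha$) with $x_1$ (resp.\ $x_2$) and by the clockwise rotation; all others coincide with $\varphi_0(T)$ and need no rechecking.

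For the swaps, the affected row becomes weakly increasing precisely because the subcase hypothesis is the inequality $y_{\alpha-1}>x_1$ or $y_\alpha>x_2$, exactly as in Proposition~\ref{two column injection 1}, and the touched columns stay strictly increasing using $x_2<x_1<y_\alpha<y_{\alpha-1}$ and $x_{\beta+1}<x_1$. For the rotations I would invoke Lemma~\ref{cyclic rotation} and check its five hypotheses in each subcase. Hypotheses (2)--(3), that the top of the rotated block in column $c$ moves strictly up and the bottom of the rotated block in column $c+1$ moves strictly down, are exactly what forces $\alpha\geqslant 3$ in the two-row subcases and $\alpha\geqslant 4$ in the one-row subcases, both subsumed by $\beta=\alpha\geqslant 4$; the displayed diagrams show the rotated blocks have the right length once this holds. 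Hypothesis (1) and hypotheses (4)--(5) follow in each subcase from its defining inequalities together with the standing relations $x_j>x_k$ for $j<k$ and $x_j\leqslant y_j$ for $j\leqslant i-1$; for instance the one-row subcase $x_{\beta+2}\leqslant y_\alpha<x_{\beta+1}$ uses exactly these two bounds. This is bookkeeping over six subcases, but none of them is genuinely hard.

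Injectivity is the crux, and I expect it to be the main obstacle. As in Proposition~\ref{two column injection 1}, $\varphi_0$ leaves a recognizable signature on the comparison between the $j$-th-from-bottom entries of the $c$-th and $(c+1)$-th columns: at the index $i$ of its definition the column-$(c+1)$ entry strictly exceeds the column-$c$ entry, since it came from below it. The rotations and swaps of $\varphi_2$ are designed so that the resulting tableau violates this pattern, forcing $\varphi_2(T)\neq\varphi_0(S)$ for every $S\in\mathsf{SSYT}(\lambda)$; the boldface entries in the diagrams are exactly the ones realizing the violation. One then argues, case by case, that from $\varphi_2(T)$ alone one can reconstruct how many rows of overlap were present and which subcase occurred, since the defining comparisons of each subcase leave a trace in the rotated and swapped positions, so the six subcases are mutually distinguishable and distinguishable from $\varphi_0$'s image. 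Granting this, define $\psi_2$ by: apply $\psi_0$ if the input lies in $\varphi_0(\mathsf{SSYT}(\lambda))$; otherwise detect the subcase, reverse its rotation and swap to obtain a tableau $T''\in\varphi_0(\mathsf{SSYT}(\lambda))$, and return $\psi_0(T'')$. Verifying $\psi_2\circ\varphi_2=\mathrm{id}$ is then a direct check on each subcase, as for $\psi_1$. The genuinely delicate point, which I would isolate as a separate lemma, is the mutual disjointness of the seven image pieces (the image of $\varphi_0$ together with the six subcases): a collision there would break well-definedness of $\psi_2$, so the bulk of the real work goes into ruling it out via the signature argument.
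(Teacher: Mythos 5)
Your overall strategy is the right one and matches what the paper indicates it would do: reduce to the six subcases where $\varphi_0(T)\notin\mathsf{SSYT}(\mu)$, verify semistandardness via Lemma~\ref{cyclic rotation} plus the defining inequalities of each subcase, and prove injectivity by a signature argument in the style of Proposition~\ref{two column injection 1}. The semistandardness half of your plan is essentially complete modulo routine checking (one small slip: the chain $x_2<x_1<y_\alpha<y_{\alpha-1}$ you quote is not what is needed in the two-row subcases, where the hypothesis is $y_{\alpha-1}>x_1$ and the relevant column comparisons involve the rotated $x_{\beta+1},x_{\beta+2},x_{\beta+3}$; the inequalities have to be redone per subcase).

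The genuine gap is exactly where you flag it, and flagging it is not the same as closing it. The signature that defeats $\varphi_0$ --- namely that every image of $\varphi_0$ has some position $j$ at which the $j$-th-from-bottom entry of the $(c+1)$-th column strictly exceeds the $j$-th-from-bottom entry of the $c$-th column --- only separates the modified outputs from $\varphi_0$'s image; it says nothing about separating the six subcases from one another. For $\psi_2$ to be well defined you need (i) an explicit detection rule that reads off, from $\varphi_2(T)$ alone, which of the seven cases produced it, and (ii) a proof that this rule is consistent, i.e.\ that no tableau arises from two different subcases applied to two different preimages. Your proposal asserts this ("Granting this\dots") rather than proving it, and it is precisely the part that does not follow formally from anything earlier: the rotations were \emph{chosen} so that the subcases are distinguishable, and verifying that choice works requires the case-by-case comparison of all $\binom{6}{2}$ pairs of subcases (plus each against $\varphi_0$). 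Until that lemma is written out, the map $\psi_2$ is not defined and injectivity is not established; as it stands the proposal is a correct plan, not a proof.
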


The proof of Proposition~\ref{two column injection 2} is technical and omitted here. It follows similar ideas to the proof of Proposition~\ref{two column injection 1}.
We can now further improve upon Corollary~\ref{corollary.cover two col immersion0} and Corollary~\ref{corollary.cover two col immersion1}.

\begin{corollary}
\label{corollary.cover two col immersion2}
The partitions $\lambda$ and $\mu$ as in~\eqref{equation.mu lambda} with $\beta \geqslant \alpha \geqslant 4$ form a cover in the immersion poset.
\end{corollary}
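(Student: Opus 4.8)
The plan is to follow verbatim the template of Corollaries~\ref{corollary.cover two col immersion0} and~\ref{corollary.cover two col immersion1}: reduce the claim to (a) $\mu$ covers $\lambda$ in dominance order, and (b) $\lambda \leqslant_I \mu$, and then invoke the fact, recorded right after Lemma~\ref{lemma.immersion K}, that every immersion relation is a dominance relation ($\lambda \leqslant_I \nu$ forces $\lambda \leqslant_D \nu$). Given (a) and (b), any hypothetical $\nu$ strictly between $\lambda$ and $\mu$ in the immersion poset would also lie strictly between them in dominance order, contradicting (a); hence $\mu$ covers $\lambda$ in the immersion poset.

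First I would verify (a) from the explicit shapes in~\eqref{equation.mu lambda}. The diagram of $\mu$ is obtained from that of $\lambda$ by deleting the box at the bottom of column $c$ — which sits in row $\alpha+\beta$, since rows $\alpha+1,\dots,\alpha+\beta$ of $\lambda$ all have length exactly $c$ while the rows below are strictly shorter — and adjoining a box to column $c+1$, which necessarily lands in row $\alpha+1$, since rows $1,\dots,\alpha$ already have length $>c$. This is precisely the elementary move ``push the bottom box of column $c$ into column $c+1$'' and hence a cover in dominance order; it requires only $\beta\geqslant 2$, which holds as $\beta\geqslant\alpha\geqslant 4$.

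Next I would establish (b) by exhibiting, for every $\nu\vdash n$, an injection $\mathsf{SSYT}(\lambda,\nu)\hookrightarrow\mathsf{SSYT}(\mu,\nu)$, whence $K_{\lambda,\nu}\leqslant K_{\mu,\nu}$ for all $\nu$ and $\lambda\leqslant_I\mu$ by Lemma~\ref{lemma.immersion K}. Here I split on $\beta-\alpha$: if $\beta\geqslant\alpha+2$, apply $\varphi_0$ (Proposition~\ref{two column injection 0}); if $\beta=\alpha+1$, then $\beta=\alpha+1\geqslant 5\geqslant 3$, so apply $\varphi_1$ (Proposition~\ref{two column injection 1}); and if $\beta=\alpha$, then $\beta=\alpha\geqslant 4$, so apply $\varphi_2$ (Proposition~\ref{two column injection 2}). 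These three cases exhaust $\beta\geqslant\alpha\geqslant 4$, and the hypothesis of each proposition is met in its case, so an injection exists in every case.

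The only real subtlety — a bookkeeping point rather than a genuine obstacle — is confirming that the parameter ranges of the three propositions glue together to cover exactly $\beta\geqslant\alpha\geqslant 4$, and in particular that in the boundary case $\beta=\alpha+1$ the hypothesis ``$\beta\geqslant 3$'' of Proposition~\ref{two column injection 1} is automatically supplied by $\alpha\geqslant 4$. Since Propositions~\ref{two column injection 0}–\ref{two column injection 2} are already in hand (the last being quoted as technical and omitted), no new combinatorics is needed; the corollary is a two-line consequence once the case split is written out.
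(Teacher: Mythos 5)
Your proposal is correct and follows the same route the paper intends: verify that $\mu$ covers $\lambda$ in dominance order, obtain $\lambda \leqslant_I \mu$ from the content-preserving injections $\varphi_0$, $\varphi_1$, $\varphi_2$ according to whether $\beta - \alpha \geqslant 2$, $=1$, or $=0$ (each hypothesis being met since $\alpha \geqslant 4$), and conclude via the argument of Corollary~\ref{corollary.cover two col immersion0} that a dominance cover which is an immersion relation is an immersion cover. The only difference is that you spell out the case split and the dominance-cover check explicitly, which the paper leaves implicit.
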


We summarize the bounds on $\alpha$ and $\beta$ needed for each map to be an injection:
\begin{center}
\begin{tabular}{ |c|c|c| }
    \hline
    Map & $\alpha$ & $\beta$ \\
    \hline
    $\varphi_0$ & $\alpha \geqslant 0$ & $\beta \geqslant \alpha + 2$ \\
    \hline
    $\varphi_1$ & $\alpha \geqslant 2$ & $\beta = \alpha + 1$ \\
    \hline
    $\varphi_2$ & $\alpha \geqslant 4$ & $\beta = \alpha$ \\
    \hline
\end{tabular}
\end{center}

\subsection{Immersion poset on hook partitions}
\label{section.hooks}

For this section, set $\lambda^{i}=(i, 1^{n-i})\vdash n$ and let $S=\{\lambda^{i} \mid 1\leqslant i \leqslant n\}$ be the set of all hook partitions.
We study the immersion poset restricted to $S$.

\begin{proposition}
        \label{proposition.hook}
        Let $1 \leqslant i \leqslant n$ and $\alpha = (\alpha_1,\dots,\alpha_k) \vdash n$ such that 
        $\alpha \leqslant_D \lambda^i$. Then
        \[
        K_{\lambda^i,\alpha} = \binom{k-1}{n-i}.
        \]
\end{proposition}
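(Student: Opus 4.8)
The plan is to set up a bijection between $\mathsf{SSYT}(\lambda^i,\alpha)$ and the collection of $(n-i)$-element subsets of $\{2,3,\dots,k\}$; since there are $\binom{k-1}{n-i}$ such subsets, this immediately yields the claimed value of $K_{\lambda^i,\alpha}$. Throughout, write $\lambda^i=(i,1^{n-i})$, so a tableau $T\in\mathsf{SSYT}(\lambda^i,\alpha)$ is determined by its first row (of length $i$, weakly increasing) together with the $n-i$ cells of the first column lying below the corner $(1,1)$ (strictly increasing), the corner being shared by both.

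First I would observe that the corner entry is forced: as $\alpha$ has $k$ positive parts, the letter $1$ is used $\alpha_1\geqslant 1$ times, and being the smallest letter it must occupy $(1,1)$; strictness down the first column then shows $1$ occurs nowhere else in that column, so all $\alpha_1$ copies of $1$ fill the cells $(1,1),\dots,(1,\alpha_1)$ of the first row. Similarly, each letter $j\geqslant 2$ occurs at most once among the $n-i$ below-corner cells of the first column. Hence the rule $T\mapsto D(T)$, where $D(T)\subseteq\{2,\dots,k\}$ is the set of entries appearing in the below-corner part of the first column, is a well-defined map from $\mathsf{SSYT}(\lambda^i,\alpha)$ to $(n-i)$-element subsets of $\{2,\dots,k\}$.

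Next I would construct the inverse. Given an $(n-i)$-element subset $D\subseteq\{2,\dots,k\}$, place the elements of $D$ in increasing order down the column below $(1,1)$, and fill the first row in weakly increasing order with the remaining multiset of letters: $\alpha_1$ copies of $1$, and $\alpha_j-[\,j\in D\,]$ copies of each $j\geqslant 2$. These multiplicities are non-negative because $j\in D$ forces $j\leqslant k=\ell(\alpha)$ and hence $\alpha_j\geqslant 1$, and the multiset has size $n-(n-i)=i$, so it exactly fills the first row. The resulting filling has content $\alpha$ and is semistandard: the only column with more than one cell is strictly increasing by construction, and the first row is weakly increasing and starts with $1$. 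Moreover it is the unique element of $\mathsf{SSYT}(\lambda^i,\alpha)$ with below-corner column content equal to $D$, because a weakly increasing row is determined by its multiset of entries. Therefore $T\mapsto D(T)$ is a bijection and $K_{\lambda^i,\alpha}=\binom{k-1}{n-i}$.

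I do not anticipate a genuine obstacle; the entire argument rests on the rigidity of hook tableaux. The only points that need care are the degenerate cases $n-i=0$ (no column below the corner, $T$ a single row) and $\alpha_1=i$ (first row entirely $1$'s), together with the check that the reconstruction never calls for a negative multiplicity — precisely where the bound $j\leqslant\ell(\alpha)=k$ is used. Finally, one may note that the hypothesis $\alpha\leqslant_D\lambda^i$ guarantees $\binom{k-1}{n-i}$ is a positive count: transposing gives $\alpha^t\geqslant_D(\lambda^i)^t=(n-i+1,1^{i-1})$, whence $k=\ell(\alpha)=(\alpha^t)_1\geqslant n-i+1$, i.e.\ $n-i\leqslant k-1$.
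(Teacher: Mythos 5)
Your proof is correct and follows essentially the same route as the paper's: the $\alpha_1$ ones are forced into the leftmost cells of the first row, the below-corner column is an arbitrary $(n-i)$-subset of $\{2,\dots,k\}$, and the rest of the first row is then uniquely determined. You simply make the bijection and its well-definedness (non-negative leftover multiplicities, the degenerate cases) more explicit than the paper does.
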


\begin{proof}
        Since $\lambda^i$ dominates $\alpha$, we know that $K_{\lambda^i,\alpha}\geqslant 1$. 
        To form a semistandard Young tableau of shape $\lambda^i$ and content $\alpha$, the $\alpha_1$ entries 1 must be placed leftmost in the first row of 
        $\lambda^i$. The remaining $n-i$ positions in the first column of $\lambda^i$ can be filled with distinct values from the set $\{2,3,\ldots,k\}$.
        This gives $\binom{k-1}{n-i}$ choices. Once these are placed, there is only one way to fill the remainder of the first row so that the resulting tableau is semistandard. 
\end{proof}

Recall from Lemma~\ref{lemma.immersion K} that $\mu \leqslant_I \lambda$ if and only if $K_{\mu,\alpha} \leqslant K_{\lambda,\alpha}$ for all 
$\alpha\vdash n$. Hence with Proposition~\ref{proposition.hook}, we are now ready to describe all the relations between hook partitions $\lambda^i\in S$ 
in the immersion poset. To illustrate what the proposition implies, we form a matrix of values in the following way:
\begin{itemize}
        \item The $j$-th column is indexed by the content $\alpha^{j}$, where $\alpha^{j}$ is any content that has $j$ parts.
        \item The $i$-th row is indexed by the shape $\lambda^{i}$.
        \item The $(i,j)$ entry of this matrix is the value $T_{i,j}:=K_{\lambda^{i},\alpha^j} = \binom{j-1}{n-i}$ for $1\leqslant i,j\leqslant n$.
\end{itemize}

\begin{example}
        \label{example.n=7}
        We give the matrix for $n=7$:
        \begin{center}
                {\renewcommand{\arraystretch}{1.3}
                \begin{tabular}{ |l|c|c|c|c|c|c|c| }
                        \hline
                        \diagbox{Partition}{$\#$ of parts} & 1 & 2 & 3 & 4 & 5 & 6 & 7 \\
                        \hline
                        $(1^7)$ & 0 & 0 & 0 & 0 & 0 & 0 & 1 \\
                        \hline
                        $(2, 1^5)$ & 0 & 0 & 0 & 0 & 0 & 1 & 6 \\
                        \hline
                        $(3, 1^4)$ & 0 & 0 & 0 & 0 & 1 & 5 & 15 \\
                        \hline
                        $(4, 1^3)$ & 0 & 0 & 0 & 1 & 4 & 10 & 20 \\
                        \hline
                        $(5, 1^2)$ & 0 & 0 & 1 & 3 & 6 & 10 & 15 \\
                        \hline
                        $(6,1)$ & 0 & 1 & 2 & 3 & 4 & 5 & 6 \\
                        \hline
                        $(7)$ & 1 & 1 & 1 & 1 & 1 & 1 & 1 \\
                        \hline
                \end{tabular}}
        \end{center}
\end{example}

\begin{remark}
        \label{remark.hook}
        In this context, $\lambda^i \geqslant_I \lambda^j$ if and only if $T_{i,m} \geqslant T_{j,m}$ for all $m$. 
        Equivalently, since $T_{j,m}= 0$ when $n-j \geqslant m$ and $\lambda^i$ dominates $\lambda^j$ when $i>j$, we need only show 
        $T_{i,m} \geqslant T_{j,m}$ for all $m> n-j$ when $i>j$.
\end{remark}
The following lemma is used to prove the structure of the immersion poset restricted to hook partitions. 

\begin{lemma}
        \label{lemma.binom}
        Suppose $\binom{n-1}{n-i}\geqslant \binom{n-1}{n-j}$ and $i > j$ (note that this implies $j\leqslant \frac{n}{2}$). Then for all $0\leqslant p\leqslant j-1$, we have
        \[ 
        \binom{n-1-p}{n-i}\geqslant \binom{n-1-p}{n-j}.
        \]
\end{lemma}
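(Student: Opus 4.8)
The plan is to reduce the statement to the monotonicity in $p$ of a single ratio of binomial coefficients. First I would check that for $0\leqslant p\leqslant j-1$ both binomials in sight are strictly positive: since $i>j$ we have $n-1-p\geqslant n-j>n-i\geqslant 0$, so $\binom{n-1-p}{n-i}$ and $\binom{n-1-p}{n-j}$ are both nonzero. Hence it is legitimate to set
\[
R_p := \frac{\binom{n-1-p}{n-i}}{\binom{n-1-p}{n-j}},
\]
and then the hypothesis is precisely $R_0\geqslant 1$ while the desired conclusion is $R_p\geqslant 1$ for all $0\leqslant p\leqslant j-1$.

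The second step is to expand $R_p$ into factorials, cancel the common $(n-1-p)!$, and rewrite the two remaining factorial quotients as products of $i-j$ factors each (using $i>j$):
\[
R_p = \frac{(n-j)!}{(n-i)!}\cdot\frac{(j-1-p)!}{(i-1-p)!}
= \prod_{\ell=1}^{i-j}\frac{n-i+\ell}{\,j-1-p+\ell\,}.
\]
For every $1\leqslant \ell\leqslant i-j$ and every $0\leqslant p\leqslant j-1$ the denominator factor satisfies $j-1-p+\ell\geqslant j-1-(j-1)+1=1>0$, so all factors are positive; in particular no degeneracy occurs, even at the endpoint $p=j-1$ (where $\binom{n-j}{n-j}=1$) or when $i=j+1$ (where the product has a single factor).

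The third step is the key observation: for fixed $\ell$, the factor $\dfrac{n-i+\ell}{\,j-1-p+\ell\,}$ is strictly increasing in $p$ on $\{0,1,\dots,j-1\}$, because increasing $p$ strictly decreases its positive denominator while fixing the numerator. A product of positive terms each nondecreasing in $p$ is nondecreasing in $p$, so $R_p$ is nondecreasing in $p$ on $\{0,1,\dots,j-1\}$. Combined with the hypothesis $R_0\geqslant 1$, this yields $R_p\geqslant R_0\geqslant 1$ for all such $p$, which is exactly the claim. The parenthetical remark that the hypothesis forces $j\leqslant n/2$ is not needed for this argument; it can be noted separately from the unimodality and symmetry of $k\mapsto\binom{n-1}{k}$ about $k=(n-1)/2$, since $\binom{n-1}{n-i}\geqslant\binom{n-1}{n-j}$ with $n-i<n-j$ would be contradicted by strict monotonicity of $\binom{n-1}{\cdot}$ on $[0,\lfloor(n-1)/2\rfloor]$ unless $n-j\geqslant n/2$.

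There is essentially no serious obstacle here: the argument is just positivity plus term-by-term monotonicity. The only place demanding a little care is the factorial bookkeeping in the second step — verifying that both the numerator product $\prod_{\ell=1}^{i-j}(n-i+\ell)$ and the denominator product $\prod_{\ell=1}^{i-j}(j-1-p+\ell)$ have exactly the stated index range and that the denominators stay positive throughout $0\leqslant p\leqslant j-1$ — but this is routine.
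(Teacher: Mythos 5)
Your proof is correct and complete: the reduction to the ratio $R_p=\prod_{\ell=1}^{i-j}\frac{n-i+\ell}{j-1-p+\ell}$ is computed accurately, the positivity of all denominators on $0\leqslant p\leqslant j-1$ is verified, and the term-by-term monotonicity in $p$ combined with $R_0\geqslant 1$ gives the claim. The paper omits its proof, stating only that it ``follows from basic properties of binomial coefficients,'' and your argument is precisely such a proof, so there is nothing to compare against and no gap to report.
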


The proof follows from basic properties of binomial coefficients, and is omitted here.

\begin{corollary}
\label{corollary.T to I}
If $T_{i,n} \geqslant T_{j,n}$ for $i>j$, then $\lambda^{i} \geqslant_I \lambda^{j}$.
\end{corollary}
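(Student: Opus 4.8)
The plan is to reduce the claim $\lambda^i \geqslant_I \lambda^j$ to a column-by-column comparison of the matrix entries $T_{i,m}$ and $T_{j,m}$, and then to propagate the single inequality $T_{i,n} \geqslant T_{j,n}$ to all relevant columns using Lemma~\ref{lemma.binom}. By Remark~\ref{remark.hook}, it suffices to verify $T_{i,m} \geqslant T_{j,m}$ for all $m > n-j$ (for $m \leqslant n-j$ we have $T_{j,m}=0$, so nothing to check), given that $i > j$.

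First I would unwind the hypothesis. By definition $T_{i,n} = \binom{n-1}{n-i}$ and $T_{j,n} = \binom{n-1}{n-j}$, so the assumption $T_{i,n}\geqslant T_{j,n}$ is exactly the hypothesis $\binom{n-1}{n-i}\geqslant\binom{n-1}{n-j}$ of Lemma~\ref{lemma.binom} (note this forces $j \leqslant n/2$, as remarked there). Next, for a general column index $m$ with $n-j < m \leqslant n$, write $m = n-p$ where $0 \leqslant p \leqslant j-1$; this is precisely the range of $p$ covered by Lemma~\ref{lemma.binom}. Then $T_{i,m} = \binom{m-1}{n-i} = \binom{n-1-p}{n-i}$ and $T_{j,m} = \binom{m-1}{n-j} = \binom{n-1-p}{n-j}$, and Lemma~\ref{lemma.binom} gives exactly $\binom{n-1-p}{n-i} \geqslant \binom{n-1-p}{n-j}$, i.e.\ $T_{i,m}\geqslant T_{j,m}$.

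Thus $T_{i,m} \geqslant T_{j,m}$ holds for all $m$: trivially when $m \leqslant n-j$ since the right side is $0$, and by the previous paragraph when $m > n-j$. By Proposition~\ref{proposition.hook} these matrix entries are the Kostka numbers $K_{\lambda^i,\alpha}$ and $K_{\lambda^j,\alpha}$ for contents $\alpha$ with the appropriate number of parts (and any two contents with the same number of parts give the same Kostka number for a hook shape), so $K_{\lambda^j,\alpha} \leqslant K_{\lambda^i,\alpha}$ for all $\alpha \vdash n$. By Lemma~\ref{lemma.immersion K} this yields $\lambda^j \leqslant_I \lambda^i$, as desired.

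There is no real obstacle here: the corollary is essentially a bookkeeping consequence of Lemma~\ref{lemma.binom} together with the combinatorial formula of Proposition~\ref{proposition.hook}. The only point requiring a moment's care is the index substitution $m = n-p$ and checking that the range $n-j < m \leqslant n$ corresponds exactly to $0 \leqslant p \leqslant j-1$, so that Lemma~\ref{lemma.binom} applies to every column that is not automatically handled by the vanishing of $T_{j,m}$.
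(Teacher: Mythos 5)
Your proposal is correct and follows essentially the same route as the paper: reduce to the columns $m>n-j$ via Remark~\ref{remark.hook}, identify $T_{i,m}=\binom{m-1}{n-i}$ via Proposition~\ref{proposition.hook}, and propagate the single inequality at $m=n$ to all $m=n-p$ with $0\leqslant p\leqslant j-1$ using Lemma~\ref{lemma.binom}. The paper's proof is just a more compressed version of the same argument.
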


\begin{proof}
By Proposition~\ref{proposition.hook}, $T_{i, n-p} = K_{\lambda^i, \alpha^{n-p}} = \binom{(n-p) - 1}{n-i}$. Hence if $T_{i,n} \geqslant T_{j,n}$, by Lemma~\ref{lemma.binom},
we also have $T_{i,n-p} \geqslant T_{j,n-p}$ for $0\leqslant p\leqslant j-1$. By Remark~\ref{remark.hook}, this implies $\lambda^{i} \geqslant_I \lambda^{j}$.
\end{proof}

\begin{example}
        Take the rows corresponding to the partitions $(5, 1^2)$ and $(3,1^4)$ in Example~\ref{example.n=7}. Since the last column entries 
        give $T_{5,7} = 15\geqslant 15 = T_{3,7}$, then by Corollary~\ref{corollary.T to I} we also have $T_{5,7-p} \geqslant T_{3,7-p}$ for $1 \leqslant p \leqslant 2$: 
        $T_{5,6} = 10\geqslant 5 = T_{3,6}$, $T_{5,5} = 6\geqslant 1 = T_{3,5}$.
\end{example}

We now describe the relations in the immersion poset on $S$ depending upon whether $n$ is even or odd.

\begin{proposition}
        \label{proposition.hook odd}
        Let $n=2k+1$ be odd, then:
        \begin{enumerate}
                \item $\lambda^{\ell+1} \geqslant_I \lambda^{\ell}$ for all $1\leqslant \ell\leqslant k$.
                \item $(\lambda^{k+1-\ell})^t = \lambda^{k+1+\ell} \geqslant_I \lambda^{k+1-\ell}$ for all $1\leqslant \ell\leqslant k$.
                \item For any $1<i\leqslant k+1$, $\lambda^i$ is incomparable to $\lambda^j$ for all $j>n-i+1$.
                \item For any $k+2\leqslant i < n$, $\lambda^i$ is incomparable to $\lambda^j$ for all $j>i$.
        \end{enumerate}
        These describe all relations in the immersion poset restricted to hook partitions $S$.
\end{proposition}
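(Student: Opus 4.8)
The strategy is to reduce the whole proposition to a single, easily analyzed inequality of binomial coefficients. By Remark~\ref{remark.hook} together with Corollary~\ref{corollary.T to I}, for $i>j$ one has $\lambda^i \geqslant_I \lambda^j$ if and only if $T_{i,n}\geqslant T_{j,n}$: the forward implication uses only the $m=n$ instance of Remark~\ref{remark.hook}, and the reverse implication is exactly Corollary~\ref{corollary.T to I}. Since $T_{i,n}=\binom{n-1}{n-i}$ by Proposition~\ref{proposition.hook} (applied to $\alpha=(1^n)$), and since $i>j$ forces $\lambda^i>_D\lambda^j$ and hence $\lambda^j\not\geqslant_I\lambda^i$, the entire immersion poset on $S$ is governed by the rule: for $i>j$, the hooks $\lambda^i$ and $\lambda^j$ are comparable, necessarily with $\lambda^i>_I\lambda^j$, exactly when $\binom{n-1}{n-i}\geqslant\binom{n-1}{n-j}$, and are incomparable otherwise. (One first notes that Proposition~\ref{proposition.hook} genuinely makes $K_{\lambda^i,\alpha}$ depend only on the number of parts of $\alpha$: if $\binom{j-1}{n-i}>0$, i.e. $j\geqslant n-i+1$, then a content with $j$ parts is automatically dominated by $\lambda^i$, so the hypothesis of Proposition~\ref{proposition.hook} holds, and if $\binom{j-1}{n-i}=0$ both sides of the asserted equality vanish.)

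For $n=2k+1$, write $a=n-i$ and $b=n-j$, so $0\leqslant a<b\leqslant 2k$. The sequence $\binom{2k}{0},\binom{2k}{1},\dots,\binom{2k}{2k}$ is strictly increasing up to its peak $\binom{2k}{k}$, then strictly decreasing, and is symmetric about $k$. A short case analysis according to which side of $k$ the indices $a$ and $b$ lie on shows that $\binom{2k}{a}\geqslant\binom{2k}{b}$ if and only if $a+b\geqslant 2k$, with equality exactly when $a+b=2k$. Rewriting $a+b\geqslant 2k$ as $i+j\leqslant n+1$, we conclude: for $i>j$, $\lambda^i>_I\lambda^j$ when $i+j\leqslant n+1$ and $\lambda^i,\lambda^j$ are incomparable when $i+j\geqslant n+2$; the equality case $i+j=n+1$ is precisely $j=n-i+1$, which is the transpose relation, since the conjugate of the hook $(i,1^{n-i})$ is $(n-i+1,1^{i-1})$.

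Each of the four parts now follows immediately. Part (1): with $i=\ell+1$, $j=\ell$ we get $i+j=2\ell+1\leqslant 2k+1=n\leqslant n+1$, so $\lambda^{\ell+1}\geqslant_I\lambda^\ell$. Part (2): with $i=k+1+\ell$, $j=k+1-\ell$ we get $i+j=n+1$, so $\lambda^{k+1+\ell}\geqslant_I\lambda^{k+1-\ell}$, and the conjugate formula gives $(\lambda^{k+1-\ell})^t=\lambda^{k+1+\ell}$. Part (3): if $1<i\leqslant k+1$ and $j>n-i+1$, then $j\geqslant k+2$, so $j>i$ and $i+j\geqslant n+2$, whence $\lambda^j\not\geqslant_I\lambda^i$ while $\lambda^i\not\geqslant_I\lambda^j$ by dominance, so they are incomparable. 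Part (4): if $k+2\leqslant i<n$ and $j>i$, then $i+j>2i\geqslant 2(k+2)=n+3$, so again $i+j\geqslant n+2$ and $\lambda^i,\lambda^j$ are incomparable. Finally, to see that (1)--(4) describe \emph{all} relations, fix a pair $i>j$. If $i+j\leqslant n+1$ and $i\leqslant k+1$, then $\lambda^i\geqslant_I\lambda^j$ by iterating (1); if $i+j\leqslant n+1$ and $i=k+1+\ell\geqslant k+2$, then $j\leqslant k+1-\ell$, so (2) gives $\lambda^i\geqslant_I\lambda^{k+1-\ell}$ and iterating (1) gives $\lambda^{k+1-\ell}\geqslant_I\lambda^j$. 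If instead $i+j\geqslant n+2$, then $j\geqslant 2$ (as $i\leqslant n$) and the incomparable pair is the instance $(i,j)$ of (3) when $j\leqslant k+1$, and the instance of (4) with the roles of the two indices exchanged (note $j<i\leqslant n$) when $j\geqslant k+2$.

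The only step demanding genuine computation is the elementary case analysis establishing $\binom{2k}{a}\geqslant\binom{2k}{b}\iff a+b\geqslant 2k$ for $a<b$; everything else is formal manipulation of the inequality $i+j\leqslant n+1$. I therefore anticipate no serious obstacle, only the need to handle the boundary case $i+j=n+1$ (the transpose relation) carefully and to check that Proposition~\ref{proposition.hook} is applicable to every content, not merely those dominated by the hook in question.
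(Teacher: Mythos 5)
Your proof is correct and follows essentially the same route as the paper's: both reduce every comparison to the last-column Kostka numbers $T_{i,n}=\binom{n-1}{n-i}$ via Corollary~\ref{corollary.T to I} together with the necessity of dominance, and then settle the resulting binomial inequalities using unimodality and symmetry of $\binom{2k}{\cdot}$. Your consolidation into the single criterion $i+j\leqslant n+1$ (for $i>j$), your explicit verification that (1)--(4) generate all relations, and your check that Proposition~\ref{proposition.hook} applies to every content are welcome tidyings of details the paper leaves implicit.
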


\begin{proof}
        Let us first prove (1). Fix an $\ell$ with $1\leqslant \ell \leqslant k$. Then by Corollary~\ref{corollary.T to I},
        $\lambda^{\ell+1} \geqslant_I \lambda^{\ell}$ if and only if $T_{\ell+1,n} \geqslant T_{\ell,n}$.
        Note that $T_{\ell+1,n} = \binom{n-1}{n-\ell-1} = \binom{n-1}{\ell}$ and $T_{\ell,n} = \binom{n-1}{n-\ell}=\binom{n-1}{\ell-1}$.
        Since $1\leqslant \ell \leqslant k$, we have $\binom{n-1}{\ell}\geqslant\binom{n-1}{\ell-1}$ and the result follows.

        To prove (2), note that $\lambda^{k+1+\ell} \geqslant_I \lambda^{k+1-\ell}$ if and only if $T_{k+1+\ell,n} \geqslant T_{k+1-\ell,n}$.
        Since $T_{k+1+\ell,n} = \binom{n-1}{k-\ell} = \binom{2k}{k-\ell} = \binom{2k}{k+\ell} = \binom{n-1}{k+\ell} = T_{k+1-\ell,n}$, the result follows.

        To prove (3) we show for any $1<i\leqslant k+1$ that $\lambda^i$ is incomparable to $\lambda^j$ for all $j>n-i+1$. Since $\lambda^j$ dominates $\lambda^i$, 
        we need only show there exists some $\alpha$ such that $K_{\lambda^i,\alpha} > K_{\lambda^j,\alpha}$. Choose $\alpha=(1^n)$. Then 
        $K_{\lambda^i,\alpha} = \binom{n-1}{n-i} = \binom{n-1}{i-1} > \binom{n-1}{n-j} = K_{\lambda^j,\alpha}$ because $i-1>n-j$ and $1<i\leqslant k+1$.

        Lastly, to prove (4) we follow the same strategy as (3). Since $\lambda^j$ dominates $\lambda^i$, we can let $\alpha=(1^n)$, and since $k+1<i<j$ we get $K_{\lambda^i,\alpha} = \binom{n-1}{n-i} > \binom{n-1}{n-j} = K_{\lambda^j,\alpha}$, and the result follows.
\end{proof}

\begin{proposition}
        \label{proposition.hook even}
        Let $n=2k$ be even, then:
        \begin{enumerate}
                \item $\lambda^{\ell+1} \geqslant_I \lambda^{\ell}$ for all $1\leqslant \ell< k$.
                \item $(\lambda^{k-\ell})^t = \lambda^{k+1+\ell} \geqslant_I \lambda^{k-\ell}$ for all $0\leqslant \ell\leqslant k-1$.
                \item For any $1<i\leqslant k$, $\lambda^i$ is incomparable to $\lambda^j$ for all $j>n-i+1$.
                \item For any $k+1\leqslant i < n$, $\lambda^i$ is incomparable to $\lambda^j$ for all $j>i$.
        \end{enumerate}
        These describe all relations in the immersion poset restricted to hook partitions $S$.
\end{proposition}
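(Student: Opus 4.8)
The plan is to follow the proof of Proposition~\ref{proposition.hook odd} closely, using Corollary~\ref{corollary.T to I}, Remark~\ref{remark.hook}, and Proposition~\ref{proposition.hook}. The one genuine change from the odd case is that the binomial row governing the comparisons is now $\binom{n-1}{\cdot}=\binom{2k-1}{\cdot}$, whose two central entries $\binom{2k-1}{k-1}$ and $\binom{2k-1}{k}$ are equal; this is why the relation $\lambda^k<_I\lambda^{k+1}=(\lambda^k)^t$, which sat inside item~(1) in the odd case, is instead recorded as the $\ell=0$ instance of item~(2) here. Throughout I would use that $T_{i,n}=K_{\lambda^i,(1^n)}=\binom{n-1}{n-i}=\binom{2k-1}{i-1}$, that $\binom{2k-1}{a}$ is weakly increasing on $\{0,\dots,k\}$ and strictly increasing on $\{0,\dots,k-1\}$, and that it is symmetric under $a\mapsto 2k-1-a$.

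For item~(1), Corollary~\ref{corollary.T to I} reduces the claim to $T_{\ell+1,n}\geqslant T_{\ell,n}$, i.e.\ $\binom{2k-1}{\ell}\geqslant\binom{2k-1}{\ell-1}$, which holds for every $\ell\leqslant k$ and hence in particular for $1\leqslant\ell<k$. For item~(2), I would first note the elementary identity $(\lambda^i)^t=\lambda^{n-i+1}$, giving $(\lambda^{k-\ell})^t=\lambda^{k+1+\ell}$; then Corollary~\ref{corollary.T to I} reduces the claim to the equality $T_{k+1+\ell,n}=\binom{2k-1}{k-1-\ell}=\binom{2k-1}{k+\ell}=T_{k-\ell,n}$, which is the binomial symmetry. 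This yields $\lambda^{k+1+\ell}\geqslant_I\lambda^{k-\ell}$ for $0\leqslant\ell\leqslant k-1$.

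For items~(3) and~(4), as in the odd case I would use that $\lambda^j$ strictly dominates $\lambda^i$ whenever $j>i$, so it suffices to find one content $\alpha$ with $K_{\lambda^i,\alpha}>K_{\lambda^j,\alpha}$, and $\alpha=(1^n)$ works since then $K_{\lambda^i,(1^n)}=\binom{2k-1}{i-1}$ and $K_{\lambda^j,(1^n)}=\binom{2k-1}{n-j}$. In case~(3), the hypotheses $1<i\leqslant k$ and $j>n-i+1$ force $0\leqslant n-j<i-1\leqslant k-1$, so strict monotonicity on $\{0,\dots,k-1\}$ gives $\binom{2k-1}{n-j}<\binom{2k-1}{i-1}$; in case~(4), the hypotheses $k+1\leqslant i<j$ force $0\leqslant n-j<n-i\leqslant k-1$, and the same monotonicity applies.

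It remains to show that (1)--(4) list all relations, for which I would prove the clean characterization that, for $i<j$, one has $\lambda^i<_I\lambda^j$ if and only if $i+j\leqslant n+1$. For the ``only if'' direction, note that every generating relation from (1) and (2) has the form $\lambda^a<_I\lambda^b$ with $a<b$ and $a+b\leqslant n+1$ (type (1): $a+b=2\ell+1\leqslant 2k-1$; type (2): $a+b=n+1$), and the set $\{(i,j):i<j,\ i+j\leqslant n+1\}$ is closed under transitivity (if $i<j<m$ with $i+j\leqslant n+1$ then $i+m<j+m\leqslant n+1$), so the transitive closure of (1) and (2) is contained in it; combined with items~(3) and~(4), which declare every pair with $i<j$ and $i+j\geqslant n+2$ incomparable (the case $2\leqslant i\leqslant k$ from (3), the case $i\geqslant k+1$ from (4), and $i=1$ cannot occur since $1+j\leqslant n+1$), this forces the actual order relation to equal that set. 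For the ``if'' direction, given $i<j$ with $i+j\leqslant n+1$: if $j\leqslant k+1$ then $\lambda^i<_I\lambda^j$ lies on the chain $\lambda^1<_I\lambda^2<_I\cdots<_I\lambda^{k+1}$ produced by (1) and the $\ell=0$ case of (2); if $j\geqslant k+2$ then $1\leqslant n+1-j\leqslant k-1$ and $i\leqslant n+1-j$, so $\lambda^i\leqslant_I\lambda^{n+1-j}$ on that chain and $\lambda^{n+1-j}<_I\lambda^j$ by item~(2), whence $\lambda^i<_I\lambda^j$ by transitivity. This characterization repackages exactly the relations and incomparabilities asserted in (1)--(4). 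The main obstacle I anticipate is precisely this last step: checking that the transitive closure of (1)--(2) and the incomparabilities of (3)--(4) partition the set of all index pairs with no gap or overlap; the binomial inequalities feeding (1)--(4) are routine once one tracks the parity of the central row $\binom{2k-1}{\cdot}$.
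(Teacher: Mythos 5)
Your proof is correct and follows the same route the paper takes (the paper simply notes the even case is "similar to the odd case"): reduce everything to the last-column Kostka numbers $T_{i,n}=\binom{2k-1}{i-1}$ via Corollary~\ref{corollary.T to I}, use monotonicity and symmetry of the binomial row for (1)--(2), and the content $(1^n)$ for (3)--(4). Your explicit verification that the transitive closure of (1)--(2) and the incomparabilities of (3)--(4) exactly partition the pairs via the criterion $i+j\leqslant n+1$ is a nice addition that the paper leaves implicit.
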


The proof of the even case is similar to the odd case.

\begin{figure}[t]
        \begin{tikzpicture}
                \matrix (a) [matrix of math nodes, column sep=0.6cm, row sep=0.5cm]{
                        (k+1,1^k) & (k+2, 1^{k-1}) \\
                        (k,1^{k+1}) & \vdots \\
                        \vdots & (2k, 1) \\
                        (2,1^{2k-1}) & (2k+1)\\
                        (1^{2k+1}) &  \\};

                \foreach \i/\j in {1-1/2-1, 2-1/3-1, 3-1/4-1, 4-1/5-1, %
                        1-2/2-1, 3-2/4-1, 4-2/5-1}
                \draw (a-\i) -- (a-\j);
        \end{tikzpicture}
        \hspace{2cm}
        \begin{tikzpicture}
                \matrix (a) [matrix of math nodes, column sep=0.6cm, row sep=0.5cm]{
                        & (k+1, 1^{k-1}) \\
                        (k,1^{k}) & \vdots \\
                        \vdots & (2k-1, 1) \\
                        (2,1^{2k-2}) & (2k)\\
                        (1^{2k}) &  \\};

                \foreach \i/\j in {2-1/3-1, 3-1/4-1, 4-1/5-1, %
                        1-2/2-1, 3-2/4-1, 4-2/5-1}
                \draw (a-\i) -- (a-\j);
        \end{tikzpicture}
\caption{Immersion poset restricted to hook partitions for $n=2k+1$ (left) and $n=2k$ (right).
\label{figure.hook}}
\end{figure}
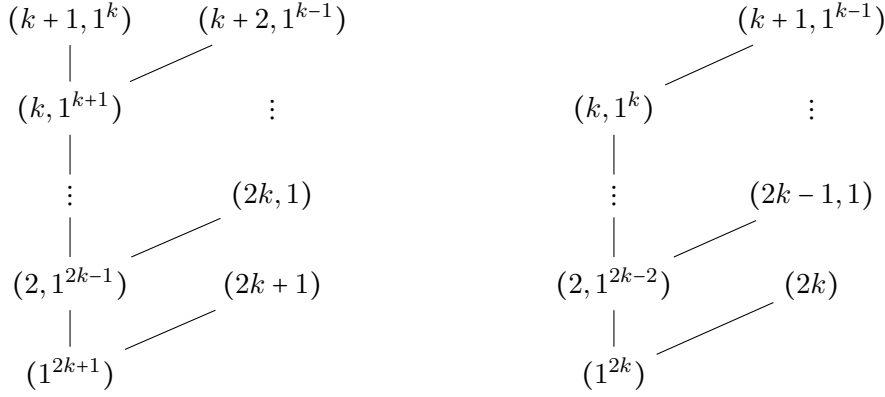

The Hasse diagram of the immersion poset restricted to hook partitions is given in Figure~\ref{figure.hook}.
Notice that item (1) in Propositions~\ref{proposition.hook odd} and~\ref{proposition.hook even} proves the string of covers on the left going up each 
Hasse diagram, while (2) proves the covers going up the right side which are the transposes.

\begin{corollary}
        \label{corollary.hook transpose}
        Let $\lambda \in S$ be a hook partition such that $\lambda \leqslant_D \lambda^t$. Then $\lambda \leqslant_I \lambda^t$.
\end{corollary}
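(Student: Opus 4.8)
The plan is to reduce the statement to part (2) of Propositions~\ref{proposition.hook odd} and~\ref{proposition.hook even}, which already establish exactly the needed immersion relations; the only work is a reindexing of the hook partitions and a check that the dominance hypothesis matches the stated ranges. First I would write $\lambda = \lambda^i = (i,1^{n-i})$ for some $1 \leqslant i \leqslant n$ and record that the transpose of a hook is again a hook: the hook $\lambda^i$ has arm length $i-1$ and leg length $n-i$, so $\lambda^t = (n-i+1,1^{i-1}) = \lambda^{n-i+1}$. Next I would translate the hypothesis: among hook partitions, $\lambda^a \leqslant_D \lambda^b$ if and only if $a \leqslant b$, so $\lambda \leqslant_D \lambda^t$ says precisely $i \leqslant n-i+1$, i.e. $2i \leqslant n+1$.

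I would then split into the parity cases. If $n = 2k+1$ is odd, the hypothesis gives $i \leqslant k+1$. If $i = k+1$, then $\lambda^t = \lambda^{n-i+1} = \lambda^{k+1} = \lambda$, so $\lambda \leqslant_I \lambda^t$ trivially (the relation is reflexive, as $K_{\lambda,\alpha} \leqslant K_{\lambda,\alpha}$ for all $\alpha$). If $i \leqslant k$, set $\ell = k+1-i$, so $1 \leqslant \ell \leqslant k$ and $n-i+1 = 2k+2-i = k+1+\ell$; Proposition~\ref{proposition.hook odd}(2) then gives $\lambda^{n-i+1} = \lambda^{k+1+\ell} \geqslant_I \lambda^{k+1-\ell} = \lambda^i = \lambda$. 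If $n = 2k$ is even, the hypothesis gives $i \leqslant k$; set $\ell = k-i$, so $0 \leqslant \ell \leqslant k-1$ and $n-i+1 = 2k-i+1 = k+1+\ell$, and Proposition~\ref{proposition.hook even}(2) gives $\lambda^{n-i+1} = \lambda^{k+1+\ell} \geqslant_I \lambda^{k-\ell} = \lambda^i = \lambda$. In all cases $\lambda \leqslant_I \lambda^t$, as desired.

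There is no genuine obstacle here: the substantive content is already carried by Propositions~\ref{proposition.hook odd} and~\ref{proposition.hook even}, whose proofs rest on Corollary~\ref{corollary.T to I} and the symmetry $\binom{2k}{k-\ell} = \binom{2k}{k+\ell}$ of the central binomial coefficients. The only points requiring care are the identification $\lambda^t = \lambda^{n-i+1}$, the equivalence between $\lambda \leqslant_D \lambda^t$ and the index bound $2i \leqslant n+1$, and — in the odd case — separating out the self-transpose hook $\lambda^{k+1}$, which is not covered by the range $1 \leqslant \ell \leqslant k$ in Proposition~\ref{proposition.hook odd}(2) but is handled trivially by reflexivity.
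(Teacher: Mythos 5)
Your proposal is correct and follows essentially the same route as the paper: the corollary is stated there as an immediate consequence of item (2) of Propositions~\ref{proposition.hook odd} and~\ref{proposition.hook even}, which is exactly the reduction you carry out. Your explicit bookkeeping — the identification $\lambda^t = \lambda^{n-i+1}$, the translation of $\lambda \leqslant_D \lambda^t$ into $2i \leqslant n+1$, and the separate treatment of the self-transpose hook $\lambda^{k+1}$ for odd $n$ — just fills in details the paper leaves implicit.
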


The \defn{rank} of a poset is the length of the longest chain of elements of the poset. 
\begin{corollary}
\label{corr.rank}
	The rank of the immersion poset $(\mathcal{P}(n), \leqslant_I)$ is at least $\lfloor n/2 \rfloor$.
\end{corollary}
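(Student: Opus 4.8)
The plan is to exhibit, for each $n$, an explicit chain of hook partitions whose length is $\lfloor n/2\rfloor$; since the rank is the length of the longest chain, this immediately yields the stated bound. Concretely, with the notation $\lambda^i=(i,1^{n-i})$ from Section~\ref{section.hooks}, I would show that
\[
\lambda^1 <_I \lambda^2 <_I \cdots <_I \lambda^{\lfloor n/2\rfloor+1}
\]
is a chain in $(\mathcal{P}(n),\leqslant_I)$. It consists of $\lfloor n/2\rfloor+1$ pairwise distinct partitions, hence has length $\lfloor n/2\rfloor$, and therefore the rank of the immersion poset is at least $\lfloor n/2\rfloor$.

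To justify the chain I would split on the parity of $n$ and invoke the complete description of the immersion poset restricted to hook partitions. Write $\lfloor n/2\rfloor = k$. If $n=2k+1$, then the required relations $\lambda^\ell <_I \lambda^{\ell+1}$ for $1\leqslant \ell\leqslant k$ are precisely item (1) of Proposition~\ref{proposition.hook odd}. If $n=2k$, then item (1) of Proposition~\ref{proposition.hook even} supplies $\lambda^\ell <_I \lambda^{\ell+1}$ for $1\leqslant \ell\leqslant k-1$, and the one remaining link $\lambda^k <_I \lambda^{k+1}$ is the $\ell=0$ case of item (2) of that proposition; equivalently, it follows from Corollary~\ref{corollary.hook transpose}, since $(\lambda^k)^t=\lambda^{k+1}$ and $\lambda^k \leqslant_D \lambda^{k+1}$. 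Strictness of each relation is clear because the $\lambda^i$ are distinct partitions. The claim then follows since the length of the longest chain of a poset is at least the length of any given chain.

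There is essentially no obstacle here: the substantive work was already done in Propositions~\ref{proposition.hook odd} and~\ref{proposition.hook even} (and is visible as the left branch of the Hasse diagram in Figure~\ref{figure.hook}), so this corollary amounts to reading off a long chain in the hook sub-poset. The only point requiring minor care is the even case, where the top cover of the chain comes from a transpose relation rather than from the string of covers $\lambda^1 \lessdot \lambda^2 \lessdot \cdots$; I would simply flag this and cite the relevant item. One could additionally remark that this bound is generally far from tight — for instance Figure~\ref{figure.immersion8} shows the rank for $n=8$ already exceeds $4$ — but that observation is not needed for the statement.
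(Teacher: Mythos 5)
Your proof is correct and is exactly the argument the paper intends: the corollary is stated immediately after the analysis of the hook sub-poset, and the implicit justification is precisely the chain $\lambda^1 <_I \cdots <_I \lambda^{\lfloor n/2\rfloor+1}$ read off from Propositions~\ref{proposition.hook odd} and~\ref{proposition.hook even} (with the top link in the even case coming from the transpose relation, as you note). No gaps.
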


\subsection{Immersion poset on two column partitions}
\label{section.two column}

Now let $S$ be the set partitions with at most two columns, that is, $S=\{\lambda \mid \lambda_1\leqslant 2\}$. If $n=2k$, then for this section we define 
$\lambda^j=(2^{k-j}, 1^{2j})$ for $0\leqslant j\leqslant k$. Similarly, if $n=2k+1$, then $\lambda^j=(2^{k-j}, 1^{2j+1})$ for $0\leqslant j\leqslant k$. 
In this section, we study the immersion poset restricted to $S$.

\begin{remark}
        \label{remark.two column cover}
        Note that $K_{\lambda,\mu}=0$ if $\lambda\in S$ and $\mu\notin S$. Hence there does not exist an immersion pair $\mu \leqslant_I \lambda$  
        with $\lambda \in S $ and $\mu \not \in S$. This implies that if $\lambda^i$ is a cover for $\lambda^j$ in the subposet restricted to $S$, 
        then $\lambda^i$ is a cover for $\lambda^j$ in the immersion poset.
\end{remark}

This remark implies that we only need to consider $K_{\lambda, \mu}$ for $\lambda,\mu\in S$ when determining the immersion relations for 
this subset. Recall that $f^\lambda$ is the number of standard Young tableaux of shape $\lambda$.

\begin{proposition}
        \label{proposition.two column Kostkas}
        \mbox{}
        \begin{enumerate}
                \item
                Let $\lambda^j=(2^{k-j}, 1^{2j}) \vdash 2k$ for $0\leqslant j\leqslant k$. 
                Then $K_{\lambda^i,\lambda^j} = f^{(j+i,j-i)}$ when $i\leqslant j$ and $K_{\lambda^i,\lambda^j} = 0$ when $i>j$.
                \item
                Let $\lambda^j=(2^{k-j}, 1^{2j+1}) \vdash 2k+1$ for $0\leqslant j\leqslant k$. Then 
                $K_{\lambda^i,\lambda^j} = f^{(j+i+1,j-i)}$ when $i\leqslant j$ and $K_{\lambda^i,\lambda^j} = 0$ when $i>j$.
        \end{enumerate}
\end{proposition}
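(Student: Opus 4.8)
The plan is to reduce the count of semistandard Young tableaux of shape $\lambda^i$ and content $\lambda^j$ to a count of standard Young tableaux, via the Lindström--Gessel--Viennot / Jacobi--Trudi-style bijection adapted to two-column shapes, or more directly by a careful combinatorial analysis. First I would handle the vanishing case: if $i > j$ then $\lambda^i \not\leqslant_D \lambda^j$ in dominance order (the conjugate partitions $(\lambda^i)^t$ and $(\lambda^j)^t$ are two-row shapes, and $i>j$ forces a dominance failure), so $K_{\lambda^i,\lambda^j}=0$ by unit-triangularity of the Kostka matrix. This is immediate and needs only the triangularity fact quoted in the excerpt.

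For the main case $i \leqslant j$, I would analyze semistandard fillings of $\lambda^i$ directly. Since $\lambda^i$ has at most two columns, a tableau $T \in \mathsf{SSYT}(\lambda^i, \lambda^j)$ is determined by which entries lie in the first column versus the second column, subject to the semistandard constraints (strictly increasing down each column, weakly increasing along each of the two-cell rows). The content $\lambda^j = (2^{k-j},1^{2j})$ means: the first $k-j$ values $\{1,\dots,k-j\}$ appear twice each, and the remaining $2j$ values appear once each. The doubled values are forced to occupy the two boxes of some row each (since they cannot go in the same column), while the singleton values fill the remaining cells of the first column. I would encode this data as a lattice path or as a standard tableau of the complementary two-row shape. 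The natural guess, matching the claimed formula $f^{(j+i,j-i)}$ (size $2j$), is that the relevant data is a standard Young tableau on the $2j$ singleton entries arranged in a two-row shape with rows of length $j+i$ and $j-i$; the $i$ comes from $\lambda^i$ having $i$ rows of length $2$, equivalently $2i$ boxes in the second column region that get "paired," shifting the row-length balance by $i$. The cleanest route is to exhibit an explicit bijection between $\mathsf{SSYT}(\lambda^i,\lambda^j)$ and $\mathsf{SYT}(j+i,j-i)$ by tracking, for each singleton value $v \in \{k-j+1,\dots,2k\}$ (resp. $\{k-j+1,\dots,2k+1\}$ in the odd case), whether $v$ sits in the first or second column of $T$: placing a box in row $1$ of the target SYT if it is in the first column below all the paired rows, and in row $2$ otherwise, then checking that semistandardness of $T$ translates exactly to the SYT conditions and that the admissible shape is $(j+i,j-i)$.

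The odd case ($\lambda^j = (2^{k-j},1^{2j+1})$) is handled identically, with one extra box: the column content $(2^{k-j},1^{2j+1})$ has $2j+1$ singletons rather than $2j$, and the target shape becomes $(j+i+1,j-i)$ of size $2j+1$; the asymmetry $+1$ in the first row reflects the extra cell at the bottom of the first column of $\lambda^i$ which always receives the largest singleton. I expect the bookkeeping of exactly which entries are "forced" into paired rows — and verifying that the pairing structure is itself uniquely determined, so that the remaining freedom is precisely the SYT on the singletons — to be the main obstacle; in particular one must check that the $k-j$ doubled values always occupy the top $k-j$ rows (a consequence of semistandardness: a doubled value $v$ must fill a complete row, and all entries $\leqslant v$ lie weakly above it in both columns), so that the shape left for the singletons is genuinely the two-column shape $(2^{\min(i, \cdot)}\dots)$ giving the stated two-row conjugate. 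Once that structural claim is established, the bijection with $\mathsf{SYT}$ and the identification of the shape via row lengths $j+i$ and $j-i$ (resp. $j+i+1$ and $j-i$) is routine. An alternative, perhaps slicker, proof: apply the Jacobi--Trudi / dual Jacobi--Trudi determinant for $s_{\lambda^i}$ expanded in elementary symmetric functions, extract the coefficient of $m_{\lambda^j}$, recognize it as a $2\times 2$ determinant of binomial coefficients, and simplify using $f^{(p,q)} = \frac{(p+q)!(p-q+1)}{(p+1)!\,q!}$; I would keep this as a fallback if the direct bijection's structural lemma proves fiddly to write cleanly.
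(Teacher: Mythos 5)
Your proposal is correct and follows essentially the same route as the paper: the vanishing case via dominance triangularity of the Kostka matrix, and for $i\leqslant j$ the observation that the $k-j$ doubled values are forced into the top $k-j$ two-cell rows, reducing the count to standard fillings of the residual two-column shape $(2^{j-i},1^{2i})$ (resp.\ $(2^{j-i},1^{2i+1})$), whose number equals $f^{(j+i,j-i)}$ (resp.\ $f^{(j+i+1,j-i)}$) by transposition. The only caveat is your passing claim that the singletons ``fill the remaining cells of the first column'' --- they of course occupy both columns of the residual shape, as your subsequent column-tracking bijection correctly assumes.
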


\begin{proof}
        For (1), if $i>j$, $\lambda^j$ dominates $\lambda^i$ and hence $K_{\lambda^i,\lambda^j} = 0$. If $i=j$, clearly 
        $K_{\lambda^i,\lambda^i} = f^{(2i)} = 1$. Suppose $j>i$. Then the first $k-j$ of the $k-i$ two length rows of any tableau 
        $T \in \mathsf{SSYT}(\lambda^i,\lambda^j)$ are fixed by the content. Hence, there is a bijection $\mathsf{SSYT}(\lambda^i,\lambda^j)
        \to \mathsf{SSYT}((2^{j-i},1^{2i}), (1^{2j}))$ by removing the first $k-j$ rows. Note that $K_{(2^{j-i},1^{2i}), (1^{2j})} = 
        f^{(2^{j-i},1^{2i})}$, which is also equal to the number of standard tableaux of the transpose of $(2^{j-i},1^{2i})$. The result follows.

        The proof of part (2) is similar.
\end{proof}

Using the hook length formula with Proposition~\ref{proposition.two column Kostkas}, we can describe $K_{\lambda^i,\lambda^j}$. We present this in the 
form of a matrix. More explicitly, suppose $n=2k$ or $n=2k+1$. Then for all $0\leqslant i,j\leqslant k$, the $i$-th row and $j$-th column 
entry of the matrix $T=(T_{i,j})$ is $T_{i,j} = K_{\lambda^i,\lambda^j}$. Note that the indexing starts with $0$.

\begin{example}
        \label{example.2colmatrix}
        Below are the matrices in tabular form for cases $n=14,15$.
        \begin{center}
                The case when $n=14$:
                \vspace{.1in}
                {\renewcommand{\arraystretch}{1.3}
                \begin{tabular}{ |l|c|c|c|c|c|c|c|c| }
                        \hline
                        \diagbox{Shape}{Content} & $(2^7)$ & $(2^6, 1^2)$ & $(2^5,1^4)$ & $(2^4,1^6)$ & $(2^3,1^8)$ & $(2^2,1^{10})$ & $(2,1^{12})$ & $(1^{14})$ \\
                        \hline
                        $(2^7)$ & 1 & 1 & 2 & 5 & 14 & 42 & 132 & 429\\
                        \hline
                        $(2^6,1^2)$ & 0 & 1 & 3 & 9 & 28 & 90 & 297 & 1001\\
                        \hline
                        $(2^5,1^4)$ & 0 & 0 & 1 & 5 & 20 & 75 & 275 & 1001\\
                        \hline
                        $(2^4,1^6)$ & 0 & 0 & 0 & 1 & 7 & 35 & 154 & 637\\
                        \hline
                        $(2^3,1^8)$ & 0 & 0 & 0 & 0 & 1 & 9 & 54 & 273\\
                        \hline
                        $(2^2,1^{10})$ & 0 & 0 & 0 & 0 & 0 & 1 & 11 & 77\\
                        \hline
                        $(2,1^{12})$ & 0 & 0 & 0 & 0 & 0 & 0 & 1 & 13\\
                        \hline
                        $(1^{14})$ & 0 & 0 & 0 & 0 & 0 & 0 & 0 & 1 \\
                        \hline
                \end{tabular}}
                \vspace{.2in}

                The case when $n=15$:
                \vspace{.1in}
                {\renewcommand{\arraystretch}{1.3}
                \begin{tabular}{ |l|c|c|c|c|c|c|c|c| }
                        \hline
                        \diagbox{Shape}{Content} & $(2^7,1)$ & $(2^6, 1^3)$ & $(2^5,1^5)$ & $(2^4,1^7)$ & $(2^3,1^9)$ & $(2^2,1^{11})$ & $(2,1^{13})$ 
                        & $(1^{15})$ \\
                        \hline
                        $(2^7,1)$ & 1 & 2 & 5 & 14 & 42 & 132 & 429 & 1430\\
                        \hline
                        $(2^6,1^3)$ & 0 & 1 & 4 & 14 & 48 & 165 & 572 & 2002\\
                        \hline
                        $(2^5,1^5)$ & 0 & 0 & 1 & 6 & 27 & 110 & 429 & 1638\\
                        \hline
                        $(2^4,1^7)$ & 0 & 0 & 0 & 1 & 8 & 44 & 208 & 910\\
                        \hline
                        $(2^3,1^9)$ & 0 & 0 & 0 & 0 & 1 & 10 & 65 & 350\\
                        \hline
                        $(2^2,1^{11})$ & 0 & 0 & 0 & 0 & 0 & 1 & 12 & 90\\
                        \hline
                        $(2,1^{13})$ & 0 & 0 & 0 & 0 & 0 & 0 & 1 & 14\\
                        \hline
                        $(1^{15})$ & 0 & 0 & 0 & 0 & 0 & 0 & 0 & 1 \\
                        \hline
                \end{tabular}}
        \end{center}
        \vspace{.2in}
\end{example}

Since the columns and rows are decreasing in dominance order, for any $i<j$ we have $\lambda^i \geqslant_I \lambda^j$ if $T_{i,m}\geqslant T_{j,m}$ for 
all $0\leqslant m \leqslant k$. In the following lemma, we prove some properties of the matrix $T$ that will show that this statement is 
equivalent to only comparing values in the last column of the matrix. That is, if $i<j$ and $T_{i,k} \geqslant T_{j,k}$, then 
$\lambda^i \geqslant_I \lambda^j$. The reader can verify this in Example~\ref{example.2colmatrix}.

\begin{lemma}
        \label{lemma.two column matrix}
        The matrix $(T_{i,j}) = (K_{\lambda^i,\lambda^j})$ defined above with $0\leqslant i,j\leqslant k$ has the following properties:
        \begin{enumerate}
                \item[(1)]The entries weakly increase within each row.
                \item[(2)] The entries within each column are unimodal.
                \item[(3)] The rate of change of entries within a row increases as the row number increases. In particular, for any fixed $i$ and $j$ 
                with $0\leqslant i < j \leqslant k$, we have for all $j\leqslant r < k$:
                \[
                \frac{T_{i,r+1}}{T_{i,r}} < \frac{T_{j,r+1}}{T_{j,r}}.
                \]

                \item[(4)] For any fixed $i$ and $j$ with $i<j$, if $T_{i,k}\geqslant T_{j,k}$, then $T_{i,m}\geqslant T_{j,m}$ for all $0\leqslant m\leqslant k$.
        \end{enumerate}
\end{lemma}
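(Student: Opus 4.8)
The plan is to deduce all four properties from a closed form for the entries of $T$. By Proposition~\ref{proposition.two column Kostkas}, setting $\epsilon = 0$ when $n = 2k$ and $\epsilon = 1$ when $n = 2k+1$, we have
\[
T_{i,j} = f^{(i+j+\epsilon,\,j-i)} \quad\text{for } 0 \leqslant i \leqslant j \leqslant k, \qquad T_{i,j} = 0 \quad\text{for } i > j .
\]
Feeding this into the hook length formula $f^{(a,b)} = \dfrac{(a+b)!\,(a-b+1)}{(a+1)!\,b!}$ (valid for $a \geqslant b \geqslant 0$) gives
\[
T_{i,j} = \frac{(2j+\epsilon)!\,(2i+1+\epsilon)}{(i+j+1+\epsilon)!\,(j-i)!} \qquad (0 \leqslant i \leqslant j \leqslant k),
\]
and every assertion below will come out of a ratio of two such quantities.

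First I would prove (1): for $j < i$ it reads $0 \leqslant T_{i,j+1}$, and for $j \geqslant i$ the two-row shape $(i+j+\epsilon,\,j-i)$ is contained in $(i+j+1+\epsilon,\,j+1-i)$, so the monotonicity $f^\lambda \leqslant f^\mu$ whenever $\lambda \subseteq \mu$ (a standard consequence of the branching rule) yields $T_{i,j} \leqslant T_{i,j+1}$. Next, for (2), I would study the consecutive ratios down column $j$. A direct cancellation in the hook length formula gives, for $0 \leqslant i \leqslant j-1$,
\[
\frac{T_{i+1,j}}{T_{i,j}} = \frac{(2i+3+\epsilon)\,(j-i)}{(2i+1+\epsilon)\,(i+j+2+\epsilon)} ,
\]
and I would then show this ratio is strictly decreasing in $i$ by cross-multiplying: the needed inequality factors through $(2i+3+\epsilon)^2 - (2i+1+\epsilon)(2i+5+\epsilon) = 4 > 0$ together with the elementary identity $(j-i)(i+j+3+\epsilon) - (j-i-1)(i+j+2+\epsilon) = 2j+2+\epsilon > 0$. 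Hence $T_{0,j}, T_{1,j}, \dots, T_{j,j}$ has strictly decreasing consecutive ratios and is therefore unimodal; appending the tail $T_{j+1,j} = \dots = 0$ after the positive value $T_{j,j} = 1$ keeps the whole column unimodal.

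The key point for (3) is that a one-step move along a diagonal of $T$ has an $i$-independent numerator: the hook length formula gives, for $0 \leqslant i \leqslant r < k$,
\[
\frac{T_{i,r+1}}{T_{i,r}} = \frac{(2r+1+\epsilon)(2r+2+\epsilon)}{(i+r+2+\epsilon)(r+1-i)} .
\]
Both factors of the denominator are positive on $0 \leqslant i \leqslant r$, and its derivative in $i$ equals $-2i-1-\epsilon < 0$, so the denominator is strictly decreasing and the whole ratio is strictly increasing in $i$, which is exactly (3). Finally, for (4) I fix $i < j$ with $T_{i,k} \geqslant T_{j,k}$ and split by the location of $m$. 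If $m < i$ then $T_{i,m} = T_{j,m} = 0$. If $i \leqslant m < j$ then $T_{j,m} = 0$ while $T_{i,m} = f^{(i+m+\epsilon,\,m-i)} \geqslant 1$. If $j \leqslant m \leqslant k$, then all the relevant entries are positive, and telescoping together with (3) (whose hypothesis holds since $j \leqslant m \leqslant r$ for each $r$ in range) gives
\[
\frac{T_{i,k}}{T_{i,m}} = \prod_{r=m}^{k-1} \frac{T_{i,r+1}}{T_{i,r}} \;\leqslant\; \prod_{r=m}^{k-1} \frac{T_{j,r+1}}{T_{j,r}} = \frac{T_{j,k}}{T_{j,m}} ,
\]
so $T_{i,m}\,T_{j,k} \geqslant T_{j,m}\,T_{i,k} \geqslant T_{j,m}\,T_{j,k}$, and dividing by $T_{j,k} > 0$ gives $T_{i,m} \geqslant T_{j,m}$.

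I expect the three ratio identities to be entirely mechanical, so the only genuine work is the strict monotonicity of the column ratios required for (2); that is the step I would treat most carefully, but it reduces to the two elementary inequalities displayed above, in exactly the style of the telescoping estimates already used for the maximal-element results in Section~\ref{section.maximal}.
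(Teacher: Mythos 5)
Your proof is correct, and its overall strategy --- a closed form for $T_{i,j}$ via Proposition~\ref{proposition.two column Kostkas} and the hook length formula, followed by ratio computations --- is the same as the paper's; your ratio identities for parts (1), (3), and (4) agree with the ones in the paper (handling the even and odd cases uniformly via $\epsilon$), and your telescoping argument for (4) is the correct fleshing-out of the paper's terser ``follows directly from (3).'' The one place you genuinely diverge is part (2): the paper proves unimodality by solving the inequality $T_{i+1,j}/T_{i,j} \gtrless 1$ explicitly, obtaining the thresholds $2i^2+4i+1 \lessgtr j$ (even) and $2i^2+6i+3 \lessgtr j$ (odd) in~\eqref{equation.unimodaleven} and~\eqref{equation.unimodalodd}, whereas you prove that the consecutive column ratios are strictly decreasing in $i$ (i.e., the columns are log-concave), which implies unimodality. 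Your argument is arguably cleaner and the two elementary inequalities you reduce it to are correct ($(2i+3+\epsilon)^2-(2i+1+\epsilon)(2i+5+\epsilon)=4$ and $(j-i)(i+j+3+\epsilon)-(j-i-1)(i+j+2+\epsilon)=2j+2+\epsilon$), but note that the paper's explicit thresholds are not just a means to unimodality: they are reused verbatim in the proof of Proposition~\ref{prop.two col relations}(1) to determine exactly which covers $\lambda^i \gtrdot_I \lambda^{i+1}$ occur. So if you adopt the log-concavity route you would still need to extract those thresholds separately later.
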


\begin{proof}
        We begin by proving (1). Let $n=2k$ be even. Then for a fixed row $i$, given any $i\leqslant j <k$, we need to show that 
        $T_{i,j+1} \geqslant T_{i,j}$. Using Proposition~\ref{proposition.two column Kostkas}, we have:
        \[
        \frac{T_{i,j+1}}{T_{i,j}} = \frac{f^{(j+1+i,j+1-i)}}{f^{(j+i,j-i)}} = \frac{(2j+2)(2j+1)}{(j+i+2)(j+1-i)} \geqslant 1
        \]
        because $j\geqslant i$ implies
        \[
        2j+2\geqslant j+i+2 \quad \text{and} \quad 2j+1 \geqslant j+1-i.
        \]
        Now let $n=2k+1$ be odd. Using the same strategy as the even case we have:
        \[
        \frac{T_{i,j+1}}{T_{i,j}} = \frac{f^{(j+2+i,j+1-i)}}{f^{(j+i+1,j-i)}} = \frac{(2j+3)(2j+2)}{(j+i+3)(j+1-i)} \geqslant 1
        \]
        because $j\geqslant i$ implies
        \[
        2j+3\geqslant j+i+3 \quad \text{and} \quad 2j+2 \geqslant j+1-i.
        \]

        Next we prove statement (2). Let $n=2k$ be even. Since statement (2) holds trivially if there is only one non-zero entry in the column, we focus 
        on columns with more than one non-zero entry. Fix a $2\leqslant j\leqslant k$. To determine when the column is increasing and decreasing we 
        consider the fraction:
        \[
        \frac{T_{i+1,j}}{T_{i,j}} = \frac{f^{(j+i+1,j-i-1)}}{f^{(j+i,j-i)}} = \frac{\frac{(2j)!(2i+3)}{(j+i+2)!(j-i-1)!}}{\frac{(2j)!(2i+1)}{(j+i+1)!(j-i)!}} 
        = \frac{(2i+3)(j-i)}{(2i+1)(j+i+2)}.
        \]
        Analyzing the following inequalities gives:
        \begin{equation}
        \label{equation.unimodaleven}
        \begin{split}
            \frac{T_{i+1,j}}{T_{i,j}}>1 \quad \iff \quad 2i^2 +4i + 1 < j, \\
            \frac{T_{i+1,j}}{T_{i,j}}=1 \quad \iff \quad 2i^2 +4i + 1 = j, \\
            \frac{T_{i+1,j}}{T_{i,j}}<1 \quad \iff \quad 2i^2 +4i + 1 > j.
        \end{split}
        \end{equation}
        Thus, for values of $i$ such that $ 2i^2 +4i + 1 < j $ the column entries are increasing, and when the values of $i$ satisfy $2i^2 +4i + 1 > j$ 
        the column entries are decreasing. This proves (2) for the even case.

        Now let $n=2k+1$ be odd. Fix a $2\leqslant j\leqslant k$. Similar to the even case we have:
        \[
        \frac{T_{i+1,j}}{T_{i,j}} = \frac{f^{(j+i+2,j-i-1)}}{f^{(j+i+1,j-i)}} = \frac{\frac{(2j+1)!(2i+4)}{(j+i+3)!(j-i-1)!}}{\frac{(2j+1)!(2i+2)}{(j+i+2)!(j-i)!}} 
        = \frac{(2i+4)(j-i)}{(2i+2)(j+i+3)}.
        \]
         Analyzing the following inequalities gives:
        \begin{equation}
	\label{equation.unimodalodd}
        \begin{split}
            \frac{T_{i+1,j}}{T_{i,j}}>1 \quad \iff \quad 2i^2 +6i + 3 < j, \\
            \frac{T_{i+1,j}}{T_{i,j}}=1 \quad \iff \quad 2i^2 +6i + 3 = j, \\
            \frac{T_{i+1,j}}{T_{i,j}}<1 \quad \iff \quad 2i^2 +6i + 3 > j.
        \end{split}
        \end{equation}
        Again, we notice that for values of $i$ such that $ 2i^2 +6i + 3 < j $ the column entries are increasing, and when the values of $i$ satisfy 
        $2i^2 +6i + 3 > j$ the column entries are decreasing. This concludes the proof of (2).

        To prove statement (3), we use Proposition~\ref{proposition.two column Kostkas} and the hook length formula to get the following equivalences:
        \begin{equation*}
                \frac{T_{i,r+1}}{T_{i,r}} < \frac{T_{j,r+1}}{T_{j,r}} \quad
                \iff \quad \frac{K_{\lambda^{i},\lambda^{r+1}}}{K_{\lambda^{i},\lambda^{r}}} < \frac{K_{\lambda^{j},\lambda^{r+1}}}{K_{\lambda^{j},\lambda^{r}}} 
                \quad
                \iff \quad \frac{(r+j+2)(r+1-j)}{(r+i+2)(r+1-i)} < 1 \quad \iff \quad i^2+i < j^2 + j.
        \end{equation*}
        The last inequality is always true since $0\leqslant i < j$, thus proving (3).

        To prove (4), fix $i$ and $j$ with $i<j$ where $T_{i,k}\geqslant T_{j,k}$. Then by statement (3) it directly follows that 
        $T_{i,m}\geqslant T_{j,m}$ for all $j\leqslant m\leqslant k$. Because $T_{j,m}=0$ for all $0\leqslant m < j$, it trivially follows that $T_{i,m}\geqslant T_{j,m}$ for these values of $m$, this finishes the proof of (4).
\end{proof}

The beauty of Lemma~\ref{lemma.two column matrix}, in particular statement (4), is that we can now reduce much of the work in determining 
the immersion relations between partitions in $S$ to just comparing the numbers of standard Young tableaux, as is done in the next proposition.

\begin{proposition}
        \label{prop.k col}
        For $n=2k$ even or $n=2k+1$ odd, the last ($k$-th) column of $T$ can be used to completely determine relations in the immersion 
        poset restricted to the subset $S$. In particular:
        \begin{enumerate}
                \item $\lambda^i \geqslant_I \lambda^j$ if and only if $i<j$ and $T_{i,k}\geqslant T_{j,k}$,
                \item For $i<j$, $\lambda^i$ and $\lambda^j$ are incomparable if and only if $T_{j,k}> T_{i,k}$.
        \end{enumerate}
\end{proposition}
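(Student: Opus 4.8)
The plan is to reduce both statements to the monotonicity property recorded in Lemma~\ref{lemma.two column matrix}(4), which already contains the analytic content. First I would recall from Lemma~\ref{lemma.immersion K} that for $\lambda^i,\lambda^j \in S$ we have $\lambda^i \geqslant_I \lambda^j$ if and only if $K_{\lambda^j,\alpha} \leqslant K_{\lambda^i,\alpha}$ for every $\alpha \vdash n$. Since $\lambda^i \in S$, Remark~\ref{remark.two column cover} tells us that $K_{\lambda^i,\alpha}=0$ unless $\alpha$ itself has at most two columns, that is, unless $\alpha = \lambda^m$ for some $0 \leqslant m \leqslant k$. Hence $\lambda^i \geqslant_I \lambda^j$ if and only if $T_{i,m} \geqslant T_{j,m}$ for all $0 \leqslant m \leqslant k$. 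I would also note that dominance order restricted to $S$ is the chain $\lambda^0 >_D \lambda^1 >_D \cdots >_D \lambda^k$, and that $\lambda^i \geqslant_I \lambda^j$ forces $\lambda^i \geqslant_D \lambda^j$; consequently a nontrivial immersion relation $\lambda^i \geqslant_I \lambda^j$ with $i \neq j$ can only occur when $i < j$, so it suffices to treat that case (the case $i=j$ being trivial).

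To prove (1) for $i<j$, the forward implication is immediate: if $\lambda^i \geqslant_I \lambda^j$, then taking $m=k$ in the reformulation above gives $T_{i,k} \geqslant T_{j,k}$. For the converse, suppose $T_{i,k} \geqslant T_{j,k}$. By Lemma~\ref{lemma.two column matrix}(4) this upgrades to $T_{i,m} \geqslant T_{j,m}$ for all $0 \leqslant m \leqslant k$, which by the first paragraph is exactly the condition $\lambda^i \geqslant_I \lambda^j$. This establishes (1).

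Statement (2) then follows formally. For $i<j$, the partitions $\lambda^i$ and $\lambda^j$ are comparable in the immersion poset if and only if $\lambda^i \geqslant_I \lambda^j$, since $\lambda^j \geqslant_I \lambda^i$ would imply $\lambda^j \geqslant_D \lambda^i$, contradicting $\lambda^j <_D \lambda^i$. Hence $\lambda^i$ and $\lambda^j$ are incomparable precisely when $\lambda^i \not\geqslant_I \lambda^j$, which by (1) happens exactly when $T_{i,k} < T_{j,k}$, i.e.\ $T_{j,k} > T_{i,k}$. The main obstacle here is not in this proposition at all but in Lemma~\ref{lemma.two column matrix}(4), whose proof supplies the rate-of-change estimate that lets a single column control all the others; within the present argument the only care needed is the bookkeeping around dominance being a total order on $S$ and the degenerate case $i=j$.
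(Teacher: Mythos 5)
Your proof is correct and follows essentially the same route as the paper: restrict attention to two-column contents, use dominance to reduce to the case $i<j$, and invoke Lemma~\ref{lemma.two column matrix}(4) to let the last column control all columns. The only cosmetic difference is that you deduce part (2) formally from part (1), whereas the paper re-invokes Lemma~\ref{lemma.two column matrix}(3) directly; both are valid and rest on the same monotonicity estimate.
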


\begin{proof}
        To prove (1), by definition $\lambda^i \geqslant_I \lambda^j$ if and only if $\lambda^i >_D \lambda^j$ and $T_{i,\ell}\geqslant T_{j,\ell}$ 
        for all $0\leqslant \ell\leqslant k$. But $\lambda^i$ dominates $\lambda^j$ if and only if $i<j$, and by (4) of Lemma~\ref{lemma.two column matrix}, 
        $T_{i,\ell}\geqslant T_{j,\ell}$ for all $0\leqslant \ell\leqslant k$ if and only if $T_{i,k}\geqslant T_{j,k}$ (when $i<j$).

        To prove (2), let $i<j$. If $\lambda^i$ and $\lambda^j$ are incomparable, then there exists some $\ell$ such that $T_{j,\ell}> T_{i,\ell}$.
        By (3) of Lemma~\ref{lemma.two column matrix}, we have:
        $$\frac{T_{i,r+1}}{T_{i,r}}\leqslant \frac{T_{j,r+1}}{T_{j,r}}$$
        for all $\ell\leqslant r<k$, which guarantees that $T_{j,k}> T_{i,k}$.
\end{proof}

As a consequence we obtain the following immediate corollary.

\begin{corollary}
        The cover relations for the immersion poset of the set $S$ are the exact same as those in the standard immersion poset.
\end{corollary}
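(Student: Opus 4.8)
The plan is to prove the slightly stronger statement that $\leqslant_I$ and $\leqslant_{std}$ induce the \emph{same} partial order on $S$; the claim about cover relations is then immediate, since the cover relations of a poset are determined by its order relation. The crucial observation is that the last column index of the matrix $T$, namely $k$, is indexed by the content $\lambda^k$, and $\lambda^k=(1^n)$ in both the even case ($\lambda^k=(2^0,1^{2k})$) and the odd case ($\lambda^k=(2^0,1^{2k+1})$). Hence $T_{i,k}=K_{\lambda^i,(1^n)}=f^{\lambda^i}$ for all $i$.

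First I would recall from Proposition~\ref{prop.k col} the description of $\leqslant_I$ on $S$: for $0\leqslant i<j\leqslant k$ one has $\lambda^i\geqslant_I\lambda^j$ if and only if $T_{i,k}\geqslant T_{j,k}$, that is, if and only if $f^{\lambda^i}\geqslant f^{\lambda^j}$; and the reverse relation never holds, since $\lambda^j\geqslant_D\lambda^i$ fails for $i<j$. Next I would observe that on $S$ the dominance order is the total order $\lambda^0>_D\lambda^1>_D\cdots>_D\lambda^k$: passing from $\lambda^i$ to $\lambda^{i+1}$ removes one row of length $2$ and adds two rows of length $1$, which strictly lowers the partition in dominance order (a one-line check of partial sums). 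Therefore, for $i<j$, the relation $\lambda^i\geqslant_{std}\lambda^j$ holds exactly when $f^{\lambda^i}\geqslant f^{\lambda^j}$, which is precisely the condition for $\lambda^i\geqslant_I\lambda^j$. Thus the subposets of $(\mathcal{P}(n),\leqslant_I)$ and $(\mathcal{P}(n),\leqslant_{std})$ induced on $S$ are literally equal, so in particular they have the same cover relations.

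Finally I would point out that there is no ambiguity between ``covers within the subposet $S$'' and ``covers within the ambient poset'': on the $\leqslant_I$ side this is Remark~\ref{remark.two column cover}, and on the $\leqslant_{std}$ side it follows because any $\mu$ with $\mu<_D\lambda^i$ satisfies $\mu_1\leqslant\lambda^i_1\leqslant 2$ and hence lies in $S$. I do not anticipate any genuine obstacle: all the content is already carried by Proposition~\ref{prop.k col} and Lemma~\ref{lemma.two column matrix}, and the only step requiring a moment's care is identifying the final column of $T$ with the tuple $(f^{\lambda^i})_i$ of standard tableau counts.
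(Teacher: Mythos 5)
Your proof is correct and follows the route the paper intends: the corollary is stated there as an immediate consequence of Proposition~\ref{prop.k col}, and your identification of the last column as $T_{i,k}=K_{\lambda^i,(1^n)}=f^{\lambda^i}$, combined with the fact that dominance is a total order on $S$, is exactly the observation that makes it immediate. The extra care you take distinguishing covers in the subposet from covers in the ambient posets is handled correctly via Remark~\ref{remark.two column cover} on the immersion side and the downward-closedness of $S$ under dominance on the standard side.
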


We can now explain the cover relations of the immersion poset restricted to the set $S$.

\begin{proposition}
        \label{prop.two col relations}
        Let $n=2k$ be even or $n=2k+1$ be odd, then:
        \begin{enumerate}
                \item $\lambda^{i} \gtrdot_I \lambda^{i+1}$ when $2i^2 + 4i + 2 > k$ for $n$ even and $2i^2 + 6i + 4>k$ for $n$ odd. This also 
                coincides with Lemma~\ref{two col}, taking $a=k-i-1$ and $b=2i+2$ ($n$ even) or $b=2i+3$ ($n$ odd).
                \item $\lambda^i$ and $\lambda^{j}$ are incomparable in the immersion poset for all $0\leqslant i,j\leqslant i_{max}$ with $i\neq j$ and $i_{max}$ 
                being the largest $i$ value not satisfying (1).
                \item Fix $i$ with $0\leqslant i \leqslant i_{max}$ and let $m>i_{max}-i$ be smallest such that $T_{i,k}\geqslant T_{i+m,k}$.
                Then $\lambda^i \gtrdot_I \lambda^{i+m}$.
        \end{enumerate}
\end{proposition}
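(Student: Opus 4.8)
The plan is to push everything to the last column of the matrix $T$ via Proposition~\ref{prop.k col}, exploiting the identity $T_{i,k}=f^{\lambda^i}$: since $\lambda^k=(1^n)$ in both the even and the odd case, Proposition~\ref{proposition.two column Kostkas} gives $T_{i,k}=K_{\lambda^i,\lambda^k}=K_{\lambda^i,(1^n)}=f^{\lambda^i}$, and in particular $T_{k,k}=1$. For (1), I would first note that $\lambda^{i+1}$ is covered by $\lambda^i$ in dominance order, since $\lambda^i$ arises from $\lambda^{i+1}$ by moving a single box. Writing $\lambda=(2^a,1^b)=\lambda^{i+1}$ and $\mu=(2^{a+1},1^{b-2})=\lambda^i$ with $a=k-i-1$ and $b=2i+2$ ($n$ even) or $b=2i+3$ ($n$ odd), Lemma~\ref{two col} shows that the displayed quadratic inequality is equivalent to $f^{\lambda^{i+1}}\leqslant f^{\lambda^i}$, that is, to $T_{i+1,k}\leqslant T_{i,k}$. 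Proposition~\ref{prop.k col}(1) applied to the pair $(i,i+1)$ then yields $\lambda^{i+1}<_I\lambda^i$; since $\leqslant_I$ implies $\leqslant_D$ and $\lambda^{i+1}\lessdot_D\lambda^i$, no partition lies strictly between them, so $\lambda^i\gtrdot_I\lambda^{i+1}$.

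For (2), the quadratic appearing in (1) is strictly increasing in $i$, so the indices $\ell$ for which it fails form an initial segment, namely $\{0,1,\dots,i_{max}\}$. By the equivalence just established, failure at index $\ell$ is exactly the statement $T_{\ell,k}<T_{\ell+1,k}$; applying this for $\ell=0,1,\dots,i_{max}-1$ produces the strictly increasing chain $T_{0,k}<T_{1,k}<\dots<T_{i_{max},k}$. Hence for any $0\leqslant i<j\leqslant i_{max}$ we have $T_{j,k}>T_{i,k}$, and Proposition~\ref{prop.k col}(2) gives incomparability of $\lambda^i$ and $\lambda^j$.

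For (3), I would first check that an $m$ as in the hypothesis exists with $i+m\leqslant k$: since $T_{i,k}=f^{\lambda^i}\geqslant 1=T_{k,k}$ and $k-i>i_{max}-i$ (because $i_{max}<k$), the value $m=k-i$ already lies in the set whose minimum defines $m$, so the chosen $m$ satisfies $m\leqslant k-i$. Put $j=i+m$; then $i<j$ and $T_{i,k}\geqslant T_{j,k}$, so Proposition~\ref{prop.k col}(1) gives $\lambda^i>_I\lambda^j$ (and $\lambda^i>_D\lambda^j$). Now suppose for contradiction that $\lambda^j<_I\nu<_I\lambda^i$ for some partition $\nu$. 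By Remark~\ref{remark.two column cover}, $\nu\leqslant_I\lambda^i$ with $\lambda^i\in S$ forces $\nu\in S$, so $\nu=\lambda^\ell$, and $\leqslant_I\Rightarrow\leqslant_D$ together with the fact that $S$ is a chain in dominance order forces $i<\ell<j$, say $\ell=i+m'$ with $1\leqslant m'<m$. If $m'\leqslant i_{max}-i$, then $i,\ell$ are distinct indices in $\{0,\dots,i_{max}\}$, so by (2) the partitions $\lambda^i$ and $\lambda^\ell$ are incomparable, contradicting $\lambda^\ell<_I\lambda^i$. If $m'>i_{max}-i$, then minimality of $m$ forces $T_{i,k}<T_{i+m',k}=T_{\ell,k}$, whereas $\lambda^\ell<_I\lambda^i$ with $i<\ell$ gives $T_{i,k}\geqslant T_{\ell,k}$ by Proposition~\ref{prop.k col}(1) --- a contradiction. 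Hence no such $\nu$ exists and $\lambda^i\gtrdot_I\lambda^j$.

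The computational content is already packaged in Lemma~\ref{two col}, Lemma~\ref{lemma.two column matrix} and Proposition~\ref{prop.k col}, so I do not expect a genuinely hard step; the one place that needs care is the bookkeeping in (3) --- verifying that the prescribed $m$ really lands in the range $i+m\leqslant k$, and that the dichotomy $m'\leqslant i_{max}-i$ versus $m'>i_{max}-i$ exhausts all possible intermediate indices $\ell$. This works precisely because the threshold ``smallest $m>i_{max}-i$'' in the hypothesis is exactly the boundary between the incomparable block $\{0,\dots,i_{max}\}$ (handled by (2)) and the strictly decreasing tail of the last column (handled by Proposition~\ref{prop.k col}(1)).
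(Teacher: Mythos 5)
Your proof is correct and takes essentially the same route as the paper: reduce everything to the last column of $T$ via Proposition~\ref{prop.k col} (using $T_{i,k}=f^{\lambda^i}$), derive the quadratic thresholds for (1), deduce the strictly increasing initial segment of the last column for (2), and use minimality of $m$ for (3). The only differences are cosmetic --- you invoke Lemma~\ref{two col} where the paper reuses the unimodality computation from Lemma~\ref{lemma.two column matrix}, and you spell out the existence of $m$ and the case analysis for intermediate elements in (3) more carefully than the paper's one-line justification.
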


\begin{proof}
        If $T_{i,k}\geqslant T_{i+1,k}$, then by Proposition~\ref{prop.k col} and Remark~\ref{remark.two column cover}, we have that 
        $\lambda^{i} \geqslant_I \lambda^{i+1}$ is a cover. We determine the values for $i$ such that $T_{i,k}\geqslant T_{i+1,k}$ by using the 
        middle equation and bottom inequality of (\ref{equation.unimodaleven}) (for $n$ even) and (\ref{equation.unimodalodd}) (for $n$ odd), 
        where we replace $j$ with $k$. Specifically, for $n$ even:
        $$2i^2+4i+1 \geqslant k \quad \implies \quad 2i^2 + 4i + 2> k,$$
        and for $n$ odd:
        $$2i^2+6i + 3 \geqslant k \quad \implies \quad 2i^2 + 6i + 4>k.$$

        To prove (2), notice that since $i_{max}$ is the number of the row containing the first maximum, by the increasing nature of the column up 
        to the maximum value given by (2) of Lemma~\ref{lemma.two column matrix}, then for any $0\leqslant i<j\leqslant i_{max}$ we have 
        $T_{i,k}<T_{j,k}$. Hence by Proposition~\ref{prop.k col} (2), $\lambda^i$ and $\lambda^j$ are incomparable.

        To prove (3) notice that by Proposition~\ref{prop.k col} statement (1), since $m$ is the smallest value it must be a cover.
\end{proof}

\begin{example}
        Suppose $n=14$, so that $k=7$. By (1) of Proposition~\ref{prop.two col relations},  the inequality holds for $1\leqslant i \leqslant k=7$ 
        so we obtain:
        $$\lambda^{7} \lessdot_I \lambda^{6}\lessdot_I \lambda^{5}\lessdot_I \lambda^{4} \lessdot_I \lambda^{3} \lessdot_I \lambda^{2} 
        \lessdot_I \lambda^{1}.$$
	Applying (3) of Proposition~\ref{prop.two col relations} to $\lambda^0$, with $i=0$ we find that $m=4$:
        $$T_{0, 7} = 429 \geqslant 273 = T_{4,7}.$$
        Notice that $m=3$ does not satisfy the inequality:
        $$T_{0, 7} = 429 \ngeqslant 637 = T_{3,7}.$$
        So our final cover relation for the poset is $\lambda^{4} \lessdot_I \lambda^{0}$.
\end{example}

\subsection{Lower intervals and Schur-positivity of interval power sums}
\label{section.lowerintervals}
In this section, we make conjectures about certain lower intervals $A_\mu := \{\lambda \,|\, (1^n) \leqslant_I \lambda \leqslant_I \mu\}$ in 
the immersion poset. Determining intervals will
\begin{enumerate}
        \item enhance our understanding of the immersion of polynomial representations for $GL_N(\mathbb{C})$ and 
        \item allow us to investigate when $p_{A_\mu}$ of Equation~\eqref{char} is Schur-positive, as asked in Question~\ref{question.Sundaram}.
        We call $p_{A_\mu}$ an interval power sum.
        It also helps towards constructing a natural corresponding representation of the symmetric group.
\end{enumerate}
In this section, we prove that $p_{A_\mu}$ is Schur-positive for the conjectured intervals.

\begin{conjecture}
        \label{conjecture.interval1}
        For $n=5$ and $n \geqslant 9$, the interval $A_{(n-2, 2)} = \{ \lambda \, |\, (1^n) \leqslant_I \lambda \leqslant_I (n-2, 2)\}$ is exactly
        \[
        (1^n) \lessdot_I (2, 1^{n-2}) \lessdot_I (2, 2, 1^{n-4}) \lessdot_I (n-2, 2).
        \]
\end{conjecture}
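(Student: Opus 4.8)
The plan is to establish the conjecture in two steps: (1) show that the four displayed partitions lie in $A_{(n-2,2)}$ and are pairwise comparable, hence form a chain, and (2) show that $A_{(n-2,2)}$ contains no other partition. Since $(1^n)$ is the global minimum of the immersion poset (Lemma~\ref{lemma.minimal immersion}), membership in $A_{(n-2,2)}$ just means $\lambda\leqslant_I(n-2,2)$, so (1) and (2) together identify $A_{(n-2,2)}$ with the $4$-element chain, and the cover symbols are then immediate.

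For step (1), the relations $(1^n)\lessdot_I(2,1^{n-2})$ and $(2,1^{n-2})\lessdot_I(2,2,1^{n-4})$ are instances of Corollary~\ref{corollary.cover two col immersion0} with $(\alpha,\beta)=(0,n)$ and $(1,n-2)$ respectively, the hypothesis $\beta\geqslant\alpha+2$ holding because $n\geqslant 5$. For $(2,2,1^{n-4})\leqslant_I(n-2,2)$ I would verify $K_{(2,2,1^{n-4}),\alpha}\leqslant K_{(n-2,2),\alpha}$ for all $\alpha$ via Lemma~\ref{lemma.immersion K}. If $\alpha\not\leqslant_D(2,2,1^{n-4})$ the left side vanishes; otherwise $\alpha_1\leqslant 2$, so $\alpha=(2^p,1^{n-2p})$, and comparing third partial sums forces $p\leqslant 2$. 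The three surviving contents are handled separately: for $\alpha=(1^n)$ both sides equal $f^{(n-2,2)}=f^{(2,2,1^{n-4})}$ (transpose shapes); for $\alpha=(2,1^{n-2})$, Proposition~\ref{proposition.two column Kostkas} gives $K_{(2,2,1^{n-4}),(2,1^{n-2})}=n-3$, while a direct count gives $K_{(n-2,2),(2,1^{n-2})}=\binom{n-2}{2}\geqslant n-3$; and for $\alpha=(2,2,1^{n-4})$ the left side is $1$ and the right side is at least $1$ since $(2,2,1^{n-4})\leqslant_D(n-2,2)$. As each relation is also strict in dominance order, transitivity yields a chain of four distinct partitions.

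For step (2), let $\nu\vdash n$ satisfy $\nu\leqslant_I(n-2,2)$. Then $\nu\leqslant_D(n-2,2)$, and taking $\alpha=(1^n)$ in Lemma~\ref{lemma.immersion K} gives $f^\nu\leqslant f^{(n-2,2)}=\tfrac{n(n-3)}{2}$. The crux is the classification: for $n=5$ and for all $n\geqslant 9$, the only partitions of $n$ with $f^\nu\leqslant\tfrac{n(n-3)}{2}$ are $(1^n)$, $(n)$, $(2,1^{n-2})$, $(n-1,1)$, $(2,2,1^{n-4})$, $(n-2,2)$. Granting this, $(n)$ and $(n-1,1)$ are ruled out because their first partial sums exceed $n-2$, so $\nu$ must be one of the four chain elements, which completes the proof.

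The main obstacle is this degree classification. Since $\tfrac{n(n-3)}{2}=\binom{n-1}{2}-1$ lies just below $\binom{n-1}{2}=f^{(n-2,1,1)}=f^{(3,1^{n-3})}$, the hook length bound is tight and must be applied carefully: one reduces, using that $f^\nu$ grows rapidly once both $\nu_1$ and $\ell(\nu)$ are bounded away from $n$, to a bounded family of shapes with $\nu_1\geqslant n-c$ or $\ell(\nu)\geqslant n-c$ for a fixed small $c$, and then checks that family directly, which is exactly where $n\geqslant 9$ enters. For smaller $n$ genuine exceptions occur — e.g. $f^{(4,3)}=14=f^{(5,2)}$ forces extra elements when $n=7$, and $f^{(4,4)}=14\leqslant 20=f^{(6,2)}$ when $n=8$ — so the hypotheses of the conjecture are sharp; the case $n=5$ is settled by inspecting the seven partitions of $5$ by hand. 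This classification is also a special case of Rasala's determination of the smallest degrees of the irreducible characters of $S_n$, and could be quoted rather than reproved.
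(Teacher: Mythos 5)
This statement is presented in the paper as a conjecture, not a theorem: the authors prove only the containment $\{(1^n),(2,1^{n-2}),(2,2,1^{n-4}),(n-2,2)\}\subseteq A_{(n-2,2)}$ (via Proposition~\ref{prop.two col relations} and an explicit injection $\mathsf{SSYT}((2,2,1^{n-4}),\nu)\to\mathsf{SSYT}((n-2,2),\nu)$), state in the remark that the reverse containment is open, and verify it by computer up to $n=18$. So you are attempting strictly more than the paper does. Your step (1) is correct and complete: the first two covers are indeed instances of Corollary~\ref{corollary.cover two col immersion0}, and your Kostka verification of $(2,2,1^{n-4})\leqslant_I(n-2,2)$ checks out --- the only contents with nonzero Kostka number on the left are $(1^n)$, $(2,1^{n-2})$, $(2,2,1^{n-4})$, and the comparisons $f^{(2,2,1^{n-4})}=f^{(n-2,2)}$, $n-3\leqslant\binom{n-2}{2}$, and $1\leqslant K_{(n-2,2),(2,2,1^{n-4})}$ are all valid. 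This is a clean alternative to the paper's injection for that relation.

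The gap is in step (2), and you have flagged it yourself: the entire open direction of the conjecture is concentrated in the claim that for $n=5$ and $n\geqslant 9$ the only $\nu\vdash n$ with $f^\nu\leqslant\frac{n(n-3)}{2}$ are $(1^n)$, $(n)$, $(2,1^{n-2})$, $(n-1,1)$, $(2,2,1^{n-4})$, $(n-2,2)$, and this claim is asserted rather than proved. Your reduction to it (dominance plus the $\alpha=(1^n)$ instance of Lemma~\ref{lemma.immersion K}, then discarding $(n)$ and $(n-1,1)$ by dominance) is exactly the strategy the authors themselves propose in the remark, so nothing new happens until the degree classification is actually established. The classification is true, and it does follow from Rasala's work on minimal character degrees of $S_n$, but the sharpest published lists there are stated for larger $n$ (the full list of seven smallest degrees requires roughly $n\geqslant 15$), so a citation alone would still leave a finite range of $n$ to be checked, and your sketched hook-length reduction to ``a bounded family of shapes'' is not carried out; note also that the borderline cases $(n-2,1,1)$ and $(3,1^{n-3})$, with $f^\nu=\frac{(n-1)(n-2)}{2}=\frac{n(n-3)}{2}+1$, show the bound is tight and must be handled explicitly. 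As written the proposal is an outline with a correctly identified but unfilled hole; if you supply the classification --- by a citation covering all $n\geqslant 9$ together with the finite verification, or by an explicit lower bound on $f^\nu$ outside the six exceptional shapes --- you would actually resolve Conjecture~\ref{conjecture.interval1}, which the paper does not.
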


\begin{figure}[t]
        \scalebox{0.5}{
                \begin{tikzpicture}[>=latex,line join=bevel,]
                        \node (node_0) at (42.5bp,51.0bp) [draw,draw=none] {${\def\lr#1{\multicolumn{1}{|@{\hspace{.6ex}}c@{\hspace{.6ex}}|}{\raisebox{-.3ex}{$#1$}}}\raisebox{-.6ex}{$\begin{array}[b]{*{1}c}\cline{1-1}\lr{\phantom{x}}\\\cline{1-1}\lr{\phantom{x}}\\\cline{1-1}\lr{\phantom{x}}\\\cline{1-1}\lr{\phantom{x}}\\\cline{1-1}\lr{\phantom{x}}\\\cline{1-1}\lr{\phantom{x}}\\\cline{1-1}\lr{\phantom{x}}\\\cline{1-1}\lr{\phantom{x}}\\\cline{1-1}\end{array}$}}$};
                        \node (node_1) at (42.5bp,183.0bp) [draw,draw=none] {${\def\lr#1{\multicolumn{1}{|@{\hspace{.6ex}}c@{\hspace{.6ex}}|}{\raisebox{-.3ex}{$#1$}}}\raisebox{-.6ex}{$\begin{array}[b]{*{2}c}\cline{1-2}\lr{\phantom{x}}&\lr{\phantom{x}}\\\cline{1-2}\lr{\phantom{x}}\\\cline{1-1}\lr{\phantom{x}}\\\cline{1-1}\lr{\phantom{x}}\\\cline{1-1}\lr{\phantom{x}}\\\cline{1-1}\lr{\phantom{x}}\\\cline{1-1}\lr{\phantom{x}}\\\cline{1-1}\end{array}$}}$};
                        \node (node_2) at (14.5bp,393.0bp) [draw,draw=none] {${\def\lr#1{\multicolumn{1}{|@{\hspace{.6ex}}c@{\hspace{.6ex}}|}{\raisebox{-.3ex}{$#1$}}}\raisebox{-.6ex}{$\begin{array}[b]{*{2}c}\cline{1-2}\lr{\phantom{x}}&\lr{\phantom{x}}\\\cline{1-2}\lr{\phantom{x}}&\lr{\phantom{x}}\\\cline{1-2}\lr{\phantom{x}}\\\cline{1-1}\lr{\phantom{x}}\\\cline{1-1}\lr{\phantom{x}}\\\cline{1-1}\lr{\phantom{x}}\\\cline{1-1}\end{array}$}}$};
                        \node (node_3) at (68.5bp,291.0bp) [draw,draw=none] {${\def\lr#1{\multicolumn{1}{|@{\hspace{.6ex}}c@{\hspace{.6ex}}|}{\raisebox{-.3ex}{$#1$}}}\raisebox{-.6ex}{$\begin{array}[b]{*{2}c}\cline{1-2}\lr{\phantom{x}}&\lr{\phantom{x}}\\\cline{1-2}\lr{\phantom{x}}&\lr{\phantom{x}}\\\cline{1-2}\lr{\phantom{x}}&\lr{\phantom{x}}\\\cline{1-2}\lr{\phantom{x}}&\lr{\phantom{x}}\\\cline{1-2}\end{array}$}}$};
                        \node (node_5) at (43.5bp,483.5bp) [draw,draw=none] {${\def\lr#1{\multicolumn{1}{|@{\hspace{.6ex}}c@{\hspace{.6ex}}|}{\raisebox{-.3ex}{$#1$}}}\raisebox{-.6ex}{$\begin{array}[b]{*{6}c}\cline{1-6}\lr{\phantom{x}}&\lr{\phantom{x}}&\lr{\phantom{x}}&\lr{\phantom{x}}&\lr{\phantom{x}}&\lr{\phantom{x}}\\\cline{1-6}\lr{\phantom{x}}&\lr{\phantom{x}}\\\cline{1-2}\end{array}$}}$};
                        \node (node_4) at (72.5bp,393.0bp) [draw,draw=none] {${\def\lr#1{\multicolumn{1}{|@{\hspace{.6ex}}c@{\hspace{.6ex}}|}{\raisebox{-.3ex}{$#1$}}}\raisebox{-.6ex}{$\begin{array}[b]{*{4}c}\cline{1-4}\lr{\phantom{x}}&\lr{\phantom{x}}&\lr{\phantom{x}}&\lr{\phantom{x}}\\\cline{1-4}\lr{\phantom{x}}&\lr{\phantom{x}}&\lr{\phantom{x}}&\lr{\phantom{x}}\\\cline{1-4}\end{array}$}}$};
                        \draw [black,->] (node_0) ..controls (42.5bp,110.59bp) and (42.5bp,119.26bp)  .. (node_1);
                        \draw [black,->] (node_1) ..controls (31.989bp,262.08bp) and (25.675bp,308.99bp)  .. (node_2);
                        \draw [black,->] (node_1) ..controls (55.423bp,236.69bp) and (57.626bp,245.67bp)  .. (node_3);
                        \draw [black,->] (node_2) ..controls (29.86bp,440.87bp) and (32.863bp,450.04bp)  .. (node_5);
                        \draw [black,->] (node_3) ..controls (70.148bp,333.19bp) and (70.91bp,352.24bp)  .. (node_4);
                        \draw [black,->] (node_4) ..controls (63.304bp,422.07bp) and (56.68bp,442.28bp)  .. (node_5);
                \end{tikzpicture}
                \qquad
                \qquad
                \begin{tikzpicture}[>=latex,line join=bevel,]
                        \node (node_0) at (42.0bp,57.0bp) [draw,draw=none] {${\def\lr#1{\multicolumn{1}{|@{\hspace{.6ex}}c@{\hspace{.6ex}}|}{\raisebox{-.3ex}{$#1$}}}\raisebox{-.6ex}{$\begin{array}[b]{*{1}c}\cline{1-1}\lr{\phantom{x}}\\\cline{1-1}\lr{\phantom{x}}\\\cline{1-1}\lr{\phantom{x}}\\\cline{1-1}\lr{\phantom{x}}\\\cline{1-1}\lr{\phantom{x}}\\\cline{1-1}\lr{\phantom{x}}\\\cline{1-1}\lr{\phantom{x}}\\\cline{1-1}\lr{\phantom{x}}\\\cline{1-1}\lr{\phantom{x}}\\\cline{1-1}\end{array}$}}$};
                        \node (node_1) at (42.0bp,201.0bp) [draw,draw=none] {${\def\lr#1{\multicolumn{1}{|@{\hspace{.6ex}}c@{\hspace{.6ex}}|}{\raisebox{-.3ex}{$#1$}}}\raisebox{-.6ex}{$\begin{array}[b]{*{2}c}\cline{1-2}\lr{\phantom{x}}&\lr{\phantom{x}}\\\cline{1-2}\lr{\phantom{x}}\\\cline{1-1}\lr{\phantom{x}}\\\cline{1-1}\lr{\phantom{x}}\\\cline{1-1}\lr{\phantom{x}}\\\cline{1-1}\lr{\phantom{x}}\\\cline{1-1}\lr{\phantom{x}}\\\cline{1-1}\lr{\phantom{x}}\\\cline{1-1}\end{array}$}}$};
                        \node (node_2) at (42.0bp,333.0bp) [draw,draw=none] {${\def\lr#1{\multicolumn{1}{|@{\hspace{.6ex}}c@{\hspace{.6ex}}|}{\raisebox{-.3ex}{$#1$}}}\raisebox{-.6ex}{$\begin{array}[b]{*{2}c}\cline{1-2}\lr{\phantom{x}}&\lr{\phantom{x}}\\\cline{1-2}\lr{\phantom{x}}&\lr{\phantom{x}}\\\cline{1-2}\lr{\phantom{x}}\\\cline{1-1}\lr{\phantom{x}}\\\cline{1-1}\lr{\phantom{x}}\\\cline{1-1}\lr{\phantom{x}}\\\cline{1-1}\lr{\phantom{x}}\\\cline{1-1}\end{array}$}}$};
                        \node (node_3) at (42.0bp,429.5bp) [draw,draw=none] {${\def\lr#1{\multicolumn{1}{|@{\hspace{.6ex}}c@{\hspace{.6ex}}|}{\raisebox{-.3ex}{$#1$}}}\raisebox{-.6ex}{$\begin{array}[b]{*{7}c}\cline{1-7}\lr{\phantom{x}}&\lr{\phantom{x}}&\lr{\phantom{x}}&\lr{\phantom{x}}&\lr{\phantom{x}}&\lr{\phantom{x}}&\lr{\phantom{x}}\\\cline{1-7}\lr{\phantom{x}}&\lr{\phantom{x}}\\\cline{1-2}\end{array}$}}$};
                        \draw [black,->] (node_0) ..controls (42.0bp,122.5bp) and (42.0bp,131.17bp)  .. (node_1);
                        \draw [black,->] (node_1) ..controls (42.0bp,260.59bp) and (42.0bp,269.26bp)  .. (node_2);
                        \draw [black,->] (node_2) ..controls (42.0bp,386.96bp) and (42.0bp,395.85bp)  .. (node_3);
                \end{tikzpicture}
        }
        \quad
        \scalebox{0.4}{
                \begin{tikzpicture}[>=latex,line join=bevel,]
                        \node (node_0) at (75.0bp,93.0bp) [draw,draw=none] {${\def\lr#1{\multicolumn{1}{|@{\hspace{.6ex}}c@{\hspace{.6ex}}|}{\raisebox{-.3ex}{$#1$}}}\raisebox{-.6ex}{$\begin{array}[b]{*{1}c}\cline{1-1}\lr{\phantom{x}}\\\cline{1-1}\lr{\phantom{x}}\\\cline{1-1}\lr{\phantom{x}}\\\cline{1-1}\lr{\phantom{x}}\\\cline{1-1}\lr{\phantom{x}}\\\cline{1-1}\lr{\phantom{x}}\\\cline{1-1}\lr{\phantom{x}}\\\cline{1-1}\lr{\phantom{x}}\\\cline{1-1}\lr{\phantom{x}}\\\cline{1-1}\lr{\phantom{x}}\\\cline{1-1}\lr{\phantom{x}}\\\cline{1-1}\lr{\phantom{x}}\\\cline{1-1}\lr{\phantom{x}}\\\cline{1-1}\lr{\phantom{x}}\\\cline{1-1}\lr{\phantom{x}}\\\cline{1-1}\end{array}$}}$};
                        \node (node_1) at (75.0bp,309.0bp) [draw,draw=none] {${\def\lr#1{\multicolumn{1}{|@{\hspace{.6ex}}c@{\hspace{.6ex}}|}{\raisebox{-.3ex}{$#1$}}}\raisebox{-.6ex}{$\begin{array}[b]{*{2}c}\cline{1-2}\lr{\phantom{x}}&\lr{\phantom{x}}\\\cline{1-2}\lr{\phantom{x}}\\\cline{1-1}\lr{\phantom{x}}\\\cline{1-1}\lr{\phantom{x}}\\\cline{1-1}\lr{\phantom{x}}\\\cline{1-1}\lr{\phantom{x}}\\\cline{1-1}\lr{\phantom{x}}\\\cline{1-1}\lr{\phantom{x}}\\\cline{1-1}\lr{\phantom{x}}\\\cline{1-1}\lr{\phantom{x}}\\\cline{1-1}\lr{\phantom{x}}\\\cline{1-1}\lr{\phantom{x}}\\\cline{1-1}\lr{\phantom{x}}\\\cline{1-1}\lr{\phantom{x}}\\\cline{1-1}\end{array}$}}$};
                        \node (node_2) at (75.0bp,513.0bp) [draw,draw=none] {${\def\lr#1{\multicolumn{1}{|@{\hspace{.6ex}}c@{\hspace{.6ex}}|}{\raisebox{-.3ex}{$#1$}}}\raisebox{-.6ex}{$\begin{array}[b]{*{2}c}\cline{1-2}\lr{\phantom{x}}&\lr{\phantom{x}}\\\cline{1-2}\lr{\phantom{x}}&\lr{\phantom{x}}\\\cline{1-2}\lr{\phantom{x}}\\\cline{1-1}\lr{\phantom{x}}\\\cline{1-1}\lr{\phantom{x}}\\\cline{1-1}\lr{\phantom{x}}\\\cline{1-1}\lr{\phantom{x}}\\\cline{1-1}\lr{\phantom{x}}\\\cline{1-1}\lr{\phantom{x}}\\\cline{1-1}\lr{\phantom{x}}\\\cline{1-1}\lr{\phantom{x}}\\\cline{1-1}\lr{\phantom{x}}\\\cline{1-1}\lr{\phantom{x}}\\\cline{1-1}\end{array}$}}$};
                        \node (node_3) at (75.0bp,645.5bp) [draw,draw=none] {${\def\lr#1{\multicolumn{1}{|@{\hspace{.6ex}}c@{\hspace{.6ex}}|}{\raisebox{-.3ex}{$#1$}}}\raisebox{-.6ex}{$\begin{array}[b]{*{13}c}\cline{1-13}\lr{\phantom{x}}&\lr{\phantom{x}}&\lr{\phantom{x}}&\lr{\phantom{x}}&\lr{\phantom{x}}&\lr{\phantom{x}}&\lr{\phantom{x}}&\lr{\phantom{x}}&\lr{\phantom{x}}&\lr{\phantom{x}}&\lr{\phantom{x}}&\lr{\phantom{x}}&\lr{\phantom{x}}\\\cline{1-13}\lr{\phantom{x}}&\lr{\phantom{x}}\\\cline{1-2}\end{array}$}}$};
                        \draw [black,->] (node_0) ..controls (75.0bp,194.67bp) and (75.0bp,203.16bp)  .. (node_1);
                        \draw [black,->] (node_1) ..controls (75.0bp,404.64bp) and (75.0bp,413.16bp)  .. (node_2);
                        \draw [black,->] (node_2) ..controls (75.0bp,603.26bp) and (75.0bp,612.06bp)  .. (node_3);
                \end{tikzpicture}
        }
        \caption{Subposet of the immersion poset only containing partitions in $A_{(n-2,2)}$ for $n=8$ (left), $n=9$ (middle), and $n=15$ (right).\label{figure.interval1ex}} 
\end{figure}

\begin{remark} 
        The first two covers are consequences of Proposition~\ref{prop.two col relations} (1). The map
        \[
        \mathsf{SSYT}((2,2,1^{n-4}), \nu) \longrightarrow \mathsf{SSYT}((n-2,2), \nu),
        \]
        which is the transpose if $\nu_1=1$ and which moves the boxes in positions $(3,1), \ldots, (n-2,1)$ to positions $(1,3), \ldots, (1,n-2)$
        if $\nu_1=2$ is an injection. This shows that $(2,2,1^{n-4}) <_I (n-2,2)$.  Therefore, we have
        \[
        \{(1^n), (2, 1^{n-2}), (2, 2, 1^{n-4}), (n-2, 2) \} \subseteq A_{(n-2, 2)}.
        \]
        However, we have not proven the cover relation $(2,2,1^{n-4}) \lessdot_I (n-2,2)$. One strategy to show the reverse containment is to argue that for all partitions $\lambda$ such that $\lambda$ and $\lambda^t$ are not included in the above list, we have $f^\lambda > \frac{n(n-3)}{2} = f^{(n-2,2)}$. This would prove that $\lambda \not<_I (n-2,2)$, hence $\lambda \not\in A_{(n-2,2)}$. 
        We have confirmed the conjecture up to $n=18$. 
        See Figure \ref{figure.interval1ex}. 
\end{remark}

\begin{proposition}
\label{prop.sposinterval1}
\mbox{}
\begin{enumerate}
\item For $n<7$ and $n=8$, $p_{A_{(n-2,2)}}$ is Schur-positive.
\item For $n \geqslant 9$, $p_{(1^n)} + p_{(2,1^{n-2})} +  p_{(2,2,1^{n-4})} +  p_{(n-2,2)}$ is Schur-positive. 
\end{enumerate}
\end{proposition}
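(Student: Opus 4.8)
The plan for part~(1) is to verify the finitely many cases directly. Since $(n-2,2)$ is a genuine partition only for $n\geqslant 4$, part~(1) concerns only $n\in\{4,5,6,8\}$ (the cases $n\leqslant 3$ being trivial). For each such $n$ I would first pin down the interval $A_{(n-2,2)}$ explicitly --- from the Kostka numbers $K_{\lambda,\alpha}$, or by combining the hook and two column relations of Sections~\ref{section.hooks} and~\ref{section.two column} with the injections of Section~\ref{section.injections} --- then list the cycle types occurring in it and expand $p_{A_{(n-2,2)}}=\sum_{\mu\in A_{(n-2,2)}}p_\mu$ in the Schur basis via the Murnaghan--Nakayama rule, i.e.\ by summing the corresponding columns of the character table of $S_n$; in every case all coefficients turn out to be nonnegative, which is a routine finite check (e.g.\ with {\sc SageMath}). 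For $n=8$, for instance, $A_{(6,2)}=\{(1^8),(2,1^6),(2^2,1^4),(2^4),(4^2),(6,2)\}$ (see Figure~\ref{figure.interval1ex}).

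For part~(2) the plan is to avoid the full Schur expansion and instead exploit a factorization of the power sums. First I would observe that
\[
p_{(1^n)}+p_{(2,1^{n-2})}+p_{(2,2,1^{n-4})}=\bigl(p_{(1^4)}+p_{(2,1,1)}+p_{(2,2)}\bigr)\,p_1^{\,n-4},
\]
which is immediate from $p_1^n=p_1^{n-4}p_1^4$, $p_2p_1^{n-2}=p_1^{n-4}(p_2p_1^2)$, and $p_2^2p_1^{n-4}=p_1^{n-4}p_2^2$. A single computation from the character table of $S_4$ gives
\[
p_{(1^4)}+p_{(2,1,1)}+p_{(2,2)}=3s_{(4)}+3s_{(3,1)}+4s_{(2,2)}+s_{(2,1,1)}+s_{(1^4)},
\]
which is Schur-positive. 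Since $p_1=s_{(1)}$ and $s_\nu\,s_{(1)}=\sum_{\nu^+}s_{\nu^+}$ by the Pieri rule, multiplication by $p_1^{\,n-4}$ preserves Schur-positivity, and iterating Pieri shows that the coefficient of $s_\lambda$ in $p_{(1^n)}+p_{(2,1^{n-2})}+p_{(2,2,1^{n-4})}$ equals
\[
N(\lambda):=3f^{\lambda/(4)}+3f^{\lambda/(3,1)}+4f^{\lambda/(2,2)}+f^{\lambda/(2,1,1)}+f^{\lambda/(1^4)},
\]
where $f^{\lambda/\nu}$ is the number of standard Young tableaux of skew shape $\lambda/\nu$ (which is $0$ unless $\nu\subseteq\lambda$). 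An elementary case distinction on $\lambda_1$ shows that for $n\geqslant 9$ every $\lambda\vdash n$ contains at least one of $(4),(3,1),(2,2),(2,1,1),(1^4)$, so $N(\lambda)\geqslant 1$.

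Next I would add $p_{(n-2,2)}=\sum_{\lambda\vdash n}\chi^\lambda(n-2,2)\,s_\lambda$ and bound its contribution. By the Murnaghan--Nakayama rule, $\chi^\lambda(n-2,2)=\sum_R(-1)^{\mathrm{ht}(R)}\chi^{\lambda\setminus R}(2)$, the sum over border strips $R$ of $\lambda$ of size $n-2$; since $\lambda\setminus R$ then has size $2$ and each of $(2)$ and $(1,1)$ has a unique border strip of size $2$, we get $|\chi^\lambda(n-2,2)|\leqslant\#\{\text{border strips of }\lambda\text{ of size }n-2\}=\#\{\text{cells of }\lambda\text{ of hook length }n-2\}$. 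A short argument with hook lengths --- the cell $(1,1)$ realizes the maximal hook length $\lambda_1+\lambda_1'-1$, so if this is less than $n-2$ there is no such cell, while if it is at least $n-2$ the diagram of $\lambda$ lies within two cells of a hook, and running through the possible shapes (the hooks $(n),(n-1,1),(2,1^{n-2}),(1^n)$, the near-hooks $(n-2,2),(2,2,1^{n-4})$, and the families $(a,3,1^b),(a,2,2,1^b)$) shows that exactly one cell has hook length $n-2$ --- gives $|\chi^\lambda(n-2,2)|\leqslant 1$ for every $\lambda\vdash n$. Hence the coefficient of $s_\lambda$ in $p_{(1^n)}+p_{(2,1^{n-2})}+p_{(2,2,1^{n-4})}+p_{(n-2,2)}$ equals $N(\lambda)+\chi^\lambda(n-2,2)\geqslant 1-1=0$ for all $\lambda\vdash n$ with $n\geqslant 9$, so this symmetric function is Schur-positive. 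The bound is tight only at $\lambda=(1^n)$ when $n$ is odd, where $N((1^n))=1$ and $\chi^{(1^n)}(n-2,2)=(-1)^n=-1$.

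The hard part will be the bound $|\chi^\lambda(n-2,2)|\leqslant 1$: this requires enumerating carefully all partitions of $n$ possessing a cell of hook length $n-2$ and confirming that each contributes just one border strip of that size (the sign then being irrelevant). Everything else is either a finite verification (part~(1)) or a formal manipulation of symmetric functions via the Pieri rule; in particular the degree-four identity above for $p_{(1^4)}+p_{(2,1,1)}+p_{(2,2)}$ already settles Question~\ref{question.Sundaram} for the column set $\{(1^4),(2,1,1),(2,2)\}$ of $\mathcal P(4)$, and is what drives the whole argument.
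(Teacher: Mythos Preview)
Your proposal is correct and takes a genuinely different route from the paper for part~(2).

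The paper proves Schur-positivity of the four-term sum by showing each successive partial sum $p_{(1^n)}$, $p_{(1^n)}+p_{(2,1^{n-2})}$, etc.\ is Schur-positive, using for each new term an explicit bijection between the ``negative'' ribbon tableaux and a subset of positive ones left over from earlier terms. For the final $p_{(n-2,2)}$ term it does an explicit case analysis on $\lambda$, listing all ribbon tableaux of type $(n-2,2)$ built from the inside (the $(n-2)$-ribbon is an inner hook) and checking that whenever the sign is $-1$ there is a spare positive term (coming from $f^{\lambda/(2,2)}$, $f^{\lambda/(4)}$, or $f^{\lambda/(1^4)}$) to absorb it.

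Your approach is cleaner for the first three terms: the factorization $p_{(1^n)}+p_{(2,1^{n-2})}+p_{(2,2,1^{n-4})}=(p_{(1^4)}+p_{(2,1,1)}+p_{(2,2)})\,p_1^{n-4}$ reduces everything to one degree-$4$ character table lookup plus Pieri, and the resulting closed form $N(\lambda)=3f^{\lambda/(4)}+3f^{\lambda/(3,1)}+4f^{\lambda/(2,2)}+f^{\lambda/(2,1,1)}+f^{\lambda/(1^4)}$ is manifestly nonnegative. This replaces the paper's chain of bijections by a single identity. For the fourth term you use the \emph{outer} Murnaghan--Nakayama recursion (remove an $(n-2)$-ribbon first, leaving a shape of size~$2$), together with the bijection between removable $k$-ribbons and cells of hook length~$k$, to get the uniform bound $|\chi^\lambda((n-2,2))|\leqslant 1$. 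The paper's inner-ribbon case analysis establishes the same bound but shape by shape.

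One small point in your sketch: the shapes with $h(1,1)\geqslant n-2$ (``within two cells of a hook'') comprise \emph{all} hooks $(a,1^b)$, all $(a,2,1^b)$, and then the families $(a,3,1^b)$, $(a,2,2,1^b)$ you list. The general hooks with $3\leqslant a\leqslant n-2$ and the shapes $(a,2,1^b)$ have \emph{zero} cells of hook length $n-2$, not one; your parenthetical list is precisely the subset where the count is exactly one. So ``exactly one'' should read ``at most one'', and the two omitted families need a line each --- but both are immediate and the bound $|\chi^\lambda((n-2,2))|\leqslant 1$ stands. Everything else goes through as you outline.
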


\begin{proof}
Part (1) can be checked explicitly by {\sc SageMath}. For part (2), let 
\begin{equation}
\label{eq.interval 1}
	p_{(1^n)} + p_{(2,1^{n-2})} +  p_{(2,2,1^{n-4})} +  p_{(n-2,2)}  = \sum_{\lambda \vdash n} c_\lambda s_\lambda.
\end{equation}
We prove that $c_\lambda \geqslant 0$ for all $\lambda \vdash n$ by proving that all partial sums 
$p_{(1^n)}$, $p_{(1^n)} + p_{(2,1^{n-2})}$, $p_{(1^n)} + p_{(2,1^{n-2})} +  p_{(2,2,1^{n-4})}$, 
$p_{(1^n)} + p_{(2,1^{n-2})} +  p_{(2,2,1^{n-4})} +  p_{(n-2,2)}$ are Schur-positive.
We employ the combinatorial Murnaghan--Nakayama rule involving ribbon tableaux (see for example~\cite[Chapter 7.17]{EC2})
\[
	p_\mu = \sum_{\lambda \vdash n}\chi^{\lambda}(\mu)s_\lambda 
	\qquad \text{where} \qquad 
	\chi^\lambda(\mu) = \sum_{T \in \mathsf{R}(\lambda, \mu)}(-1)^{\text{ht}(T)}
\]
and $\mathsf{R}(\lambda,\mu)$ is the set of all ribbon tableaux of shape $\lambda$ and type $\mu$ and $\text{ht}(T)$ is equal to the sum of the 
heights of all ribbons in $T$. We show that each subset of ribbon tableaux that contributes a negative term to $c_\lambda$ is in bijection with a 
distinct subset of ribbon tableaux that contributes a positive number to $c_\lambda$, ensuring that $c_\lambda \geqslant 0$. We examine each 
partial sum of power sum symmetric functions, and demonstrate Schur-positivity at each step through these bijections.

\medskip

\noindent
(1) It is well-known that $p_{(1^n)} = \sum_{\lambda\vdash n} f^\lambda s_\lambda$ since $\mathsf{R}(\lambda, (1^n))$ is the set of all standard
Young tableaux of shape $\lambda$.

\medskip

\noindent
(2) For $T \in \mathsf{R}(\lambda, (2,1^{n-2}))$, $T$ has either a horizontal or a vertical $2$-ribbon and the remaining are single box ribbons.
If $T$ has a horizontal $2$-ribbon, then $\text{ht}(T) = 0$ and $T$ contributes $+1$ to $\chi^\lambda((2,1^{n-2}))$. There are 
$f^{\lambda/(2)}$ such ribbon tableaux in $\mathsf{R}(\lambda, (2,1^{n-2}))$, where $f^{\lambda/\mu}$ is the cardinality of $\mathsf{SYT}(\lambda/\mu)$, the set of standard Young tableaux of skew shape $\lambda/\mu$.
 If $T$ has a vertical $2$-ribbon, then $\text{ht}(T) = 1$ and $T$ contributes $-1$ to 
$\chi^\lambda((2,1^{n-2}))$. There are $f^{\lambda/(1,1)}$ such ribbon tableaux in $R(\lambda, (2,1^{n-2}))$. Therefore, the coefficient of 
$s_\lambda$ in $p_{(2, 1^{n-2})}$ is $f^{\lambda/(2)} - f^{\lambda/(1,1)}$.
If $(1,1) \subseteq \lambda$, then $c_\lambda$ includes $-f^{\lambda / (1,1)}$. The natural bijection
\[
	\mathsf{SYT}(\lambda / (1,1)) \rightarrow \{ T \in \mathsf{SYT}(\lambda) \mid T_{1,1} = 1 \text{ and } T_{2,1} = 2 \}
\]
demonstrates that $f^\lambda-f^{\lambda/(1,1)} \geqslant 0$. Hence $p_{(1^n)} + p_{(2,1^{n-2})}$ is Schur-positive.

\medskip
\noindent
(3) For any $T \in \mathsf{R}(\lambda, (2,2,1^{n-4}))$, there are six possible ways to arrange two $2$-ribbons. 
\[
\begin{array}{cccccc}
\ytableausetup{boxsize=1.2em} 
\begin{ytableau}
1 & 1 \\
2 & 2
\end{ytableau} & \ytableausetup{boxsize=1.2em} 
\begin{ytableau}
1 & 2 \\
1 & 2
\end{ytableau} & \ytableausetup{boxsize=1.2em} 
\begin{ytableau}
1 & 1 &2 & 2
\end{ytableau} &\ytableausetup{boxsize=1.2em} 
\begin{ytableau}
1 \\
1 \\
2\\
2
\end{ytableau}& \ytableausetup{boxsize=1.2em} 
\begin{ytableau}
1 & 2 & 2\\
1
\end{ytableau} & \ytableausetup{boxsize=1.2em} 
\begin{ytableau}
1 & 1 \\
2 \\
2
\end{ytableau} \\
& & & & & \\
\text{ht}(T) = 0 & \text{ht}(T) = 2 & \text{ht}(T) = 0 & \text{ht}(T) = 2 & \text{ht}(T) = 1 & \text{ht}(T) = 1
\end{array}
\]
The remaining $n-4$ ribbons in $T$ are single boxes. Therefore, the coefficient of $s_\lambda$ in $p_{(2, 2, 1^{n-4})}$ is
\[
2 f^{\lambda / (2,2)} + f^{\lambda / (4)} + f^{\lambda / (1^4)} - f^{\lambda / (3,1)} - f^{\lambda / (2,1,1)}.
\]
If $(3,1) \subseteq \lambda$, then $c_\lambda$ includes $-f^{\lambda / (3,1)}$. Consider the bijection
\[
\mathsf{SYT}(\lambda / (3,1)) \rightarrow \{ T \in \mathsf{SYT}(\lambda/(2)) \mid T_{1,3} = 1, \text{ and } T_{2,1} = 2 \}.
\]
If $(2,1,1) \subseteq \lambda$, then $c_\lambda$ includes $-f^{\lambda / (2,1,1)}$. Consider the bijection
\[
\mathsf{SYT}(\lambda / (2,1,1)) \rightarrow \{ T \in \mathsf{SYT}(\lambda / (2)) \mid T_{2,1} = 1 \text{ and } T_{3,1} = 2 \}.
\]
Hence $f^{\lambda/(2)}$ from (2) and the terms $-f^{\lambda/(3,1)} - f^{\lambda/(2,1,1)}$ from (3) satisfy 
$f^{\lambda/(2)}-f^{\lambda/(3,1)} - f^{\lambda/(2,1,1)}\geqslant 0$.
So far, we have shown that $p_{(1^n)} + p_{(2,1^{n-2})} + p_{(2,2,1^{n-4})}$ is Schur-positive.

\medskip
\noindent
(4) For $T \in \mathsf{R}(\lambda, (n-2,2))$, the possible ways of arranging one $(n-2)$-ribbon and one $2$-ribbon in $T$ are the following. Note that $1^0$ appearing in $\lambda$ means that there are no parts of size $1$ in $\lambda$.
\[
\begin{array}{cccccc}
\lambda^{(1)} = (a, 1^b) &\lambda^{(2)} = (a, 1^b)  &\lambda^{(3)} = (a, 3, 1^b)  & \lambda^{(4)} = (a, 2, 2, 1^b)  & \lambda^{(5)} = (2,2,1^{n-4})  
& \lambda^{(6)} = (n-2,2) \\
0\leqslant b \leqslant n-3 & 2\leqslant b \leqslant n-1 & 0\leqslant b \leqslant n-6 & 0\leqslant b \leqslant n-6 \\
 \ytableausetup{boxsize=1.1em} 
\begin{ytableau}
1 & 1 & \cdots & 1 & 2 & 2\\
1 \\
\vdots \\
1
\end{ytableau} &  \ytableausetup{boxsize=1.1em} 
\begin{ytableau}
1 & 1 & \cdots & 1 \\
1 \\
\vdots \\
1 \\
2\\
2
\end{ytableau} &  \ytableausetup{boxsize=1.1em} 
\begin{ytableau}
1 & 1 & \cdots & 1 \\
1 & 2 & 2\\
\vdots \\
1
\end{ytableau} &  \ytableausetup{boxsize=1.1em} 
\begin{ytableau}
1 & 1 & \cdots & 1 \\
1 &2 \\
1 &2 \\
\vdots \\
1
\end{ytableau}   &  \ytableausetup{boxsize=1.1em} 
\begin{ytableau}
1 & 2\\
1 &2 \\
\vdots \\
1
\end{ytableau} & \ytableausetup{boxsize=1.1em} 
\begin{ytableau}
1 & 1 & \cdots &1  \\
2 & 2 
\end{ytableau}  \\
& & &&& \\
\text{ht}(T) = b & \text{ht}(T) = b-2+1 & \text{ht}(T) = b + 1 & \text{ht}(T) = b +2 + 1 & \text{ht}(T) = n-3 + 1 & \text{ht}(T) = 0 
\end{array}
\]
If $\lambda = (a,1^b)$ with $2\leqslant b \leqslant n-3$, then cases $\lambda^{(1)}$ and $\lambda^{(2)}$ both apply. Since $\text{ht}(\lambda^{(1)})$ 
and $\text{ht}(\lambda^{(2)})$ have opposite parity, $\chi^{(a,1^b)}((n-2,2)) = 0$. 
For $\lambda = (2, 1^{n-2})$, only $\lambda^{(2)}$ applies and $\chi^{(2,1^{n-2})}((n-2,2)) = (-1)^{n-3}$. For $\lambda = (1^n)$, only $\lambda^{(2)}$ 
applies and $\chi^{(1^{n})}((n-2,2)) = (-1)^{n-1}$. Since $(1^4)$ is contained in both $\lambda = (2,1^{n-2}), (1^n)$,  $c_\lambda$ also includes 
$f^{\lambda /(1^4)} \geqslant 1$. For $\lambda = (n-1,1)$, only $\lambda^{(1)}$ applies and $\chi^{(n-1,1)}((n-2,2)) = -1$. In this case, 
$(4) \subseteq \lambda$, and thus $c_\lambda$ also includes $f^{\lambda /(4)} \geqslant 1$. For $\lambda = (n)$, the height of any ribbon 
tableaux is $0$, so there are no negatives to worry about. 

If $\lambda$ is of the form $\lambda^{(3)}$, $\lambda^{(4)}$, or $\lambda^{(5)}$, then it is possible that $c_{\lambda^{(i)}}$ includes $-1$ from the 
unique $T \in \mathsf{R}(\lambda^{(i)}, (n-2,2))$ for $i = 3,4,5$. In any of these disjoint cases, $(2,2) \subseteq \lambda^{(i)}$, which means 
$c_{\lambda^{(i)}}$ also includes $f^{\lambda^{(i)} / (2,2)} \geqslant 1$. 
Hence $f^{\lambda / (1^4)}$, $f^{\lambda / (4)}$, and $f^{\lambda / (2,2)}$ from (3) and $\chi^\lambda((n-2,2))$ from (4) satisfy $f^{\lambda / (1^4)} + f^{\lambda / (4)} + f^{\lambda / (2,2)} + \chi^\lambda((n-2,2)) \geqslant 0$. 
 We have shown that $p_{(1^n)} + p_{(2,1^{n-2})} + p_{(2,2,1^{n-4})} + p_{(n-2,2)}$ is Schur-positive. 
\end{proof}

\begin{conjecture}
        \label{conjecture.interval2}
        For $n \geqslant 9$, the interval $A_{(n-2, 1,1)} = \{ \lambda \, |\, (1^n) \leqslant_I \lambda \leqslant_I (n-2, 1,1)\}$ is exactly
        \[
        (1^n) \lessdot_I (2, 1^{n-2}) \lessdot_I (2, 2, 1^{n-4}) \lessdot_I (3, 1^{n-3}) \lessdot_I (n-2, 1,1).
        \]
\end{conjecture}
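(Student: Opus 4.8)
The plan is to prove the conjecture in three steps: (i) show that each of the five listed partitions lies in $A_{(n-2,1,1)}$, so that they form a chain under $\leqslant_I$; (ii) show that no other partition lies in $A_{(n-2,1,1)}$; and (iii) deduce that the four displayed relations are precisely the cover relations. For (i), write the two-column partitions as in Section~\ref{section.two column}. The relations $(1^n) \leqslant_I (2,1^{n-2})$ and $(2,1^{n-2}) \leqslant_I (2,2,1^{n-4})$ are instances of Proposition~\ref{prop.two col relations}(1) (equivalently Corollary~\ref{corollary.cover two col immersion0}), valid since $n \geqslant 9$. For $(2,2,1^{n-4}) \leqslant_I (3,1^{n-3})$ I would use Proposition~\ref{two column injection 0} with $c=2$, $\alpha=0$, $\beta=2$ (so $\beta=\alpha+2$): here $\varphi_0$ relocates the box in position $(2,2)$ to position $(1,3)$, and one checks directly that the image is a semistandard tableau of shape $(3,1^{n-3})$ with the same content (the new entry at $(1,3)$ exceeds the old entry at $(1,2)$ since the second column of the source is strictly increasing, and the first column is unchanged), so $K_{(2,2,1^{n-4}),\nu}\leqslant K_{(3,1^{n-3}),\nu}$ for all $\nu$ and Lemma~\ref{lemma.immersion K} applies. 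Finally $(3,1^{n-3})$ and $(n-2,1,1)$ are conjugate hooks with $(3,1^{n-3})\leqslant_D (n-2,1,1)$ because $3\leqslant n-2$, so Corollary~\ref{corollary.hook transpose} gives $(3,1^{n-3})\leqslant_I (n-2,1,1)$. By transitivity the five partitions form a chain.

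For (ii), suppose $(1^n)\leqslant_I \lambda\leqslant_I (n-2,1,1)$. Then $\lambda\leqslant_D (n-2,1,1)$, which is precisely the condition $\lambda_1\leqslant n-2$ and $\lambda_1+\lambda_2\leqslant n-1$; and comparing standard tableaux gives $f^\lambda\leqslant f^{(n-2,1,1)}=\binom{n-1}{2}$ by the hook length formula. I would now invoke the classification of partitions of $n$ with few standard Young tableaux (Rasala's determination of the minimal degrees of the irreducible characters of $S_n$): for all large $n$ the only $\lambda\vdash n$ with $f^\lambda\leqslant\binom{n-1}{2}$ are $(n)$, $(1^n)$, $(n-1,1)$, $(2,1^{n-2})$, $(n-2,2)$, $(2,2,1^{n-4})$, $(n-2,1,1)$ and $(3,1^{n-3})$, with the finitely many initial values $n\geqslant 9$ handled by direct computation. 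Among these eight, exactly $(1^n)$, $(2,1^{n-2})$, $(2,2,1^{n-4})$, $(3,1^{n-3})$, $(n-2,1,1)$ satisfy the dominance constraints, since $(n)$ and $(n-1,1)$ have $\lambda_1\geqslant n-1$ and $(n-2,2)$ has $\lambda_1+\lambda_2=n$. Together with (i) this shows $A_{(n-2,1,1)}$ equals the five listed partitions.

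For (iii), the five elements are pairwise comparable and thus form a five-element chain $(1^n)<_I (2,1^{n-2})<_I (2,2,1^{n-4})<_I (3,1^{n-3})<_I (n-2,1,1)$; since the interval contains exactly these five elements, the only covering relations among them are the consecutive ones, which is the displayed chain. The main obstacle is step (ii) --- more precisely, showing that every partition dominated by $(n-2,1,1)$ other than the five listed has $f^\lambda>\binom{n-1}{2}$. Rasala's theorem settles this for large $n$, but a self-contained proof would require a somewhat intricate induction with the hook length formula, lower-bounding $f^\lambda$ for the ``narrow'' non-hook, non-two-column shapes still permitted by the dominance constraint, together with a finite base-case check; once this classification is in hand, no Kostka inequality beyond $\varphi_0$ and Corollary~\ref{corollary.hook transpose} is needed.
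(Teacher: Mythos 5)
This statement is only a \emph{conjecture} in the paper: the remark that follows it proves the containment $\{(1^n),(2,1^{n-2}),(2,2,1^{n-4}),(3,1^{n-3}),(n-2,1,1)\}\subseteq A_{(n-2,1,1)}$ by exactly the three ingredients you use in step (i) --- Corollary~\ref{corollary.cover two col immersion0} for the first two relations, $\varphi_0$ with $\alpha=0$, $\beta=2$ for $(2,2,1^{n-4})\leqslant_I(3,1^{n-3})$, and Corollary~\ref{corollary.hook transpose} for the last --- but leaves the reverse containment open, verified only by computer up to $n=18$. Your step (ii) is therefore the genuinely new content. The reduction to the necessary conditions $\lambda\leqslant_D(n-2,1,1)$ and $f^\lambda\leqslant\binom{n-1}{2}$ is exactly the strategy the authors suggest, and invoking Rasala's classification of the minimal degrees of irreducible $S_n$-characters is a legitimate way to finish: for large $n$ the only partitions with $f^\lambda\leqslant\binom{n-1}{2}$ are the eight near-trivial ones you list, and the dominance constraint then eliminates $(n)$, $(n-1,1)$, $(n-2,2)$. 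Step (iii) is also correct, since $A_{(n-2,1,1)}$ is order-convex (it contains the global minimum $(1^n)$, so any $\nu$ strictly between two of its elements lies in it), and hence consecutive relations in a five-element interval are covers in the full poset. What keeps this from being a finished proof is only the execution of step (ii): Rasala's theorem carries an explicit threshold (the standard statement of the list of smallest degrees requires $n$ noticeably larger than $9$, e.g.\ $n\geqslant 15$), so you must pin down that threshold and actually carry out the finite verification for the remaining values $9\leqslant n<n_0$; neither is done in the proposal. With those details supplied, your argument would settle the conjecture, which is more than the paper itself accomplishes.
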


\begin{remark} 
        The first two covers are consequences of Corollary~\ref{corollary.cover two col immersion0}. By Proposition~\ref{two column injection 0},
        \[
        		\varphi_0 \colon \mathsf{SSYT}((2,2,1^{n-4}), \nu) \rightarrow \mathsf{SSYT}((3,1^{n-3}), \nu)
	\]
	is an injection (with $\alpha = 0, \beta = 2$). Since $(2,2,1^{n-4}) <_D (3,1^{n-3})$, this implies $(2,2,1^{n-4}) \lessdot_I (3,1^{n-3})$.   
       By Corollary~\ref{corollary.hook transpose}, we know $(3, 1^{n-3}) <_I (n-2, 1,1)$ because $(3, 1^{n-3}) <_D (3, 1^{n-3})^t = (n-2, 1,1)$. This implies
        \[
        \{(1^n), (2, 1^{n-2}), (2, 2, 1^{n-4}), (3, 1^{n-3}), (n-2, 1,1) \} \subseteq A_{(n-2, 1, 1)}.
        \]
        However, we have not proven the cover relation $ (3, 1^{n-3}) \lessdot_I (n-2, 1,1)$. To show that $A_{(n-2, 1, 1)}$ is 
        contained in the above set, one could show that for all partitions $\lambda$ such that $\lambda$ and $\lambda^t$ are not included in the above list, 
        we have $f^\lambda > \frac{(n-1)(n-2)}{2} = f^{(n-2,1,1)}$. This would prove that $\lambda \not\in A_{(n-2,1,1)}$. We have confirmed the conjecture 
        up to $n=18$. 
\end{remark}

\begin{proposition}
\label{prop.sposinterval2} 
\mbox{}
\begin{enumerate}
\item For $n<9$, $p_{A_{(n-2,1,1)}}$ is Schur-positive.
\item For $n\geqslant 9$, $p_{(1^n)} + p_{(2,1^{n-2})} +  p_{(2,2,1^{n-4})} +  p_{(3,1^{n-3})} + p_{(n-2,1,1)}$ is Schur-positive.
\end{enumerate}
\end{proposition}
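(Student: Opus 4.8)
The plan is to mirror the proof of Proposition~\ref{prop.sposinterval1}. Part~(1) will be a finite check in {\sc SageMath}. For part~(2) I would expand
\[
p_{(1^n)} + p_{(2,1^{n-2})} + p_{(2,2,1^{n-4})} + p_{(3,1^{n-3})} + p_{(n-2,1,1)} = \sum_{\lambda \vdash n} c_\lambda s_\lambda
\]
and prove $c_\lambda \geqslant 0$ by adding the five power sums in the order listed and showing every partial sum is Schur-positive, at each stage absorbing the negative contributions to $c_\lambda$ into positive contributions not yet used, by explicit bijections between sets of skew standard Young tableaux exactly as in Proposition~\ref{prop.sposinterval1}. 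The tools are the combinatorial Murnaghan--Nakayama rule, the identity that the coefficient of $s_\lambda$ in $p_\rho\, p_{(1^{n-|\rho|})}$ is $\sum_{\sigma \vdash |\rho|} \chi^\sigma(\rho)\, f^{\lambda/\sigma}$, and the dual Pieri recursions $f^{\lambda/(k)} = f^{\lambda/(k+1)} + f^{\lambda/(k,1)}$, $f^{\lambda/(1^k)} = f^{\lambda/(1^{k+1})} + f^{\lambda/(2,1^{k-1})}$, $f^{\lambda/(2,1)} = f^{\lambda/(3,1)} + f^{\lambda/(2,2)} + f^{\lambda/(2,1,1)}$, each of which is the bijection recording the cell of the smallest entry of a skew standard tableau.

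The first three summands are exactly those treated in Proposition~\ref{prop.sposinterval1}, so that argument already shows $p_{(1^n)} + p_{(2,1^{n-2})} + p_{(2,2,1^{n-4})}$ is Schur-positive and that after this stage the coefficient of $s_\lambda$ is at least the unused quantity $2f^{\lambda/(2,2)} + f^{\lambda/(4)} + f^{\lambda/(1^4)}$, with $f^\lambda - f^{\lambda/(1,1)} \geqslant 0$ and $f^{\lambda/(2)} - f^{\lambda/(3,1)} - f^{\lambda/(2,1,1)} \geqslant 0$ held in reserve. Murnaghan--Nakayama gives the coefficient of $s_\lambda$ in $p_{(3,1^{n-3})}$ as $f^{\lambda/(3)} - f^{\lambda/(2,1)} + f^{\lambda/(1^3)}$ (the size-$3$ border strips $(3),(2,1),(1^3)$ of heights $0,1,2$ attachable at the corner), and by the dual Pieri recursions this equals $f^{\lambda/(4)} + f^{\lambda/(1^4)} - f^{\lambda/(2,2)}$. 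Adding this, the single negative term $-f^{\lambda/(2,2)}$ is cancelled by one of the two copies of $f^{\lambda/(2,2)}$, so $p_{(1^n)} + p_{(2,1^{n-2})} + p_{(2,2,1^{n-4})} + p_{(3,1^{n-3})}$ is Schur-positive with coefficient of $s_\lambda$ now at least $f^{\lambda/(2,2)} + 2f^{\lambda/(4)} + 2f^{\lambda/(1^4)}$ (the two reserve quantities still available).

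Finally I would add $p_{(n-2,1,1)}$ and show that $f^{\lambda/(2,2)} + 2f^{\lambda/(4)} + 2f^{\lambda/(1^4)}$, together with the reserve quantities if needed, dominates $-\chi^\lambda((n-2,1,1))$. Peeling the $(n-2)$-ribbon off first leaves a shape $\sigma \vdash 2$ with $\chi^\sigma((1,1)) = 1$, so
\[
\chi^\lambda((n-2,1,1)) = \sum_{\sigma \in \{(2),(1,1)\}} (-1)^{\text{ht}(\lambda/\sigma)}\,[\,\lambda/\sigma \text{ is an }(n-2)\text{-border strip}\,];
\]
in particular this is nonzero only when $\lambda$ is obtained from $(2)$ or $(1,1)$ by attaching one long ribbon. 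One checks that for $n \geqslant 9$ the two options are never both available, so $\chi^\lambda((n-2,1,1)) \in \{-1,0,1\}$, and a case analysis isolates the near-hook shapes with value $-1$ (families such as $(a,2)$, $(a,3)$, $(a,3,1^b)$, $(a,2,2,1^b)$, $(2,1^{n-2})$, $(2,2,1^{n-4})$, $(1^n)$). For each of these one of $f^{\lambda/(2,2)}, f^{\lambda/(4)}, f^{\lambda/(1^4)}$ is at least $1$ --- just as $f^{\lambda/(1^4)}, f^{\lambda/(4)}, f^{\lambda/(2,2)}$ were used in the proof of Proposition~\ref{prop.sposinterval1} --- so the $-1$ is absorbed; the finitely many remaining small shapes are checked directly. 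This yields $c_\lambda \geqslant 0$ for all $\lambda \vdash n$.

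The hard part will be this last step: carrying out the Murnaghan--Nakayama computation of $\chi^\lambda((n-2,1,1))$ uniformly, pinning down exactly which $\lambda$ make it negative, and verifying in each case that the positive leftover not yet consumed is large enough --- with special care for the shapes near the minimal hooks $(1^n)$ and $(2,1^{n-2})$, where almost all of the candidate positive terms vanish. Everything before that reduces either to the finite check in part~(1) or to a mechanical combination of Proposition~\ref{prop.sposinterval1} with standard skew-tableaux identities.
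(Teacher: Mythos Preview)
Your plan is correct and follows the paper's strategy: verify part~(1) by computer, then for part~(2) build up the partial sums exactly as in Proposition~\ref{prop.sposinterval1}, absorbing each negative piece of a Murnaghan--Nakayama expansion into an explicit positive leftover. The two differences from the paper are both in execution rather than in concept.

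For $p_{(3,1^{n-3})}$ the paper absorbs $-f^{\lambda/(2,1)}$ directly into an unused piece of $f^{\lambda}$ via the bijection $\mathsf{SYT}(\lambda/(2,1)) \to \{T \in \mathsf{SYT}(\lambda) : T_{1,1}=1,\,T_{1,2}=2,\,T_{2,1}=3\}$; you instead rewrite $f^{\lambda/(3)} - f^{\lambda/(2,1)} + f^{\lambda/(1^3)} = f^{\lambda/(4)} + f^{\lambda/(1^4)} - f^{\lambda/(2,2)}$ by the inner-corner recursions and spend one of the two spare copies of $f^{\lambda/(2,2)}$. Both bookkeepings leave enough of $f^{\lambda/(2,2)}, f^{\lambda/(4)}, f^{\lambda/(1^4)}$ for the last step.

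For $p_{(n-2,1,1)}$ the paper places the $(n-2)$-ribbon at the origin and then two single boxes, yielding a twelve-case analysis; you instead strip the $(n-2)$-ribbon from the outside of $\lambda$, leaving $\sigma\in\{(2),(1,1)\}$. Your observation that for $n\geqslant 4$ at most one such $\sigma$ makes $\lambda/\sigma$ a border strip (because any $2\times 2$ block of $\lambda$ surviving both removals would have to contain $(1,1)$) gives $\chi^{\lambda}((n-2,1,1))\in\{-1,0,1\}$ immediately and cuts the case list down to the near-hook families you name. The absorption of each resulting $-1$ into $f^{\lambda/(2,2)}$, $f^{\lambda/(4)}$, or $f^{\lambda/(1^4)}$ is then exactly what the paper does. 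Your version is a bit cleaner; the paper's is more explicit.
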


\begin{proof}
Part (1) can be checked explicitly using {\sc SageMath}. For part (2), 
as shown in the proof of Proposition~\ref{prop.sposinterval1}, $p_{(1^n)} + p_{(2,1^{n-2})} +  p_{(2,2,1^{n-4})} $ is Schur-positive. We next show 
$p_{(1^n)} + p_{(2,1^{n-2})} +  p_{(2,2,1^{n-4})} +  p_{(3,1^{n-3})}$ is Schur-positive. For $T \in \mathsf{R}(\lambda, (3, 1^{n-3}))$, there are three 
possible ways of arranging one $3$-ribbon in $T$. 
\[
\begin{array}{ccc}
\ytableausetup{boxsize=1.1em} 
\begin{ytableau}
1 & 1 &1  \\ 
\end{ytableau} & \ytableausetup{boxsize=1.1em} 
\begin{ytableau}
1 & 1  \\ 
1
\end{ytableau} & \ytableausetup{boxsize=1.1em} 
\begin{ytableau}
1   \\ 
1\\
1
\end{ytableau} \\
& &  \\
\text{ht}(T) = 0  & \text{ht}(T) =  1  & \text{ht}(T) = 2
\end{array} 
\]
Therefore, the coefficient of $s_\lambda$ in $p_{(3,1^{n-3})}$ is $f^{\lambda / (3)} - f^{\lambda / (2,1)} + f^{\lambda / (1,1,1)}$. 

If $(2,1) \subseteq \lambda$, then $c_\lambda$ includes $ - f^{\lambda / (2,1)}$.
Consider the bijection
\[
\mathsf{SYT}(\lambda / (2,1)) \rightarrow \{ T \in \mathsf{SYT}(\lambda) \mid T_{1,1} = 1, T_{1,2} = 2,  \text{ and } T_{2,1} = 3 \}.
\]
Note, the above subset of standard Young tableaux of shape $\lambda$ in the term $f^{\lambda}$ in $p_{(1^n)}$ was not used in the bijections in the 
proof of Proposition~\ref{prop.sposinterval1}. This shows that $p_{(1^n)} + p_{(2,1^{n-2})} +  p_{(2,2,1^{n-4})} +  p_{(3,1^{n-3})}$ is Schur-positive.

\smallskip

We now examine the Schur expansion of $p_{(n-2,1,1)}$. There are a few specific shapes $\lambda$ where $\mathsf{R}(\lambda,  (n-2,1,1))$ is nonempty.
Note that for a ribbon tableau $T$ of type $(n-2,1,1)$, 
$\text{ht}(T) = \text{ht}(R_1)$, where $R_1$ is the $(n-2)$-ribbon of $1$'s in $T$. In Case 1, we examine all hook shapes $\lambda = (a, 1^b)$. 
In Case 2, we examine all shapes $\lambda = (a, 2, 1^b)$. In Case 3, we examine the remaining two shapes $(a,3, 1^b)$ and $(a,2,2, 1^b)$.

\smallskip

\noindent \textbf{Case 1a:} $\lambda = (a,1^b)$ with $4 \leqslant a \leqslant n-3$ and $3 \leqslant b \leqslant n-4$. These conditions on $a,b$ require 
that the $(n-2)$-ribbon forms a hook with nontrivial arm and nontrivial leg.
\[
\begin{array}{cccc}
 \ytableausetup{boxsize=1.1em} 
\begin{ytableau}
1 & \cdots & 1 & 2 & 3\\
\vdots \\
1
\end{ytableau} &  \ytableausetup{boxsize=1.1em} 
\begin{ytableau}
1 & \cdots & 1 \\
\vdots \\
1 \\
2\\
3
\end{ytableau} & \ytableausetup{boxsize=1.1em} 
\begin{ytableau}
1 & \cdots & 1 & 2 \\
\vdots \\
1 \\
3
\end{ytableau} & \ytableausetup{boxsize=1.1em} 
\begin{ytableau}
1 & \cdots & 1 & 3\\
\vdots \\
1\\
2
\end{ytableau}  \\
& & & \\
\text{ht}(T) = b & \text{ht}(T) = b-2 & \text{ht}(T) = b -1& \text{ht}(T) = b-1
\end{array}
\]
Since $b, b-2$ have opposite parity to $b-1$, $\chi^\lambda((n-2,1,1)) = 0$ for $\lambda = (a,1^b)$ with $4 \leqslant a \leqslant n-3$ and 
$3 \leqslant b \leqslant n-4$.

\smallskip

\noindent \textbf{Case 1b:} $\lambda = (3,1^{n-3})$. 
\[
\begin{array}{cccc}
 \ytableausetup{boxsize=1.1em} 
\begin{ytableau}
1 & 2 & 3\\
\vdots \\
1
\end{ytableau} &  \ytableausetup{boxsize=1.1em} 
\begin{ytableau}
1 & 1& 2\\
\vdots \\
1 \\
3
\end{ytableau} & \ytableausetup{boxsize=1.1em} 
\begin{ytableau}
1  & 1 & 3\\
\vdots \\
1 \\
2
\end{ytableau}  & \ytableausetup{boxsize=1.1em} 
\begin{ytableau}
1  & 1 & 1\\
\vdots \\
1 \\
2 \\
3
\end{ytableau}\\
& & & \\
\text{ht}(T) = n-3 & \text{ht}(T) = n-4 & \text{ht}(T) = n-4 & \text{ht}(T) = n-5 
\end{array}
\]
Since $n-3, n-5$ have opposite parity to $n-4$, $\chi^\lambda((n-2,1,1)) = 0$ for $\lambda = (3,1^{n-3})$.

\smallskip

\noindent \textbf{Case 1c:} $\lambda = (2,1^{n-2})$. 
\[
\begin{array}{ccc}
 \ytableausetup{boxsize=1.1em} 
\begin{ytableau}
1 & 2\\
\vdots \\
1\\
3
\end{ytableau} &  \ytableausetup{boxsize=1.1em} 
\begin{ytableau}
1 & 3\\
\vdots \\
1 \\
2
\end{ytableau} & \ytableausetup{boxsize=1.1em} 
\begin{ytableau}
1  & 1 \\
\vdots \\
1 \\
2\\
3
\end{ytableau}  \\
& & \\
\text{ht}(T) = n-3 & \text{ht}(T) = n-3 & \text{ht}(T) = n-4 
\end{array}
\]
Since $n-3$ and $n-4$ have opposite parity, $\chi^\lambda((n-2,1,1)) = (-1)^{n-3}$ for $\lambda = (2,1^{n-2})$. Since $n\geqslant 5$, 
$(1^4) \subseteq \lambda$ and $f^{\lambda/(1^4)} \geqslant 1$, the coefficient of $s_\lambda$ in $p_{(2,2,1^{n-4})}$ will cancel this potential negative.

\smallskip

\noindent \textbf{Case 1d:} $\lambda = (1^{n})$. The unique ribbon tableau $T \in \mathsf{R}((1^n), (n-2,1,1))$ has height $n-3$, hence 
$\chi^\lambda((n-2,1,1)) = (-1)^{n-3}$ for $\lambda = (1^{n})$. Since $n\geqslant 5$, $(1^4) \subseteq \lambda$ and $f^{\lambda/(1^4)} \geqslant 1$, 
the coefficient of $s_\lambda$ in $p_{(2,2,1^{n-4})}$ will cancel this potential negative.

\smallskip

\noindent \textbf{Case 1e:} $\lambda = (n-2 ,1,1)$. 
\[
\begin{array}{cccc}
 \ytableausetup{boxsize=1.1em} 
\begin{ytableau}
1 & \cdots & 1\\
2 \\
3
\end{ytableau} &  \ytableausetup{boxsize=1.1em} 
\begin{ytableau}
1 & \cdots & 1 & 2\\
1 \\
3
\end{ytableau} & \ytableausetup{boxsize=1.1em} 
\begin{ytableau}
1 & \cdots & 1 & 3\\
1 \\
2
\end{ytableau}  & \ytableausetup{boxsize=1.1em} 
\begin{ytableau}
1  & \cdots & 1 & 2 & 3\\
 1 \\
1
\end{ytableau}\\
& & & \\
\text{ht}(T) = 0 & \text{ht}(T) = 1 & \text{ht}(T) = 1 & \text{ht}(T) = 2
\end{array}
\]
Thus, $\chi^\lambda((n-2,1,1)) = 0 $ for $\lambda = (n-2, 1,1)$. \\

\noindent \textbf{Case 1f:} $\lambda = (n-1 ,1)$. 
\[
\begin{array}{ccc}
 \ytableausetup{boxsize=1.1em} 
\begin{ytableau}
1 & \cdots & 1 & 2\\
3 \\
\end{ytableau} &  \ytableausetup{boxsize=1.1em} 
\begin{ytableau}
1 & \cdots & 1 & 3\\
2
\end{ytableau} & \ytableausetup{boxsize=1.1em} 
\begin{ytableau}
1 & \cdots & 1 & 2 & 3\\
1 
\end{ytableau}  \\
& & \\
\text{ht}(T) = 0 & \text{ht}(T) = 0 & \text{ht}(T) = 1 
\end{array}
\]
Thus, $\chi^\lambda((n-2,1,1)) = 1$ for $\lambda = (n-1, 1)$. 

\smallskip

\noindent \textbf{Case 1g:} $\lambda = (n)$. The unique ribbon tableau $T \in \mathsf{R}((n), (n-2,1,1))$ has height $0$, hence $\chi^\lambda((n-2,1,1)) = 1$ 
for $\lambda = (n)$.

\smallskip

\noindent \textbf{Case 2a:} $\lambda = (a, 2, 1^b)$ with $3 \leqslant a \leqslant n-3$ and $1 \leqslant b \leqslant n-5$. 
\[
\begin{array}{cccc}
 \ytableausetup{boxsize=1.1em} 
\begin{ytableau}
1 &  1& \cdots & 1 & 3\\
1 & 2 \\
\vdots \\
1
\end{ytableau} &  \ytableausetup{boxsize=1.1em} 
\begin{ytableau}
1 &  1& \cdots & 1 & 2\\
1 & 3 \\
\vdots \\
1
\end{ytableau} & \ytableausetup{boxsize=1.1em} 
\begin{ytableau}
1 &  1& \cdots & 1 \\
1 & 2 \\
\vdots \\
1 \\
3 
\end{ytableau}  & \ytableausetup{boxsize=1.1em} 
\begin{ytableau}
1 &  1& \cdots & 1 \\
1 & 3 \\
\vdots \\
1 \\
2 
\end{ytableau}  \\
& & & \\
\text{ht}(T) = b+1 & \text{ht}(T) = b+1 & \text{ht}(T) = b & \text{ht}(T) = b 
\end{array}
\]
Since $b$ and $b+1$ have opposite parity, $\chi^\lambda((n-2,1,1)) = 0$ for $\lambda = (a, 2, 1^b)$ with $3 \leqslant a \leqslant n-3$ and 
$1 \leqslant b \leqslant n-5$.

\smallskip

\noindent \textbf{Case 2b:} $\lambda = (n-2, 2)$. 
\[
\begin{array}{ccc}
 \ytableausetup{boxsize=1.1em} 
\begin{ytableau}
1 &  1& \cdots & 1\\
2 & 3 
\end{ytableau} &  \ytableausetup{boxsize=1.1em} 
\begin{ytableau}
1 &  1& \cdots & 1 & 3\\
1 & 2
\end{ytableau} & \ytableausetup{boxsize=1.1em} 
\begin{ytableau}
1 &  1& \cdots & 1 & 2\\
1 & 3
\end{ytableau}   \\
& & \\
\text{ht}(T) = 0 & \text{ht}(T) = 1 & \text{ht}(T) = 1 
\end{array}
\]
Thus, $\chi^\lambda((n-2,1,1)) = -1$ for $\lambda = (n-2, 2)$. Since $(2,2) \subseteq \lambda$ and $f^{\lambda/(2,2)} \geqslant 1$, the coefficient of 
$s_\lambda$ in $p_{(2,2,1^{n-4})}$ will cancel this negative.

\smallskip

\noindent \textbf{Case 2c:} $\lambda = (2, 2, 1^{n-4})$. 
\[
\begin{array}{ccc}
 \ytableausetup{boxsize=1.1em} 
\begin{ytableau}
1 &  2\\
1 & 3\\
\vdots \\
1 \\
1
\end{ytableau} &  \ytableausetup{boxsize=1.1em} 
\begin{ytableau}
1 &  1\\
1 & 3 \\
\vdots \\
1 \\
2
\end{ytableau} & \ytableausetup{boxsize=1.1em} 
\begin{ytableau}
1 &  1\\
1 & 2 \\
\vdots \\
1 \\
3
\end{ytableau}   \\
& & \\
\text{ht}(T) = n-3 & \text{ht}(T) = n-4 & \text{ht}(T) = n-4 
\end{array}
\]
Since $n-3$ and $n-4$ have opposite parity, $\chi^\lambda((n-2,1,1)) = (-1)^{n-4}$ for $\lambda = (2, 2, 1^{n-4})$. Since $(2,2) \subseteq \lambda$ and 
$f^{\lambda/(2,2)} \geqslant 1$, the coefficient of $s_\lambda$ in $p_{(2,2,1^{n-4})}$ will cancel this potential negative.

\smallskip

\noindent \textbf{Case 3a:} $\lambda = (n-(3+b), 3, 1^b)$ for $0\leqslant b \leqslant n-6$. The unique ribbon tableau 
$T \in \mathsf{R}((n-(3+b),3,1^b), (n-2,1,1))$ has height $b+1$, hence $\chi^\lambda((n-2,1,1)) = (-1)^{b+1}$ for $\lambda =  (n-(3+b), 3, 1^b)$. 
Since $(2,2) \subseteq \lambda$ and $f^{\lambda/(2,2)} \geqslant 1$, the coefficient of $s_\lambda$ in $p_{(2,2,1^{n-4})}$ will cancel this potential negative.

\smallskip

\noindent \textbf{Case 3b:} $\lambda = (n-(4+b), 2,2, 1^b)$ for $0\leqslant b \leqslant n-6$. The unique ribbon tableau 
$T \in \mathsf{R}((n-(4+b),2,2,1^b), (n-2,1,1))$ has height $b+2$, hence $\chi^\lambda((n-2,1,1)) = (-1)^{b+2}$ for $\lambda  = (n-(4+b), 2,2, 1^b)$. 
Since $(2,2) \subseteq \lambda$ and $f^{\lambda/(2,2)} \geqslant 1$, the coefficient of $s_\lambda$ in $p_{(2,2,1^{n-4})}$ will cancel this 
potential negative. 
\end{proof}

\section{Discussion}
\label{section.discussion}

In this paper, we studied various properties of the immersion and standard immersion poset, which are tightly linked to finite-dimensional 
irreducible polynomial representations of $GL_N(\CC)$ through their immersion pairs.

There are still many open questions to pursue in this line of research. It would be interesting to characterize all maximal elements in the immersion
and standard immersion poset. In particular, a proof of Conjecture~\ref{conjecture.maximal} seems in reach with the methods developed in this paper. 
In Corollary~\ref{corollary.hook transpose}, we showed that for hook shapes $\lambda$ and $\lambda^t$ form an immersion pair. The same
seems true for two column partitions. It would be interesting to classify when $\lambda$ and its transpose form an immersion pair.
In Corollary~\ref{corr.rank}, we showed that the rank of the immersion poset is at least $\lfloor n/2 \rfloor$. It would be desirable
to find better bounds for the rank.

Furthermore, it would be interesting to classify all intervals and chains in the immersion poset, in particular to obtain proofs of
Conjectures~\ref{conjecture.interval1} and~\ref{conjecture.interval2}. In view of the results of Section~\ref{section.lowerintervals}, the following
question is natural.
\begin{question}
    Which intervals $A_\mu := \{\lambda \,|\, (1^n) \leqslant_I \lambda \leqslant_I \mu\}$ in the immersion poset give rise to Schur-positivity of 
    $p_{A_\mu}$?
\end{question}

Sundaram conjectured that all intervals $[(1^n), \mu]$ in reverse lexicographic order make $\eqref{char}$ 
Schur-positive~\cite[Conjecture 1]{MR3855421}, and has proven the conjecture for certain intervals ~\cite{conjecture}. 
When $n \geqslant 5$, it appears that the immersion poset always contains some interval(s) which do not give rise to Schur-positivity. 
For example, $p_{A_{(n-1,1)}} = p_{(1^n)} + p_{(2,1^{n-2})} + p_{(n-1,1)}$ contains $-s_{(1^n)}$ when $n$ is odd. This observation shows that 
the analog of~\cite[Conjecture 1]{MR3855421} is false for the immersion poset order. However, it does seem true that a large percentage 
of intervals $A_\mu$ in the immersion poset yield Schur-positivity. Using  {\sc SageMath}~\cite{sagemath}, we observe that when 
$6 \leqslant n \leqslant 9$ at least 91\% of the intervals in the immersion poset make 
$\eqref{char}$ Schur-positive. When $n = 10,11$ the percentage of Schur-positive intervals drops to at least 81\%, and when $n=18$, the percentage 
is approximately 73.5\%. 

We conclude with some probabilistic and asymptotic questions.
\begin{question}
For randomly chosen partition $\lambda <_D \mu$, what is the probability that  $\lambda \leqslant_I \mu$? 
\end{question} 
Based on computer evidence, we conjecture that the probability is near $0.5$.
For a partition $\lambda$ of any size, consider the padded partition $\lambda[N]:=(N-|\lambda|,\lambda_1,\lambda_2,\ldots)$ of size $N$, 
where $N\geqslant |\lambda|$. For any two partitions $\lambda\leqslant_D\mu$ (of any size), what can we say about $\lambda[N] \leqslant_I \mu[N]$ 
for $N\gg 1$? Furthermore, it would be interesting to study the asymptotical behaviors of the (standard) immersion poset.

\bibliography{ref}
\bibliographystyle{alpha}

\end{document}